\newtheorem{lemma}{Lemma}
\newtheorem{thm}{Theorem}
\newtheorem{cor}{Corollary}
\newtheorem{defn}{Definition}
\newtheorem{remark}{Remark}
\newtheorem{ex}{Example}
\newtheorem{pro}{Proposition}
\newenvironment{manualcor}[1]{%
  \manualcorinner
}{\endmanualcorinner}
\newcommand{\EP}{\,\mathbb{P}}
\newcommand{\EE}{\,\mathbb{E}}
\newcommand{\breg}{{\mathbf{\Delta}}}
\newcommand{\Breg}{{\mathbf{D}}}
\newcommand{\bm}{\boldsymbol}
\newcommand{\bsb}{\boldsymbol}
\newcommand{\rd}{\,\mathrm{d}}
\newcommand{\rD}{\,\mathrm{D}}
\newcommand{\sgn}{\mathrm{sgn}}
\newcommand{\avg}{\mathrm{avg}}
\newcommand{\back}{\oset\smallsetminus}
\newcommand{\sym}{\bar}
\newcommand{\oset}[3][0ex]{%
  \mathrel{\mathop{#3}\limits^{
    \vbox to#1{\kern-2\ex@
    \hbox{$\scriptstyle#2$}\vss}}}}
\begin{document}

\begin{frontmatter}
\title{Analysis of Generalized Bregman Surrogate Algorithms for Nonsmooth Nonconvex Statistical Learning}
\runtitle{Analysis of Bregman-surrogate Algorithms}

\begin{aug}
\author{\fnms{Yiyuan} \snm{She}},
\author{\fnms{Zhifeng} \snm{Wang}} \and
\author{\fnms{Jiuwu} \snm{Jin}}

\runauthor{Y. She et al.}


\address{Department of Statistics,
Florida State University
}

\end{aug}

\begin{abstract}
  Modern statistical applications often involve minimizing an  objective function  that may be nonsmooth and/or nonconvex.   This paper focuses on  a broad Bregman-surrogate   algorithm framework  including the   local linear approximation, mirror descent, iterative thresholding,  DC programming and many others
 as particular instances. The   recharacterization via generalized Bregman functions enables us to construct suitable error measures and  establish global convergence rates   for    nonconvex and nonsmooth  objectives in possibly high dimensions. For sparse learning problems with a composite objective, under some regularity conditions, the obtained estimators  as the surrogate's fixed points, though not  necessarily local minimizers,  enjoy provable statistical guarantees, and  the sequence of iterates can be shown to approach the statistical truth within the desired accuracy geometrically fast. The paper also studies how to  design adaptive momentum based accelerations without assuming convexity or smoothness by  carefully controlling stepsize and relaxation  parameters. 
\end{abstract}

\begin{keyword}[class=MSC]
\kwd[Primary ]{90C26}
\kwd{49J52, 68Q25}
\end{keyword}

\begin{keyword}
\kwd{nonconvex optimization}
\kwd{nonsmooth optimization}
\kwd{MM algorithms}
\kwd{Bregman divergence}
\kwd{statistical algorithmic analysis}
\kwd{momentum-based acceleration}
\end{keyword}

\end{frontmatter}

\section{Introduction}

Many statistical learning problems can be formulated as minimizing a certain objective function. In shrinkage estimation, the objective  can often be represented as the sum of a loss function and a penalty function, neither of which is necessarily smooth or convex. For example, when the number of variables is much larger than the number of observations ($p \gg n$), sparsity-inducing penalties come into play and result in nondifferentiability. Furthermore, many popular penalties are nonconvex \citep{Frank1993Bridge, FanLi2001SCAD, Zhang2010MCP}, making the computation and analysis more challenging.
Although in low dimensions there are ways to tackle nonsmooth nonconvex optimization, statisticians often prefer easy-to-implement algorithms that scale well in big data applications. Therefore, first-order methods, gradient-descent type algorithms in particular, have recently attracted a great deal of attention due to their lower complexity per iteration and better numerical stability than Newton-type algorithms.

In this work, we study a class of algorithms   in a \textit{Bregman surrogate} framework.
The idea is that instead of solving the original problem $\min_{\bm\beta} f(\bm\beta)$, one constructs a surrogate function
\begin{equation}\label{g-function}
g(\bm{\beta};\bm{\beta}^-) = f(\bm{\beta}) + \bm\Delta_\psi(\bm\beta,\bm\beta^-),
\end{equation}
and generates a sequence of iterates according to
\begin{equation}
\bm\beta^{(t+1)} \in \mathop{\arg\min}_{\bm\beta} g(\bm{\beta};\bm{\beta}^{(t)}).
\end{equation}
The generalized Bregman function $\bm\Delta_\psi$ will be rigourously defined in Section \ref{sec:Bregnotation}, and we will call $g$ a (generalized) Bregman surrogate.
Note that $\bm\Delta_\psi$ is not necessarily the standard Bregman divergence \citep{Bregman1967} because we do not restrict $\psi$ to be smooth or strictly convex or even convex. Bregman divergence does not seem to have been widely used in the statistics community, but see \cite{Zhang2010Bregman}. The generalized Bregman surrogate framework has a close connection to the majorization-minimization (MM) principle \citep{Hunter2004tutorial,Hunter2005}. But   the  surrogate here as a function of $\bm\beta$ matches $f(\bm\beta)$ to a higher order when $\bm\beta^-$ is set to $\bm\beta$ (cf. Lemma \ref{lemma:degeneracy}) and  we do not always invoke the majorization condition $g(\bm{\beta};\bm{\beta}^-) \geq f(\bm{\beta})$; the benefits will be seen in step size control and acceleration.

A variety of algorithms can be recharacterized by Bregman surrogates, including DC programming \citep{Tao1986}, local linear approximation (LLA) \citep{Zou2008LLA} and iterative thresholding \citep{Blumensath2009,SheTISP}. In contrast to the large body of literature in convex optimization, little research has been done on the rate of convergence of nonconvex optimization algorithms when $p > n$, and there is a lack of universal methodologies.
Instead of proving local convergence results for some carefully chosen initial points,  this work aims to   establish \textit{global} convergence rates  regardless of the specific choice of the starting point, where a crucial element is the    error measure. We will see that    the  most natural  measures are unsurprisingly problem-dependent, but  can be conveniently constructed via generalized Bregman functions.

Another perhaps more intriguing question to statisticians is  how the statistical accuracy improves or deteriorates as the cycles progress, and whether the finally obtained estimators can  enjoy provable  guarantees in a statistical sense. See, for example,  \cite{Agarwal2012,Fan2014,Wang2014}; 
in particular, \cite{Loh2015}, one of the main motivations of our work, showed that   for a composite objective composed of a loss and a regularizer that enforces sparsity, the sequence of iterates $\bm\beta^{(t)}$ generated by  gradient-descent type algorithms can approach a  minimizer $\bm\beta^o$   at a linear rate even when $p > n$, if the problem under consideration satisfies some regularity conditions.
This article reveals  broader conclusions when using  generalized Bregman surrogate algorithms in the composite setting: the  more straightforward \textit{statistical error} between the $t$-th iterate $\bm\beta^{(t)}$ and the statistical truth $\bm\beta^*$     enjoys   fast convergence, and  the convergent fixed points, though not necessarily local minimizers, let alone global minimizers,  possess   the desired statistical accuracy in a minimax sense.  The studies support the practice of avoiding unnecessary over-optimization in  high-dimensional sparse learning tasks. Our  theory will make heavy use of the calculus of generalized Bregman  functions---in fact, the proofs  become readily on hand with some nice properties of $\bm\Delta$ established. Again,  a wise choice of the discrepancy measure  can facilitate theoretical analysis  and lead to less restrictive regularity conditions.

Finally, we would like to study and extend   Nesterov's first and second accelerations \citep{Nesterov1983, Nesterov1988}. Accelerated gradient algorithms \citep{Beck2009FISTA, Tseng2008,Krich2015} have lately gained popularity in high-dimensional convex programming because they can attain the optimal rates of convergence among first-order methods. However, since convexity is indispensable to these theories, how to adapt  the momentum techniques  to nonsmooth nonconvex programming is largely unknown.
 Ghadimi and Lan \cite{Ghadimi2016} studied how to accelerate gradient descent type algorithms  when the objective function is nonconvex but strongly smooth;  the obtained convergence rate is of the same order     as gradient descent for  nonconvex problems.
We are interested in more general Bregman surrogates  with a possible lack of smoothness and convexity, most notably in high-dimensional nonconvex sparse learning.  This work will come up with  two momentum-based schemes to accelerate   Bregman-surrogate algorithms by carefully controlling the sequences of relaxation parameters and  step sizes.

Overall, this paper aims to provide a universal tool of generalized Bregman functions in   the interplay between optimization and statistics,  and to  demonstrate its active roles  in  constructing error measures,  formulating less restrictive regularity conditions, characterizing  strong convexity,  deriving the so-called  basic inequalities in nonasymptotic statistical analysis,    devising line search and momentum-based updates, and so on.
The rest of this paper is organized as follows. In Section \ref{sec:Bregman}, we introduce the generalized Bregman surrogate framework and present some examples. Section \ref{sec:results} gives the main theoretical results on  computational accuracy and statistical accuracy. Section \ref{sec:acc} proposes and analyzes two acceleration schemes. We conclude in Section \ref{sec:summ}. Simulation studies and  all technical details are provided in the Appendices.

\paragraph*{Notation} Throughout the paper, we use $C,c$ to denote positive constants. They are not necessarily the same at each occurrence.
The class of   continuously differentiable functions is denoted by  $\mathcal C^{1}$.
Given any matrix $\bm A$, we denote its $(i,j)$-th element by $A_{ij}$. The spectral norm and the Frobenius norm of $\bm A$ are denoted by $\|\bm A\|_2$ and $\|\bm A\|_F$, respectively. The Hadamard product of two matrices $\bm A$ and $\bm B$ of the same dimension is denoted by $\bm A\circ\bm B$ and their inner product is  $\langle\bm A,\bm B\rangle = tr\{{\bm{A}}^\top \bm B\}$.
 If $\bm A-\bm B$ is positive semi-definite, we also write $\bm A \succeq\bm B$.  Let $[p] := \{1,\cdots,p\}$. Given $\mathcal J\subset [p]$, we use $\bm A_\mathcal J$ to denote the submatrix of $\bm A$ formed by the columns indexed by $\mathcal J$. Given a set $A\subset \mathbb R^n$, we use  $A^\circ$,  $\mbox{ri}(A)$,   $\overline A$   to denote its interior,  relative interior, and closure, respectively \cite{Rockafellar1970}. When   $f$ is  an extended real-valued  function from $ D\subset \mathbb R^p$ to $ \mathbb R\cup \{+\infty\}$,   its effective domain is defined as $\mbox{dom}(f)=\{ \bsb{\beta}\in \mathbb R^p: f(\bsb{\beta})<+\infty\}$.
Let $\mathbb R_+=[0, +\infty)$.

\section{Basics of generalized Bregman surrogates}\label{sec:Bregman}

\subsection{Generalized Bregman functions} \label{sec:Bregnotation}

Bregman divergence \citep{Bregman1967}, typically defined for continuously differentiable and strictly convex functions, plays an important role in convex analysis. An extension of it based on ``right-hand'' Gateaux differentials helps to handle   nonsmooth  nonconvex optimization problems.
We begin with  one-sided directional derivative. 
\begin{defn}
\label{def:Gateaux}
Let $\psi: D\subset \mathbb R^p\rightarrow \mathbb R$ be a function. The one-sided directional derivative  of $\psi$ at $\bm{\beta}\in D$ with increment $\bm{h}$ is defined as
\begin{equation} \label{Gateaux}
\delta \psi(\bm{\beta}; \bm{h}) = \lim_{\epsilon \rightarrow 0+} \frac{\psi(\bm{\beta} + \epsilon \bm{h}) - \psi(\bm{\beta})}{\epsilon},
\end{equation}
provided $\bm h$ is admissible in the sense that $\bm\beta+\epsilon\bm h \in D$ for sufficiently small  $\epsilon: 0<\epsilon <\epsilon_0$. When $\psi: D\rightarrow \mathbb R^n$ is a vector function,  $\delta \psi$ is defined componentwise.
\end{defn}

In the following,      $\psi$   is called (one-sided) directionally differentiable at $\bm\beta$ if $\delta \psi(\bm{\beta}; \bm{h})$ as defined in \eqref{Gateaux} exists \emph{and} is finite for all admissible $\bm h$, and if this holds for all  $\bsb{\beta}\in D $, we say that $\psi$ is directionally differentiable.


When $a>0$, $\delta \psi(\bm{\beta}; a \bm{h})=a \delta  \psi(\bm{\beta};    \bm{h}) $, but   $\delta \psi$ is not necessarily  a linear operator with respect to $\bsb{h}$. Definition \ref{def:Gateaux} is a relaxed version of the standard Gateaux differential which studies the limit when $\epsilon \rightarrow 0$.  In high-dimensional sparse problems where nonsmooth regularizers and/or losses are widely used, \eqref{Gateaux} is more convenient and useful.

\begin{defn}[\textbf{Generalized Bregman Function (GBF)}] \label{def:Breg}
The generalized Bregman function associated with a function $\psi$ is defined by
\begin{equation}
\bm{\Delta}_{\psi}(\bm{\beta},\bm{\gamma}) = \psi(\bm{\beta}) - \psi(\bm{\gamma}) - \delta \psi(\bm\gamma; \bm\beta-\bm\gamma),   \label{GBFdef}
\end{equation}
assuming  $\bm{\beta},\bm{\gamma}\in \mbox{dom}(\psi)$ and $\delta \psi(\bm\gamma; \bm\beta-\bm\gamma)$ is meaningful and finite.
In particular, when $\psi$ is  differentiable and strictly convex, the generalized Bregman function $\bm{\Delta}_{\psi}$ becomes the standard Bregman divergence:
\begin{equation}
\mathbf{D}_{\psi}(\bm{\beta},\bm{\gamma}) := \psi(\bm{\beta}) - \psi(\bm{\gamma}) - \langle \nabla \psi(\bm\gamma), \bm\beta-\bm\gamma \rangle. \label{stdBDdef}
\end{equation}
When $\psi$ is a vector function, a vector version of $\breg$  is defined componentwise.
\end{defn}

When  $\nabla\psi$ exists at $\bsb{\beta}$, $\delta \psi(\bm{\beta}, \bm{h})$ reduces to $\langle \nabla \psi(\bm\beta), \bm h \rangle$, which is linear in $\bm h$. So if $\psi$ is the restriction of a    function $\varphi\in \mathcal C^1$ to a convex set,   $\breg_{\psi} (\bsb{\beta}, \bsb{\gamma})= \breg_{\varphi}(\bsb{\beta}, \bsb{\gamma})$ for all $\bsb{\beta}, \bsb{\gamma}\in \mbox{dom}(\psi)$.
For simplicity, all functions in our paper are assumed to be defined on a \textit{whole} vector space ($\mathbb R^p$, typically)   unless otherwise mentioned, although most results can be formulated in the case of extended real-valued functions  under the convexity of their effective domains. 

The generalized Bregman   $\bm\Delta_\psi(\cdot,\bm\gamma)$ can be seen as the difference between the function $\psi$ and its radial  approximations   made at  $\bm\gamma$.
A simple  but important example is $\mathbf{D}_2(\bm\beta,\bm\gamma) := \mathbf{D}_{\|\cdot\|_2^2/2} (\bm\beta,\bm\gamma) =  \|\bm\beta-\bm\gamma\|_2^2/2$. In general, $\bm{\Delta}_\psi$ or $\mathbf{D}_\psi$ may not be symmetric. The following symmetrized version turns out to be useful:
\begin{equation}
\sym{\bm{\Delta}}_\psi(\bm{\beta},\bm{\gamma}) :=  \frac{1}{2}(\bm\Delta_\psi + \back{\bm\Delta}_\psi)(\bm\beta,\bm\gamma) = \frac{1}{2}\{\bm{\Delta}_\psi(\bm{\beta},\bm{\gamma}) + \bm{\Delta}_\psi(\bm{\gamma},\bm{\beta})\},
\end{equation}
where ${\back{\breg}}(\bm{\beta},\bm\gamma)$ denotes $\bm\Delta(\bm\gamma,\bm\beta)$. If $\psi$ is smooth, $\sym{\bm{\Delta}}_\psi(\bm{\beta},\bm{\gamma})  = \langle \nabla \psi(\bsb{\beta}) - \nabla \psi(\bsb{\gamma}),  \bsb{\beta} - \bsb{\gamma}\rangle$.

To simplify the notation, we   use $\bm\Delta_\psi\ge \bm\Delta_\phi$ to denote $\bm\Delta_\psi(\bm\beta,\bm\gamma) \ge \bm\Delta_\phi(\bm\beta,\bm\gamma)$ for all $\bm\beta,\bm\gamma $, and so $\bm\Delta_\psi \ge 0$ stands for $\bm\Delta_\psi(\bm\beta,\bm\gamma) \ge 0, \forall\bm\beta,\bm\gamma$. Some basic properties of $\bm\Delta$   are given as follows.

\begin{lemma}\label{lemma:Delta_linear}
Let $\psi$ and $\varphi$ be  directionally differentiable functions. Then for any $\bm{\alpha}, \bm{\beta}, \bm{\gamma}$, we have
the following properties.
\begin{description}
  \item (i) \ $\bm{\Delta}_{a\psi + b\varphi}(\bm{\beta},\bm{\gamma}) = a\bm{\Delta}_{\psi}(\bm{\beta},\bm{\gamma}) + b\bm{\Delta}_{\varphi}(\bm{\beta},\bm{\gamma})$, $\forall a, b\in \mathbb{R}$.
  \item (ii) If $\it\psi$ is convex,  it is directionally differentiable and $\bm\Delta_\psi \ge 0$; conversely, if $\psi$ is directionally differentiable and  $\bm\Delta_\psi\ge 0$ then   $\psi$ is convex.

\item (iii) If $\psi  : \mathbb R^n\rightarrow \mathbb R  $ is differentiable and $\varphi: \mathbb R^p \rightarrow \mathbb R^n$ is continuous and directionally differentiable, then $\breg_{\psi \circ \varphi} (\bsb{\beta}, \bsb{\gamma}) = \breg_{\psi  } (\varphi(\bsb{\beta}), \varphi(\bsb{\gamma})) + \langle \breg_{\varphi}(\bsb{\beta}, \bsb{\gamma}), \nabla \psi(\varphi(\bsb{\gamma})) \rangle $.   Also, if $\psi: \mathbb R^n\rightarrow \mathbb R  $ is directionally differentiable and    $\varphi: \mathbb R^p \rightarrow \mathbb R^n$  is linear,   then $\breg_{\psi \circ \varphi  } (\bsb{\beta}, \bsb{\gamma}) = \breg_{\psi  } (\varphi( \bsb{\beta}), \varphi(  \bsb{\gamma}))$.

  \item (iv) $\bm\Delta_\psi(\bm\beta,\bm\gamma) = \int_0^1 \big[\delta\psi\big(\bm\gamma + t(\bm\beta-\bm\gamma);\,\bm\beta-\bm\gamma \big) - \delta \psi(\bm\gamma; \bm\beta-\bm\gamma)\big]\mathrm{d}t$, provided $\delta\psi(\bm\gamma + t(\bm\beta-\bm\gamma);\,\bm\beta-\bm\gamma )$ is integrable over $t\in[0,1]$.
\end{description}
\end{lemma}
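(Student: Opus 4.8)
The plan is to treat the four parts by different mechanisms, reducing everything to the one-sided derivative $\delta\psi$ and the defining identity \eqref{GBFdef}. Part (i) is immediate: the limit in \eqref{Gateaux} is linear in the function argument whenever the two one-sided derivatives exist, so $\delta(a\psi+b\varphi)(\bm\gamma;\bm h)=a\,\delta\psi(\bm\gamma;\bm h)+b\,\delta\varphi(\bm\gamma;\bm h)$; substituting into \eqref{GBFdef} and collecting terms gives the claim. For (iv) I would set $u(t)=\psi(\bm\gamma+t(\bm\beta-\bm\gamma))$, note that its one-sided derivative is $u'_+(t)=\delta\psi(\bm\gamma+t(\bm\beta-\bm\gamma);\bm\beta-\bm\gamma)$, and apply the fundamental theorem of calculus in the form valid under the stated integrability to write $\psi(\bm\beta)-\psi(\bm\gamma)=\int_0^1 u'_+(t)\,\mathrm{d}t$; subtracting the constant $\delta\psi(\bm\gamma;\bm\beta-\bm\gamma)=\int_0^1\delta\psi(\bm\gamma;\bm\beta-\bm\gamma)\,\mathrm{d}t$ produces the displayed integrand. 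The only care in (iv) is justifying this one-sided version of the fundamental theorem.

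For (iii) I would prove a chain rule for one-sided derivatives. When $\psi$ is differentiable, continuity and directional differentiability of $\varphi$ give $\varphi(\bm\gamma+\epsilon\bm h)=\varphi(\bm\gamma)+\epsilon\,\delta\varphi(\bm\gamma;\bm h)+o(\epsilon)$, and a first-order expansion of $\psi$ around $\varphi(\bm\gamma)$ yields $\delta(\psi\circ\varphi)(\bm\gamma;\bm h)=\langle\nabla\psi(\varphi(\bm\gamma)),\delta\varphi(\bm\gamma;\bm h)\rangle$. Plugging this into \eqref{GBFdef}, using $\delta\psi=\langle\nabla\psi,\cdot\rangle$ for the differentiable outer map, the cross terms rearrange exactly into $\breg_\psi(\varphi(\bm\beta),\varphi(\bm\gamma))+\langle\breg_\varphi(\bm\beta,\bm\gamma),\nabla\psi(\varphi(\bm\gamma))\rangle$; the only subtlety is the $o(\epsilon)$ bookkeeping. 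For the linear case no differentiability of $\psi$ is needed: directly from the definition $\delta(\psi\circ\varphi)(\bm\gamma;\bm\beta-\bm\gamma)=\delta\psi(\varphi(\bm\gamma);\varphi(\bm\beta)-\varphi(\bm\gamma))$, and since $\breg_\varphi\equiv0$ for linear $\varphi$, one reads off $\breg_{\psi\circ\varphi}(\bm\beta,\bm\gamma)=\breg_\psi(\varphi(\bm\beta),\varphi(\bm\gamma))$.

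Part (ii) is the heart. For the forward direction I would use that convexity makes the difference quotient $\epsilon\mapsto[\psi(\bm\gamma+\epsilon(\bm\beta-\bm\gamma))-\psi(\bm\gamma)]/\epsilon$ nondecreasing and bounded below, so the limit defining $\delta\psi(\bm\gamma;\bm\beta-\bm\gamma)$ exists and is finite, and the same convexity inequality at $\epsilon\to0^+$ gives $\delta\psi(\bm\gamma;\bm\beta-\bm\gamma)\le\psi(\bm\beta)-\psi(\bm\gamma)$, i.e. $\breg_\psi\ge0$. For the converse I would reduce convexity to each line, writing $g(\tau)=\psi(\bm\gamma+\tau\bm v)$; the hypothesis $\breg_\psi\ge0$ translates into the support inequalities $g(\sigma)\ge g(\tau)+g'_+(\tau)(\sigma-\tau)$ for $\sigma>\tau$ and the analogous one with $g'_-$ for $\sigma<\tau$, which yield the sandwich $g'_+(\tau_1)\le[g(\tau_2)-g(\tau_1)]/(\tau_2-\tau_1)\le g'_-(\tau_2)$ for $\tau_1<\tau_2$, hence monotone secant slopes and convexity \emph{once} the one-sided derivatives are shown not to reverse.

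The main obstacle is exactly this last point in the converse: establishing $g'_-(\tau)\le g'_+(\tau)$, equivalently $\delta\psi(\bm x;\bm v)+\delta\psi(\bm x;-\bm v)\ge0$, from $\breg_\psi\ge0$. The support inequalities only relate the right derivative at a left point to the left derivative at a right point, so closing the gap at a single point is genuinely global rather than local; I would obtain it by ruling out an interior strict maximum of the affine-adjusted gap function $h(\lambda)=g(\lambda)-[(1-\lambda)g(0)+\lambda g(1)]$ on a segment, noting that the support inequalities toward the two endpoints force $g'_-(\lambda^*)>g'_+(\lambda^*)$ at such a maximum, and then invoking the non-reversal of one-sided derivatives (the classical directional-derivative criterion for convexity, provable via the increasing-function theorem for one-sided derivatives) to exclude it. This is where the nonsmoothness bites, and I expect it to require the most care.
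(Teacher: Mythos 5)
Parts (i), (iii), (iv) and the forward half of (ii) are correct and essentially coincide with the paper's proof: the paper likewise reads (i) off the linearity of $\delta$ in the function argument, proves both chain rules in (iii) by the same first-order expansion (for differentiable $\psi$) and the same direct limit computation (for linear $\varphi$), establishes (iv) via the fundamental theorem of calculus for one-sided/Dini derivatives along the segment (radial continuity being supplied by directional differentiability), and obtains $\bm\Delta_\psi\ge 0$ for convex $\psi$ from the monotone difference quotient, quoting Rockafellar's Theorem~23.1 rather than re-deriving it.

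The gap is in the converse of (ii). You have correctly located the crux --- the single-point inequality $g'_-(\tau)\le g'_+(\tau)$ --- but your plan for it is circular. Non-reversal is \emph{not} a general property of directionally differentiable functions: $g(\tau)=-|\tau|$ reverses at $0$ and even satisfies both one-sided support inequalities \emph{at that point} (with equality), so non-reversal can only come from the global hypothesis $\bm\Delta_\psi\ge 0$; yet your sandwich $g'_+(\tau_1)\le [g(\tau_2)-g(\tau_1)]/(\tau_2-\tau_1)\le g'_-(\tau_2)$ only ever compares the right derivative at one point with the left derivative at a strictly larger point. Your proposed resolution is to derive reversal at a putative interior maximum of the gap function and then to ``invoke the non-reversal of one-sided derivatives'' to exclude it --- but non-reversal is precisely the statement under proof, and the ``classical directional-derivative criterion'' you cite presupposes it rather than delivering it. The paper sidesteps the issue by a genuinely different, global route: from $\bm\Delta_\psi\ge0$ it deduces $\langle\bm s_{\bm\beta}-\bm s_{\bm\gamma},\bm\beta-\bm\gamma\rangle\ge0$ for arbitrary selections $\bm s_{\bm\beta}$ minorizing $\delta\psi(\bm\beta;\cdot)$, i.e.\ monotonicity of the Clarke--Rockafellar subdifferential, and then cites Correa et al.\ for ``monotone subdifferential implies convex.'' If you want to keep your elementary one-dimensional argument, it can be repaired without ever proving single-point non-reversal: let $h$ be the chord-adjusted gap on $[\tau_1,\tau_3]$ (so $h(\tau_1)=h(\tau_3)=0$ and $h$ inherits the support inequalities), suppose $h(\tau_0)>0$, and let $(a,b)$ be the connected component of $\{h>0\}$ containing $\tau_0$, so $h(a)=h(b)=0$ by continuity. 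The \emph{forward} support inequality aimed from any $\tau\in(a,b)$ at the zero $b$ gives $h'_+(\tau)\le -h(\tau)/(b-\tau)<0$ on all of $(a,b)$, and the one-sided decreasing-function theorem then makes $h$ nonincreasing on $[a,b]$, whence $h(\tau_0)\le h(a)=0$, a contradiction. Note this uses the forward support inequality reaching \emph{across} any interior peak, which is exactly the global information your pointwise argument never exploits.
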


The properties will be frequently  used in the rest of the paper. For instance, for $\psi =\rho\| \cdot\|_2^2/2 - f$, by (i) we can write $\breg_\psi = \rho \Breg_2 - \breg_f$. Sometimes, though $f$ is not necessarily convex,   $f + \nu \| \cdot\|_2^2/2$ is so for some $\nu\in \mathbb R$, which means   $\breg_f\ge -\nu\Breg_2$, owing to (ii). For $l(\bsb{\beta}) = l_0(\bsb{X} \bsb{\beta}+\bsb{\alpha})$, commonly encountered in statistical applications, (iii) states that $\breg_l(\bsb{\beta}, \bsb{\gamma}) = \breg_{l_0}(\bsb{X} \bsb{\beta}+\bsb{\alpha}, \bsb{X} \bsb{\gamma}+\bsb{\alpha})$.   For (iv), the integrability condition  is met when the directional derivative restricted to the interval $[\bsb{\beta}, \bsb{\gamma}]$ is bounded by  a constant (or more generally a   Lebesgue integrable function); in particular,   if    $\psi$  is $L$-strongly smooth, that is, $\nabla \psi$ exists  and  is Lipschitz continuous:
$ 
\|\nabla \psi(\bm\beta) - \nabla \psi(\bm\gamma)\|_* \le L\|\bm\beta - \bm\gamma\|  \text{ for any }\bm\beta, \bm\gamma,
$ where $\|\cdot \|_*$ is the dual norm of $\| \cdot \|$, 
 $\breg_\psi(\bsb{\beta}, \bsb{\gamma})\le L \| \bsb{\beta}-\bsb{\gamma}
\|^2/2$ and for the Euclidean norm, $\breg_\psi \le L \Breg_2$ results.
\\

Moreover, the GBF  operator satisfies some interesting ``idempotence'' properties under some mild assumptions, which is extremely helpful in studying iterative optimization algorithms.

\begin{lemma}\label{lemma:Delta_Delta}
(i) When $\psi$ is convex, $ \bm\Delta_{\bm\Delta_\psi(\cdot,\bm\alpha)}(\bm\beta,\bm\gamma)\le \bm\Delta_\psi(\bm\beta,\bm\gamma)  $, and when $\psi$ is concave, $ \bm\Delta_{\bm\Delta_\psi(\cdot,\bm\alpha)}(\bm\beta,\bm\gamma)\ge \bm\Delta_\psi(\bm\beta,\bm\gamma) $ for all $\bsb{\alpha}, \bsb{\beta}, \bsb{\gamma}$.

(ii) When $\psi$ is directionally differentiable, for all $\bsb{\alpha}=(1-\theta) \bsb{\gamma} +  \theta  \bsb{\beta}$ with $\theta\not\in(0,1)$,  $ \bm\Delta_{\bm\Delta_\psi(\cdot,\bm\alpha)}(\bm\beta,\bm\gamma)= \bm\Delta_\psi(\bm\beta,\bm\gamma)  $ and in particular,
\begin{align} \bm\Delta_{\bm\Delta_\psi(\cdot,\bm\beta)}(\bm\beta,\bm\gamma)= \bm\Delta_{\bm\Delta_\psi(\cdot,\bm\gamma)}(\bm\beta,\bm\gamma)=\bm\Delta_\psi(\bm\beta,\bm\gamma). \label{eq:idempotence-weak}
\end{align}

(iii) When  $\delta\psi (\cdot; \bsb{\beta}-\bsb{\gamma})$ is bounded in a neighborhood of $\bsb{\alpha}$ and has restricted radial continuity at $\bsb{\alpha}$: $\lim_{\epsilon\rightarrow 0+} \delta \psi(\bsb{\alpha}+\epsilon \bsb{h}; \bsb{\beta} - \bsb{\gamma}) =\delta \psi(\bsb{\alpha}; \bsb{\beta} - \bsb{\gamma}) $ for any $\bsb{h}\in[\bsb{\beta}-\bsb{\alpha}, \bsb{\gamma}-\bsb{\alpha}] $, or when  $\delta\psi (\bsb{\alpha}; \cdot)$ has restricted linearity $\delta\psi (\bsb{\alpha}; \bsb{h})=\langle g(\bsb{\alpha}), \bsb{h}\rangle$ for  some $g$ and  all $\bsb{h}\in [\bsb{\beta}-\bsb{\alpha}, \bsb{\gamma}-\bsb{\alpha}]$,    we have
   \begin{align} \bm\Delta_{\bm\Delta_\psi(\cdot,\bm\alpha)}(\bm\beta,\bm\gamma)= \bm\Delta_\psi(\bm\beta,\bm\gamma). \label{eq:idempotence}
\end{align} In particular, \eqref{eq:idempotence} holds when $\psi$ is differentiable at $\bsb{\alpha}$ or    $\delta\psi (\cdot; \bsb{\beta}-\bsb{\gamma})$ is continuous at $\bsb{\alpha}$.
\end{lemma}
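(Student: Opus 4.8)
The plan is to reduce all three parts to a single \emph{master identity} obtained by expanding the definition of the outer generalized Bregman function. Writing $q_{\bm\alpha}(\cdot) := \delta\psi(\bm\alpha;\cdot)$ and $\phi := \bm\Delta_\psi(\cdot,\bm\alpha) = \psi(\cdot) - \psi(\bm\alpha) - q_{\bm\alpha}(\cdot - \bm\alpha)$, I would substitute $\phi$ into \eqref{GBFdef}. The constants $\psi(\bm\alpha)$ cancel, the value terms reassemble $\psi(\bm\beta)-\psi(\bm\gamma)-\delta\psi(\bm\gamma;\bm\beta-\bm\gamma)=\bm\Delta_\psi(\bm\beta,\bm\gamma)$, and the contribution of $q_{\bm\alpha}(\cdot-\bm\alpha)$ collects into exactly the generalized Bregman function of $q_{\bm\alpha}$ at the shifted arguments, giving
$$\bm\Delta_{\bm\Delta_\psi(\cdot,\bm\alpha)}(\bm\beta,\bm\gamma) = \bm\Delta_\psi(\bm\beta,\bm\gamma) - \bm\Delta_{q_{\bm\alpha}}(\bm\beta-\bm\alpha,\bm\gamma-\bm\alpha), \qquad q_{\bm\alpha}=\delta\psi(\bm\alpha;\cdot).$$
Along the way I would verify that $\delta\phi(\bm\gamma;\bm\beta-\bm\gamma)$ exists under each hypothesis (needed for the left side to be defined), which reduces to existence of $\delta q_{\bm\alpha}(\bm\gamma-\bm\alpha;\bm\beta-\bm\gamma)$. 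With this identity, every claim becomes a statement about the single function $q_{\bm\alpha}$, namely the sign or vanishing of $\bm\Delta_{q_{\bm\alpha}}(\bm\beta-\bm\alpha,\bm\gamma-\bm\alpha)$.

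For (i), since $\psi$ is convex its directional derivative $q_{\bm\alpha}=\delta\psi(\bm\alpha;\cdot)$ is finite and sublinear, hence convex, so Lemma~\ref{lemma:Delta_linear}(ii) gives $\bm\Delta_{q_{\bm\alpha}}\ge 0$ and the master identity yields $\bm\Delta_{\bm\Delta_\psi(\cdot,\bm\alpha)}\le\bm\Delta_\psi$; applying the same reasoning to $-\psi$ handles the concave case. For (ii), setting $\bm d:=\bm\beta-\bm\gamma$ and $\bm\alpha=(1-\theta)\bm\gamma+\theta\bm\beta$ gives $\bm\gamma-\bm\alpha=-\theta\bm d$ and $\bm\beta-\bm\alpha=(1-\theta)\bm d$, so the two arguments of $\bm\Delta_{q_{\bm\alpha}}$ are collinear multiples $-\theta\bm d$ and $(1-\theta)\bm d$ of $\bm d$ whose difference is $\bm d$. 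Positive homogeneity $q_{\bm\alpha}(a\bm h)=a\,q_{\bm\alpha}(\bm h)$ for $a>0$ makes $q_{\bm\alpha}$ linear along each ray from the origin, so $\bm\Delta_{q_{\bm\alpha}}$ vanishes whenever both arguments lie on the same side of $0$ along $\mathbb R\bm d$; the condition $\theta\notin(0,1)$ is exactly what places $-\theta$ and $1-\theta$ on the same side (both $\ge 0$ when $\theta\le 0$, both $\le 0$ when $\theta\ge 1$), which is why $\theta\in(0,1)$ must be excluded. The choices $\theta=0,1$ then give \eqref{eq:idempotence-weak}.

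Part (iii) is where I expect the real work. With $\bm u:=\bm\gamma-\bm\alpha$, $\bm v:=\bm\beta-\bm\alpha$, $\bm d:=\bm v-\bm u=\bm\beta-\bm\gamma$, the goal is $\bm\Delta_{q_{\bm\alpha}}(\bm v,\bm u)=0$, and I would establish the stronger fact that $f(\lambda):=q_{\bm\alpha}(\bm u+\lambda\bm d)=\delta\psi(\bm\alpha;\bm u+\lambda\bm d)$ is \emph{affine} in $\lambda\in[0,1]$ with slope $q_{\bm\alpha}(\bm d)$; affineness forces $f(1)-f(0)=f'(0+)$, i.e. $\bm\Delta_{q_{\bm\alpha}}(\bm v,\bm u)=0$. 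Under restricted linearity, $f(\lambda)=\langle g(\bm\alpha),\bm u+\lambda\bm d\rangle$ is affine immediately (and differentiability at $\bm\alpha$ is the special instance $g=\nabla\psi(\bm\alpha)$). The delicate case is restricted radial continuity. Here I would first use local boundedness of $\delta\psi(\cdot;\bm d)$ near $\bm\alpha$ to conclude that $\sigma\mapsto\psi(\bm x_0+\sigma\bm d)$ is Lipschitz, hence absolutely continuous, giving the one-sided fundamental theorem of calculus $\psi(\bm x_0+\tau\bm d)-\psi(\bm x_0)=\int_0^\tau\delta\psi(\bm x_0+\sigma\bm d;\bm d)\,\mathrm d\sigma$ for base points near $\bm\alpha$. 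Splitting the difference quotient defining $f(\lambda)$ as
$$\frac{\psi(\bm\alpha+t\bm u+t\lambda\bm d)-\psi(\bm\alpha)}{t}=\frac{\psi(\bm\alpha+t\bm u)-\psi(\bm\alpha)}{t}+\int_0^\lambda\delta\psi\big(\bm\alpha+t(\bm u+\lambda'\bm d);\,\bm d\big)\,\mathrm d\lambda',$$
I would let $t\to0+$: the first term tends to $q_{\bm\alpha}(\bm u)=f(0)$, while radial continuity makes the integrand converge pointwise to $\delta\psi(\bm\alpha;\bm d)$ for each $\lambda'\in[0,1]$ (note $\bm u+\lambda'\bm d\in[\bm\beta-\bm\alpha,\bm\gamma-\bm\alpha]$), and the uniform bound permits dominated convergence, so the integral tends to $\lambda\,\delta\psi(\bm\alpha;\bm d)$. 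This yields $f(\lambda)=q_{\bm\alpha}(\bm u)+\lambda\,q_{\bm\alpha}(\bm d)$, the desired affineness. The main obstacle is precisely this interchange: converting base-point (first-argument) radial continuity of $\delta\psi$ into linearity of $q_{\bm\alpha}$ in its second argument along the segment, for which the boundedness-driven fundamental theorem of calculus and the dominated convergence step are the crucial ingredients. Finally, the two ``in particular'' statements follow since differentiability at $\bm\alpha$ is the restricted-linearity case, and full continuity of $\delta\psi(\cdot;\bm\beta-\bm\gamma)$ at $\bm\alpha$ implies both local boundedness and radial continuity.
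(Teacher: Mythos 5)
Your proposal is correct and follows essentially the same route as the paper: your ``master identity'' is exactly the paper's decomposition $\bm\Delta_\psi - \bm\Delta_{\bm\Delta_\psi(\cdot,\bm\alpha)} = \bm\Delta_{\delta\psi(\bm\alpha;\cdot-\bm\alpha)}$, part (i) uses the same convexity of the sublinear map $\delta\psi(\bm\alpha;\cdot)$, part (ii) is the same positive-homogeneity/collinearity case analysis, and part (iii) rests on the same FTC-plus-dominated-convergence interchange justified by local boundedness and restricted radial continuity. The only cosmetic difference is that in (iii) you establish affineness of $\lambda\mapsto\delta\psi(\bm\alpha;\bm u+\lambda\bm d)$ directly, whereas the paper shows the directional derivative of $\delta\psi(\bm\alpha;\cdot-\bm\alpha)$ is constant along the segment and then integrates via Lemma \ref{lemma:Delta_linear}(iv); these are equivalent.
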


We  refer to (ii) as the \emph{weak idempotence} property and (iii) as the \emph{strong idempotence} property.
When $\bm\Delta_\psi$ becomes a legitimate Bregman divergence, \eqref{eq:idempotence} can be rephrased into the three-point property $\Breg_{\psi} (\bm{\beta},\bm{\gamma})= \Breg_{\psi} (\bm{\beta},\bm{\alpha})+ \Breg_{\psi}(\bm{\alpha},\bm{\gamma}) - \langle\bm{\beta} - \bm{\alpha}, \nabla \psi (\bm{\gamma}) - \nabla \psi(\bm{\alpha})) \rangle$ \citep{Chen1993}.
It is worth mentioning that although from (iii), differentiability can be used to gain strong idempotence,   the weak    idempotence \eqref{eq:idempotence-weak}  is often what we need, which always holds under just directional differentiability. 
\\

At the end of the subsection, we give some important facts of GBFs    for    canonical generalized linear models (GLMs) that are widely used in statistics modeling.
Here, the response  variable $\bsb{y}\in \mathcal Y^n  \subset \mathbb R^n$ has density $p_{ \bsb{\eta}}(\cdot )=\exp\{(\langle \cdot, \bsb{\eta}\rangle -  {b}(\bsb{\eta} ))/\sigma^{2}  - c(\cdot,\sigma^2)\}\ $ with respect to measure $\nu_0$ defined on $\mathcal Y^n$ (typically the counting measure or   Lebesgue measure), where   $\bsb{\eta}\in \mathbb R^n$ represents the systematic component of interest, and $\sigma$  is the scale parameter; see  \cite{Jor87}.
Since $\sigma$ is not the parameter of interest, it is more convenient to define the density $\exp\{   (\langle \cdot, \bsb{\eta}\rangle -  {b}(\bsb{\eta} ))/\sigma^{2}\}$ (still written as $p_{ \bsb{\eta}}(\cdot )$ with a slight abuse of notation) with respect to the base measure  $\rd \nu = \exp(- c(\cdot,\sigma^2))\rd \nu_0$.    The  loss for   $\bsb{\eta}$  can be written as
\begin{align}\label{l0glmass}
l_0 ( \bsb{\eta} ; \bsb{y}  ) =  \{-\langle \bsb{y},   \bsb{\eta}\rangle + b(\bsb{\eta} )\}/\sigma^2 .\end{align} That is, $l_0$  corresponds to  a distribution in the exponential dispersion family with  cumulant function $b(\cdot)$,  dispersion $\sigma^2$ and natural parameter $\bsb{\eta} $.
In the Gaussian case, $l_0 (\bsb{\eta} ) =  - \langle  \bsb{\eta},   \bsb{y}\rangle/\sigma^2 + \| \bsb{y} \|_2^2/(2 \sigma^2) $. 

Following \cite{WainJordan08},   we define the natural parameter space    $\Omega =\mbox{dom}(b)= \{\bsb{\eta} \in \mathbb R^n: b(\bsb{\eta})< \infty\}$ (always assumed to be nonempty) and the mean parameter space  $\mathcal M = \{\bsb{\mu}\in \mathbb R^n: \bsb{\mu} = \EE  \bsb{y}, \mbox{ where  } \bsb{y}\sim  p \mbox{ for some density } p \mbox{ defined on }   \mathcal Y^n \allowbreak \mbox{ with respect to } \nu\} $, and call  $p_{ \bsb{\eta}}$     minimal if      $\langle \bsb{a}, \bsb{z}\rangle = c$  for almost every   $\bsb{z}\in \mathcal Y^{n}$ with respect to  $\nu$   implies $\bsb{a} = \bsb0$.   When $\Omega$ is open,    $p_{\bsb{\eta}}$ is called regular, and   $ b$ can be shown to be differentiable to any order and  convex, but not necessarily strictly convex; if, in addition, $p_{ \bsb{\eta}} $   is   minimal, $b$ is strictly convex and the canonical link $g = (\nabla b )^{-1}$ is well-defined on $\mathcal M^\circ$. These can all be derived  from, say, the propositions in   \cite{WainJordan08}.

\begin{lemma} \label{lemma:glmBreg}
Assume the  exponential dispersion family setup with  the associated loss defined in \eqref{l0glmass}.  (i) If $\Omega$ is an open set or $p_{\bsb{\eta}}$ is regular, then
\begin{align}
l_0(\bsb{\eta}; \bsb{z}) =\breg_{b}(\bsb{\eta}, \partial b^*(   \bsb{z}))/\sigma^2 -b^{*}(\bsb{z})/\sigma^2\label{bregglm_cum}
\end{align} for all $\bsb{\eta} \in \Omega, \bsb{z} \in \mbox{ri}(\mathcal M)$, where  $b^*$ is the Fenchel conjugate of $b$, and $\partial b^*  (   \bsb{z})$ can take any subgradient of $b^*$ at $\bsb{z}$. If $p_{\bsb{\eta}}$ is also minimal, $\breg_{b}$ becomes $\Breg_b$, $ \partial b^*(   \bsb{z})$ becomes  $g(   \bsb{z})$ (which is unique), and  $\mbox{ri}(\mathcal M)$ becomes $\mathcal M^\circ$.  (ii) As long as  $\Omega$ is  open,
\begin{align}
l_0(\bsb{\eta}; \bsb{z}) = \breg_{  b^* }(\bsb{z}, \nabla b (\bsb{\eta}))/\sigma^2  -b^{*}(\bsb{z})/\sigma^2 \label{bregglm_conj}
 \end{align} for all $\bsb{\eta} \in \Omega, \bsb{z} \in \mathcal \mbox{ri}(\mathcal M)$. If $p_{\bsb{\eta}}$ is also minimal,  $  \breg_{  b^* }=\Breg_{b^*}$ and  $\mbox{ri}(\mathcal M)=\mathcal M^\circ$. (iii) Given any  $\bsb{\eta}_1\in \Omega^\circ$ and  $\bsb{\eta}_2\in \Omega$, the Kullback Leibler (KL) divergence of   $p_{\eta_2} $ from  $p_{\eta_1} $   relates to the GBF of    $l_0$ or $b$ by
\begin{align}
 \mbox{KL}(  p_{\bsb{\eta}_1} , p_{\bsb{\eta}_2})   = \breg_{l_0} (   \bsb{\eta}_2,   \bsb{\eta}_1) = \breg_b( \bsb{\eta}_2, \bsb{\eta}_1)/\sigma^2. \label{bregglm_KL}
  \end{align}
\end{lemma}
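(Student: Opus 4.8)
The plan is to reduce every identity to Fenchel conjugacy between the cumulant function $b$ and $b^*$ together with the exponential-family moment identity $\EE_{\bsb{\eta}}\bsb{y}=\nabla b(\bsb{\eta})$, which holds because openness of $\Omega$ makes $b$ differentiable to every order on $\Omega^\circ=\Omega$. The workhorse is the Fenchel--Young equality: whenever $\bsb{\gamma}\in\partial b^*(\bsb{z})$ (equivalently $\bsb{z}\in\partial b(\bsb{\gamma})$), one has $b(\bsb{\gamma})+b^*(\bsb{z})=\langle \bsb{z},\bsb{\gamma}\rangle$. I will also repeatedly use that the loss splits as $l_0(\bsb{\eta};\bsb{z})=\{b(\bsb{\eta})-\langle\bsb{z},\bsb{\eta}\rangle\}/\sigma^2$.

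For (i), fix $\bsb{z}\in\mbox{ri}(\mathcal M)$ and let $\bsb{\gamma}\in\partial b^*(\bsb{z})$. Standard exponential-family facts give that such $\bsb{\gamma}$ lies in $\Omega^\circ$ and satisfies $\nabla b(\bsb{\gamma})=\bsb{z}$; since $b$ is differentiable there, the one-sided derivative in the GBF is linear, $\delta b(\bsb{\gamma};\bsb{\eta}-\bsb{\gamma})=\langle\bsb{z},\bsb{\eta}-\bsb{\gamma}\rangle$. Substituting this into \eqref{GBFdef} and eliminating $b(\bsb{\gamma})$ via the Fenchel--Young equality collapses every $\bsb{\gamma}$-dependent term, leaving $\breg_b(\bsb{\eta},\bsb{\gamma})-b^*(\bsb{z})=b(\bsb{\eta})-\langle\bsb{z},\bsb{\eta}\rangle=\sigma^2 l_0(\bsb{\eta};\bsb{z})$, which is \eqref{bregglm_cum}; independence of the chosen subgradient is automatic since the right-hand side no longer involves $\bsb{\gamma}$. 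In the minimal case $b$ is strictly convex, so $\nabla b$ is injective, $\partial b^*(\bsb{z})=\{g(\bsb{z})\}$ is the singleton $g=(\nabla b)^{-1}$, $\breg_b$ is a genuine Bregman divergence $\Breg_b$, and minimality forces $\mathcal M$ to be full-dimensional so that $\mbox{ri}(\mathcal M)=\mathcal M^\circ$.

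For (ii), set $\bsb{\mu}=\nabla b(\bsb{\eta})$, which lies in $\mbox{ri}(\mathcal M)$ because $\bsb{\eta}\in\Omega=\Omega^\circ$, and expand $\breg_{b^*}(\bsb{z},\bsb{\mu})$ about the base point $\bsb{\mu}$. Here the main obstacle appears: in the non-minimal case $b^*$ need not be differentiable at $\bsb{\mu}$, so $\delta b^*(\bsb{\mu};\bsb{z}-\bsb{\mu})$ is a priori only $\max_{\bsb{\gamma}\in\partial b^*(\bsb{\mu})}\langle\bsb{\gamma},\bsb{z}-\bsb{\mu}\rangle$. The resolution is a tangency argument: $\partial b^*(\bsb{\mu})=(\nabla b)^{-1}(\bsb{\mu})$ is the coset $\bsb{\eta}+\mathcal L_0$, where $\mathcal L_0=\{\bsb{a}:\langle\bsb{a},\bsb{y}\rangle=\text{const a.e. }[\nu]\}$ is the degeneracy subspace, while $\bsb{z}-\bsb{\mu}$ lies in the linear subspace parallel to $\mbox{aff}(\mathcal M)$, contained in $\mathcal L_0^{\perp}$; hence $\langle\bsb{a},\bsb{z}-\bsb{\mu}\rangle=0$ for all $\bsb{a}\in\mathcal L_0$ and the directional derivative reduces to the single value $\langle\bsb{\eta},\bsb{z}-\bsb{\mu}\rangle$. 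Substituting and using $b^*(\bsb{\mu})=\langle\bsb{\eta},\bsb{\mu}\rangle-b(\bsb{\eta})$ yields $\breg_{b^*}(\bsb{z},\bsb{\mu})-b^*(\bsb{z})=b(\bsb{\eta})-\langle\bsb{\eta},\bsb{z}\rangle=\sigma^2 l_0(\bsb{\eta};\bsb{z})$, i.e. \eqref{bregglm_conj}; minimality again upgrades $\breg_{b^*}$ to $\Breg_{b^*}$ and $\mbox{ri}(\mathcal M)$ to $\mathcal M^\circ$.

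For (iii), I compute the divergence directly: $\log(p_{\bsb{\eta}_1}/p_{\bsb{\eta}_2})=\{\langle\bsb{y},\bsb{\eta}_1-\bsb{\eta}_2\rangle-b(\bsb{\eta}_1)+b(\bsb{\eta}_2)\}/\sigma^2$, and taking $\EE_{\bsb{\eta}_1}$ replaces $\bsb{y}$ by $\bsb{\mu}_1=\nabla b(\bsb{\eta}_1)$, which is licit because $\bsb{\eta}_1\in\Omega^\circ$. Since $b$ is differentiable at $\bsb{\eta}_1$, the resulting expression $\{b(\bsb{\eta}_2)-b(\bsb{\eta}_1)-\langle\bsb{\mu}_1,\bsb{\eta}_2-\bsb{\eta}_1\rangle\}/\sigma^2$ is exactly $\breg_b(\bsb{\eta}_2,\bsb{\eta}_1)/\sigma^2$. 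Finally, the affine term $-\langle\bsb{y},\bsb{\eta}\rangle/\sigma^2$ in $l_0$ contributes nothing to its GBF (the radial approximation of an affine map is exact), so Lemma \ref{lemma:Delta_linear}(i) gives $\breg_{l_0}(\bsb{\eta}_2,\bsb{\eta}_1)=\breg_b(\bsb{\eta}_2,\bsb{\eta}_1)/\sigma^2$, completing \eqref{bregglm_KL}. The only genuinely delicate point across the three parts is the possible non-differentiability of $b^*$ in (ii), handled by the degeneracy-subspace tangency observation above; everything else is bookkeeping with Fenchel--Young and the moment identity.
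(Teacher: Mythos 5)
Your proposal is correct and takes essentially the same route as the paper: parts (i) and (iii) are the identical Fenchel--Young and moment-identity computations, and in (ii) you isolate the same key difficulty (non-differentiability of $b^*$ at $\bsb{\mu}=\nabla b(\bsb{\eta})$ in the non-minimal case) and resolve it the same way. The one place you lean on a "standard fact" --- that $\partial b^*(\bsb{\mu})$ is contained in a coset of the degeneracy subspace $\mathcal L_0$ with $\bsb{z}-\bsb{\mu}\perp\mathcal L_0$ --- is precisely what the paper proves explicitly, via the chain $\sym{\breg}_b(\bsb{\eta}_2,\bsb{\eta}_1)=0\Rightarrow\breg_b(\bsb{\eta}_2,\bsb{\eta}_1)=0\Rightarrow\mathrm{Var}\langle\bsb{\eta}_2-\bsb{\eta}_1,\bsb{y}\rangle=0\Rightarrow\langle\bsb{\eta}_2-\bsb{\eta}_1,\cdot\rangle$ constant $\nu$-a.e., so your argument is a repackaging rather than a different proof.
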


Property (i) shows the importance of GBF in maximum likelihood estimation. A  Bregman version of Property (ii) was  first described in \cite{banerjee2005clustering}, while our conclusions   based on   $\breg_b, \breg_{b^*}$ are more general, as they   do \textit{not} require the strict convexity of $b$ or the differentiability of $b^*$.  Consider for instance the multinomial GLM under a symmetric parametrization: for $[y_1, \ldots, y_m] \in \mathcal Y = \{y_k \in \{ 0, 1\}, 1\le k \le m, \sum y_k =1\}$ ($n=1$),    $\EE y_k \propto \exp( {\eta}_k)$ or  $\EE y_k = \exp( {\eta}_k)/\sum \exp( {\eta}_k)$   gives  $b = \log \sum \exp( {\eta}_k)$, and thus $b^*(\bsb{\mu})$ takes $ \sum \mu_k \log \mu_k  $ for $ [\mu_1, \ldots, \mu_m] \in \mathcal M =\{  [\mu_k]: \sum \mu_k =1, \mu_k\ge 0\} $ and $+\infty$ otherwise. Clearly,    $b^*$ is  not differentiable (given any $\bsb{z}\in \mbox{ri} (\mathcal M)$, $\partial b^*(\bsb{z}) = \{\log \bsb{z} + t \bsb{1}:  t\in \mathbb R\}$),  but nicely our  two GBF representations  still hold.   In addition, if the right-hand side of  \eqref{bregglm_cum} or  \eqref{bregglm_conj}, as a function of $\bsb{z}$, is continuous on $\overline{ \mathcal M}$, which is the case for    Bernoulli, multinomial and Poisson,   (i) and (ii) hold for any $\bsb{z}\in \overline{ \mathcal M}$ from  \cite[Theorem 3.4]{WainJordan08}.

Property (iii) (notice the exchange of $\bsb{\eta}_1$ and $\bsb{\eta}_2$ in the generalized Bregman expressions) can be used to formulate and verify model       regularity conditions  in    minimax  studies of {sparse} GLMs, which  are of great interest in high-dimensional statistical learning \citep{Tsybakov2008}.
More concretely, consider a general signal class 
\begin{equation}
\mathcal B(s^*, M) = \{ \bm\beta^*\in \mathbb R^p : \|\bm\beta^*\|_0\le s^*, \| \bsb{\beta}^*\|_\infty\le  M \},
\end{equation}
where $s^*\le  p$, $0\le M\le +\infty$.
Some applications limit the magnitude of the coefficients $\beta_j$ via a constraint or a penalty, resulting in a finite $M$. Let $I(\cdot)$ be any nondecreasing   function with $I(0)=0, I\not\equiv 0$.
Some particular examples are $I(t) = t$ and $I(t) = 1_{t\ge c}$.
Recall the regular exponential dispersion family  with  systematic component $\bsb{\eta}=\bsb{X} \bsb{\beta}$ and loss   $l(\bsb\beta) = l_0(\bsb{\eta})$    defined by  \eqref{l0glmass}. 

\begin{thm}\label{thm:minimaxglm}
In the regular exponential dispersion family setup (with    $ \mbox{dom}(b)$   a nonempty open set), assume    $  p\ge 2, 1\le s^*\le p/2$. Let
  \begin{align}
P (s^* ) = s^*\log(ep/s^*).
\end{align}

(i) If
\begin{align}
\breg_{l_0} (\bsb{0}, \bsb{X}    \bsb{\beta}   )\sigma^2 \le    \kappa\Breg_2(    \bsb{0},  \bsb{\beta} ), \quad \forall   \bm\beta\in \mathcal B( s^*, M) \label{regcond-glmestminimax}
\end{align}
 where $ \kappa>0$,   there exist positive constants $ c, \tilde c$, depending on $I(\cdot)$ only, such that
\begin{align*}
\inf_{ \hat{\bm\beta} }\,\sup_{ \bm\beta^* \in \mathcal B(s^* , M)} \EE \big\{I\big(\Breg_2  (   \bsb{\beta}^*,    \hat{  \bsb{\beta}} )/[\tilde c \min\{ \sigma^2  P (s^*)/\kappa, M^2 s^*\}]\big)\big\} \ge c >0, 
\end{align*}
where $ \hat{\bm\beta} $ denotes any estimator of $ \bm\beta^*$.

(ii) If
\begingroup
  \singlespacing
\begin{equation}
\begin{cases}
  \underline\kappa\Breg_2(      \bsb{\beta}_1,    \bsb{\beta}_2)\le\Breg_2(     \bsb{X} \bsb{\beta}_1,    \bsb{X}\bsb{\beta}_2) \\  \breg_{l_0} ( \bsb{0}, \bsb{X}    \bsb{\beta}_1   )\sigma^2 \le     \overline\kappa\Breg_2(    \bsb{0},      \bsb{\beta}_1), \end{cases}  \qquad\forall   \bm\beta_i\in \mathcal B(s^*,M)
     \label{regcond-glmestminimax-2}
 \end{equation}
 \endgroup
where $\underline \kappa, \overline \kappa\ge 0$,   then there exist positive constants $ c, \tilde c$  depending on $I(\cdot)$ only  such that
\begin{align*}
\inf_{ \hat{\bm\beta} }\,\sup_{ \bm\beta^* \in \mathcal B(s^* ,M)} \EE\big\{I\big(\Breg_2  (  \bsb{X} \bsb{\beta}^*,   \bsb{X} \hat{  \bsb{\beta}} )/[c \min\{(\underline\kappa/ \overline \kappa)\sigma^2  P (s^* ), \underline \kappa M^2 s^*\} ]\big)\big\}  \ge c >0.  
\end{align*}
\end{thm}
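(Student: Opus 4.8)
The plan is to prove both lower bounds by the standard reduction from estimation to multiple hypothesis testing, combining a sparse packing of the signal class with Fano's inequality, where Lemma~\ref{lemma:glmBreg}(iii) is exactly the device that rewrites the information-theoretic quantities as the generalized Bregman expressions appearing in \eqref{regcond-glmestminimax} and \eqref{regcond-glmestminimax-2}. First I would construct the packing: by a sparse Varshamov--Gilbert argument there exist $\bsb\omega_1,\dots,\bsb\omega_N\in\{0,1\}^p$ with $\|\bsb\omega_j\|_0=s^*$, pairwise Hamming distance $\ge s^*/2$, and $\log N\ge c_0 P(s^*)$ (this is where $p\ge 2$, $1\le s^*\le p/2$ and the specific form $s^*\log(ep/s^*)$ enter). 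Setting $\bsb\beta_j=\delta\bsb\omega_j$ for a separation radius $\delta\le M$ to be tuned, each $\bsb\beta_j\in\mathcal B(s^*,M)$, with $\Breg_2(\bsb\beta_j,\bsb\beta_k)=\delta^2\rho_H(\bsb\omega_j,\bsb\omega_k)/2\ge \delta^2 s^*/4$ and $\Breg_2(\bsb 0,\bsb\beta_j)=s^*\delta^2/2$.

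The key observation is that the regularity conditions bound the GBF of $l_0$ only at the reference point $\bsb 0$, which is precisely what the mixture form of Fano requires. Writing $P_j$ for the law under $\bsb\beta_j$ and $\bar P=N^{-1}\sum_j P_j$, the mutual information obeys $N^{-1}\sum_j \mathrm{KL}(P_j\|\bar P)\le N^{-1}\sum_j \mathrm{KL}(P_j\|P_0)$ for the null $P_0$ at $\bsb\beta=\bsb 0$, and by Lemma~\ref{lemma:glmBreg}(iii) (note the exchange of arguments) $\mathrm{KL}(P_j\|P_0)=\breg_{l_0}(\bsb 0,\bsb X\bsb\beta_j)$. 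In case (i), \eqref{regcond-glmestminimax} gives $\breg_{l_0}(\bsb 0,\bsb X\bsb\beta_j)\le \kappa\Breg_2(\bsb 0,\bsb\beta_j)/\sigma^2=\kappa s^*\delta^2/(2\sigma^2)$, so choosing $\delta^2\asymp \min\{\sigma^2 P(s^*)/(\kappa s^*),\,M^2\}$ forces the average KL below $\tfrac14\log N$. In case (ii) the same tuning with $\overline\kappa$ in place of $\kappa$ bounds the KL, while the restricted-isometry half of \eqref{regcond-glmestminimax-2} lower-bounds the prediction-error separation via $\Breg_2(\bsb X\bsb\beta_j,\bsb X\bsb\beta_k)\ge \underline\kappa\,\Breg_2(\bsb\beta_j,\bsb\beta_k)\ge \underline\kappa\,\delta^2 s^*/4$, producing the advertised rate $\min\{(\underline\kappa/\overline\kappa)\sigma^2 P(s^*),\,\underline\kappa M^2 s^*\}$.

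It remains to pass from the testing error to the loss $I(\cdot)$. Let $\rho$ denote half the minimal separation, measured in $\Breg_2$ for (i) and in $\Breg_2(\bsb X\cdot,\bsb X\cdot)$ for (ii). Since the $\Breg_2$-balls of radius $\rho$ about the $\bsb\beta_j$ are disjoint, the minimum-distance decoder satisfies $\{\hat{\bsb\beta}\text{ decodes wrong}\}\subseteq\{\Breg_2(\bsb\beta^*,\hat{\bsb\beta})\ge\rho\}$, and Fano bounds the former probability below by an absolute constant once the average KL is $\le\tfrac14\log N$. Because $I$ is nondecreasing with $I(0)=0,\ I\not\equiv 0$, fixing $t_0$ with $I(t_0)>0$ and normalizing the denominator so that the rate equals $\rho/t_0$ gives $\EE\{I(\Breg_2(\bsb\beta^*,\hat{\bsb\beta})/(\tilde c\cdot\text{rate}))\}\ge I(t_0)\,\EP(\Breg_2\ge\rho)\ge c>0$, with $c,\tilde c$ depending only on $I$; the prediction-error version is identical.

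The main obstacle is not the packing, which is routine, but aligning it with the available hypotheses: the conditions control the GBF of $l_0$ \emph{only} at the null, so the argument must be routed through the mixture/KL-to-reference form of Fano rather than through pairwise KL divergences, and one must simultaneously verify feasibility, namely $\bsb X\bsb\beta_j\in\Omega$ (needed to invoke Lemma~\ref{lemma:glmBreg}(iii)), for the tuned $\delta$, together with the truncation at $M$ that yields the $\min\{\cdot,\,M^2 s^*\}$ factor. A secondary care point is that in (ii) the isometry inequality is applied to differences $\bsb\beta_j-\bsb\beta_k$ of two $s^*$-sparse vectors, which is exactly the regime covered by \eqref{regcond-glmestminimax-2}.
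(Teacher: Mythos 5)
Your proposal is correct and follows essentially the same route as the paper's proof: a sparse Varshamov--Gilbert packing (the paper uses Lemma A.3 of Rigollet with $\bsb 0$ included in the packing set), the identity $\mathrm{KL}(p_{\bsb X\bsb\beta},p_{\bsb 0})=\breg_{l_0}(\bsb 0,\bsb X\bsb\beta)$ from Lemma \ref{lemma:glmBreg}(iii), a separation radius $R$ truncated at $M$, and a Fano-type bound with KL measured to the null (the paper invokes Theorem 2.7 of Tsybakov and delegates the general loss $I(\cdot)$ to Theorem 6.1 of Lounici et al., which you instead reconstruct explicitly). No substantive differences.
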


The GBF-form conditions \eqref{regcond-glmestminimax}, \eqref{regcond-glmestminimax-2} can be viewed as an  extension  of restricted isometry    \citep{candes2007dantzig}, and are often easy  to check using the Hessian. For example, from     Lemma \ref{lemma:Delta_linear}, we immediately know that if $l_0$ is $L$-strongly smooth, 
 \eqref{regcond-glmestminimax}   is satisfied with $\kappa = L \| X\|_2^2$ even when  $M=+\infty$.    This is  the case for regression and logistic regression, and accordingly, no estimation algorithms can beat the  minimax  rate   $s^*\log(ep/s^*)$ (ignoring trivial factors). The {optimal} lower bounds provide useful guidance in establishing sharp statistical  error upper bounds of  Bregman-surrogate algorithms in Section \ref{subsec:stat}.

\subsection{Examples of Bregman surrogates}\label{sec:ex}

\begin{ex} \upshape \label{ex:mirrdes}
\textbf{(Gradient descent and mirror descent)}.
Gradient descent is a simple first-order method to minimize a  function $f\in \mathcal C^1$ which may be nonconvex. Starting with $\bm\beta^{(0)}$, the algorithm proceeds as follows:
\begin{equation}
\bm\beta^{(t+1)} = \bm\beta^{(t)} - \alpha\nabla f(\bm\beta^{(t)}),
\end{equation}
where $\alpha > 0$ is a step size parameter. Its rationale can be seen by formulating a Bregman-surrogate algorithm using $\bm\Delta_\psi = \rho\mathbf D_2 - \bm\Delta_f$:
\begin{subequations}
\label{GD}
\begin{align}
\bm\beta^{(t+1)}  &= \mathop{\arg\min}_{\bm\beta}\  g(\bm\beta;\bm\beta^{(t)})= f(\bm\beta) + (\rho\mathbf D_2 - \bm\Delta_f)(\bm\beta,\bm\beta^{(t)}) \label{GDa}\\
 &= \bm\beta^{(t)} - \frac{1}{\rho}\nabla f(\bm\beta^{(t)}),
\end{align}
\end{subequations}
where $f(\cdot) - \bm\Delta_f(\cdot,\bm\beta^{(t)})$ gives a linear approximation of $f$ and $1/\rho$ amounts to the step size. We call $\rho$ the inverse step size parameter.   (The generalized Bregman surrogate in   \eqref{GDa}   extends the class of algorithms to a directionally differentiable   $f$,     with the update given by  $\bm\beta^{(t+1)} = \bm\beta^{(t)} + (0\vee - \delta f(\bm\beta^{(t)};\bm h^\circ)) \bm h^\circ/\rho$ and $\bm h^\circ \in \mathop{\arg\max}_{\|\bm h\|_2=1}[\delta f(\bm\beta^{(t)};\bm h)]_-$, where $[\,]_-$ denotes the negative part ($t_- = (|t| - t)/2$).) 

More generally, we can use a   strictly convex $\varphi\in \mathcal C^1$ to construct
\begin{equation}\label{MD-g}
g(\bm\beta;\bm\beta^{(t)}) = f(\bm\beta) + (\rho\mathbf D_\varphi - \bm\Delta_f)(\bm\beta,\bm\beta^{(t)}),
\end{equation}
Minimizing \eqref{MD-g} with respect to $\bm\beta$ gives the renowned mirror descent  \citep{Nemirovski1983Book}:
$\bm{\beta}^{(t+1)} = (\nabla\varphi)^{-1}(\nabla\varphi(\bm{\beta}^{(t)}) - \nabla f(\bm{\beta}^{(t)})/\rho)$,
where $(\nabla \varphi)^{-1}$ is the inverse of $\nabla \varphi$.
Mirror descent is widely used in convex programming, but  this work does \textit{not} restrict $f$ to be convex.
\end{ex}


\begin{ex}\label{ex:thres} \upshape
\textbf{(Iterative thresholding)}.
Sparsity-inducing penalties are widely used in high-dimensional problems; see, for example, $\ell_0$, $\ell_1$ \citep{Tibshirani1996}, bridge penalties \citep{Frank1993Bridge}, SCAD \citep{FanLi2001SCAD}, capped-$\ell_1$ \citep{Zhang2010} and MCP \citep{Zhang2010MCP}. There is a universal connection between thresholding rules and penalty functions \citep{SheGLMTISP}, and the mapping from penalties to thresholdings is many-to-one.
This makes it possible to apply an iterative thresholding algorithm   to solve a general penalized problem of the form $\min_{\bm\beta} l(\bm\beta)+\sum_jP(\varrho\beta_j;\lambda)$ \citep{Blumensath2009,SheTISP}:
\begin{equation}\label{Thres-update}
\bm\beta^{(t+1)} = \Theta(\varrho\bm\beta^{(t)} -  \nabla l(\bm\beta^{(t)})/{\varrho}; \lambda)/{\varrho},
\end{equation}
where $\Theta$ is a thresholding function inducing $P$, and $\varrho > 0$ is an algorithm parameter for the sake of scaling and convergence control. This  class of iterative algorithms is called the \textit{Thresholding-based Iterative Selection
Procedures} (TISP) in \cite{SheTISP} and is  scalable in computation.
For the rigorous definition of $\Theta$ and the $\Theta$-$P$ coupling formula, see Section \ref{subsec:comp} for detail. Some examples of $\Theta$ include: (i) soft-thresholding $\Theta_S(t;\lambda) = \sgn(t)(|t|-\lambda)1_{|t|>\lambda}$, which induces the $\ell_1$ penalty, (ii) hard-thresholding $\Theta_H(t;\lambda) = t1_{|t|>\lambda}$, which is associated with (infinitely) many penalties, with the capped-$\ell_1$ penalty, \eqref{def:PH}, and the discrete $\ell_0$ penalty as particular instances. The nonconvex SCAD and MCP penalties also have their corresponding thresholding rules.
In this sense, thresholdings extend proximity operators. One can regard \eqref{Thres-update} as an outcome of minimizing the following Bregman surrogate
\begin{equation} \label{IterThres-g}
g(\bm\beta;\bm\beta^{(t)}) = l(\bm\beta) + \sum P(\varrho\beta_j;\lambda) + (\varrho^2\mathbf D_2 - \bm\Delta_l)(\bm\beta,\bm\beta^{(t)}).
\end{equation}
Here,  we  linearize $l$ only, as $\min_{\bm\beta} g(\bm\beta;\bm\beta^{(t)})$   has \eqref{Thres-update} as its globally optimal solution.
 Interestingly,  the set of fixed points under the $g$-mapping enjoys provable guarantees that may \textit{not} hold for the set of  local minimizers  to the original objective  (Section \ref{subsec:fixed}). This is particularly the case when  $\Theta$ has {discontinuities} and $P(t;\lambda)$ is given by $ P_\Theta(t;\lambda) + q(t;\lambda)$, where $P_\Theta$ is  defined by \eqref{pendef} and $q$ is a function satisfying $q(t;\lambda)\ge 0$ for all $t\in\mathbb R$ and $q(t;\lambda) = 0$ if $t = \Theta(s;\lambda)$ for some $s\in\mathbb R$  \citep{She2016}.

A closely related \textit{iterative quantile-thresholding} procedure  \cite{SheGLMTISP,SheSpec} proceeds by $\bsb{\beta}^{(t+1)} = \Theta^\#(\bm\beta^{(t)} -  \nabla l(\bm\beta^{(t)})/{\varrho}^2; q) $ for the sake of feature screening:  $\min l(\bsb{\beta})$ s.t. $\| \bsb{\beta}\|_0\le q$, and uses  a similar surrogate $g(\bm\beta;\bm\beta^{(t)}) = l(\bm\beta)  + (\varrho^2\mathbf D_2 - \bm\Delta_l)(\bm\beta,\bm\beta^{(t)})$. Here,  the quantile thresholding $\Theta^\#(\bsb{\alpha}; q)$, as an outcome of $\min g(\bsb{\beta}; \bsb{\beta}^{-})$, keeps the top $q$ elements of $\alpha_j$ after ordering them in magnitude, $|\alpha_{(1)}| \ge \cdots \ge  |\alpha_{(p)}|$,  and zero out the rest. To avoid ambiguity, we assume no ties  occur   in performing $\Theta^{\#}(\bm{\alpha}; q)$ throughout the paper, that is, $|\alpha_{(q)}| > |\alpha_{(q+1)}|$.
\end{ex}


\begin{ex} \label{ex:NMF}\upshape
\textbf{(Nonnegative matrix factorization)}.
Nonnegative Matrix Factorization (NMF) \citep{Lee1999NMF} provides an effective tool for feature extraction and finds widespread applications in computer vision, text mining and many other areas.
NMF approximates a nonnegative data matrix $\bm{X} \in \mathbb{R}_+^{n\times p}$ by the product of two nonnegative low-rank matrices $\bm{W}\in \mathbb{R}_+^{n\times r}$ and $\bm{H} \in \mathbb{R}_+^{r\times p}$.
The  KL  divergence is often used to make a cost function, that is,
$\min_{\bm W\in\mathbb R_+^{n\times r}, \bm H\in\mathbb R_+^{r\times p}} \mathrm{KL}(\bm{X},\bm{WH}) := \sum_{i,j}[X_{ij}\log(X_{ij}/(\bm W\bm H)_{ij}) - X_{ij} + (\bm W\bm H)_{ij}]$, which gives a nonconvex optimization problem. The following \textit{multiplicative} update rule  (MUR) shows good scalability in big data applications \citep{Cichocki2006}:
\begin{align}
H_{kj}^{(t+1)} &= H_{kj}^{(t)}\exp\Big[-\frac{1}{\rho}\sum_i\Big(W_{ik} - \frac{W_{ik}X_{ij}}{(\bm W\bm H^{(t)})_{ij}}\Big)\Big], \label{NMF-Hupdate}\\
W_{ik}^{(t+1)} &= W_{ik}^{(t)}\exp\Big[-\frac{1}{\rho}\sum_j\Big(H_{kj} - \frac{H_{kj}X_{ij}}{(\bm W^{(t)}\bm H)_{ij}}\Big)\Big]. \label{NMF-Wupdate}
\end{align}
The update formulas can be explained from a Bregman surrogate perspective.
Since the problem is symmetric in $\bm{W}$ and $\bm{H}$, $\bm\Delta_{\mathrm{KL}}(\bm{X},\bm{WH}) = \bm\Delta_{\mathrm{KL}}(\bm{X}^{\top},\bm{H}^{\top}\bm{W}^{\top})$, we take \eqref{NMF-Hupdate} for instance to illustrate the point.
Noticing that the criterion is separable in the column vectors of $\bm H$, it suffices to look at $\mathop{\min}_{\bm h \in \mathbb R_+^r}  f(\bm h) = \mathrm{KL}(\bm x, \bm W\bm h)= \sum_i[x_i\log(x_i/(\bm W\bm h)_i) - x_i + (\bm W\bm h)_i]$,
where $\bm x$ can be any column of $\bm X$.
Then it is easy to verify that the following Bregman surrogate,
\begin{equation}
g(\bm h; \bm h^{(t)}) = f(\bm h) + (\rho \mathbf D_\varphi - \mathbf D_{f})(\bm h,\bm h^{(t)}),~ \varphi(\bm h) = \sum(h_i\log h_i - h_i),
\end{equation}
leads to the multiplicative update formulas.
\end{ex}


\begin{ex}\label{ex:DC} \upshape
\textbf{(DC programming)}.
DC programming \citep{Tao1986} is capable of tackling a large class of nonsmooth nonconvex optimization problems; see, for example, \cite{Gasso2009,PanShen2013}. A ``difference of convex'' (DC) function $f$ is defined by $f(\bm\beta) = d_1(\bm\beta) - d_2(\bm\beta)$, where $d_1$ and $d_2$ are both closed convex functions.
To minimize $f(\bm\beta)$, a standard DC algorithm generates two sequences $\{\bm\beta^{(t)}\}$ and $\{\bm\gamma^{(t)}\}$ that obey
\begin{equation}\label{DC-seq}
\bm\gamma^{(t)} \in \partial d_2(\bm\beta^{(t)}),~ \bm\beta^{(t+1)} \in \partial d_1^*(\bm\gamma^{(t)}),
\end{equation}
where $\partial d(\bm\beta)$ is the subdifferential of $d(\cdot)$ at $\bm\beta$, and $d_1^*(\cdot)$ is the Fenchel conjugate  of $d_1(\cdot)$.
(As before,  $d_1, d_2$ are assumed to be real-valued  functions defined on $\mathbb R^p$, so  the sequences are well-defined and  finite.) This elegant algorithm does not involve any line search and guarantees global convergence given any initial point.
Many popular nonconvex algorithms can be derived from \eqref{DC-seq} \citep{An2005_DC}.

Focusing on the $\bm\beta$-update, we know that $\bm\beta^{(t+1)}$ must be  a solution to $\min_{\bm\beta} d_1(\bm\beta) - \langle\bm\beta, \bm\gamma^{(t)}\rangle$ or $\min_{\bm\beta} d_1(\bm\beta) - \langle\bm\beta-\bm\beta^{(t)}, \bm\gamma^{(t)}\rangle$.
Due to the convexity of $d_2$,    $\langle \bm\beta-\bm\beta^{(t)}, \bm\gamma^{(t)}\rangle \le\sup_{\bsb{\gamma}\in\partial d_2(\bm\beta^{(t)}) }\langle \bm\beta-\bm\beta^{(t)}, \bm\gamma\rangle= \delta d_2(\bm\beta^{(t)};\bm\beta-\bm\beta^{(t)})$ for all   $ \bm\gamma^{(t)}\in\partial d_2(\bm\beta^{(t)}), \bm\beta\in \mathbb R^p $. Thus $\min_{\bm\beta} d_1(\bm\beta) - \langle\bm\beta-\bm\beta^{(t)}, \bm\gamma^{(t)}\rangle$ should be no lower than $\min_{\bm\beta} d_1(\bm\beta) -  \delta d_2(\bm\beta^{(t)}; \bm\beta - \bm\beta^{(t)})$. Choosing $\bm\beta^{(t+1)}\in \arg\min \allowbreak d_1(\bm\beta) -  \delta d_2(\bm\beta^{(t)}; \bm\beta - \bm\beta^{(t)})$ and $\bm\gamma^{(t)} = \delta d_2(\bm\beta^{(t)}; \bm\beta^{(t+1)}-\bm\beta^{(t)})(\bm\beta^{(t+1)}-\bm\beta^{(t)})/\|\bm\beta^{(t+1)}-\bm\beta^{(t)}\|_2^2$ ensures \eqref{DC-seq}, which simply amounts to using  a  Bregman surrogate
\begin{equation}
g(\bm\beta;\bm\beta^{(t)}) = f(\bm\beta) + \bm\Delta_{d_2}(\bm\beta,\bm\beta^{(t)}).
\end{equation}
For the $\bsb\gamma$-updates,  a   Bregman surrogate   $g(\bm\gamma;\bm\gamma^{(t)}) = (d_2^* - d_1^*)(\bm\gamma) + \bm\Delta_{d_1^*}(\bm\gamma,\bm\gamma^{(t)})$ can be similarly constructed.
\end{ex}


\begin{ex}\label{ex:LLA} \upshape
\textbf{(Local linear approximation)}.
Zou and Li \cite{Zou2008LLA}  proposed an effective local linear approximation (LLA) technique to minimize penalized negative log-likelihoods. In their paper, the loss function is assumed to be convex and smooth, and the penalty is concave on $\mathbb R_+$. We give a new characterization of LLA by use of a Bregman surrogate.

Let $l$ be a directionally differentiable loss function {but not} necessarily continuously differentiable, and $P$ be a function that is concave and differentiable over $(0,+\infty)$, and satisfies $P(t) = P(-t)$ for any $t\in \mathbb R$, $P(0) = 0$. Consider the problem
$\min_{\bm\beta} l(\bm\beta) + \sum_jP(\beta_j)$. Using the generalized Bregman notation $\bm\Delta_{\|\cdot\|_1}(\bm\beta,\bm\gamma)$, or $\bm\Delta_1(\bm\beta,\bm\gamma)$ for short,  define
\begin{equation}\label{LLA-g}
g(\bm\beta;\bm\beta^{(t)}) = l(\bm\beta) + \sum P(\beta_j) + \sum\big[\alpha_j\bm\Delta_1 (\beta_j,\beta_j^{(t)}) - \bm\Delta_P(\beta_j,\beta^{(t)}_j)\big].
\end{equation}
In contrast to \eqref{IterThres-g}, \eqref{LLA-g} linearizes $P$ instead of $l$.
Simple calculation shows
\begin{align}
\bm\Delta_1(\beta_j,\beta_j^{(t)}) &= \begin{cases}|\beta_j| - \sgn(\beta_j^{(t)})\beta_j,&\beta_j^{(t)}\ne 0\\ 0,&\beta_j^{(t)} = 0,\end{cases}\label{Delta_1}\\
\bm\Delta_P(\beta_j, \beta_j^{(t)}) &= \begin{cases}P(\beta_j) - P(\beta_j^{(t)}) - P'(\beta_j^{(t)})(\beta_j - \beta_j^{(t)}),&\beta_j^{(t)} \ne 0\\ P(\beta_j) - P'_+(0)|\beta_j|,& \beta_j^{(t)} = 0,\end{cases} \label{Delta_P}
\end{align}
where $\sgn(\cdot)$ is the sign function and $P'_+(\beta)$ denotes the right derivative of $P(\cdot)$ at $\beta$.
Interestingly, with $\alpha_j = |P'_+(\beta_j^{(t)})|$, the $\bm\Delta_1$-based surrogate \eqref{LLA-g} can be shown to be
\begin{align*}
l(\bm\beta) + \sum_j \big[P(|\beta_j^{(t)}|) + P'_+(|\beta_j^{(t)}|)(|\beta_j| - |\beta_j^{(t)}|)\big],
\end{align*}
    which is exactly the surrogate  constructed by Zou and Li. To the best of our knowledge, the generalized Bregman formulation is new.

LLA requires solving a weighted lasso problem at each step. We can further linearize $l$  as in Example \ref{ex:thres} to improve its scalability. LLA is popular among statisticians, but to our knowledge, there is a lack of  \textit{global} convergence-rate studies   in large-$p$ applications. We will see that reformulating LLA from the generalized Bregman surrogate perspective leads to a convenient choice of the convergence measure in analyzing the algorithm.
\end{ex}


\begin{ex} \upshape
\textbf{(Sigmoidal regression)}.
We use the univariate-response sigmoidal regression to illustrate this type of nonconvex problems that is commonly seen in artificial neural networks. The formulation carries over to multilayered networks and recurrent networks \citep{She2014NN}.

Let $\bm X = [\bm x_1,\bm x_2, \ldots, \bm x_n]^\top \in \mathbb R^{n\times p}$ be the data matrix, and $\bm y = [y_1,\cdots, y_n]^\top$ be the response vector. Define $\pi(\nu) = e^\nu/(1+e^\nu)$; if $\nu$ is replaced by a vector, $\pi$ is defined componentwise. The sigmoidal regression solves
\begin{equation}
\min_{\bm\beta}\  f(\bm\beta) = \frac{1}{2}\sum_{i=1}^n (y_i - \pi(\bm x_i^\top\bm\beta))^2.
\end{equation}
Then $\nabla^2 f(\bm\beta) = \sum_{i=1}^n [(-2\mu_i^3+3\mu_i^2-\mu_i)y_i + (3\mu_i^4 - 5\mu_i^3+2\mu_i^2)] \bm x_i\bm x_i^\top$, where $\mu_i = \pi(\bm x_i^\top\bm\beta)$.
Because $\mu_i\in[0,1]$, we get $\nabla^2 f(\bm\beta) \preceq \bm X^\top\text{diag}\{|0.1y_i| + 0.08\}_{i=1}^n\bm X$,
which motivates a Bregman surrogate
{\begin{equation*}
g(\bm{\beta};\bm{\beta}^{(t)}) = f(\bm{\beta}) + \mathbf D_{\psi-f}(\bm{\beta},\bm{\beta}^{(t)}),~ \psi(\bm{\beta}) = \frac{1}{2}\bm{\beta}^{\top} \bm X^\top\text{diag}\{|0.1y_i| + 0.08\}\bm X \bm\beta.
\end{equation*}}Solving $\min_{\bm\beta} g(\bm\beta;\bm\beta^{(t)})$ yields $\bm\beta^{(t+1)} = \bm\beta^{(t)} + \bm B^{-1}\bm X^\top(\bm u^{(t)} - \bm u^{(t)} \circ \bm u^{(t)}) \circ (\bm y - \bm u^{(t)})$, where $\bm B =  \bm X^\top\text{diag}\{|0.1y_i| + 0.08\}_{i=1}^n\bm X$, $\bm u^{(t)} = \pi(\bm X^\top\bm\beta^{(t)})$ and $\circ$ denotes the Hadamard product.
This type of surrogate functions is closely related to proximal Newton-type methods \citep{Schmidt2010Thesis} and signomial programming \citep{Lange2014}.
\end{ex}

\section{Bregman-surrogate algorithm analysis} \label{sec:results}

Motivated by the examples in Section \ref{sec:Bregman}, we study a generalized Bregman-surrogate algorithm family for solving $\min_{\bm\beta} f(\bm\beta)$, with the sequence of iterates defined by
\begin{equation} \label{BregIter}
\bm\beta^{(t+1)} \in \mathop{\arg\min}_{\bm\beta} \  g(\bm\beta;\bm\beta^{(t)}) := f(\bm{\beta}) + \bm\Delta_\psi(\bm{\beta},\bm\beta^{(t)}), \ t \ge 0
\end{equation}
The
objective function $f$ and the auxiliary function $\psi$ are  assumed to be directionally differentiable but need not be smooth or convex.    $\psi$ has flexible options  as seen from the previous examples.

 Equation \eqref{BregIter} does not necessarily give an MM procedure, as the majorization condition $g(\bm\beta;\bm\beta^-) \ge f(\bm\beta)$ may not hold.   But we have the following zeroth-order \textit{and} first-order degeneracies when $\bm\beta^-=\bm\beta$, which provides  rationality of investigating the accuracy of \textit{fixed points}  under the $g$-mapping  \eqref{BregIter}.

\begin{lemma} \label{lemma:degeneracy}
Let $g(\bm\beta;\bm\beta^-) = f(\bm\beta) + \bm\Delta_\psi(\bm\beta,\bm\beta^-)$ with $f$ and $\psi$ directionally differentiable. Then (i) $g(\bm\beta;\bm\beta) = f(\bm\beta)$, and (ii) $\delta g(\bm\beta;\bm\beta^-,\bm h)|_{\bm\beta^-=\bm\beta} = \delta f(\bm\beta;\bm h), \forall \bm\beta,\bm h$, where $\delta g(\bm\beta;\bm\beta^-,\bm h)$ is the directional derivative  of $g(\,\cdot\,; \bsb{\beta}^-)$ at $\bm\beta$ with increment $\bm h$.
\end{lemma}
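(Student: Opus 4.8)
The plan is to reduce both claims to two elementary facts about the one-sided directional derivative of Definition~\ref{def:Gateaux}: that $\delta\psi(\bm\gamma;\bm{0})=0$ for every $\bm\gamma$, and that $\delta\psi$ is positively homogeneous of degree one in its increment, i.e.\ $\delta\psi(\bm\gamma;\epsilon\bm h)=\epsilon\,\delta\psi(\bm\gamma;\bm h)$ for $\epsilon>0$ (the latter is recorded just after Definition~\ref{def:Gateaux}). The first fact is immediate from \eqref{Gateaux}, since the difference quotient $(\psi(\bm\gamma+\epsilon\bm{0})-\psi(\bm\gamma))/\epsilon$ is identically zero. Part (i) then drops out at once: using \eqref{GBFdef}, $\bm\Delta_\psi(\bm\beta,\bm\beta)=\psi(\bm\beta)-\psi(\bm\beta)-\delta\psi(\bm\beta;\bm\beta-\bm\beta)=-\delta\psi(\bm\beta;\bm{0})=0$, so $g(\bm\beta;\bm\beta)=f(\bm\beta)+\bm\Delta_\psi(\bm\beta,\bm\beta)=f(\bm\beta)$.

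For (ii), the key is to read the notation correctly: $\delta g(\bm\beta;\bm\beta^-,\bm h)$ is the directional derivative of the single-argument map $\bm\beta\mapsto g(\bm\beta;\bm\beta^-)$ taken with $\bm\beta^-$ held fixed, and only \emph{afterward} is $\bm\beta^-$ set equal to $\bm\beta$. No differentiation is ever performed through the slot $\bm\beta^-$; evaluating at $\bm\beta^-=\bm\beta$ therefore amounts to computing the directional derivative of the single function $G(\cdot):=g(\cdot;\bm\beta)$ at the base point $\bm\beta$. Expanding $G$ via \eqref{GBFdef}, I would write $G(\bm\beta+\epsilon\bm h)-G(\bm\beta) = \big[f(\bm\beta+\epsilon\bm h)-f(\bm\beta)\big]+\big[\psi(\bm\beta+\epsilon\bm h)-\psi(\bm\beta)\big]-\big[\delta\psi(\bm\beta;\epsilon\bm h)-\delta\psi(\bm\beta;\bm{0})\big]$, then substitute $\delta\psi(\bm\beta;\bm{0})=0$ together with $\delta\psi(\bm\beta;\epsilon\bm h)=\epsilon\,\delta\psi(\bm\beta;\bm h)$. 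Dividing by $\epsilon$ and letting $\epsilon\to 0+$ yields $\delta f(\bm\beta;\bm h)+\delta\psi(\bm\beta;\bm h)-\delta\psi(\bm\beta;\bm h)=\delta f(\bm\beta;\bm h)$; directional differentiability of $f$ and $\psi$ guarantees each of these limits exists and is finite, so the cancellation is legitimate and the limit defining $\delta G(\bm\beta;\bm h)$ indeed exists.

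I expect the only genuine obstacle to be conceptual rather than computational: keeping the bookkeeping of the two arguments of $g$ straight, so that one does not inadvertently differentiate the $\bm\Delta_\psi$ term through $\bm\beta^-$ (which would spuriously introduce a derivative of $\bm\gamma\mapsto\delta\psi(\bm\gamma;\cdot)$ that need not even exist). Once it is clear that the substitution $\bm\beta^-=\bm\beta$ collapses the linearization point onto the base point, the positive homogeneity of $\delta\psi$ in its increment delivers exactly the cancellation that makes the first-order behavior of $g$ agree with that of $f$. This is the precise sense in which the generalized Bregman surrogate matches $f$ to a higher order than a generic majorizer, and it is what justifies later analyzing the accuracy of the fixed points of the $g$-mapping.
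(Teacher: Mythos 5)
Your proof is correct and follows essentially the same route as the paper's: both reduce part (ii) to showing that the directional derivative of the map $\bm\beta\mapsto\delta\psi(\bm\beta^-;\bm\beta-\bm\beta^-)$, evaluated at $\bm\beta^-=\bm\beta$, equals $\delta\psi(\bm\beta;\bm h)$ and hence cancels the $\delta\psi(\bm\beta;\bm h)$ term. The only cosmetic difference is that you invoke positive homogeneity of $\delta\psi(\bm\beta;\cdot)$ directly, whereas the paper arrives at the same cancellation by collapsing an iterated limit $\lim_{\epsilon'\to 0+}\lim_{\epsilon\to 0+}$ into a single one.
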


The lemma relates the set of fixed points of the algorithm mapping,
\begin{align} \label{fpsetdef}
\{\bsb{\beta}: \bsb{\beta}\in \arg\min_{\bsb{\beta}}   g(\bm\beta;\bm\beta^-)|_{\bm\beta^-=\bm\beta}\},
\end{align}
 which we will call the fixed points of $g$ for short,  to the set of directional stationary points of $f$ (under directional differentiability),
\begin{align}
\{\bm\beta: \delta f(\bm\beta;\bm h) \ge 0 \text{ for any admissible } \bm h\},
\end{align} which becomes  the set of stationary points when $f\in \mathcal C^{1}$. The link is general for any generalized Bregman surrogate in \eqref{BregIter}   \textit{regardless} of the specific form of $\psi$.
An important implication is that     in studying  convergence it is  legitimate to  measure   how    $\bsb{\beta}^{(t+1)}$ and $\bsb{\beta}^{(t)}$ differ, as widely used in practice.   Later we will see that it is indeed possible to provide provable guarantees for the fixed points of this type of surrogates.
In contrast, a general MM algorithm does not always  have the first-order degeneracy and so attaining $\bsb{\beta}^{(t+1)}=\bsb{\beta}^{(t)}$ does   not necessarily ensure a good-quality solution,   especially in nonconvex scenarios.


\subsection{Computational accuracy}\label{subsec:comp}

We first study the optimization error of  \eqref{BregIter}, then turn to its statistical error  in Section \ref{subsec:stat}. This subsection aims to derive universal rates of convergence under no regularity conditions.

\subsubsection*{$\bullet$ General setting}

In this part, the objective $ f(\bm\beta)$  does not have any known structure. To better connect with some conventional results  in convex optimization, we first present two propositions for \eqref{BregIter} on the function-value convergence and  iterate convergence. While the resultant rates are encouraging, the error bounds are most informative under certain smoothness and convexity assumptions. This  suggests the necessity of choosing a proper convergence measure in order to avoid stringent or awkward technical conditions  in nonconvex optimization.


\begin{pro}\label{th:comp_funcval}   Given an arbitrary initial point $\bm{\beta}^{(0)}$, let $\bm\beta^{(t)}$ be the sequence generated according to \eqref{BregIter}  where  $\psi$ is differentiable.
Then
\begin{equation}\label{th:comp_diff_bound-0}
\mathop{\avg}_{0\le t\le T}f(\bm{\beta}^{(t+1)}) - f(\bar{\bm{\beta}}) \leq \frac{1}{T+1}[\bm\Delta_\psi(\bar{\bm{\beta}},\bm{\beta}^{(0)}) - \bm\Delta_\psi(\bar{\bm{\beta}}, \bm{\bm{\beta}}^{(T+1)})]
\end{equation}for any $\bar{\bm{\beta}}$ satisfying
\begin{equation}\label{th1-condition}
\bm\Delta_\psi(\bm{\beta}^{(t+1)},\bm{\beta}^{(t)}) + \bm{\Delta}_f(\bar{\bm{\beta}},\bm{\beta}^{(t+1)}) \geq 0,~0\le t\le T.
\end{equation}
Here, $\mathop{\avg}_{0\le t\le T}f(\bm{\beta}^{(t+1)})$ denotes the average of $f(\bm\beta^{(1)}),\ldots, f(\bm\beta^{(T+1)})$.

In particular, if both $f$ and $\psi$ are convex, then $f(\bm\beta^{(t)})$ is nonincreasing and  \begin{equation}\label{th:comp_diff_bound-0-cvx}
f(\bm{\beta}^{(T+1)}) - f(\bm{\beta}) \leq \frac{\bm\Delta_\psi(\bm{\beta},\bm{\beta}^{(0)})}{T+1}, \ \forall \bsb{\beta}.
\end{equation}
\end{pro}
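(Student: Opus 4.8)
The plan is to exploit the first-order optimality of each $\bm\beta^{(t+1)}$ together with the three-point identity for Bregman divergences, producing a telescoping bound that sums to the claimed average-value estimate. First I would record the optimality condition. Since $\bm\beta^{(t+1)}$ globally minimizes $g(\cdot;\bm\beta^{(t)}) = f(\cdot) + \bm\Delta_\psi(\cdot,\bm\beta^{(t)})$ and $\psi$ is differentiable, the Bregman term is differentiable in its first slot with gradient $\nabla\psi(\bm\beta^{(t+1)}) - \nabla\psi(\bm\beta^{(t)})$ at $\bm\beta^{(t+1)}$, and the directional derivative of $g(\cdot;\bm\beta^{(t)})$ splits additively into a $\delta f$ term and this linear contribution. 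Testing the admissible direction $\bar{\bm\beta} - \bm\beta^{(t+1)}$ then gives
\begin{equation*}
\delta f(\bm\beta^{(t+1)};\, \bar{\bm\beta} - \bm\beta^{(t+1)}) + \langle \nabla\psi(\bm\beta^{(t+1)}) - \nabla\psi(\bm\beta^{(t)}),\, \bar{\bm\beta} - \bm\beta^{(t+1)}\rangle \ge 0.
\end{equation*}
I would then substitute the GBF identity $\delta f(\bm\beta^{(t+1)}; \bar{\bm\beta} - \bm\beta^{(t+1)}) = f(\bar{\bm\beta}) - f(\bm\beta^{(t+1)}) - \bm\Delta_f(\bar{\bm\beta}, \bm\beta^{(t+1)})$ straight from \eqref{GBFdef}, converting the inequality into an upper bound on $f(\bm\beta^{(t+1)}) - f(\bar{\bm\beta})$ expressed through $\bm\Delta_f$ and the inner-product term.

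The key step is to rewrite that inner product. Since $\psi\in\mathcal C^1$ we have $\bm\Delta_\psi = \Breg_\psi$, and the three-point identity stated after Lemma \ref{lemma:Delta_Delta} (applied with $\bm\beta = \bar{\bm\beta}$, $\bm\alpha = \bm\beta^{(t+1)}$, $\bm\gamma = \bm\beta^{(t)}$) yields
\begin{equation*}
\langle \nabla\psi(\bm\beta^{(t+1)}) - \nabla\psi(\bm\beta^{(t)}),\, \bar{\bm\beta} - \bm\beta^{(t+1)}\rangle = \bm\Delta_\psi(\bar{\bm\beta}, \bm\beta^{(t)}) - \bm\Delta_\psi(\bar{\bm\beta}, \bm\beta^{(t+1)}) - \bm\Delta_\psi(\bm\beta^{(t+1)}, \bm\beta^{(t)}).
\end{equation*}
Combining this with the optimality inequality gives
\begin{equation*}
f(\bm\beta^{(t+1)}) - f(\bar{\bm\beta}) \le \bm\Delta_\psi(\bar{\bm\beta}, \bm\beta^{(t)}) - \bm\Delta_\psi(\bar{\bm\beta}, \bm\beta^{(t+1)}) - \big[\bm\Delta_f(\bar{\bm\beta}, \bm\beta^{(t+1)}) + \bm\Delta_\psi(\bm\beta^{(t+1)}, \bm\beta^{(t)})\big].
\end{equation*}
The bracketed quantity is exactly the left side of the hypothesis \eqref{th1-condition}, hence nonnegative, so it may be dropped, leaving the per-step telescoping inequality $f(\bm\beta^{(t+1)}) - f(\bar{\bm\beta}) \le \bm\Delta_\psi(\bar{\bm\beta}, \bm\beta^{(t)}) - \bm\Delta_\psi(\bar{\bm\beta}, \bm\beta^{(t+1)})$.

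Summing over $t = 0, \ldots, T$, the right side telescopes to $\bm\Delta_\psi(\bar{\bm\beta}, \bm\beta^{(0)}) - \bm\Delta_\psi(\bar{\bm\beta}, \bm\beta^{(T+1)})$, and dividing by $T+1$ gives \eqref{th:comp_diff_bound-0}. For the convex specialization I would first note that Lemma \ref{lemma:Delta_linear}(ii) makes both $\bm\Delta_f \ge 0$ and $\bm\Delta_\psi \ge 0$, so \eqref{th1-condition} holds automatically for every $\bar{\bm\beta}$. Monotonicity of $f(\bm\beta^{(t)})$ follows by evaluating the surrogate at the previous iterate, $f(\bm\beta^{(t+1)}) + \bm\Delta_\psi(\bm\beta^{(t+1)}, \bm\beta^{(t)}) = g(\bm\beta^{(t+1)};\bm\beta^{(t)}) \le g(\bm\beta^{(t)};\bm\beta^{(t)}) = f(\bm\beta^{(t)})$, together with $\bm\Delta_\psi \ge 0$. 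Finally, bounding the average below by the smallest (final) value $f(\bm\beta^{(T+1)})$ and discarding the nonnegative $\bm\Delta_\psi(\bar{\bm\beta}, \bm\beta^{(T+1)})$ produces \eqref{th:comp_diff_bound-0-cvx}.

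I expect the only real obstacle to lie in rigorously justifying the first-order optimality step when $f$ is merely directionally (not Fréchet) differentiable: one must argue that a global minimizer of $g(\cdot;\bm\beta^{(t)})$ has nonnegative directional derivative in the admissible direction $\bar{\bm\beta} - \bm\beta^{(t+1)}$, and that this directional derivative separates cleanly into the $\delta f$ contribution and the linear term coming from the smooth $\psi$. Once this splitting is in hand, everything else is the three-point identity plus a telescoping sum; it is precisely the differentiability of $\psi$ that makes the inner-product term collapse into consecutive $\bm\Delta_\psi$ differences.
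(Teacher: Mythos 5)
Your proposal is correct and follows essentially the same route as the paper's proof: first-order optimality of $\bm\beta^{(t+1)}$ for the surrogate, cancellation of the $\psi$-terms, dropping the nonnegative quantity guaranteed by \eqref{th1-condition}, and telescoping. The only cosmetic difference is that you write out the three-point identity for $\mathbf D_\psi$ explicitly where the paper invokes its idempotence property $\bm\Delta_{\bm\Delta_\psi(\cdot,\bm\beta^{(t)})}=\bm\Delta_\psi$ (Lemma \ref{lemma:Delta_Delta}), which the paper itself notes is the three-point property in disguise.
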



  Equation \eqref{th:comp_diff_bound-0}  shows a convergence rate of  $\mathcal O(1/T)$ under   \eqref{th1-condition} that  amounts to step size control. For example, for $\bm\Delta_\psi= \rho\mathbf D_\varphi - \bm\Delta_f$  in mirror descent,  \eqref{th1-condition} shows that $\rho$ should be sufficiently large, which in turns gives a small stepsize   $1/\rho$:
  $$\rho \ge (\bm\Delta_f(\bm\beta^{(t+1)},\bm\beta^{(t)}) - \bm\Delta_f(\bar{\bm{\beta}}, \bm\beta^{(t+1)}))/\mathbf D_\varphi(\bm\beta^{(t+1)}, \bm\beta^{(t)}),$$
or $\rho \ge \bm\Delta_f(\bm\beta^{(t+1)},\bm\beta^{(t)})/\mathbf D_\varphi(\bm\beta^{(t+1)}, \bm\beta^{(t)})$ when $f$ is convex.  In nonconvex scenarios,   the condition may be hard to verify, but  one has reason to believe that with a properly small step size, a generalized Bregman-surrogate  algorithm   should not be much slower than gradient descent. 

Actually, a faster rate of convergence may be obtained under some GBF comparison conditions,  \eqref{strcvx-cond-2} and \eqref{strcvx-cond-1} below, which can be viewed as substitutes for conventional    strong  convexity  in a more general sense. (The corresponding geometric decay  of the errors is  motivating in high dimensional statistical learning, in light of  the ``restricted'' strongly convexity   often possessed by  such a type of problems  \cite{Loh2015}.)

\begin{pro} \label{th:strongcvx}
Consider the iterative algorithm defined by \eqref{BregIter}   starting at an arbitrary point $\bm\beta^{(0)}$ with  $\psi$  differentiable, and let $\bm\beta^o$ be a  minimizer of $f(\bm\beta)$.
(i) If for some $\kappa > 1$,
 $\bm\Delta_\phi = \bm\Delta_\psi+\bm\Delta_f$ satisfies\begin{equation} \label{strcvx-cond-2}
\sym{\bm\Delta}_\phi \ge \frac{\kappa}{\kappa-1}\bm\Delta_\psi,
\end{equation}
 then for any $T\ge 0$, we have
\begin{equation} \label{strcvx-res-2}
\sym{\bm\Delta}_\phi(\bm\beta^o,\bm\beta^{(T+1)}) \le \Big(\frac{\kappa-1}{\kappa+1}\Big)^{T+1} \sym{\bm\Delta}_\phi(\bm\beta^o,\bm\beta^{(0)}) - \frac{\kappa}{2} \min _{0\le t \le  T} \breg_{\psi} (\bsb{\beta}^{(t+1)},\bsb{\beta}^{(t)} ).
\end{equation}
(ii) Alternatively, if \begin{equation}\label{strcvx-cond-1}
2\sym{\bm\Delta}_f \ge \varepsilon \bm\Delta_\psi
\end{equation}
for some $\varepsilon > 0$, then
\begin{equation} \label{strcvx-res-1}
\bm\Delta_\psi(\bm\beta^o, \bm\beta^{(T+1)}) \le \Big(\frac{1}{1+\varepsilon}\Big)^{T+1}\bm\Delta_\psi(\bm\beta^o,\bm\beta^{(0)}) -\frac{1}{\varepsilon} \min _{0\le t \le  T} \breg_{\psi} (\bsb{\beta}^{(t+1)},\bsb{\beta}^{(t)} )
\end{equation}
for any $T \ge 0$.
\end{pro}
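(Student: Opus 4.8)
The plan is to extract a per-step \emph{basic inequality} from the surrogate optimality of $\bm\beta^{(t+1)}$ and then unroll a geometric recursion. Since $\psi$ is differentiable, the minimizer $\bm\beta^{(t+1)}$ of $g(\cdot\,;\bm\beta^{(t)}) = f + \bm\Delta_\psi(\cdot\,,\bm\beta^{(t)})$ satisfies, for every admissible $\bm h$, the one-sided condition $\delta f(\bm\beta^{(t+1)};\bm h) + \langle \nabla\psi(\bm\beta^{(t+1)}) - \nabla\psi(\bm\beta^{(t)}),\bm h\rangle \ge 0$ (the $\bm\Delta_\psi$ part contributes a linear term because $\psi$ is smooth). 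I would take $\bm h = \bm\beta^o - \bm\beta^{(t+1)}$, rewrite $\delta f(\bm\beta^{(t+1)};\bm\beta^o-\bm\beta^{(t+1)}) = f(\bm\beta^o) - f(\bm\beta^{(t+1)}) - \bm\Delta_f(\bm\beta^o,\bm\beta^{(t+1)})$, and feed the inner product into the smooth three-point identity $\bm\Delta_\psi(\bm\beta^o,\bm\beta^{(t)}) - \bm\Delta_\psi(\bm\beta^o,\bm\beta^{(t+1)}) - \bm\Delta_\psi(\bm\beta^{(t+1)},\bm\beta^{(t)}) = \langle \nabla\psi(\bm\beta^{(t+1)}) - \nabla\psi(\bm\beta^{(t)}),\bm\beta^o-\bm\beta^{(t+1)}\rangle$. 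This collapses the optimality relation into an inequality tying the three $\bm\Delta_\psi$ terms together with $f(\bm\beta^o)-f(\bm\beta^{(t+1)})$ and $\bm\Delta_f(\bm\beta^o,\bm\beta^{(t+1)})$.

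The decisive step is to replace the one-sided $\bm\Delta_f(\bm\beta^o,\bm\beta^{(t+1)})$ by the symmetrized $\sym{\bm\Delta}_f$ that the hypotheses involve. Here I would invoke that $\bm\beta^o$ minimizes $f$, so $\delta f(\bm\beta^o;\bm\beta^{(t+1)}-\bm\beta^o)\ge 0$, which is exactly $f(\bm\beta^o) - f(\bm\beta^{(t+1)}) \le -\bm\Delta_f(\bm\beta^{(t+1)},\bm\beta^o)$. Substituting this upper bound (discarding the nonnegative stationarity slack) lets the two $f$-terms fuse into $-2\sym{\bm\Delta}_f(\bm\beta^o,\bm\beta^{(t+1)})$, yielding the basic inequality $2\sym{\bm\Delta}_f(\bm\beta^o,\bm\beta^{(t+1)}) + \bm\Delta_\psi(\bm\beta^o,\bm\beta^{(t+1)}) + \bm\Delta_\psi(\bm\beta^{(t+1)},\bm\beta^{(t)}) \le \bm\Delta_\psi(\bm\beta^o,\bm\beta^{(t)})$. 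This single relation drives both parts.

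For (ii) I would just insert $2\sym{\bm\Delta}_f \ge \varepsilon\bm\Delta_\psi$ at $(\bm\beta^o,\bm\beta^{(t+1)})$ to get $(1+\varepsilon)\bm\Delta_\psi(\bm\beta^o,\bm\beta^{(t+1)}) + \bm\Delta_\psi(\bm\beta^{(t+1)},\bm\beta^{(t)}) \le \bm\Delta_\psi(\bm\beta^o,\bm\beta^{(t)})$, i.e.\ the contraction $a_{t+1} \le (1+\varepsilon)^{-1}(a_t - b_t)$ with $a_t = \bm\Delta_\psi(\bm\beta^o,\bm\beta^{(t)})$ and $b_t = \bm\Delta_\psi(\bm\beta^{(t+1)},\bm\beta^{(t)})$. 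For (i) I would add $2\sym{\bm\Delta}_\psi(\bm\beta^o,\bm\beta^{(t+1)})$ to both sides so the left accumulates $2\sym{\bm\Delta}_\phi(\bm\beta^o,\bm\beta^{(t+1)})$, use the hypothesis in the form $\bm\Delta_\psi \le \tfrac{\kappa-1}{\kappa}\sym{\bm\Delta}_\phi$ to bound $\bm\Delta_\psi(\bm\beta^o,\bm\beta^{(t)})$ by $D_t := \sym{\bm\Delta}_\phi(\bm\beta^o,\bm\beta^{(t)})$, and—this is the crux—exploit the \emph{symmetry} of $\sym{\bm\Delta}_\phi$ to bound the reverse term $\bm\Delta_\psi(\bm\beta^{(t+1)},\bm\beta^o) \le \tfrac{\kappa-1}{\kappa}D_{t+1}$. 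Collecting the $D_{t+1}$ contributions and solving for $D_{t+1}$ produces $D_{t+1} \le \tfrac{\kappa-1}{\kappa+1}D_t - \tfrac{\kappa}{\kappa+1}b_t$, whose factor is exactly the claimed $\tfrac{\kappa-1}{\kappa+1}$.

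Finally I would unroll each recursion: both have the form $x_{t+1} \le r x_t - c\,b_t$ with $r<1$, so iterating gives $x_{T+1} \le r^{T+1}x_0 - c\sum_{t=0}^{T} r^{T-t} b_t$, where the accumulated weight $c\sum_{t=0}^{T} r^{T-t}$ equals $\tfrac{1}{\varepsilon}(1-r^{T+1})$ in case (ii) and $\tfrac{\kappa}{2}(1-r^{T+1})$ in case (i); bounding every $b_t$ below by $\min_{0\le t\le T}\bm\Delta_\psi(\bm\beta^{(t+1)},\bm\beta^{(t)})$ then delivers the stated correction terms. I expect the main obstacle to be precisely the symmetrization: turning the inherently one-sided surrogate optimality into the symmetric measures $\sym{\bm\Delta}_f$ and $\sym{\bm\Delta}_\phi$ \emph{without} assuming convexity or smoothness of $f$, and, in (i), the self-referential estimate that bounds $D_{t+1}$ in terms of itself through the symmetry of $\sym{\bm\Delta}_\phi$ before the contraction factor can be read off. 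The remaining algebra is routine; the only mild subtlety is that the geometric weights sum to a factor $(1-r^{T+1})$ rather than the clean constant, which is absorbed once the relevant $\bm\Delta_\psi(\bm\beta^{(t+1)},\bm\beta^{(t)})$ is nonpositive, i.e.\ in the nonconvex regime that motivates the correction.
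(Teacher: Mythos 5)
Your proposal is correct and follows essentially the same route as the paper: the identical per-iteration basic inequality $(2\sym{\bm\Delta}_f+\bm\Delta_\psi)(\bm\beta^o,\bm\beta^{(t+1)})+\bm\Delta_\psi(\bm\beta^{(t+1)},\bm\beta^{(t)})\le\bm\Delta_\psi(\bm\beta^o,\bm\beta^{(t)})$, obtained from the surrogate optimality of $\bm\beta^{(t+1)}$ (the paper's Lemma \ref{lemma:f_triangle} plus strong idempotence of $\bm\Delta_\psi$ for differentiable $\psi$ --- exactly your three-point identity) combined with the first-order optimality of $\bm\beta^o$, followed in case (i) by a double application of the comparison condition in both orientations via the symmetry of $\sym{\bm\Delta}_\phi$, yielding the same recursion $D_{t+1}\le\frac{\kappa-1}{\kappa+1}D_t-\frac{\kappa}{\kappa+1}b_t$ and, in case (ii), the same contraction with factor $(1+\varepsilon)^{-1}$. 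Your closing observation that the unrolled geometric weights give $(1-r^{T+1})$ times the stated constant is a fair and careful reading of the paper's own final step.
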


\begin{remark} 
We give an illustration of  (i) and (ii)  to compare their assumptions and conclusions. In  gradient descent with $\bm\Delta_\phi = \rho\mathbf D_2$,  \eqref{strcvx-cond-2} becomes $\rho\mathbf D_2 \ge (\rho\mathbf D_2 - \bm\Delta_f)\kappa/(\kappa-1)$ or $\bm\Delta_f \ge (\rho/\kappa)\mathbf D_2$ and when $f$ is $\mu$-strongly convex    and   $\rho$-strongly smooth,  $\kappa = \rho/\mu  $. Then  \eqref{strcvx-res-2} reads
\begin{equation}  \label{strcvx-gd}
\mathbf D_2(\bm\beta^o,\bm\beta^{(T+1)}) \le \Big(\frac{\rho-\mu}{\rho+\mu}\Big)^{T+1}\mathbf D_2(\bm\beta^o,\bm\beta^{(0)}).
\end{equation}
The $\Breg_2$-form bound is classical for problems with strong convexity; see, for example,  Theorem 2.1.15 in \cite{Nesterov2004book}. Yet  it is worth mentioning that our Bregman comparison conditions do not require $\psi$ to be \emph{strongly} convex to attain the linear rate.
\eqref{strcvx-res-1} gives a linear convergence result, too, in terms of yet another measure. In the same setup,    \eqref{strcvx-cond-1}   holds for $\varepsilon: \varepsilon\rho/(2+\varepsilon) = \mu$  and  similarly \begin{equation}\bm\Delta_\psi(\bm\beta^o,\bm\beta^{(T+1)}) \le \Big(\frac{ \rho-\mu }{ \rho+\mu}\Big)^{T+1}\bm\Delta_\psi(\bm\beta^o,\bm\beta^{(0)}).\end{equation} A careful examination of the proof in Section \ref{subsec:proofofscvx}   shows that \eqref{strcvx-cond-1} is applied once, while \eqref{strcvx-cond-2} is applied twice on both sides of \eqref{strongcvx-3}, and so     (ii) appears   less technically demanding.  Picking  a suitable error function can    assist   analysis and relax  regularity assumptions. The same  $ \bm\Delta_\psi $ will be used in studying  the statistical error convergence in Theorem \ref{th:stat_bound}. \end{remark}

Instead of naively comparing $f(\bsb{\beta}^{(t)})$ with $f^o$, or $ \bsb{\beta}^{(t)} $ with $ \bsb{\beta}^{o} $, which may be unattainable or nonunique  in    nonconvex optimization,   one can   measure the algorithm convergence in a  wiser manner. Ben-Tal and Nemirovski  \cite{Nemirovski_Notes} pointed out that with an inappropriate  measure of discrepancy, the convergence rate of gradient descent for minimizing a nonconvex objective can be arbitrarily slow, and a common choice is to bound      \begin{align}\label{gradmeasureBN}\mathop{\min}_{ t\leq T}\|\nabla f(\bm{\beta}^{(t)})\|^2.\end{align} This is reasonable  since when $\nabla f(\bm{\beta}^{(t)}) = 0$, gradient descent stops iterating and delivers a stationary point. \eqref{gradmeasureBN} can be rewritten  as $\rho^2$ times
\begin{align}
\mathop{\min}_{ t\leq T} \Breg_2 (\bm{\beta}^{(t+1)}, \bm{\beta}^{(t)}) \end{align}
as   $\bm\beta^{(t+1)} - \bm\beta^{(t)} = -\nabla f(\bm\beta^{(t)})/\rho$.  The idea of checking stationarity by the difference between two successive iterates generalizes, thanks to  Lemma \ref{lemma:degeneracy}, and eventually leads to an error bound  that  can get rid of  condition  \eqref{th1-condition}.

\begin{thm}\label{th:comp_nonconvex}
Any generalized Bregman surrogate algorithm defined by \eqref{BregIter} satisfies the following bound for all $ T\ge 1$,
\begin{equation}\label{cor:comp_diff_bound-1}
\mathop{\avg}_{0\leq t \leq T}(2\sym{\bm\Delta}_\psi + \bm{\Delta}_f)(\bm{\beta}^{(t)}, \bm{\beta}^{(t+1)}) \leq \frac{1}{T+1}\big[f(\bm{\beta}^{(0)}) - f(\bm{\beta}^{(T+1)})\big].
\end{equation}
\end{thm}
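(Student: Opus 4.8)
The plan is to reduce everything to the single per-step inequality
\[
(2\sym{\bm\Delta}_\psi + \bm{\Delta}_f)(\bm\beta^{(t)}, \bm\beta^{(t+1)}) \le f(\bm\beta^{(t)}) - f(\bm\beta^{(t+1)}), \quad 0 \le t \le T,
\]
after which summing over $t$, telescoping the right-hand side, and dividing by $T+1$ yields \eqref{cor:comp_diff_bound-1} at once. Writing $a = \bm\beta^{(t)}$ and $b = \bm\beta^{(t+1)}$, the entire argument concentrates on this one inequality, and no smoothness, convexity, or the step-size condition \eqref{th1-condition} should be needed.

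First I would rewrite the left-hand side purely in terms of one-sided directional derivatives. Expanding the two generalized Bregman functions from Definition \ref{def:Breg} and cancelling the $\psi$-values gives the clean identity $2\sym{\bm\Delta}_\psi(a, b) = -\delta\psi(a; b-a) - \delta\psi(b; a-b)$. Adding $\bm\Delta_f(a, b) = f(a) - f(b) - \delta f(b; a-b)$ and grouping the two derivatives taken at $b$ via additivity of the directional derivative ($\delta\psi(b;\cdot) + \delta f(b;\cdot) = \delta\phi(b;\cdot)$ with $\phi := f + \psi$, consistent with the $\bm\Delta_\phi = \bm\Delta_\psi + \bm\Delta_f$ of Lemma~\ref{lemma:Delta_linear}(i)), I obtain
\[
(2\sym{\bm\Delta}_\psi + \bm\Delta_f)(a, b) = \big[f(a) - f(b)\big] - \big[\delta\psi(a; b-a) + \delta\phi(b; a-b)\big].
\]
Thus the per-step inequality becomes equivalent to the nonnegativity of the bracketed derivative term $\delta\psi(a; b-a) + \delta\phi(b; a-b)$.

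This nonnegativity is where the optimality of $b = \bm\beta^{(t+1)}$ enters, and it is the delicate part of the argument. Since $b$ minimizes $g(\cdot; a) = \phi(\cdot) - \psi(a) - \delta\psi(a; \cdot - a)$ over $\mathbb R^p$, one-sided directional stationarity in the (admissible) direction $a - b$ gives $\delta g(b; a, a-b) \ge 0$. Computing this derivative, the only nonroutine contribution is the directional derivative at $b$, in direction $a-b$, of the radial map $\bm\beta \mapsto \delta\psi(a; \bm\beta - a)$. Here I would invoke positive homogeneity of the one-sided directional derivative (the property $\delta\psi(\bm\beta; c\bm h) = c\,\delta\psi(\bm\beta; \bm h)$ for $c>0$ noted after Definition \ref{def:Gateaux}): along the radial direction, $\bm\beta = b + \epsilon(a-b)$ gives $\bm\beta - a = (1-\epsilon)(b-a)$ with $1-\epsilon > 0$, so $\delta\psi(a; \bm\beta - a) = (1-\epsilon)\,\delta\psi(a; b-a)$, and differentiating in $\epsilon$ at $0+$ shows this radial term contributes exactly $-\delta\psi(a; b-a)$. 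Hence $\delta g(b; a, a-b) = \delta\phi(b; a-b) + \delta\psi(a; b-a) \ge 0$, which is precisely the nonnegativity needed; combined with the identity above, the per-step inequality follows and the telescoping completes the proof.

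The main obstacle is this middle step: because $\psi$ need not be smooth or convex, $\delta\psi(a; \cdot)$ is in general only positively homogeneous rather than linear, so one cannot simply substitute an inner product with a gradient. Choosing the test direction to be the \emph{radial} one, $a - b$, is exactly what rescues the computation, since positive homogeneity applies cleanly along the segment $[b, a]$ and converts the derivative of the radial term into $-\delta\psi(a; b-a)$. Once that is secured, the algebraic identity for $2\sym{\bm\Delta}_\psi$ and the telescoping sum are entirely routine.
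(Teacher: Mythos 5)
Your proposal is correct and follows essentially the same route as the paper: the paper derives the per-step inequality by applying the first-order optimality condition for $\bm\beta^{(t+1)}$ (Lemma \ref{lemma:local-opt} via Lemma \ref{lemma:f_triangle}) with $\bm\beta=\bm\beta^{(t)}$ and then invoking the weak idempotence property \eqref{eq:idempotence-weak}, whose proof is precisely your positive-homogeneity computation of the directional derivative of the radial term $\delta\psi(\bm\beta^{(t)};\cdot-\bm\beta^{(t)})$ along $[\bm\beta^{(t+1)},\bm\beta^{(t)}]$. Your exact identity $(2\sym{\bm\Delta}_\psi+\bm\Delta_f)(\bm\beta^{(t)},\bm\beta^{(t+1)})=f(\bm\beta^{(t)})-f(\bm\beta^{(t+1)})-\delta g(\bm\beta^{(t+1)};\bm\beta^{(t)},\bm\beta^{(t)}-\bm\beta^{(t+1)})$ is just an inlined, self-contained rendering of that same argument, and the telescoping step is identical.
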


\eqref{cor:comp_diff_bound-1} obtains the same   rate of convergence as  Proposition \ref{th:comp_funcval}, but  is \textit{free} of any conditions other than directional differentiability, because   only the weak idempotence is needed to derive the bound. A proper stepsize control can often make the GBF error   nonnegative (e.g., \eqref{ITsschoice}). But even when $\bsb{\beta}^{(t)}$ diverges, (45) still applies.

Notice the factor `2' proceeding the symmetrized Bregman $\sym{\bm\Delta}_\psi$ on the left-hand side of \eqref{cor:comp_diff_bound-1}.  This gives a relaxed stepsize control than  MM. We use  mirror descent        $\bm\Delta_\psi = \rho\mathbf D_\varphi - \bm\Delta_f$  to exemplify the point without requiring   $f$ to be convex,  cf. Example \ref{ex:mirrdes}.  
 \begin{cor} \label{cor-mirror}
In the  mirror descent setup with a possibly nonconvex objective, suppose that  $\breg_f \le L \sym\Breg_{\varphi}$ for some $L>0$, $\inf_{\bsb{\beta}} f(\bsb{\beta}) \ge 0$, and  the inverse stepsize parameter $\rho$ is taken such that  $\rho > L/2$. Then  any  accumulation point of $\bsb{\beta}^{(t)}$ is a fixed point of $g$ and
\begin{align}
\mathop{\avg}_{0\leq t \leq T}\sym{\mathbf D}_\varphi (\bm{\beta}^{(t)}, \bm{\beta}^{(t+1)}) \leq \frac{f(\bm{\beta}^{(0)})}{(T+1)(2\rho-L)}.\label{mirrdes-genconvrate}
\end{align}
 \end{cor}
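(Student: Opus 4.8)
The plan is to specialize Theorem \ref{th:comp_nonconvex} to the mirror-descent surrogate $\breg_\psi = \rho\Breg_\varphi - \breg_f$ and reduce the left-hand side of \eqref{cor:comp_diff_bound-1} to a positive multiple of $\sym\Breg_\varphi$. First I would invoke the linearity of the GBF operator (Lemma \ref{lemma:Delta_linear}(i)), which passes through the symmetrization, to write $\sym\breg_\psi = \rho\sym\Breg_\varphi - \sym\breg_f$, so that
\begin{equation*}
2\sym\breg_\psi + \breg_f = 2\rho\sym\Breg_\varphi - 2\sym\breg_f + \breg_f.
\end{equation*}
Evaluating at the ordered pair $(\bm\beta^{(t)},\bm\beta^{(t+1)})$ and expanding $2\sym\breg_f(\bm\beta^{(t)},\bm\beta^{(t+1)}) = \breg_f(\bm\beta^{(t)},\bm\beta^{(t+1)}) + \breg_f(\bm\beta^{(t+1)},\bm\beta^{(t)})$, the two copies of $\breg_f(\bm\beta^{(t)},\bm\beta^{(t+1)})$ cancel, leaving
\begin{equation*}
(2\sym\breg_\psi + \breg_f)(\bm\beta^{(t)},\bm\beta^{(t+1)}) = 2\rho\,\sym\Breg_\varphi(\bm\beta^{(t)},\bm\beta^{(t+1)}) - \breg_f(\bm\beta^{(t+1)},\bm\beta^{(t)}).
\end{equation*}

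Next I would apply the hypothesis $\breg_f \le L\sym\Breg_\varphi$ to the reversed pair, together with the symmetry $\sym\Breg_\varphi(\bm\beta^{(t+1)},\bm\beta^{(t)}) = \sym\Breg_\varphi(\bm\beta^{(t)},\bm\beta^{(t+1)})$, to obtain the termwise lower bound $(2\sym\breg_\psi + \breg_f)(\bm\beta^{(t)},\bm\beta^{(t+1)}) \ge (2\rho - L)\,\sym\Breg_\varphi(\bm\beta^{(t)},\bm\beta^{(t+1)})$. Averaging over $0\le t\le T$ and chaining with \eqref{cor:comp_diff_bound-1} bounds the averaged $\sym\Breg_\varphi$ term by $[f(\bm\beta^{(0)}) - f(\bm\beta^{(T+1)})]/[(T+1)(2\rho-L)]$; since $\inf_{\bm\beta} f(\bm\beta)\ge 0$ forces $f(\bm\beta^{(T+1)})\ge 0$, the numerator is at most $f(\bm\beta^{(0)})$, and dividing by $2\rho - L > 0$ (legitimate because $\rho > L/2$) yields \eqref{mirrdes-genconvrate}.

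For the accumulation-point claim, I would instead chain the termwise bound with \eqref{cor:comp_diff_bound-1} before averaging, giving $\sum_{t=0}^{T}(2\rho-L)\sym\Breg_\varphi(\bm\beta^{(t)},\bm\beta^{(t+1)}) \le f(\bm\beta^{(0)})$ for every $T$; as the summands are nonnegative with uniformly bounded partial sums, the series converges and hence $\sym\Breg_\varphi(\bm\beta^{(t)},\bm\beta^{(t+1)}) \to 0$. Given a subsequence $\bm\beta^{(t_k)}\to\bm\beta^*$, continuity of $\nabla\varphi$, $(\nabla\varphi)^{-1}$ and $\nabla f$ makes the mirror-descent update $\bm\beta^{(t_k+1)} = (\nabla\varphi)^{-1}(\nabla\varphi(\bm\beta^{(t_k)}) - \nabla f(\bm\beta^{(t_k)})/\rho)$ converge to $(\nabla\varphi)^{-1}(\nabla\varphi(\bm\beta^*) - \nabla f(\bm\beta^*)/\rho)$; passing to the limit in $\sym\Breg_\varphi(\bm\beta^{(t_k)},\bm\beta^{(t_k+1)})\to 0$ and using that strict convexity of $\varphi$ makes $\sym\Breg_\varphi$ vanish only on the diagonal forces $\bm\beta^* = (\nabla\varphi)^{-1}(\nabla\varphi(\bm\beta^*) - \nabla f(\bm\beta^*)/\rho)$, i.e. $\bm\beta^*\in\arg\min_{\bm\beta} g(\bm\beta;\bm\beta^*)$, so $\bm\beta^*$ is a fixed point of $g$ in the sense of \eqref{fpsetdef}.

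The main obstacle I anticipate is the accumulation-point part rather than the rate: the algebraic reduction and the single application of Theorem \ref{th:comp_nonconvex} are routine, whereas the limiting argument requires the continuity and strict-convexity properties of $\varphi$ to be used carefully, in particular to rule out that consecutive iterates drift apart while their symmetrized Bregman gap still tends to zero. If one prefers not to assume continuity of $\nabla f$, an alternative is to argue directly from the first-order optimality of each $\bm\beta^{(t+1)}$ combined with the first-order degeneracy of Lemma \ref{lemma:degeneracy}.
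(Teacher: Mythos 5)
Your proposal is correct and follows essentially the same route as the paper's proof: the identity $(2\sym{\bm\Delta}_\psi+\bm\Delta_f)(\bm\beta^{(t)},\bm\beta^{(t+1)})=2\rho\sym{\mathbf D}_\varphi(\bm\beta^{(t)},\bm\beta^{(t+1)})-\bm\Delta_f(\bm\beta^{(t+1)},\bm\beta^{(t)})\ge(2\rho-L)\sym{\mathbf D}_\varphi(\bm\beta^{(t)},\bm\beta^{(t+1)})$ chained with Theorem \ref{th:comp_nonconvex}, then summability of the nonnegative terms plus continuity of the update map and strict convexity of $\varphi$ for the fixed-point claim, is exactly the paper's argument. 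The only cosmetic difference is that the paper phrases the limiting step via continuity of the abstract solution map $\mathcal T(\bm\beta^-)=\arg\min g(\cdot;\bm\beta^-)$ rather than the explicit mirror-descent formula, which is the same thing in this setting.
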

Hence in the special case of gradient descent, \eqref{mirrdes-genconvrate} recovers    $\min_{0\le t\le T}\allowbreak\|\nabla f(\bm\beta^{(t)})\|_2^2 = \mathcal O(1/T)$   \citep{Nemirovski_Notes} when $\rho>L/2$.
 In comparison, MM algorithms always require  $\bm\Delta_\psi \ge 0$, or $\rho \ge L$. A smaller value of $\rho$ means a larger step size with which the algorithm converges faster. 

\subsubsection*{$\bullet$ Composite setting}

High-dimensional statistical learning  often has an additive objective $f(\bm\beta)= l_0(\bm X\bm\beta) + P(\varrho\bm\beta;\lambda)$, where $\bm X\in \mathbb R^{n\times p}$ is the predictor or feature matrix, $l_0(\cdot)$ is the loss  defined on $\bm X\bm\beta$ (and so $l(\bm\beta) = l_0(\bm X\bm\beta)$),  $P(\cdot; \lambda)$   is a sparsity-inducing regularizer and      $\varrho$ is   a controllable parameter,  typically taking     $ \|\bm X\|_2$ to match the scale. Unless otherwise mentioned,     $P( \bm\beta;\lambda)$ denotes $\sum_j P( \beta_j; \lambda) $     with a little abuse of notation.

Such a composite setup is widely assumed in convex optimization \cite{Tseng2008,Duchi2011}. But among the abundant choices of $l_0$ and $P$ in the literature, many of them are nonconvex. The good news is that   the main theorem proved in the previous subsection adapts to the composite setting and  we give some results for iterative thresholding  and LLA as an illustration (cf. Examples \ref{ex:thres}, \ref{ex:LLA}).

\vspace{2ex}
\noindent \textit{Iterative thresholding}.
Many popularly used penalty functions are associated with thresholdings rigorously defined  as follows. 

\begin{defn}[\textbf{Thresholding function}] \label{def:Theta}
A threshold function is a real-valued function $\Theta(t;\lambda)$ defined for $-\infty < t <\infty$ and $0\leq \lambda < \infty$ such that
(i) $\Theta(-t;\lambda) = -\Theta(t;\lambda)$;
(ii) $\Theta(t;\lambda) \leq \Theta(t';\lambda)$ for $t\leq t'$;
(iii) $\mathop{\lim}_{t\rightarrow\infty}\Theta(t;\lambda) = \infty$;
(iv) $0\leq \Theta(t;\lambda) \leq t$ for $0\leq t < \infty$.
\end{defn}

Given $\Theta$, a critical concavity number ${\mathcal L}_{\Theta} \le 1$ can be introduced such that $\rd \Theta^{-1}(u;\lambda)\rd u \ge 1 - \mathcal L_\Theta$ for almost every $u \ge 0$, or
\begin{equation} \label{def:LTheta}
{\mathcal L}_{\Theta} = 1- \mathrm{ess\,inf}\{ \mathrm{d} \Theta^{-1}(u;\lambda)/\mathrm{d} u: u \ge 0\},
\end{equation}
with ess\,inf  the essential infimum and $\Theta^{-1}(u;\lambda) := \sup\{t:\Theta(t;\lambda)\le u\}, \forall u>0$. For the widely used soft-thresholding $\Theta_S(t;\lambda) = \sgn(t)(|t|-\lambda)1_{|t|>\lambda}$ and hard-thresholding
$\Theta_H(t;\lambda) = t1_{|t|>\lambda}$, $\mathcal L_\Theta$ equals $0$ and $1$, respectively. In fact, when $\mathcal L_\Theta > 0$,    the penalty induced by $\Theta$ via \eqref{pendef} is nonconvex, and $\mathcal L_\Theta$ gives a concavity measure of it according to Lemma \ref{lemma:Lp}.
The Bregman surrogate characterization of iterative thresholding in \eqref{IterThres-g} yields a general conclusion for any $\Theta$ in possibly high dimensions.

\begin{pro} \label{pro:PTheta}
Given any  thresholding  $\Theta$ and  directionally differentiable  $l(\cdot)$, consider  the iterative thresholding procedure \eqref{Thres-update}: $\bm\beta^{(t+1)} = \Theta(\varrho\bm\beta^{(t)} - \nabla l(\bm\beta^{(t)})/\varrho; \lambda)/\varrho$ with $\varrho>0$. Construct \begin{equation} \label{pendef}
P_{\Theta}(t; \lambda)=\int_0^{|t|} (\Theta^{-1}(u;\lambda) - u) \rd u, ~\forall t \in \mathbb R,
\end{equation}
and define $f(\bm\beta) = l(\bm\beta) + P_\Theta(\varrho\bm\beta;\lambda)$,  $g(\bsb{\beta}, \bsb{\beta}^-) = l(\bm\beta) +  P_\Theta(\varrho\bsb{\beta};\lambda) + (\varrho^2\mathbf D_2 - \bm\Delta_l)(\bm\beta,\bm\beta^-)$. 
Then   $ \bm\beta^{(t )} \in  \mathop{\arg\min}_{\bm\beta} g(\bm\beta, \bm\beta^{(t-1)}) $ and for all $T\ge 1$
\begin{equation}\label{th:Theta_comp_grad_bound-1}
\mathop{\avg}_{0\leq t \leq T}(\varrho^2(2 - \mathcal L_
\Theta)\mathbf D_2 -\back {\bm\Delta}_l)(  \bm{\beta}^{(t)}, \bm{\beta}^{(t+1)}) \leq \frac{1}{T+1}\big[f(\bm{\beta}^{(0)}) - f(\bm{\beta}^{(T+1)})\big].
\end{equation}
\end{pro}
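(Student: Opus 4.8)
The plan is to recognize the iterative thresholding recursion as a generalized Bregman-surrogate algorithm of the form \eqref{BregIter} with objective $f(\bm\beta)=l(\bm\beta)+P_\Theta(\varrho\bm\beta;\lambda)$ and auxiliary discrepancy $\bm\Delta_\psi=\varrho^2\mathbf{D}_2-\bm\Delta_l$, and then to read off \eqref{th:Theta_comp_grad_bound-1} from the universal bound in Theorem \ref{th:comp_nonconvex}. Concretely, once the surrogate-minimizer identity $\bm\beta^{(t)}\in\arg\min_{\bm\beta} g(\bm\beta,\bm\beta^{(t-1)})$ is in place, Theorem \ref{th:comp_nonconvex} gives
\begin{equation*}
\mathop{\avg}_{0\le t\le T}(2\sym{\bm\Delta}_\psi+\bm\Delta_f)(\bm\beta^{(t)},\bm\beta^{(t+1)})\le\frac{1}{T+1}\big[f(\bm\beta^{(0)})-f(\bm\beta^{(T+1)})\big],
\end{equation*}
so it remains to show that this left-hand integrand dominates $(\varrho^2(2-\mathcal L_\Theta)\mathbf{D}_2-\back{\bm\Delta}_l)(\bm\beta^{(t)},\bm\beta^{(t+1)})$ pointwise.

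First I would establish the minimizer identity. Since $l(\bm\beta)-\bm\Delta_l(\bm\beta,\bm\beta^-)=l(\bm\beta^-)+\delta l(\bm\beta^-;\bm\beta-\bm\beta^-)$ linearizes the loss, $g(\bm\beta;\bm\beta^-)$ reduces up to an additive constant to $\delta l(\bm\beta^-;\bm\beta-\bm\beta^-)+P_\Theta(\varrho\bm\beta;\lambda)+(\varrho^2/2)\|\bm\beta-\bm\beta^-\|_2^2$, which separates across coordinates. Completing the square turns each coordinate problem into $\min_{\theta}\tfrac12(\theta-\alpha_j)^2+P_\Theta(\theta;\lambda)$ after the scaling $\theta=\varrho\beta_j$ and $\alpha_j=\varrho\beta_j^--\nabla_j l(\bm\beta^-)/\varrho$; by the $\Theta$--$P_\Theta$ coupling \eqref{pendef} recalled in Section \ref{subsec:comp} (see also \cite{SheTISP}), the minimizer is $\Theta(\alpha_j;\lambda)$, which after undoing the scaling is exactly \eqref{Thres-update}. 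This yields $\bm\beta^{(t)}\in\arg\min_{\bm\beta} g(\bm\beta,\bm\beta^{(t-1)})$.

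Next I would carry out the GBF algebra. Because $\mathbf{D}_2$ is symmetric, $\sym{\bm\Delta}_\psi=\varrho^2\mathbf{D}_2-\sym{\bm\Delta}_l$, so by Lemma \ref{lemma:Delta_linear}(i) together with $\bm\Delta_f=\bm\Delta_l+\bm\Delta_{P_\Theta(\varrho\,\cdot\,;\lambda)}$ one obtains
\begin{equation*}
2\sym{\bm\Delta}_\psi+\bm\Delta_f=2\varrho^2\mathbf{D}_2-\back{\bm\Delta}_l+\bm\Delta_{P_\Theta(\varrho\,\cdot\,;\lambda)}.
\end{equation*}
The remaining task is the concavity bound $\bm\Delta_{P_\Theta(\varrho\,\cdot\,;\lambda)}\ge-\varrho^2\mathcal L_\Theta\mathbf{D}_2$. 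This is where the concavity number enters: from \eqref{def:LTheta}, $\mathrm{d}\Theta^{-1}(u;\lambda)/\mathrm{d}u\ge 1-\mathcal L_\Theta$ a.e., so $P_\Theta'(t;\lambda)=\sgn(t)(\Theta^{-1}(|t|;\lambda)-|t|)$ has a.e.\ derivative at least $-\mathcal L_\Theta$, whence $P_\Theta(t;\lambda)+\mathcal L_\Theta t^2/2$ is convex in $t$ (this is Lemma \ref{lemma:Lp}). Composing with the linear map $\beta_j\mapsto\varrho\beta_j$ and summing over $j$ shows $P_\Theta(\varrho\bm\beta;\lambda)+\varrho^2\mathcal L_\Theta\|\bm\beta\|_2^2/2$ is convex; by Lemma \ref{lemma:Delta_linear}(ii) its GBF is nonnegative, and Lemma \ref{lemma:Delta_linear}(i) then gives $\bm\Delta_{P_\Theta(\varrho\,\cdot\,;\lambda)}\ge-\varrho^2\mathcal L_\Theta\mathbf{D}_2$. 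Substituting this bound, averaging, and chaining with Theorem \ref{th:comp_nonconvex} delivers \eqref{th:Theta_comp_grad_bound-1}.

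The hard part will be the concavity bound: one must argue that $\mathcal L_\Theta$, defined through an essential infimum of $\mathrm{d}\Theta^{-1}/\mathrm{d}u$, genuinely controls the \emph{global} concavity of $P_\Theta$ across the kink at the origin and across any discontinuities of $\Theta$, at which $\Theta^{-1}$ is flat and $P_\Theta$ may fail to be differentiable. Treating these nonsmooth points carefully---and confirming that the coordinatewise minimizer still equals $\Theta$ when $\Theta$ has jumps, so that the minimizer identity remains valid---is the delicate step; everything else is bookkeeping with Lemma \ref{lemma:Delta_linear} and Theorem \ref{th:comp_nonconvex}.
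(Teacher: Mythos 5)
Your proposal is correct and follows essentially the same route as the paper: reduce to Theorem \ref{th:comp_nonconvex} with $\bm\Delta_\psi=\varrho^2\mathbf D_2-\bm\Delta_l$, use the $\Theta$--$P_\Theta$ coupling for the minimizer identity, and absorb the penalty's nonconvexity via $\bm\Delta_{P_\Theta}+\mathcal L_\Theta\mathbf D_2\ge 0$, which is exactly the content of the paper's Lemma \ref{lemma:Lp}. The only cosmetic difference is that you obtain that bound by lifting ($P_\Theta(t;\lambda)+\mathcal L_\Theta t^2/2$ convex, then Lemma \ref{lemma:Delta_linear}(i)--(iii)), whereas the paper computes $(\bm\Delta_{P_\Theta}+\mathcal L\mathbf D_2)(\beta,\gamma)$ directly as $\int_\gamma^\beta\int_\gamma^u[s'(v)+\mathcal L]\,\mathrm{d}v\,\mathrm{d}u$ with $s(u)=\Theta^{-1}(u;\lambda)-u$; both hinge on the same a.e.\ bound $s'\ge-\mathcal L_\Theta$, and the kink at the origin and the (only upward) jumps of the nondecreasing $\Theta^{-1}$ work in the favorable direction, as you anticipated.
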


When the loss satisfies $\breg_l \le L \Breg_2$, 
a reasonable choice of  $\varrho$ is \begin{equation}\label{ITsschoice}\varrho^2> L/(2-\mathcal L_\Theta).\end{equation} So when $\mathcal L_\Theta>0$, the step size upper bound will be  smaller than that  as $\mathcal L_\Theta = 0$. This   is often the price to pay for nonconvex optimization. On the other hand, \eqref{th:Theta_comp_grad_bound-1} still  ensures the universal rate of convergence of $\mathcal O(1/T)$, in spite of the high dimensionality and nonconvexity.

\vspace{2ex}
\noindent \textit{Local linear approximation}.
Next, we study the computational convergence of LLA  for solving the penalized estimation problem $\min f(\bsb{\beta})=  l(\bm\beta)+P(\varrho\bm\beta)$, assuming  $l$ is directionally differentiable, $P(0) = 0$,  $P'_+(0)< +\infty$,  $P(t)=P(-t)\ge 0$ and  $P(t)$ is differentiable for any $t>0$.
Recall its Bregman form  surrogate
\begin{equation} \label{LLA-surro}
g^{(t)}_\mathrm{LLA}(\bm\beta;\bm\beta^{(t)}) = l(\bm\beta) + P(\varrho\bm\beta) + \bm\Delta_{\|\bm\alpha^{(t)}\text{\tiny$\circ$}(\cdot)\|_1 - P(\cdot)}(\varrho\bm\beta,\varrho\bm\beta^{(t)}),
\end{equation}
where $\bm\alpha^{(t)} = [\alpha_j^{(t)}]$ with $\alpha_j^{(t)} = |P'_+(\beta_j^{(t)})|, 1\le j\le p$.
We abbreviate {\small$\bm\Delta_{\|\bm\alpha^{(t)}\text{\tiny$\circ$}(\cdot)\|_1 - P(\cdot)}$} to {\small$\bm\Delta_{\mathrm{LLA}}^{(t)}$}, which  does not satisfy strong idempotence. By combining  {\small$\sym{\bm\Delta}_{\mathrm{LLA}}^{(t)}$} and $\bm\Delta_f$  to evaluate LLA's optimization error, we  obtain a  convergence result without any additional assumptions.

\begin{pro}\label{pro:LLA_comp}
Given any starting point $\bm{\beta}^{(0)}$,  the LLA iterates satisfy the following bound for all $T \ge 1$:
{\begin{equation*} 
\mathop{\avg}_{0\leq t \leq T} [2\sym{\bm\Delta}_{\mathrm{LLA}}^{(t)}(\varrho\bm{\beta}^{(t)}, \varrho\bm{\beta}^{(t+1)}) + \bm{\Delta}_f(\bm{\beta}^{(t)}, \bm{\beta}^{(t+1)})]  \leq \frac{1}{T+1}[f(\bm{\beta}^{(0)}) - f(\bm{\beta}^{(T+1)})].
\end{equation*}}
\end{pro}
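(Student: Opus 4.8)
The plan is to recognize the LLA recursion \eqref{LLA-surro} as a particular instance of the generic Bregman-surrogate iteration \eqref{BregIter} with a \emph{time-varying} auxiliary function, and then to reuse the one-step argument behind Theorem \ref{th:comp_nonconvex}. First I would absorb the linear scaling: put $\psi^{(t)}(\bm u) = \|\bm\alpha^{(t)}\circ\bm u\|_1 - P(\bm u)$ and $\tilde\psi^{(t)}(\bm\beta) = \psi^{(t)}(\varrho\bm\beta)$. The linear-composition identity in Lemma \ref{lemma:Delta_linear}(iii) then gives $\bm\Delta_{\tilde\psi^{(t)}}(\bm\beta,\bm\gamma) = \bm\Delta_{\mathrm{LLA}}^{(t)}(\varrho\bm\beta,\varrho\bm\gamma)$, so that $g^{(t)}_\mathrm{LLA}(\bm\beta;\bm\beta^{(t)}) = f(\bm\beta) + \bm\Delta_{\tilde\psi^{(t)}}(\bm\beta,\bm\beta^{(t)})$ is exactly of the form \eqref{BregIter} with $\psi$ replaced by $\tilde\psi^{(t)}$. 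Since $P$ is differentiable on $(0,+\infty)$ with $P'_+(0)<+\infty$ and $\|\cdot\|_1$ is directionally differentiable, $\tilde\psi^{(t)}$ is directionally differentiable, which is all the lemmas below require.

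Next I would establish, for each fixed $t$, the one-step inequality
\[
2\sym{\bm\Delta}_{\mathrm{LLA}}^{(t)}(\varrho\bm\beta^{(t)},\varrho\bm\beta^{(t+1)}) + \bm\Delta_f(\bm\beta^{(t)},\bm\beta^{(t+1)}) \le f(\bm\beta^{(t)}) - f(\bm\beta^{(t+1)}).
\]
The device is to evaluate the GBF of $\bm\gamma\mapsto g^{(t)}_\mathrm{LLA}(\bm\gamma;\bm\beta^{(t)})$ at $(\bm\beta^{(t)},\bm\beta^{(t+1)})$ in two ways. On one hand, linearity (Lemma \ref{lemma:Delta_linear}(i)) splits it as $\bm\Delta_f(\bm\beta^{(t)},\bm\beta^{(t+1)}) + \bm\Delta_{\bm\Delta_{\tilde\psi^{(t)}}(\cdot,\bm\beta^{(t)})}(\bm\beta^{(t)},\bm\beta^{(t+1)})$, and the weak idempotence \eqref{eq:idempotence-weak} collapses the second summand to $\bm\Delta_{\tilde\psi^{(t)}}(\bm\beta^{(t)},\bm\beta^{(t+1)})$. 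On the other hand, because $\bm\beta^{(t+1)}$ minimizes $g^{(t)}_\mathrm{LLA}(\cdot;\bm\beta^{(t)})$, its directional derivative there in the admissible direction $\bm\beta^{(t)}-\bm\beta^{(t+1)}$ is nonnegative, so the same GBF is at most $g^{(t)}_\mathrm{LLA}(\bm\beta^{(t)};\bm\beta^{(t)}) - g^{(t)}_\mathrm{LLA}(\bm\beta^{(t+1)};\bm\beta^{(t)})$. Here the zeroth-order degeneracy of Lemma \ref{lemma:degeneracy}(i) gives $g^{(t)}_\mathrm{LLA}(\bm\beta^{(t)};\bm\beta^{(t)}) = f(\bm\beta^{(t)})$, while $g^{(t)}_\mathrm{LLA}(\bm\beta^{(t+1)};\bm\beta^{(t)}) = f(\bm\beta^{(t+1)}) + \bm\Delta_{\tilde\psi^{(t)}}(\bm\beta^{(t+1)},\bm\beta^{(t)})$. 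Chaining the two expressions and transposing the reversed term $\bm\Delta_{\tilde\psi^{(t)}}(\bm\beta^{(t+1)},\bm\beta^{(t)})$ to the left assembles $\bm\Delta_{\tilde\psi^{(t)}}(\bm\beta^{(t)},\bm\beta^{(t+1)}) + \bm\Delta_{\tilde\psi^{(t)}}(\bm\beta^{(t+1)},\bm\beta^{(t)}) = 2\sym{\bm\Delta}_{\tilde\psi^{(t)}}(\bm\beta^{(t)},\bm\beta^{(t+1)})$, which equals $2\sym{\bm\Delta}_{\mathrm{LLA}}^{(t)}(\varrho\bm\beta^{(t)},\varrho\bm\beta^{(t+1)})$ by the scaling identity.

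Finally, summing the one-step inequality over $t=0,\ldots,T$ telescopes the right-hand side to $f(\bm\beta^{(0)})-f(\bm\beta^{(T+1)})$, and dividing by $T+1$ delivers the claimed average bound. The telescoping cancellation involves only the fixed objective $f=l+P(\varrho\,\cdot)$, so the fact that the surrogate changes at every step through the weights $\bm\alpha^{(t)}$ is harmless.

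I expect the only genuinely delicate point to be the bookkeeping forced by the time-varying weights: one must keep the zeroth-order degeneracy invoked at matched indices (the surrogate $g^{(t)}_\mathrm{LLA}$ is always anchored at the very base point $\bm\beta^{(t)}$ from which its weights are built), and one must rely on weak idempotence rather than strong idempotence, since $\bm\Delta_{\mathrm{LLA}}^{(t)}$ fails the latter, as already noted after \eqref{LLA-surro}. Everything else is a faithful transcription of the Theorem \ref{th:comp_nonconvex} mechanism once the linear scaling has been factored out.
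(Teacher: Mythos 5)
Your proposal is correct and follows essentially the same route as the paper's proof: the one-step inequality obtained from the minimizing property of $\bm\beta^{(t+1)}$ (Lemma \ref{lemma:local-opt}), the linearity of the GBF operator, the linear-composition identity of Lemma \ref{lemma:Delta_linear}(iii) to absorb the scaling by $\varrho$, and the weak idempotence \eqref{eq:idempotence-weak} to collapse $\bm\Delta_{\bm\Delta_{\mathrm{LLA}}^{(t)}(\cdot,\varrho\bm\beta^{(t)})}(\varrho\bm\beta^{(t)},\varrho\bm\beta^{(t+1)})$ into $\bm\Delta_{\mathrm{LLA}}^{(t)}(\varrho\bm\beta^{(t)},\varrho\bm\beta^{(t+1)})$, followed by the telescoping sum. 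Your remarks on why only weak (not strong) idempotence is available and on anchoring the time-varying surrogate at its own base point match the paper's treatment exactly.
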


Ignoring the cost difference per iteration, the convergence rate of LLA  is no slower than that of gradient descent.   If  $l$ is a negative log-likelihood function associated with a log-concave density and $P$ is concave on $\mathbb R_+$, as assumed in \cite{Zou2008LLA},  $2\sym{\bm\Delta}_{\mathrm{LLA}}^{(t)}(\varrho\bm{\beta}, \varrho\bm{\beta}') + \bm{\Delta}_f(\bm{\beta}, \bm{\beta}') = \bm\Delta_l(\bm\beta,\bm\beta') + \bm\Delta_{-P}(\varrho\bm\beta',\varrho\bm\beta) + 2\sum_j\alpha_j^{(t)}\sym{\bm\Delta}_1(\varrho\beta_j,\varrho\beta_j') \ge 0, \forall \bm\beta, \bm\beta'  $.   But Proposition  \ref{pro:LLA_comp}  holds even when  $P$ is nonconcave on $\mathbb R_+$ and $l$ is nonconvex.

The global convergence-rate results presented in this subsection are   free of any  regularity conditions on sparsity, sample size, initial point and   design incoherence. High-dimensional learning algorithms may however   show  a better convergence rate  when the problems under consideration are ``regular'' in a certain sense. 

\subsection{Statistical accuracy}\label{subsec:stat}

To statisticians,  the  statistical accuracy of Bregman-surrogate algorithms with respect to  a  statistical truth  (denoted by $\bm\beta^*$)  is  perhaps more meaningful   than the optimization error to a certain local or global minimizer, since real world data are always noisy.
 Section \ref{subsec:fixed}  and   Section \ref{sec:stat-seq} will study  the statistical error of the final estimate $\hat{\bm\beta}$  and   the $t$-th iterate $\bm\beta^{(t)}$, respectively, where  combining the generalized Bregman calculus    and the empirical process theory eases the treatment of  a nonquadratic loss.

The  techniques based on GBFs  apply to a  general problem (see, e.g., {Theorem \ref{theorem:genalgananal}} in  Section \ref{subsec:genopt}), but here we  focus on the aforementioned sparse learning   in the composite setting:
$\min_{\bm\beta} l(\bm\beta) + P_\Theta(\varrho\bm\beta;\lambda)$, where    $l(\bm{\beta}) =l_0(\bm{\eta})= l_0(\bm{X\beta})$ is  directionally differentiable and  $P_{\Theta}(\cdot;\lambda)$ is induced by a thresholding $\Theta$ via  \eqref{pendef}.  Since $l_0$ is placed on $\bsb{X}\bsb{\beta}$, we include here a scaling parameter $\varrho$ (often $\| \bsb{X}\|_2$) in the penalty; this  will yield a universal choice  of the regularization parameter  $\lambda$ that does not vary with the sample size.  Throughout Section \ref{subsec:stat}, we assume that $\varrho$ satisfies $ \varrho\ge\|\bsb{X}\|_2$. Note that neither the loss nor the penalty needs to be convex or smooth.

Give any directionally differentiable  $\psi$,  the sequence of iterates is generated by
\begin{equation} \label{P_iterate}
\bm\beta^{(t+1)} \in \mathop{\arg\min}_{\bm\beta} g(\bm\beta;\bm\beta^{(t)}):= l(\bm\beta) + P_\Theta(\varrho\bm\beta;\lambda) + \bm\Delta_\psi(\bm\beta,\bm\beta^{(t)}).
\end{equation}
Nonconvex iterative thresholding 
 and LLA are particular instances.

First, we must characterize the notion of  noise in this nonlikelihood setting, to take  into account the randomness of samples.
Assume $l_0$ is differentiable at point  $\bm X\bm\beta^*$ (but    not necessarily differentiable on all of $\mathbb R^n$)   and define the \textit{effective noise} by
\begin{equation} \label{noise-def}
\bm\epsilon = -\nabla l_0(\bm X\bm\beta^*).
\end{equation}
(An alternative assumption is  that  $\delta l_0(\bm X\bm\beta^*;\bm h)$ is a sub-Gaussian random variable with mean 0 and scale bounded by $c\sigma$ for any unit vector $\bm h$,   but we will not pursue further in the current paper.)

Typically,   $\mathbb E [\bm\epsilon] $ should be $0$, and so $\nabla \{\mathbb E[ l_0(\bm X\bm\beta^*)]\} = 0$ assuming the differentiation and expectation are exchangeable, which means  the statistical truth  makes the gradient of its risk vanish.
For a GLM  with $y_i$ ($1\le i \le n$) following a distribution in the exponential family that has cumulant function $b$ and canonical link function $g = (b')^{-1}$, the loss is then $l(\bm\beta) = l_0(\bm X\bm\beta) = - \langle\bm y, \bm X\bm\beta\rangle + \langle \bm 1,b(\bm X\bm\beta)\rangle$ (cf. \eqref{l0glmass} with $\sigma=1$), and so
\begin{equation} \label{def:noise}
\bm\epsilon = \bm y - g^{-1}(\bm X\bm\beta^*) = \bm y - \mathbb E(\bm y).
\end{equation}
Our effective noise, as a joint outcome of the loss and the response,  does not depend on the regularizer, and may differ from the raw noise.
For example, under $\bm y = \bm X\bm\beta^* + \bm\epsilon^{\mathrm{raw}}$,
$l(\bsb{\beta}) = l_{\mbox{\tiny Huber}}(\bm r) = \sum_{i:|r_i|\le a \sigma}r_i^2/2 + \sum_{i:|r_i|> a\sigma}(a|r_i|-a^2\sigma^2/2)$ with $\bm r = \bm y - \bm X\bm\beta$ 
\citep{huber1981book}, simple calculation gives $\epsilon_i = \epsilon_i^{\mathrm{raw}}1_{|\epsilon_i^{\mathrm{raw}}|\le a\sigma} + a \sigma 1_{|\epsilon_i^{\mathrm{raw}}|> a\sigma}$, which is   bounded by $ a\sigma$,  thereby sub-Gaussian, no matter what distribution     the raw noise follows. 
This nonparametricness is apparent for   any $l_0$ that is (globally) Lipschitz, for example,  the logistic deviance and  hinge loss for classification.

In this section, we assume that $\bm{\epsilon}$ is a  {sub-Gaussian} random vector with mean zero and scale bounded by $\sigma$, cf.  Definition \ref{def:subgauss}, where     $\epsilon_i$ are not required to be independent.
Examples include Gaussian random variables and bounded random variables such as Bernoulli. 

The support of $\bm{\beta}$ is denoted by $\mathcal J(\bm{\beta})= \{j: \beta_j \neq {0}\}$, and its cardinality is $J(\bm{\beta})=|\mathcal J(\bm{\beta})| = \|\bm{\beta}\|_0$. We abbreviate $J(\bm\beta^*)$ to $J^*$ and $J(\hat{\bm\beta})$ to $\hat J$.
In sparse learning, $J^*\ll n \ll p$   is typically true. The sparsity suggests the possibility of  obtaining a fast rate of convergence in statistical error.
The following penalty induced by the hard-thresholding $\Theta_H(t;\lambda) = t1_{|t|>\lambda}$ by \eqref{pendef} turns out to play a key role in the analysis\begin{equation} \label{def:PH}
P_H(t;\lambda) = (-t^2/2+\lambda|t|) 1_{|t|<\lambda} + (\lambda^2/2) 1_{|t|\ge \lambda}.
\end{equation}
An important fact is that $P_\Theta(t;\lambda)\ge P_H(t;\lambda)$ for any $t\in\mathbb R$ and any thresholding rule $\Theta$. This is simply because in shrinkage estimation, any  $\Theta(t;\lambda)$ with  $\lambda$ as the threshold is identical to zero as $ t\in [0, \lambda)$  and is bounded above by the identity line  for $t\ge \lambda$. 

\subsubsection{Statistical accuracy of fixed-point solutions}
\label{subsec:fixed}

The finally obtained solutions from a Bregman surrogate algorithm can be described as the fixed points of $g$ (recall \eqref{fpsetdef}),\begin{equation}\label{fixed-point}
\hat{\bm\beta} \in \mathop{\arg\min}_{\bm\beta} g(\bm\beta;\hat{\bm\beta}).
\end{equation}
We denote the set by   $\mathcal F$, and call such solutions the \textit{$F$-estimators}. When the objective function is convex,  an F-estimator   is necessarily  a globally optimal solution to the original problem by  Lemma \ref{lemma:degeneracy}, thus an M-estimator.
In general, however,  the lack of convexity and smoothness may make    $\hat {\bsb{\beta}}$  neither an M-estimator nor a Z-estimator \citep{van1996weak}, which  poses new and intriguing challenges to    statistical algorithmic  analysis. It is also worth mentioning that another important class of ``A-estimators'' that have \textit{alternative} optimality, typically arising from block coordinate descent (BCD) algorithms like in    Example \ref{ex:NMF}, can often be converted to F-estimators; see Section \ref{subsec:Aests}.

Nicely, if the problem is   regular, all  F-estimators defined through $g$ can       achieve essentially the best statistical precision in possibly high dimensions.  This is  nontrivial    since   even $f$'s locally optimal solutions do {not} all have the provable guarantee (cf. Remark \ref{fixedvslocal}).
   Theorem \ref{thm:errrate} and  Theorem \ref{thm:ora}   below only make use of the weak idempotence property; another notable feature is that  the conditions and conclusions below are \textit{regardless} of the form of     $\breg_{\psi}$. 

\begin{thm} \label{thm:errrate}
  Suppose there exist $\delta>0$, $\vartheta>0$ and large enough $K\geq 0$ so that the following inequality holds for any $\bm\beta \in\mathbb R^p$:
\begingroup
  \singlespacing
{\small
\begin{equation}
\label{R_0-errrate}
\begin{split}
&\varrho^{2}\mathcal L_\Theta\mathbf D_2( \bm{\beta}, \bm{\beta}^*) + \delta\mathbf D_2(\bm{X\beta},\bm{X\beta}^*) + \vartheta P_H(\varrho(\bm{\beta} - \bm{\beta}^*);\lambda)  +  P_\Theta(\varrho\bm{\beta}^*;\lambda) \\
\le\,& 2\sym{\bm{\Delta}}_{l}(\bm{\beta},\bm{\beta}^*)  + P_\Theta(\varrho\bm{\beta};\lambda) + K\lambda^2J(\bsb{\beta}^*), \end{split}
\end{equation}
}\endgroup
where  $\lambda = A\sigma\sqrt{\log(ep)}/\sqrt{(\delta\wedge\vartheta )\vartheta}$ with $A$ a sufficiently large constant.
Then
\begingroup
  \singlespacing
{\small
\begin{align}
&\Breg_2(\bm{X} \hat {\bsb{\beta}},\bm{X}\bsb{\beta}^*)  \le  \frac{2KA^2}{(\delta\wedge\vartheta )\delta\vartheta}\sigma^2 J^*  \log(ep), \label{esterrbnd-1}\\
&P_H(\varrho  (\hat {\bsb{\beta}} - \bsb{\beta}^*); \lambda)   \le  \frac{4KA^2}{(\delta\wedge\vartheta )\vartheta^2}\sigma^2 J^*  \log(ep), \label{esterrbnd-2}
\end{align}}
\endgroup
with probability at least $1-Cp^{-cA^2}$,
where $C,c$ are  positive constants.
\end{thm}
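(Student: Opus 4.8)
The plan is to first distill a purely deterministic ``basic inequality'' that no longer involves $\breg_\psi$, and then to calibrate $\lambda$ against the stochastic size of the effective noise $\bm\epsilon = -\nabla l_0(\bm X\bm\beta^*)$ from \eqref{noise-def}. Throughout write $f(\bm\beta) = l(\bm\beta) + P(\bm\beta)$ with $P(\bm\beta) := P_\Theta(\varrho\bm\beta;\lambda)$, and set $\bm\delta := \hat{\bm\beta}-\bm\beta^*$.

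\emph{Deterministic step.} I would start from the fixed-point characterization \eqref{fixed-point}: $\hat{\bm\beta}$ globally minimizes $G := g(\,\cdot\,;\hat{\bm\beta}) = f + \breg_\psi(\,\cdot\,,\hat{\bm\beta})$, where $\breg_\psi(\hat{\bm\beta},\hat{\bm\beta})=0$ gives $G(\hat{\bm\beta}) = f(\hat{\bm\beta})$. Minimality forces $\delta G(\hat{\bm\beta};\bm\delta)\ge 0$, hence $\breg_G(\bm\beta^*,\hat{\bm\beta}) \le G(\bm\beta^*) - G(\hat{\bm\beta})$. Expanding $\breg_G$ by linearity (Lemma \ref{lemma:Delta_linear}(i)) and applying the \emph{weak idempotence} \eqref{eq:idempotence-weak}, $\breg_{\breg_\psi(\,\cdot\,,\hat{\bm\beta})}(\bm\beta^*,\hat{\bm\beta}) = \breg_\psi(\bm\beta^*,\hat{\bm\beta})$, the two $\breg_\psi$ terms cancel and leave the $\psi$-free inequality $\breg_f(\bm\beta^*,\hat{\bm\beta}) \le f(\bm\beta^*) - f(\hat{\bm\beta})$; this cancellation is exactly what renders the conclusion independent of the form of $\breg_\psi$.

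\emph{Basic inequality.} I would then split $\breg_f = \breg_l + \breg_P$ (Lemma \ref{lemma:Delta_linear}(i)). Since $\mathcal L_\Theta$ is the concavity modulus of $P_\Theta$ (Lemma \ref{lemma:Lp}), the map $P + \varrho^2\mathcal L_\Theta\|\cdot\|_2^2/2$ is convex, so $\breg_P(\bm\beta^*,\hat{\bm\beta}) \ge -\varrho^2\mathcal L_\Theta\Breg_2(\bm\beta^*,\hat{\bm\beta})$ by Lemma \ref{lemma:Delta_linear}(ii). The loss part is handled through differentiability of $l_0$ at $\bm X\bm\beta^*$: as $\nabla l(\bm\beta^*) = \bm X^\top\nabla l_0(\bm X\bm\beta^*) = -\bm X^\top\bm\epsilon$, one has $l(\hat{\bm\beta}) - l(\bm\beta^*) = \breg_l(\hat{\bm\beta},\bm\beta^*) - \langle\bm\epsilon,\bm X\bm\delta\rangle$, which combines $\breg_l(\hat{\bm\beta},\bm\beta^*)$ and $\breg_l(\bm\beta^*,\hat{\bm\beta})$ into $2\sym{\breg}_l(\hat{\bm\beta},\bm\beta^*)$. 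The result is the deterministic basic inequality
\[
2\sym{\breg}_l(\hat{\bm\beta},\bm\beta^*) + P_\Theta(\varrho\hat{\bm\beta};\lambda) - P_\Theta(\varrho\bm\beta^*;\lambda) - \varrho^2\mathcal L_\Theta\Breg_2(\hat{\bm\beta},\bm\beta^*) \le \langle\bm\epsilon,\bm X\bm\delta\rangle .
\]
Its left-hand side is precisely the right-hand side of \eqref{R_0-errrate} at $\bm\beta=\hat{\bm\beta}$ (minus the $K\lambda^2 J^*$ and $\varrho^2\mathcal L_\Theta\Breg_2$ terms), so invoking the regularity condition collapses everything to
\[
\delta\Breg_2(\bm X\hat{\bm\beta},\bm X\bm\beta^*) + \vartheta P_H(\varrho\bm\delta;\lambda) \le \langle\bm\epsilon,\bm X\bm\delta\rangle + K\lambda^2 J^* .
\]

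\emph{Stochastic step and the main obstacle.} The remaining, and hardest, task is to dominate $\langle\bm\epsilon,\bm X\bm\delta\rangle$ by the two nonnegative terms on the left \emph{uniformly} in $\bm\delta$, since $\hat{\bm\beta}$ and its random support depend on $\bm\epsilon$. I would split the coordinates of $\bm\delta$ at the level $\varrho|\delta_j| = \lambda$. On the sub-threshold block, $\langle\bm\epsilon,\bm X\bm\delta''\rangle \le \|\bm X^\top\bm\epsilon\|_\infty\|\bm\delta''\|_1$ is controlled by an $\ell_\infty$ sub-Gaussian bound on $\bm X^\top\bm\epsilon$ (each column having norm $\le\varrho$) and absorbed into the linear part of $\vartheta P_H$; on the super-threshold block, $\langle\bm\epsilon,\bm X\bm\delta'\rangle \le \|\mathcal P_{S'}\bm\epsilon\|_2\|\bm X\bm\delta'\|_2$ is controlled by a chaining/union bound over all supports of each cardinality (using $\log\binom{p}{k}\lesssim k\log(ep/k)$) and absorbed into $\delta\Breg_2(\bm X\hat{\bm\beta},\bm X\bm\beta^*)$ together with the jump part of $\vartheta P_H$. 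The choice $\lambda = A\sigma\sqrt{\log(ep)}/\sqrt{(\delta\wedge\vartheta)\vartheta}$ with $A$ large is exactly what renders both absorptions valid on an event of probability at least $1-Cp^{-cA^2}$, leaving no residual noise term. What survives is $\tfrac{\delta}{4}\|\bm X\bm\delta\|_2^2 + \tfrac{\vartheta}{2}P_H(\varrho\bm\delta;\lambda) \le K\lambda^2 J^*$; reading off the two summands separately and substituting $\lambda^2 = A^2\sigma^2\log(ep)/[(\delta\wedge\vartheta)\vartheta]$ yields \eqref{esterrbnd-1} and \eqref{esterrbnd-2}. I expect this uniform deviation bound — interpolating the $\ell_\infty$ and sparse-$\ell_2$ regimes while tracking the constants tightly enough that the stated $\lambda$ suffices — to be the genuine obstacle; everything upstream is routine once weak idempotence eliminates $\psi$.
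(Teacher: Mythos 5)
Your proposal is correct and follows essentially the same route as the paper: the deterministic step is exactly the paper's Lemma \ref{lemma:Theta_triangle_fixed} (fixed-point optimality plus weak idempotence to cancel $\bm\Delta_\psi$, then Lemma \ref{lemma:Lp} for the $\varrho^2\mathcal L_\Theta\mathbf D_2$ term), and your uniform bound on $\langle\bm\epsilon,\bm X\bm\delta\rangle$ by $\tfrac{1}{2a}\|\bm X\bm\delta\|_2^2+\tfrac{1}{2b}P_H(\varrho\bm\delta;\lambda)$ via the sub-/super-threshold split is precisely the content of the paper's Lemma \ref{lemma:phostochastic} (which the paper imports from Lemma 4 of \cite{She2016} rather than reproving). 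The constants you track are consistent with the stated $\lambda$ and yield \eqref{esterrbnd-1}--\eqref{esterrbnd-2}.
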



Moreover, an     \textit{oracle inequality} \citep{Donoho1994, Koltchinskii2011book} can be built to justify the estimators  even when   $ {\bsb{\beta}^*}$ is not exactly sparse. Toward this goal, recall the notion of a pseudo-metric $d$ (cf. Definition \ref{def:pseudometric}), that is,  $d$ is nonnegative, symmetric, and satisfies the triangle inequality, and  suppose  without loss of generality that $$\alpha d^2(\bsb{\eta}, \bsb{\eta}') \le \breg_{l_0}(\bsb{\eta}, \bsb{\eta}') \le L d^2(\bsb{\eta}, \bsb{\eta}' ), \forall \bsb{\eta}, \bsb{\eta}'$$ for some pseudo-metric $d$ with $-\infty\le \alpha \le  L\le +\infty$. For  regression $l(\bsb{\beta}) =l_0(\bsb{\eta} )= \| \bsb{y } -\bsb{\eta}\|_2^2/2$,  $\alpha =  L=1>0 $.

\begin{thm} \label{thm:ora}
Assume   for given $\bm\beta\in\mathbb R^p$, there exist   $r$:    $ 0\le r <1,   \alpha r /{L} \ge 0$, positive $\delta$, $\vartheta$,  and a large enough $K\geq 0$ so that
{\small\begin{align}\label{R_0-oracond-general}
\begin{split}
&\varrho^2\mathcal L_\Theta\mathbf D_2(\bm{\beta}, \bm{\gamma}) + \delta\mathbf D_2(\bsb{X}\bsb{\beta},\bsb{X}\bsb{\gamma}) + \vartheta P_H(\varrho(\bm{\beta} - \bm{\gamma});\lambda)  +  P_\Theta(\varrho\bm{\beta};\lambda)\\
\le\,& (1+ \frac{\alpha}{L}r)\bm{\Delta}_{l}(\bm{\beta},\bm{\gamma}) + P_\Theta(\varrho\bm{\gamma};\lambda) + K\lambda^2J(\bm{\beta}) \end{split}
\end{align}}for any $\bm\gamma\in\mathbb R^p$, where  $\lambda = A\sigma\sqrt{\log(ep)}/\sqrt{(\delta\wedge\vartheta )\vartheta}$ with $A$ a sufficiently large constant. The    oracle inequality below holds for some constant $C>0$, 
{\small\begin{align} \label{R_0-oraineq-general}
\begin{split}
\EE\bm\Delta_l(\hat{\bm\beta},\bm\beta^*) \le \,& \EE\Big\{  \Big(\frac{1+r}{1-r}\Big)^2  \bm\Delta_l(\bm\beta,\bm\beta^*)  +   \frac{(1+r)KA^2}{(1-r)(2\vartheta\wedge\delta )\vartheta}\sigma^2J(\bm\beta)\log(ep) \Big\} + \frac{C(1+r)}{(1-r)(2\vartheta\wedge\delta) } \sigma^2.
\end{split}
\end{align}}
\end{thm}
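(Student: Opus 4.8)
The plan is to derive a $\psi$-free \emph{basic inequality} from the fixed-point characterization, convert its deterministic part through the regularity condition \eqref{R_0-oracond-general}, absorb its stochastic part using the sub-Gaussian effective noise, and then close a self-bounding recursion in $\bm\Delta_l$. First I would use Lemma \ref{lemma:degeneracy}: since $\hat{\bm\beta}\in\arg\min_{\bm\beta}g(\bm\beta;\hat{\bm\beta})$, it is a directional stationary point of $f=l+P_\Theta(\varrho\,\cdot\,;\lambda)$ \emph{independent} of $\psi$, hence, via the $\Theta$--$P_\Theta$ coupling of Proposition \ref{pro:PTheta}, a solution of the thresholding fixed-point equation \eqref{Thres-update}. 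Using the definition \eqref{pendef} of $P_\Theta$ and the concavity number $\mathcal L_\Theta$ from \eqref{def:LTheta}, the coordinatewise minimization gives, for every reference $\bm\beta$, the curvature bound $\tfrac{1}{2}\|\varrho\bm\beta-\varrho\hat{\bm\beta}\|_2^2+\langle\bm\beta-\hat{\bm\beta},\nabla l(\hat{\bm\beta})\rangle+P_\Theta(\varrho\bm\beta;\lambda)\ge P_\Theta(\varrho\hat{\bm\beta};\lambda)+\tfrac{1-\mathcal L_\Theta}{2}\|\varrho\bm\beta-\varrho\hat{\bm\beta}\|_2^2$. Substituting $\langle\nabla l(\hat{\bm\beta}),\bm\beta-\hat{\bm\beta}\rangle=l(\bm\beta)-l(\hat{\bm\beta})-\bm\Delta_l(\bm\beta,\hat{\bm\beta})$ then rearranges into the $\psi$-free inequality $l(\hat{\bm\beta})+P_\Theta(\varrho\hat{\bm\beta};\lambda)+\bm\Delta_l(\bm\beta,\hat{\bm\beta})\le l(\bm\beta)+P_\Theta(\varrho\bm\beta;\lambda)+\varrho^2\mathcal L_\Theta\mathbf D_2(\bm\beta,\hat{\bm\beta})$, which is exactly why neither the conclusion nor the hypotheses reference $\breg_\psi$.

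Next I would inject the statistical truth. Writing $\nabla l(\bm\beta^*)=\bm X^\top\nabla l_0(\bm X\bm\beta^*)=-\bm X^\top\bm\epsilon$ from \eqref{noise-def} and expanding both $l(\hat{\bm\beta})-l(\bm\beta^*)$ and $l(\bm\beta)-l(\bm\beta^*)$ through their Bregman remainders (Lemma \ref{lemma:Delta_linear}(iii) giving $\bm\Delta_l=\bm\Delta_{l_0}(\bm X\,\cdot\,,\bm X\,\cdot\,)$), the basic inequality becomes $\bm\Delta_l(\hat{\bm\beta},\bm\beta^*)+\bm\Delta_l(\bm\beta,\hat{\bm\beta})+P_\Theta(\varrho\hat{\bm\beta};\lambda)\le\bm\Delta_l(\bm\beta,\bm\beta^*)+P_\Theta(\varrho\bm\beta;\lambda)+\varrho^2\mathcal L_\Theta\mathbf D_2(\bm\beta,\hat{\bm\beta})+\langle\bm\epsilon,\bm X(\hat{\bm\beta}-\bm\beta)\rangle$. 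I then apply \eqref{R_0-oracond-general} with $\bm\gamma=\hat{\bm\beta}$ to replace $\varrho^2\mathcal L_\Theta\mathbf D_2(\bm\beta,\hat{\bm\beta})+P_\Theta(\varrho\bm\beta;\lambda)-P_\Theta(\varrho\hat{\bm\beta};\lambda)$ by $(1+\tfrac{\alpha}{L}r)\bm\Delta_l(\bm\beta,\hat{\bm\beta})+K\lambda^2J(\bm\beta)-\delta\mathbf D_2(\bm X\bm\beta,\bm X\hat{\bm\beta})-\vartheta P_H(\varrho(\bm\beta-\hat{\bm\beta});\lambda)$; cancelling one copy of $\bm\Delta_l(\bm\beta,\hat{\bm\beta})$ leaves the leftover $\tfrac{\alpha}{L}r\,\bm\Delta_l(\bm\beta,\hat{\bm\beta})$ together with the two negative reserves $-\delta\mathbf D_2$ and $-\vartheta P_H$ that are available to soak up the noise.

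The stochastic term is the heart of the matter and the step I expect to be the main obstacle. Because $\hat{\bm\beta}$ is data dependent with an unknown, possibly non-sparse support, $\langle\bm\epsilon,\bm X(\hat{\bm\beta}-\bm\beta)\rangle$ cannot be bounded pointwise; instead I would split $\hat{\bm\beta}-\bm\beta$ into its restriction to $\mathcal J(\bm\beta)$ and its complement and dominate each part simultaneously over all candidate supports by a peeling/union-bound argument driven by the sub-Gaussian tails of $\bm\epsilon$ (Definition \ref{def:subgauss}), exactly as in the sparse-signal analysis behind Theorem \ref{thm:errrate}. With $\lambda=A\sigma\sqrt{\log(ep)}/\sqrt{(\delta\wedge\vartheta)\vartheta}$ chosen so that $\lambda$ dominates the per-coordinate noise level $\sim\sigma\sqrt{\log(ep)}$, the off-support contribution is absorbed into $\vartheta P_H(\varrho(\bm\beta-\hat{\bm\beta});\lambda)$ via the universal lower bound $P_\Theta\ge P_H$, while the on-support contribution is absorbed into $\delta\mathbf D_2(\bm X\bm\beta,\bm X\hat{\bm\beta})$ at the cost of an additive $\lesssim\sigma^2 J(\bm\beta)\log(ep)$, all on an event of probability at least $1-Cp^{-cA^2}$; on the complementary event a crude deterministic bound is used, and integrating the tail after taking expectations supplies the residual additive $\sigma^2$ term. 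Forcing the majorization to hold across every support is precisely what produces the $\log(ep)$ factor and keeps the rate at the minimax benchmark of Theorem \ref{thm:minimaxglm}.

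Finally, to reach \eqref{R_0-oraineq-general} I would close the recursion through the pseudo-metric sandwich $\alpha d^2\le\bm\Delta_{l_0}\le L d^2$. Bounding $\tfrac{\alpha}{L}r\,\bm\Delta_l(\bm\beta,\hat{\bm\beta})\le\alpha r\,d^2(\bm X\bm\beta,\bm X\hat{\bm\beta})$, then applying the triangle inequality $d(\bm X\bm\beta,\bm X\hat{\bm\beta})\le d(\bm X\hat{\bm\beta},\bm X\bm\beta^*)+d(\bm X\bm\beta,\bm X\bm\beta^*)$ together with the lower bound $\bm\Delta_l(\hat{\bm\beta},\bm\beta^*)\ge\alpha d^2(\bm X\hat{\bm\beta},\bm X\bm\beta^*)$, dominates the leftover by $r\big(\sqrt{\bm\Delta_l(\hat{\bm\beta},\bm\beta^*)}+\sqrt{\bm\Delta_l(\bm\beta,\bm\beta^*)}\big)^2$. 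The resulting quadratic inequality in $\sqrt{\bm\Delta_l(\hat{\bm\beta},\bm\beta^*)}$ can be solved exactly because $r<1$, and it yields the amplification factor $(\tfrac{1+r}{1-r})^2$ on the oracle term $\bm\Delta_l(\bm\beta,\bm\beta^*)$; taking expectations and collecting the $K\lambda^2 J(\bm\beta)$ and residual pieces then produces \eqref{R_0-oraineq-general}.
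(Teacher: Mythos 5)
Your proposal is correct and follows essentially the same route as the paper's proof: a $\psi$-free basic inequality at the fixed point (the paper's Lemma \ref{lemma:Theta_triangle_fixed}), substitution of $\bm\gamma=\hat{\bm\beta}$ into \eqref{R_0-oracond-general}, a sub-Gaussian bound on $\langle\bm\epsilon,\bm X(\hat{\bm\beta}-\bm\beta)\rangle$ absorbed into $\delta\mathbf D_2+\vartheta P_H$ plus an additive remainder whose expectation is $O(\sigma^2)$ (Lemma \ref{lemma:phostochastic}), and the pseudo-metric triangle inequality to close the self-bounding inequality with the factor $\bigl(\tfrac{1+r}{1-r}\bigr)^2$. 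The only step to drop is the claim that directional stationarity of $f$ at $\hat{\bm\beta}$ implies $\hat{\bm\beta}$ solves the thresholding fixed-point equation \eqref{Thres-update} — that implication is false for general $\psi$ and for discontinuous $\Theta$ (this distinction is the very point of Remark \ref{fixedvslocal}) — but it is also unneeded, since the basic inequality you extract from it follows directly from Lemma \ref{lemma:degeneracy} combined with Lemma \ref{lemma:local-opt} and $\bm\Delta_{P_\Theta(\varrho\cdot)}\ge-\varrho^2\mathcal L_\Theta\mathbf D_2$ (Lemma \ref{lemma:Lp}), exactly as the paper does.
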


    Compared with     \eqref{R_0-errrate} which fixes $\bsb{\gamma}$ at $\bsb{\beta}^*$, \eqref{R_0-oracond-general} has $(1+ \frac{\alpha}{L}r)\bm{\Delta}_{l}$  in place of $2\sym{\bm{\Delta}}_{l}$ as   the first term on the right-hand side. Nonrigorously,  these conditions ask   $2\sym{\bm{\Delta}}_{l}$ or $ (1+ \frac{\alpha}{L}r)\bm{\Delta}_{l}$  to dominate $\varrho^2\mathcal L_\Theta\mathbf D_2$ in  a restricted sense; Remark \ref{rmk:regconds}   argues  that   \eqref{R_0-oracond-general} is not technically demanding compared with  many  other    regularity conditions in the literature.

When $r=0$,  the multiplicative constant proceeding $\bm\Delta_l(\bm\beta,\bm\beta^*)$   in \eqref{R_0-oraineq-general} is as small as   $1$, resulting in a {sharp} oracle inequality \citep{Koltchinskii2011book}.
If one sets $\bm\beta = \bm\beta^*$ in \eqref{R_0-oraineq-general}, the Bregman error $\bm\Delta_l(\hat{\bm\beta},\bm\beta^*)$ is of the order $\sigma^2J^*\log(ep)$ for any thresholding (when $\delta,\vartheta,K$ are treated as constants).
But the bias term $\bm\Delta_l(\bm\beta,\bm\beta^*)$ or $\bm\Delta_{l_0}(\bsb{X}\bm\beta,\bsb{X}\bm\beta^*)$   helps to handle \emph{approximately} sparse signals:  when $\bm\beta^*$ contains a number of small  nonzero elements,  rather than taking $\bm\beta = \bm\beta^*$,  a reference $\bm\beta$ with a reduced support will   yield an even smaller error bound  benefiting from the bias-variance tradeoff.

Unlike the optimization error bounds, the statistical error bounds  never  vanish (unless $\sigma\rightarrow 0$). We can similarly analyze  the set of  global minimizers,  in which case  the term  $\varrho^2\mathcal L_\Theta\mathbf D_2(\bm{\beta}, \bm{\beta}^*)$ is dropped from the regularity conditions, but the error bounds remain of  the  same order (cf. Remark \ref{staterr-exts} in Section \ref{proof:ora}).
In fact, for   sparse GLMs, by  Theorem \ref{thm:minimaxglm}, the  rate    $\sigma^2 J^* \log (ep)$ is   essentially minimax optimal (thus unbeatable) up to a logarithmic factor. 


\begin{remark}[Regularity condition comparison] \label{rmk:regconds}  
    The GBF-based regularity conditions \eqref{R_0-errrate}, \eqref{R_0-oracond-general}   are  no more demanding than some commonly used regularity conditions.
Assume that $P_\Theta$ is subadditive: $P_{\Theta}(t+s) \le P_{\Theta}(t) + P_{\Theta}(s)$, which holds when it is concave on $\mathbb R_+$. Let $\mathcal J = \mathcal J(\bm\beta)$, $J = |\mathcal J(\bsb{\beta})$\textbar, $\bm\gamma = \bm\beta'-\bm\beta$. Then, from $P_{\Theta}(\varrho\bm{\beta}_{\mathcal J}';\lambda) - P_{\Theta}(\varrho\bm{\beta}_{\mathcal J};\lambda)\le P_{\Theta}(\varrho(\bm{\beta}' - \bm{\beta})_{\mathcal J};\lambda)$ and $P_{\Theta}(\varrho\bm{\beta}_{\mathcal J^c}';\lambda) = P_{\Theta}(\varrho(\bm{\beta}' - \bm{\beta})_{\mathcal J^c};\lambda)$, \eqref{R_0-oracond-general}   is implied by
$P_{\Theta}(\varrho\bm{\gamma}_{\mathcal J}; \lambda) + \vartheta P_H(\varrho\bm{\gamma}_{\mathcal J}; \lambda) + \mathcal L_{\Theta}\mathbf{D}_2(\varrho\bm{\beta},\varrho\bm{\beta}') + \delta\|\bm X\bsb{\gamma}\|_2^2 /2
\le\,(2-\varepsilon ) \bm{\Delta}_{l}(\bm{\beta},\bm{\beta}') + K \lambda^2J + P_{\Theta}( \varrho\bm{\gamma}_{\mathcal J^c} ; \lambda) - \vartheta P_{H}(\varrho\bm{\gamma}_{\mathcal J^c}; \lambda)$,
or
$(1+ \vartheta)P_{\Theta}(\varrho\bm{\gamma}_{\mathcal J}; \lambda) +  \mathcal L_{\Theta}\mathbf{D}_2(\varrho\bm{\beta},\varrho\bm{\beta}') + \delta\|\bm X \bm{\gamma}\|_2^2/2
\le (2-\varepsilon )\bm{\Delta}_{l}(\bm{\beta},\bm{\beta}') + K \lambda^2J + (1-\vartheta) P_{\Theta}( \varrho\bm{\gamma}_{\mathcal J^c} ; \lambda)$ since $P_H \le P_{\Theta}$.

To get more intuition, let     $l(\bm{\beta}) = \|\bm{X\beta} - \bm{y}\|_2^2/2$.  Then the above condition simplifies to
$(1+\vartheta)P_\Theta(\varrho\bm{\gamma}_{\mathcal J};\lambda) +  {\mathcal L_\Theta}\|\varrho\bm{\gamma}\|_2^2/2
\le    (2-\varepsilon') \|\bm X \bm{\gamma}\|_2^2 /2+ K\lambda^2J + (1-\vartheta)P_\Theta(\varrho\bm{\gamma}_{\mathcal J^c};\lambda)$ with $\varepsilon' = \varepsilon +\delta$,  or the following sufficient condition   (with  $K$ redefined) for all $\bm\gamma\in\mathbb R^p$:  
{\small\begin{equation}
(1+\vartheta)P_\Theta(\varrho\bm{\gamma}_{\mathcal J};\lambda) + \frac{\mathcal L_\Theta}{2}\|\varrho\bm{\gamma}\|_2^2
\le   K \sqrt J \lambda  \|\bm X \bm{\gamma}\|_2    + (1-\vartheta)P_\Theta(\varrho\bm{\gamma}_{\mathcal J^c};\lambda).  \label{regcondition-comp}
\end{equation}}For  lasso, where  $P_\Theta(\bm\beta;\lambda) = \lambda\|\bm\beta\|_1$, there is  a rich collection of  regularity conditions  in the literature.  In this convex case,   $\mathcal L_\Theta = 0$ and  $\varrho$ can be arbitrarily large. \eqref{regcondition-comp} reduces to (with $\vartheta$ and $K$ redefined and $\lambda$ canceled)
{\small \begin{align}\label{rem1-4}
(1+\vartheta)\varrho\|\bm{\gamma}_{\mathcal J}\|_1 \le K\sqrt{J}\|\bm X\bm{\gamma}\|_2 + \varrho\|\bm{\gamma}_{\mathcal J^c}\|_1, \forall \bm{\gamma}
\end{align}}for some $K\ge 0, \vartheta>0$.
Taking $\varrho = c\| \bsb{X}\|_2$ results in scale invariance with respect to $\bsb{X}$. Let's compare   \eqref{rem1-4} with    the restricted eigenvalue (RE) condition and the compatibility condition \citep{Bickel2009,van2009conditions}. 
For given $\mathcal J$, the  two conditions assume that there exist positive numbers $\kappa$, $\vartheta_{RE}$ such that
$
J\| \bm X \bm{\gamma} \|_2^2 \ge \kappa\|\bm{\gamma}_{\mathcal J}\|_1^2
$  (compatibility)  
or more restrictively,
$
\| \bm X \bm{\gamma} \|_2^2 \ge \kappa \|\bm{\gamma}_{\mathcal J}\|_2^2
$ (RE), 
for all $\bm{\gamma}:  
(1+\vartheta_{RE})\|\bm{\gamma}_\mathcal{J}\|_1 \ge  \|\bm{\gamma}_{\mathcal J^c}\|_1$. 
 Therefore,   $(1+\vartheta)\varrho\|\bm{\gamma}_{\mathcal J}\|_1 \le K\sqrt{J}\|\bm X\bm{\gamma}\|_2 \vee     \varrho\|\bm{\gamma}_{\mathcal J^c}\|_1$   with   $K = (1+\vartheta_{RE})/(\varrho\sqrt{\kappa})$, $\vartheta = \vartheta_{RE}$. That is, the RE-type conditions are   more   demanding than  \eqref{rem1-4} (and  \eqref{R_0-oracond-general}).
 Another popular set of regularity conditions is based on  restricted strong convexity (RSC).  Under a version of  RSC  condition (and assuming $f$ is differentiable), \cite[Theorem 1]{Loh2015} showed that $\|\tilde{\bm\beta}-\bm\beta^*\|_2^2$  has a bound of order $\sigma^2(J^*\log p)/n$ for any stationary point $\tilde{\bm\beta}$. In the lasso case, the condition becomes $\|\bm X\bm\gamma\|_2^2 \ge \alpha\|\bm\gamma\|_2^2 - \tau\log p\|\bm\gamma\|_1^2/n$ for some constant $\alpha>0$ and $\tau\ge 0$, from which it follows that for any $\bm\gamma:(1+\vartheta_{RE})\|\bm{\gamma}_\mathcal{J}\|_1 \ge  \|\bm{\gamma}_{\mathcal J^c}\|_1$,
$
\|\bm X\bm\gamma\|_2^2  \ge \alpha\|\bm\gamma\|_2^2 - \tau(2+\vartheta_{RE})^2\frac{\log p}{n}\|\bm\gamma_{\mathcal J}\|_1^2
 \ge \alpha\|\bm\gamma\|_2^2 - \tau(2+\vartheta_{RE})^2 \frac{J\log p}{n} \|\bm\gamma_{\mathcal J}\|_2^2
 \ge \kappa'\|\bm\gamma_{\mathcal J}\|_2^2,
$ where $\kappa' = \alpha - \tau(2+\vartheta_{RE})^2(J\log p/n)$. Therefore, when $n\gg J\log p$, RSC implies   RE   and so is more restrictive than  \eqref{rem1-4}. See  Remark \ref{staterr-exts} in Section \ref{proof:ora} for  an extension to  general penalties.
\end{remark}

 \begin{remark}[Technical treatment] \label{advofoptbasedstatanal}  
A big difference between our work and   \cite{Loh2015} is that the latter enforces    an $\ell_1$-type side constraint, for example, $\|\bm\beta\|_1\le R$, in addition to the sparsity-inducing penalty $P$. The use of the constraint is a necessary ingredient of the proofs  and the constraint parameter $R$  appears in  the  minimum sample size condition and the  error bounds implicitly. However, few practically used  algorithms    seem to include such an additional $\ell_1$ constraint.

Our analysis   does not need any side constraint, and the resulting error bounds and the oracle inequality hold  with no minimum sample size requirement.
In fact, in dealing with a general  penalty that may be nonconvex, our treatment of the stochastic term is distinctive from the conventional  ``$\ell_1$ fashion'' via  H\"{o}lder's inequality: $\langle \bsb{\epsilon} , \bsb{X} \bsb{\beta}\rangle \le \|\bsb{X}^\top  \bsb{\epsilon}\|_{\infty} \| \bsb{\beta}\|_1$ (see, e.g., \cite{bunea2007sparsity,Bickel2009,Lounici2011}). More concretely,  applying the union bound to $\|\bsb{X}^\top  \bsb{\epsilon}\|_{\infty} $ will lead to a further upper bound      $\|\bsb{\beta}\|_2^2 +P(\bsb{\beta};\lambda)$ up to multiplicative factors   \cite{Loh2015}, while  we     can bound    $\langle \bsb{\epsilon} , \bsb{X} \bsb{\beta}\rangle$  by the sum of $\| \bsb{X} \bsb{\beta} \|_2^2/a$ and a light penalty $P_H(\bsb{\beta}; \lambda)/b$ for any   $a, b>0$, with a proper choice of $\lambda$. 
\end{remark}

\begin{remark}[Fixed points vs. local minimizers] \label{fixedvslocal}  
Targeting at the fixed points of the Bregman surrogate  instead of the local minimizers of the  original objective  seems more reasonable   from a statistical perspective.  Certainly, if $f$ is smooth,     $\mathcal F$  contains more valid solutions  (cf. Lemma \ref{lemma:degeneracy}).   But a more important reason is that $\mathcal F$   can adaptively exclude  bad local solutions  for some statistical learning problems with severe {nonsmoothness} and nonconvexity.

For instance,  each bridge $\ell_q$-penalty ($q:0\le q<1$) \citep{Frank1993Bridge} determines a  thresholding $\Theta_q$, which is however  the solution for  infinitely many penalties; picking the particular  one constructed from \eqref{pendef} that is the lowest and directionally differentiable   \cite{She2016}, one can  repeat the analysis in Theorems \ref{thm:errrate}, \ref{thm:ora}    to show  provable guarantees for all  the fixed points of the iterative $\Theta_q$ procedure. In contrast, as  pointed out by \cite{Loh2015}, the original optimization problem may contain   ``faulty''   local minimizers. In fact,  when  $q=0$, the $\ell_0$-penalized problem $\min_{\bm\beta}\|\bm X\bm\beta - \bm y\|_2^2/2+(\lambda^2/2)\|\bm\beta\|_0$ (not directionally differentiable)  \emph{always} has  $\bm 0$ as a local minimizer which is however  a poor estimator  as $\bsb{\beta}^*$ is large. \  Switching to   the surrogate's   fixed points successfully addresses the issue: $\hat{\bsb{\beta}} = \bsb{0}$ is a valid fixed point only when $\bm X^\top\bm y$ is properly small:   $\|\bm X^\top\bm y\|_\infty \le \lambda$, or the true signal is inconsequential   relative to the maximum noise level.     \end{remark}

\subsubsection{Statistical analysis of   the iterates from Bregman surrogates}
\label{sec:stat-seq}

We show  a nice  result for     \eqref{P_iterate} in the composite setting: under a   regularity  condition similar to those in Section \ref{subsec:fixed},  with high probability, the $t$-th iterate can approach the statistical target within the desired   precision geometrically fast, even when $p>n$. Specifically, we add   a mild  multiple of $\bm\Delta_\psi$ to the left-hand side of   \eqref{R_0-errrate} and assume that for some $\delta > 0$, $\varepsilon > 0$, $\vartheta > 0$ and large $K\geq 0$,    
{\small
\begin{align}\label{S}
\begin{split}
&\varepsilon\bm\Delta_\psi(\bm{\beta}^*,\bm{\beta}) + \delta\mathbf D_2(\bm{X\beta},\bm{X\beta}^*) + \vartheta P_H(\varrho(\bm{\beta} - \bm\beta^*);\lambda) + P_{\Theta}(\varrho\bm\beta^*;\lambda) \\
\le\,& (2\sym{\bm{\Delta}}_{l} - \varrho^2\mathcal L_{\Theta}\mathbf{D}_2)( \bm{\beta}, \bm{\beta}^*) + P_{\Theta}(\varrho\bm{\beta};\lambda) + K\lambda^2J(\bm{\beta}^*), \forall \bm\beta
\end{split}
\end{align}}\normalfont and   $\psi$ is   differentiable for simplicity.
Recall that  \eqref{strcvx-cond-1} in Proposition \ref{th:strongcvx} requires   $2\sym{\bm\Delta}_f$ to dominate     $\varepsilon\bm\Delta_\psi$; \eqref{S} gives    a large-$p$ extension of it.

\begin{thm}\label{th:stat_bound}
Under   the above regularity condition, for $\lambda = A\sigma\sqrt{\log(ep)}/\allowbreak\sqrt{(\delta\wedge\vartheta )\vartheta}$ with $A$ sufficiently large and $\kappa = 1/(1+\varepsilon)$, we have
{\small \begin{equation} \label{seq-linear}
\bm\Delta_\psi(\bm{\beta}^*,\bm{\beta}^{(t)}) \leq \kappa^t\bm\Delta_\psi(\bm{\beta}^*,\bm{\beta}^{(0)}) + \frac{\kappa}{1-\kappa}( K\lambda^2J^* - \min_{1\le s\le t}\bm\Delta_\psi(\bm\beta^{(s)},\bm\beta^{(s-1)}) )
\end{equation}}for any $t\ge 1$ with probability at least $1-Cp^{-cA^2}$, where $C,c$ are universal positive constants.
\end{thm}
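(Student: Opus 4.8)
The plan is to reduce \eqref{seq-linear} to a single deterministic one-step contraction and then unroll a geometric recursion, the only probabilistic input being a uniform control of the effective noise $\bm\epsilon=-\nabla l_0(\bm X\bm\beta^*)$ from \eqref{noise-def}.

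First I would record the one-step relation. Since $\psi$ is differentiable, $\bm\Delta_\psi$ satisfies the three-point (weak idempotence) identity of Lemma \ref{lemma:Delta_Delta},
\[
\bm\Delta_\psi(\bm\beta^*,\bm\beta^{(t)}) = \bm\Delta_\psi(\bm\beta^*,\bm\beta^{(t+1)}) + \bm\Delta_\psi(\bm\beta^{(t+1)},\bm\beta^{(t)}) + \langle\nabla\psi(\bm\beta^{(t)})-\nabla\psi(\bm\beta^{(t+1)}),\bm\beta^{(t+1)}-\bm\beta^*\rangle.
\]
Because $\bm\beta^{(t+1)}$ globally minimizes $g(\cdot;\bm\beta^{(t)})$, it is directionally stationary, so $\delta l(\bm\beta^{(t+1)};\bm h)+\delta[P_\Theta(\varrho\,\cdot\,;\lambda)](\bm\beta^{(t+1)};\bm h)+\langle\nabla\psi(\bm\beta^{(t+1)})-\nabla\psi(\bm\beta^{(t)}),\bm h\rangle\ge0$; taking $\bm h=\bm\beta^*-\bm\beta^{(t+1)}$ lower-bounds the cross term. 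The key algebraic observation is that rewriting $-\delta l(\bm\beta^{(t+1)};\bm\beta^*-\bm\beta^{(t+1)})$ through the two one-sided Bregman expansions of $l$ (at $\bm\beta^*$ and at $\bm\beta^{(t+1)}$) produces exactly $2\sym{\bm\Delta}_l(\bm\beta^{(t+1)},\bm\beta^*)-\langle\bm\epsilon,\bm X(\bm\beta^{(t+1)}-\bm\beta^*)\rangle$, while the penalty term yields $P_\Theta(\varrho\bm\beta^{(t+1)};\lambda)-P_\Theta(\varrho\bm\beta^*;\lambda)$ together with a remainder bounded below by $-\varrho^2\mathcal L_\Theta\mathbf D_2(\bm\beta^{(t+1)},\bm\beta^*)$ using the concavity number of $P_\Theta$ and Lemma \ref{lemma:Delta_linear}. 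This is precisely why condition \eqref{S} is phrased through $2\sym{\bm\Delta}_l-\varrho^2\mathcal L_\Theta\mathbf D_2$.

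Substituting \eqref{S} evaluated at $\bm\beta=\bm\beta^{(t+1)}$ then replaces the loss-plus-penalty aggregate by $\varepsilon\bm\Delta_\psi(\bm\beta^*,\bm\beta^{(t+1)})+\delta\mathbf D_2(\bm X\bm\beta^{(t+1)},\bm X\bm\beta^*)+\vartheta P_H(\varrho(\bm\beta^{(t+1)}-\bm\beta^*);\lambda)-K\lambda^2J^*$, leaving
\[
\bm\Delta_\psi(\bm\beta^*,\bm\beta^{(t)}) \ge (1+\varepsilon)\bm\Delta_\psi(\bm\beta^*,\bm\beta^{(t+1)}) + \bm\Delta_\psi(\bm\beta^{(t+1)},\bm\beta^{(t)}) - K\lambda^2J^* + T_t,
\]
with stochastic residual $T_t=\delta\mathbf D_2(\bm X\bm\beta^{(t+1)},\bm X\bm\beta^*)+\vartheta P_H(\varrho(\bm\beta^{(t+1)}-\bm\beta^*);\lambda)-\langle\bm\epsilon,\bm X(\bm\beta^{(t+1)}-\bm\beta^*)\rangle$. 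The crux, and the main obstacle, is to show $T_t\ge0$. Since $\bm\beta^{(t+1)}$ is random and data-dependent, I would establish the bound \emph{uniformly}: on an event of probability at least $1-Cp^{-cA^2}$, $\langle\bm\epsilon,\bm X(\bm\beta-\bm\beta^*)\rangle\le\delta\mathbf D_2(\bm X\bm\beta,\bm X\bm\beta^*)+\vartheta P_H(\varrho(\bm\beta-\bm\beta^*);\lambda)$ holds for \emph{all} $\bm\beta$ at once. This uses sub-Gaussianity of $\bm\epsilon$: one splits the inner product across the sparsity levels of $\bm\beta-\bm\beta^*$, applies a maximal/peeling inequality over the $\binom{p}{k}$ supports, and calibrates $\lambda=A\sigma\sqrt{\log(ep)}/\sqrt{(\delta\wedge\vartheta)\vartheta}$ so that the $\mathbf D_2$ and $P_H$ terms dominate (the ``$\|\bm X\bm\beta\|_2^2/a+P_H/b$'' device of Remark \ref{advofoptbasedstatanal}). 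This is the same empirical-process step underlying Theorems \ref{thm:errrate} and \ref{thm:ora}, and because it is uniform in $\bm\beta$ it simultaneously covers every random iterate, so no extra union bound over $t$ is needed.

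On that event the one-step contraction $(1+\varepsilon)\bm\Delta_\psi(\bm\beta^*,\bm\beta^{(t+1)})+\bm\Delta_\psi(\bm\beta^{(t+1)},\bm\beta^{(t)})\le\bm\Delta_\psi(\bm\beta^*,\bm\beta^{(t)})+K\lambda^2J^*$ holds for all $t$. Writing $a_t=\bm\Delta_\psi(\bm\beta^*,\bm\beta^{(t)})$, $D_t=\bm\Delta_\psi(\bm\beta^{(t)},\bm\beta^{(t-1)})$ and $\kappa=1/(1+\varepsilon)$, this reads $a_{t+1}\le\kappa(a_t-D_{t+1}+K\lambda^2J^*)$, which iterates to $a_t\le\kappa^t a_0+\sum_{j=1}^t\kappa^{t-j+1}(K\lambda^2J^*-D_j)$. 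Bounding each $D_j$ below by $\min_{1\le s\le t}\bm\Delta_\psi(\bm\beta^{(s)},\bm\beta^{(s-1)})$ and summing the geometric series through $\sum_{j=1}^t\kappa^{t-j+1}\le\kappa/(1-\kappa)=1/\varepsilon$ gives exactly \eqref{seq-linear}. The only place requiring mild care is this last geometric bookkeeping, where one uses that $\kappa\in(0,1)$ together with $\kappa/(1-\kappa)=1/\varepsilon$ to pass from the partial geometric sum to the closed-form factor $\kappa/(1-\kappa)$.
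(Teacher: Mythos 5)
Your proposal follows essentially the same route as the paper's proof: the one-step inequality from the iterate's optimality (the paper packages your three-point-identity-plus-stationarity step as Lemma \ref{lemma:f_triangle} together with the idempotence $\bm\Delta_{\breg_\psi(\cdot;\bm\beta^{(t)})}=\bm\Delta_\psi$ and the $\varrho^2\mathcal L_\Theta\mathbf D_2$ correction from Lemma \ref{lemma:Lp}), the substitution of \eqref{S} at $\bm\beta=\bm\beta^{(t+1)}$ yielding exactly the paper's \eqref{stat-triangle-3}, the uniform-in-$\bm\beta$ control of the stochastic term via Lemma \ref{lemma:phostochastic} so that no union bound over $t$ is needed, and the same geometric unrolling of $a_{t+1}\le\kappa(a_t-D_{t+1}+K\lambda^2J^*)$. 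The argument is correct and matches the paper's in all essential respects.
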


The error measure $\bm\Delta_\psi(\bm{\beta}^*,\bm{\beta}^{(t)})$   in \eqref{seq-linear}    has $\bm\beta^*$ as its first argument and differs  from the $\bm\Delta_l(\hat{\bm\beta},\bm\beta^*)$  used in \eqref{R_0-oraineq-general}. 
According to the proof, \eqref{S}  only needs to hold for $\bm\beta= \bm\beta^{(s)}$  ($0\le s\le t$),  and so different starting values may give different values of $\kappa$.
With $\bm\Delta_\psi \ge 0$ (which can  be realized  by stepsize control),
the fast converging  statistical error  to  $\mathcal O(\sigma^2J^*\log(ep))$ implies that    over-optimization may be unnecessary.
As an example, consider the iterative thresholding procedures with $\breg_l \le L \Breg_2$ and $\varrho^2 > L$. Then \eqref{seq-linear} yields{\small\begin{align*}
\|\bm\beta^*-\bm\beta^{(t)}\|_2^2 \le \kappa^t \frac{   \varrho^2  }{ \varrho^2-L }\|\bm\beta^*-\bm\beta^{(0)}\|_2^2+ \frac{2\kappa K}{(1-\kappa)(\varrho^2-L)} \lambda^2J^*.
\end{align*}}So it is possible to terminate the iterative algorithm   before full computational convergence without sacrificing much statistical accuracy. The simulations in Section   \ref{subsec:simu_stat}  support this point.

\begin{remark}  
Theorem \ref{th:stat_bound} reveals the fast decay of  the \emph{direct} statistical error between $\bm\beta^{(t)}$ and $\bm\beta^*$.   \cite{Agarwal2012} and \cite{Loh2015} argued a similar point for gradient descent type algorithms, in a somehow indirect manner:   (i) $\bm\beta^{(t)}$ can approach any globally optimal solution $\tilde{\bm\beta}$ geometrically fast in computation under a combination of an RSC condition and an RSM condition, and  (ii) under some regularity conditions, every local minimum point is close enough to the authentic $\bm\beta^*$.
In the RSC condition  for (i), the factor proceeding the dominant term $\sym{\bm\Delta}_l$ is $1$ ({there are two different sets of RSC conditions used in Theorem 1 and Theorem 3 of \cite{Loh2015}, the factor $\alpha_1$ in the second set corresponding to \emph{half} of the $\alpha_1$ used in the first set}). But  \eqref{S} allows it to be $2$.
Moreover,  Theorem \ref{th:stat_bound} does not need the extra RSM condition and  applies to a broader class of algorithms.
For example,  we can  show that the statistical error of the {LLA} algorithm   reduces at a linear rate to the desired precision under some regularity conditions; see Proposition \ref{thm:stat_LLA} and Lemma \ref{lemma:Delta_Delta_P} in Section \ref{subsec:LLA-stat}. 
\end{remark}

\section{Two  acceleration schemes for generalized Bregman surrogates}\label{sec:acc}

 How to accelerate first-order algorithms without incurring much additional cost {per iteration}   has lately attracted lots of   attention in  big data applications.
In  convex optimization, Nesterov's momentum techniques prove to be quite effective in that the rate of convergence can be improved from $\mathcal O(1/t)$ to $\mathcal O(1/t^2)$, which is optimal when using  first-order methods on  smooth  problems \citep{Nesterov2004book,Tseng2008,Beck2009FISTA,Krich2015}.
This section attempts to extend Nesterov's \textit{first} and \textit{second} accelerations \citep{Nesterov1983, Nesterov1988} to   Bregman-surrogate algorithms. With a possible  lack of smoothness or convexity, carefully choosing the relaxation parameters and step sizes is the key, and we will see the benefit of   maximizing a quantity  $R_t/(\theta_t^2\rho_t)$ at the $t$-th iteration,  with $R_t$  appropriately defined via generalized Bregman notation. 
 We consider the following two broad scenarios to devise the acceleration schemes.

\paragraph*{Scenario 1} $g(\bm{\beta};\bm{\gamma}) = f(\bm{\beta}) - \bm\Delta_{\psi_0}(\bm{\beta},\bm{\gamma}) + \rho\mathbf D_2(\bm{\beta},\bm{\gamma})$.
This surrogate family includes gradient descent type algorithms. Often, if $\min_{\bm\beta} f(\bm\beta) + \bm\Delta_\psi(\bm\beta,\bm\gamma)$ is easy to solve, so is $\min_{\bm\beta} f(\bm\beta) + \bm\Delta_\psi(\bm\beta,\bm\gamma) + \rho\mathbf D_2(\bm\beta,\bm\gamma)$, in which case $\psi_0 = -\psi$.

\paragraph*{Scenario 2} $g(\bm{\beta};\bm{\gamma}) = f(\bm{\beta}) - \bm\Delta_{\psi_0}(\bm{\beta},\bm{\gamma}) + \rho\bm\Delta_\phi(\bm{\beta},\bm{\gamma})$.
This gives a more general class than the first one. 
\\

This section assumes that $f$,  $\psi_0$, $\phi$, $\bm\Delta_{\psi_0}(\cdot,\bm\gamma)$,  $\bm\Delta_{\psi_0}(\cdot,\bm\gamma)$ are directionally differentiable given  any $\bm\gamma$. We introduce a convenient notation $\mathbf C_\psi$ defined for any $\psi$ as follows
\begin{equation} \label{def:C}
\mathbf C_\psi(\bm{\alpha},\bm{\beta},\theta) = \theta \psi(\bm{\alpha}) + (1-\theta)\psi(\bm{\beta}) - \psi(\theta\bm{\alpha}+(1-\theta)\bm{\beta}), \end{equation}
where $0\le\theta\le 1$.
Like $\breg$, $\mathbf C$ is a linear operator of $\psi$ and its nonnegativity means   convexity. Some connections between $\breg$ and   $\mathbf C$ are given below.

\begin{lemma} \label{lemma:C}
  Let $\psi$ be directionally differentiable. (i)  $\mathbf C_\psi(\bm\alpha,\bm\beta,\theta) = (1-\theta)\bm\Delta_\psi(\bm\beta,\bm\alpha) - \bm\Delta_\psi(\theta\bm\alpha+(1-\theta)\bm\beta,\bm\alpha)$ for any $\bm\alpha,\bm\beta$ and  $\theta\in [0,1]$.
(ii) $\mathbf C_{\bm{\Delta}_\psi(\cdot,\bm\alpha)} = \mathbf C_\psi$  if   $\psi$ is differentiable at $\bsb{\alpha}$.
\end{lemma}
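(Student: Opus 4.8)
For part (i), the plan is a direct expansion of the right-hand side using the definition of the generalized Bregman function. Write $\bm z = \theta\bm\alpha+(1-\theta)\bm\beta$ and note the key algebraic fact $\bm z-\bm\alpha=(1-\theta)(\bm\beta-\bm\alpha)$. Expanding $\bm\Delta_\psi(\bm\beta,\bm\alpha)=\psi(\bm\beta)-\psi(\bm\alpha)-\delta\psi(\bm\alpha;\bm\beta-\bm\alpha)$ and $\bm\Delta_\psi(\bm z,\bm\alpha)=\psi(\bm z)-\psi(\bm\alpha)-\delta\psi(\bm\alpha;\bm z-\bm\alpha)$, I would substitute $\bm z-\bm\alpha=(1-\theta)(\bm\beta-\bm\alpha)$ into the last directional-derivative term. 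Because $\theta\in[0,1]$ forces $1-\theta\ge 0$, the positive homogeneity of $\delta\psi(\bm\alpha;\cdot)$ (recorded just after Definition \ref{def:Gateaux}) gives $\delta\psi(\bm\alpha;\bm z-\bm\alpha)=(1-\theta)\,\delta\psi(\bm\alpha;\bm\beta-\bm\alpha)$. Forming $(1-\theta)\bm\Delta_\psi(\bm\beta,\bm\alpha)-\bm\Delta_\psi(\bm z,\bm\alpha)$, the two directional-derivative contributions then carry the common factor $(1-\theta)$ and cancel exactly, as do the surplus $\psi(\bm\alpha)$ terms, leaving $\theta\psi(\bm\alpha)+(1-\theta)\psi(\bm\beta)-\psi(\bm z)=\mathbf C_\psi(\bm\alpha,\bm\beta,\theta)$, which is precisely the claim.

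For part (ii), let $\phi(\cdot)=\bm\Delta_\psi(\cdot,\bm\alpha)$. The plan is to exploit that differentiability of $\psi$ at $\bm\alpha$ upgrades $\delta\psi(\bm\alpha;\cdot)$ from a merely positively homogeneous map to the \emph{linear} functional $\langle\nabla\psi(\bm\alpha),\cdot\rangle$. Consequently $\phi(\bm x)=\psi(\bm x)-\psi(\bm\alpha)-\langle\nabla\psi(\bm\alpha),\bm x-\bm\alpha\rangle$ differs from $\psi$ only by an affine function $a(\bm x)=\psi(\bm\alpha)+\langle\nabla\psi(\bm\alpha),\bm x-\bm\alpha\rangle$. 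Since $\mathbf C$ is linear in its subscript (as noted right after its definition \eqref{def:C}), it suffices to check that $\mathbf C$ annihilates affine functions: for $a(\bm x)=\langle\bm c,\bm x\rangle+d$ one computes directly that $\theta a(\bm\alpha)+(1-\theta)a(\bm\beta)-a(\theta\bm\alpha+(1-\theta)\bm\beta)=0$, the linear parts combining to zero by the identity $\theta\bm\alpha+(1-\theta)\bm\beta=\bm z$ and the constants telescoping. Hence $\mathbf C_{\phi}=\mathbf C_{\psi-a}=\mathbf C_\psi-\mathbf C_a=\mathbf C_\psi$. Alternatively, one could re-derive this from part (i) applied to both $\phi$ and $\psi$ together with the strong idempotence identity $\bm\Delta_{\bm\Delta_\psi(\cdot,\bm\alpha)}(\cdot,\cdot)=\bm\Delta_\psi(\cdot,\cdot)$ of Lemma \ref{lemma:Delta_Delta}(iii), whose hypothesis (differentiability at $\bm\alpha$) matches the one here; but the affine-annihilation route is cleaner and self-contained.

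Neither part presents a serious obstacle, so the main point to get right is conceptual rather than computational: tracking \emph{why} the stated hypotheses are exactly what the cancellations require. In (i) the restriction $\theta\in[0,1]$ is not cosmetic---it is what guarantees $1-\theta\ge 0$ so that positive homogeneity can be invoked; for $\theta\notin[0,1]$ one would be pulling a negative scalar through a one-sided derivative and the identity can fail. In (ii) the step that demands genuine differentiability (not merely directional differentiability) is the linearity of $\delta\psi(\bm\alpha;\cdot)$: without it $\phi$ is $\psi$ minus a positively homogeneous but possibly nonlinear term, and the affine-annihilation argument collapses. I would therefore flag these two homogeneity/linearity distinctions explicitly, since they are the only places where the proof could silently break.
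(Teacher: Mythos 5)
Your proposal is correct and follows essentially the same route as the paper's proof: part (i) is the same direct expansion exploiting positive homogeneity of $\delta\psi(\bm\alpha;\cdot)$ under $1-\theta\ge 0$, and part (ii) matches the paper's observation that $\bm\Delta_\psi(\cdot,\bm\alpha)$ differs from $\psi$ by the term $\psi(\bm\alpha)+\delta\psi(\bm\alpha;\cdot-\bm\alpha)$, which becomes affine under differentiability at $\bm\alpha$ and is therefore annihilated by the linear operator $\mathbf C$. Your closing remarks correctly identify the two places (homogeneity in (i), linearity in (ii)) where the hypotheses are actually used.
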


\paragraph*{An acceleration scheme of the second kind}
 Scenario 2 is of our primary interest since it applies more broadly. Below, we modify the surrogate and define   an iterative algorithm  (not a descent method)
 that involves three sequences $\bm\alpha^{(t)}$, $\bm\beta^{(t)}$, $\bm\gamma^{(t)}$ starting at $\bm\alpha^{(0)}=\bm\beta^{(0)}$:
\begin{subequations}\label{acc2-alg}
\begin{align}
&\bm{\gamma}^{(t)}   =  (1-\theta_t)\bm{\beta}^{(t)} + \theta_t\bm{\alpha}^{(t)} \label{acc2-alg1}\\
&\bm{\alpha}^{(t+1)}   {=}  \mathop{{\arg}{\min}} f(\bm{\beta}) {-} \bm\Delta_{\psi_0}(\bm{\beta},\bm{\gamma}^{(t)}) {+}\mu_0 \breg_{\phi}(\bm{\beta},\bm{\gamma}^{(t)}){+} \theta_t\rho_t\bm\Delta_\phi(\bm{\beta},\bm{\alpha}^{(t)})\label{acc2-alg2}\\
&\bm{\beta}^{(t+1)}  = (1-\theta_t)\bm{\beta}^{(t)} + \theta_t\bm{\alpha}^{(t+1)}, \label{acc2-alg3}
\end{align}
\end{subequations}
for some  $\mu_0\ge 0$,  $\theta_t\in (0, 1]$, $\rho_t>0$ ($\forall t\ge 0$), to be chosen later. Notice  the extra GBF term $\mu_0 \breg_{\phi}(\cdot,\bm{\gamma}^{(t)})$ in \eqref{acc2-alg2} in addition to $\breg_\phi(\cdot,\bm{\alpha}^{(t)})$.
The design of relaxation parameters $\theta_t$  and inverse step size parameters  $\rho_t, \mu_0$ holds the key to acceleration.
Let \begin{align}\bar \psi_0 = \psi_0 - \mu_0\phi. \label{psi0bardef}\end{align} We advocate the following line search criterion
{ 
\begin{subequations}\label{acc2_search}
\begin{align}
\begin{split}
&R_t:=\theta_t^2\rho_t\bm\Delta_\phi(\bm{\alpha}^{(t+1)},\bm{\alpha}^{(t)}) - \bm\Delta_{\bar\psi_0}(\bm{\beta}^{(t+1)},\bm{\gamma}^{(t)}) + (1-\theta_t)\bm\Delta_{\bar \psi_0}(\bm{\beta}^{(t)},\bm{\gamma}^{(t)}) \\
& \qquad \ + \mathbf C_{f(\cdot)-\bm\Delta_{\bar \psi_0}(\cdot,\bm\gamma^{(t)})}(\bm\alpha^{(t+1)},\bm\beta^{(t)},\theta_t)\ge 0,\end{split}\label{acc2_search1}
\\&\frac{\theta_t^2}{1-\theta_t} = \frac{\theta_{t-1}(\rho_{t-1}\theta_{t-1} + \mu_0)}{\rho_t},  \ t\ge 1. \label{acc2_search2}
\end{align}\end{subequations}}\normalfont
The update of the relaxation parameter involves $\rho$ and $\mu$ as well. 

Theorem \ref{th:comp_acc2} presents two error  bounds without assuming   convexity or smoothness, and shows     in general the  reasonability of \eqref{acc2_search1}. 

\begin{thm}\label{th:comp_acc2}
Let    $\rho_t$ be any positive sequence. Consider the algorithm defined by \eqref{acc2-alg1}--\eqref{acc2-alg3} and \eqref{acc2_search2}.
Let $\mathcal E_t(\bm\beta) = \bm\Delta_{\bar \psi_0}(\bm{\beta},\bm{\gamma}^{(t)}) + \bm\Delta_{f(\cdot)-\bm\Delta_{\psi_0}(\cdot,\bm\gamma^{(t)})}(\bm\beta,\bm\alpha^{(t+1)})+(\mu_0\bm\Delta_{\bm\Delta_\phi(\cdot,\bm\gamma^{(t)})-\phi(\cdot)}\allowbreak+\theta_t\rho_t\bm\Delta_{\bm\Delta_\phi(\cdot,\bm\alpha^{(t)})-\phi(\cdot)})(\bm\beta,\bm\alpha^{(t+1)})$.

(i) When $\mu_0=0$, for any $\bm{\beta}$ and $T\ge 0$,
{ \begin{equation} \label{acc2_result}
\begin{split}
&\frac{f(\bm{\beta}^{(T+1)})-f(\bm{\beta})}{\theta_T^2\rho_T} + T{\cdot}\mathop{\avg}_{0\le t\le T}\frac{\mathcal E_t(\bm\beta)}{\theta_t\rho_t} + T{\cdot}\mathop{\avg}_{0\le t\le T}\frac{R_t}{\theta_t^2\rho_t}  \\ \le \, &
\bm\Delta_\phi(\bm{\beta},\bm{\alpha}^{(0)}) - \bm\Delta_\phi(\bm{\beta},\bm{\alpha}^{(T+1)})+ \frac{1-\theta_0}{\theta_0^2\rho_0}\big[f(\bm{\beta}^{(0)})-f(\bm{\beta})\big] .
\end{split}\end{equation}}

(ii) Moreover, given any $\mu_0\ge 0$, {
\begin{equation}\label{eq:th_acc2-conclusion2}
\begin{split}
& f(\bm{\beta}^{(T+1)})-f(\bm{\beta} )   +\theta_T^2(\rho_T +\frac{\mu_0}{\theta_T}) \bm\Delta_\phi(\bm{\beta},\bm{\alpha}^{(T+1)})\\& + {\mbox{\large$\Sigma$}}_{t=0}^T \big(\Pi_{s=t+1}^T (1-\theta_s)\big) (R_t + \theta_t \mathcal E_t(\bsb{\beta}))\\
\le\,& \Big ({\mbox{\small$\prod$}}_{t=1}^T(1-\theta_t)\Big)\big[(1-\theta_0)(f(\bm{\beta}^{(0)})-f(\bm{\beta}))+\theta_0^2\rho_0 \bm\Delta_\phi(\bm{\beta},\bm{\beta}^{(0)})\big]
\end{split}
\end{equation}
}for all  $\bsb{\beta}$ and $T\ge 0$,
where by convention,  $\prod_{s = l}^u a_s=1$ as $l>u$.

\end{thm}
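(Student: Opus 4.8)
The plan is to prove both bounds from a single per-iteration inequality, obtained by a careful generalized-Bregman expansion of the optimality condition for $\bm\alpha^{(t+1)}$, and then to extract \eqref{acc2_result} and \eqref{eq:th_acc2-conclusion2} by two different telescopings. The first move is to consolidate the surrogate. Since $-\bm\Delta_{\psi_0}(\cdot,\bm\gamma^{(t)}) + \mu_0\bm\Delta_\phi(\cdot,\bm\gamma^{(t)}) = -\bm\Delta_{\bar\psi_0}(\cdot,\bm\gamma^{(t)})$ by the linearity in Lemma \ref{lemma:Delta_linear}(i) and $\bar\psi_0 = \psi_0-\mu_0\phi$, the point $\bm\alpha^{(t+1)}$ in \eqref{acc2-alg2} is a global minimizer of $\Phi_t(\cdot) := f(\cdot) - \bm\Delta_{\bar\psi_0}(\cdot,\bm\gamma^{(t)}) + \theta_t\rho_t\bm\Delta_\phi(\cdot,\bm\alpha^{(t)})$. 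Because $\bm\alpha^{(t+1)}$ minimizes $\Phi_t$, the one-sided derivative $\delta\Phi_t(\bm\alpha^{(t+1)};\bm\beta-\bm\alpha^{(t+1)})\ge 0$ for every $\bm\beta$; combining this with the defining identity of Definition \ref{def:Breg}, $\Phi_t(\bm\beta)-\Phi_t(\bm\alpha^{(t+1)}) = \delta\Phi_t(\bm\alpha^{(t+1)};\bm\beta-\bm\alpha^{(t+1)}) + \bm\Delta_{\Phi_t}(\bm\beta,\bm\alpha^{(t+1)})$, yields the minimizer inequality $\Phi_t(\bm\beta)\ge\Phi_t(\bm\alpha^{(t+1)}) + \bm\Delta_{\Phi_t}(\bm\beta,\bm\alpha^{(t+1)})$.

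Next I would expand $\bm\Delta_{\Phi_t}(\bm\beta,\bm\alpha^{(t+1)})$. By linearity it decomposes into the GBF of $f-\bm\Delta_{\bar\psi_0}(\cdot,\bm\gamma^{(t)})$ plus $\theta_t\rho_t\bm\Delta_{\bm\Delta_\phi(\cdot,\bm\alpha^{(t)})}(\bm\beta,\bm\alpha^{(t+1)})$. The key algebraic device is to write each term of the form $\bm\Delta_{\bm\Delta_\phi(\cdot,\bm\alpha)}$ as $\bm\Delta_\phi + \bm\Delta_{\bm\Delta_\phi(\cdot,\bm\alpha)-\phi}$ (again by linearity, since $\bm\Delta_\phi(\cdot,\bm\alpha)=\phi+[\bm\Delta_\phi(\cdot,\bm\alpha)-\phi]$), and to convert between $\bar\psi_0$ and $\psi_0$ via $\bm\Delta_{\psi_0}(\cdot,\bm\gamma^{(t)})-\bm\Delta_{\bar\psi_0}(\cdot,\bm\gamma^{(t)})=\mu_0\bm\Delta_\phi(\cdot,\bm\gamma^{(t)})$. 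After collecting terms, the ``multiplicative'' remainder is exactly $(\mu_0+\theta_t\rho_t)\bm\Delta_\phi(\bm\beta,\bm\alpha^{(t+1)})$, while the remaining GBF corrections, together with $\bm\Delta_{\bar\psi_0}(\bm\beta,\bm\gamma^{(t)})$, assemble precisely into $\mathcal E_t(\bm\beta)$. Isolating $f(\bm\beta)$ in the minimizer inequality then gives
\begin{align*}
f(\bm\beta)\ge\,& f(\bm\alpha^{(t+1)}) - \bm\Delta_{\bar\psi_0}(\bm\alpha^{(t+1)},\bm\gamma^{(t)}) + \theta_t\rho_t\bm\Delta_\phi(\bm\alpha^{(t+1)},\bm\alpha^{(t)}) \\
&- \theta_t\rho_t\bm\Delta_\phi(\bm\beta,\bm\alpha^{(t)}) + (\mu_0+\theta_t\rho_t)\bm\Delta_\phi(\bm\beta,\bm\alpha^{(t+1)}) + \mathcal E_t(\bm\beta).
\end{align*}

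The third step converts $f(\bm\alpha^{(t+1)})$ into the iterate values using \eqref{acc2-alg3} and Lemma \ref{lemma:C}. With $F := f-\bm\Delta_{\bar\psi_0}(\cdot,\bm\gamma^{(t)})$ and $\bm\beta^{(t+1)}=\theta_t\bm\alpha^{(t+1)}+(1-\theta_t)\bm\beta^{(t)}$, the definition of $\mathbf C$ gives $f(\bm\beta^{(t+1)}) = \theta_t F(\bm\alpha^{(t+1)}) + (1-\theta_t)F(\bm\beta^{(t)}) - \mathbf C_F(\bm\alpha^{(t+1)},\bm\beta^{(t)},\theta_t) + \bm\Delta_{\bar\psi_0}(\bm\beta^{(t+1)},\bm\gamma^{(t)})$. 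Multiplying the displayed inequality by $\theta_t$ and substituting, the four quantities $\theta_t^2\rho_t\bm\Delta_\phi(\bm\alpha^{(t+1)},\bm\alpha^{(t)})$, $-\bm\Delta_{\bar\psi_0}(\bm\beta^{(t+1)},\bm\gamma^{(t)})$, $(1-\theta_t)\bm\Delta_{\bar\psi_0}(\bm\beta^{(t)},\bm\gamma^{(t)})$ and $\mathbf C_F(\bm\alpha^{(t+1)},\bm\beta^{(t)},\theta_t)$ merge into $R_t$ as in \eqref{acc2_search1}, producing the master recursion
\begin{align*}
&f(\bm\beta^{(t+1)})-f(\bm\beta) + (\theta_t\mu_0+\theta_t^2\rho_t)\bm\Delta_\phi(\bm\beta,\bm\alpha^{(t+1)}) + R_t + \theta_t\mathcal E_t(\bm\beta) \\
\le\,& (1-\theta_t)\big[f(\bm\beta^{(t)})-f(\bm\beta)\big] + \theta_t^2\rho_t\bm\Delta_\phi(\bm\beta,\bm\alpha^{(t)}),
\end{align*}
valid for all $t\ge 0$ and all $\bm\beta$.

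Finally I would telescope. Writing $a_t=\theta_t\mu_0+\theta_t^2\rho_t$, the relaxation update \eqref{acc2_search2} is exactly $\theta_t^2\rho_t=(1-\theta_t)a_{t-1}$ for $t\ge 1$, so with $U_t:=f(\bm\beta^{(t)})-f(\bm\beta)+a_{t-1}\bm\Delta_\phi(\bm\beta,\bm\alpha^{(t)})$ (using $\bm\alpha^{(0)}=\bm\beta^{(0)}$ at $t=0$) the master recursion reads $U_{t+1}+R_t+\theta_t\mathcal E_t(\bm\beta)\le(1-\theta_t)U_t$. For part (ii), unrolling from $t=T$ down to $t=0$ and weighting the $t$-th inequality by $\prod_{s=t+1}^T(1-\theta_s)$ gives \eqref{eq:th_acc2-conclusion2}, noting $a_T=\theta_T^2(\rho_T+\mu_0/\theta_T)$. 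For part (i), where $\mu_0=0$ and $a_t=\theta_t^2\rho_t$, I would instead divide the master recursion by $\theta_t^2\rho_t$ and use $(1-\theta_t)/(\theta_t^2\rho_t)=1/(\theta_{t-1}^2\rho_{t-1})$ from \eqref{acc2_search2}; the function-value terms telescope cleanly and summing over $0\le t\le T$ yields \eqref{acc2_result}. I expect the main obstacle to be the second step: the bookkeeping that shows $\bm\Delta_{\bar\psi_0}(\bm\beta,\bm\gamma^{(t)})+\bm\Delta_{\Phi_t}(\bm\beta,\bm\alpha^{(t+1)})$ collapses to exactly $\mathcal E_t(\bm\beta)+(\mu_0+\theta_t\rho_t)\bm\Delta_\phi(\bm\beta,\bm\alpha^{(t+1)})$. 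This relies only on the linearity of $\bm\Delta$ and the splitting $\bm\Delta_{\bm\Delta_\phi(\cdot,\bm\alpha)}=\bm\Delta_\phi+\bm\Delta_{\bm\Delta_\phi(\cdot,\bm\alpha)-\phi}$, so that no convexity or smoothness of $f$, $\psi_0$ or $\phi$ is ever used, which is precisely what lets the theorem hold in the nonconvex nonsmooth setting.
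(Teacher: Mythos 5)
Your proposal is correct and follows essentially the same route as the paper's proof: the first-order optimality condition for $\bm\alpha^{(t+1)}$ combined with the GBF definition (the content of Lemma \ref{lemma:f_triangle}), the linearity splitting $\bm\Delta_{\bm\Delta_\phi(\cdot,\bm\alpha)}=\bm\Delta_\phi+\bm\Delta_{\bm\Delta_\phi(\cdot,\bm\alpha)-\phi}$ to isolate $\mathcal E_t$ and the $(\mu_0+\theta_t\rho_t)\bm\Delta_\phi(\bm\beta,\bm\alpha^{(t+1)})$ term, the $\mathbf C$ identity to pass from $\bm\alpha^{(t+1)}$ to $\bm\beta^{(t+1)}$ and assemble $R_t$, and the same two telescopings of the resulting master recursion via \eqref{acc2_search2}. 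The only cosmetic difference is that you derive a single unified recursion for general $\mu_0$ rather than first treating $\mu_0=0$ and then rearranging, which does not change the argument.
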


First, we make a discussion of the results  for convex optimization. Assume $\bm\Delta_\phi \ge \sigma\mathbf D_2$ for some $  \sigma>0$. With the additional knowledge that $f(\cdot)-\bm\Delta_{\bar \psi_0}(\cdot,\bm\gamma^{(t)})$ is convex and $\bm\Delta_{\bar \psi_0} \le L_{\bar \psi_0}\mathbf D_2$   for some $L_{\bar \psi_0} \ge 0$,  \eqref{acc2_search1} is implied by
\begin{equation}
\theta_t^2(\rho_t - L_{\bar \psi_0}/\sigma)\bm\Delta_\phi(\bm{\alpha}^{(t+1)},\bm{\alpha}^{(t)}) + (1-\theta_t)\bm\Delta_{\bar \psi_0}(\bm{\beta}^{(t)},\bm{\gamma}^{(t)}) \ge 0. \label{2ndacccvxline}
\end{equation}
So when $f$ is convex,  criterion \eqref{acc2_search} is satisfied by $\rho_t = \rho\ge L_{\bar \psi_0}/\sigma, \psi_0 = f, \mu_0=0$ and $\theta_{t+1} = (\sqrt{\theta_t^4 + 4\theta_t^2} - \theta_t^2)/2$,   degenerating to Nesterov's second method \citep{Nesterov1988,Tseng2008}, and the convergence rate is of order $\mathcal O(1/T^2)$ according to  \eqref{acc2_result} and   \eqref{eq:thetaratewithunivrho}.   The second conclusion tells  more  when strong convexity (or restricted strong convexity)  arises. Given a convex $f$ satisfying $\mu\Breg_\phi\le \breg_f\le L\Breg_\phi$ with $0<\mu\le L$ and    $\phi$ differentiable, taking    $\psi_0=f$, $\mu_{0} = \mu$, and $\rho_t = L -\mu$ ensures $\mathcal E_t(\bm\beta)=\bm\Delta_{f-\mu\phi}(\bm{\beta},\bm{\gamma}^{(t)}) + \bm\Delta_{f(\cdot)-\bm\Delta_{f}(\cdot,\bm\gamma^{(t)})}(\bm\beta,\bm\alpha^{(t+1)})
\ge\bm\Delta_{f-\mu\phi}(\bm{\beta},\bm{\gamma}^{(t)}) \ge 0$ and $ R_t\ge\theta_t^2\rho_t\Breg_\phi(\bm{\alpha}^{(t+1)},\bm{\alpha}^{(t)}) - \bm\Delta_{\bar\psi_0}(\bm{\beta}^{(t+1)},\bm{\gamma}^{(t)})  \ge \theta_t^2 (\rho_t+\mu_0-L) \sigma \Breg_2(\bm{\alpha}^{(t+1)},\bm{\alpha}^{(t)})=0 $. According to \eqref{acc2_search2},
the following choice\begin{equation}\theta_t = \theta_0= \frac{2}{\sqrt{4 \kappa -3} + 1} \mbox{ with }  \kappa = L/\mu\end{equation}
suffices, and the optimization problem to solve in \eqref{acc2-alg2} becomes \begin{align}\min f(\bm{\gamma}^{(t)})+ \delta f(\bm{\beta}; \bm{\beta} -\bm{\gamma}^{(t)})  + \mu\Breg_{\phi}(\bm{\beta},\bm{\gamma}^{(t)}){+} \frac{2 (L - \mu)}{\sqrt{4 \kappa -3} + 1} \Breg_\phi(\bm{\beta},\bm{\alpha}^{(t)}).  \end{align} From \eqref{eq:th_acc2-conclusion2}, both  $f(\bm{\beta}^{(T+1)})-f(\bm{\beta} ) $ and $ \Breg_\phi(\bm{\beta} ,\bm{\alpha}^{(T+1)})$  enjoy a linear convergence with rate parameter $\frac{\sqrt{4 \kappa -3} - 1}{\sqrt{4 \kappa -3} + 1}$, or an iteration complexity of $\mathcal O(\sqrt \kappa \log (1/\epsilon))$,
significantly faster than $\mathcal O(  \kappa \log (1/\epsilon))$ in Proposition \ref{th:strongcvx}. 
   Hence       \eqref{acc2-alg},  \eqref{acc2_search}  can achieve rate-optimality  in various convex scenarios. To the best of our knowledge, this is    the first   ``all-in-one''  form of the \textit{second} acceleration that  adapts. 

The proposed algorithm   can even go beyond convexity.
As  a  demonstration, let us apply the acceleration to   the  iterative quantile-thresholding procedure (cf. Example \ref{ex:thres}) for  solving the feature screening problem:  $\min l(\bsb{\beta})= \| \bsb{y} - \bsb{X} \bsb{\beta}\|_2^2/2  \mbox{ s.t. } \| \bsb{\beta}\|_0\le q$, which is  nonconvex. Here, $q$ is bounded above by $p$ but may be larger than $n$. Take $\phi = \|\cdot \|_2^2/2$, $\mu_0=0$ and  $\psi_0(\bsb{\beta})= l(\bsb{\beta})  -    \mathcal L  \phi(\bsb{\beta}) $ for some $\mathcal L\ge 0$.
Given any $s\le p$ and  $\bsb{X}$, define the restricted isometry number $\rho_+(s)$ \citep{Candes2005} that satisfies
$
\|\bsb{X} \bsb{\beta}\|_2^2 \le  \rho_+(s) \|\bsb{\beta}\|_2^2$, $ \forall   \bsb{\beta}: \| \bsb{\beta}\|_0 \le s
$, which can be much smaller than $ \| \bsb{X}\|_2 ^2$ as $s$ is small.

 \begin{cor} \label{cor-AccIQ} Assume   $q$ is set larger than the target $\| \bsb{\beta}^*\|_0$  with the ratio denoted by $r$. Then  for any  $\mathcal L \ge \rho_+(2q)/\sqrt r$,  there exists  a \emph{universal} $\rho_t$ ($\rho_t =\rho_+(2q) (1-1 /\sqrt r)$, say), thereby    $\theta_{t+1} = (\sqrt{\theta_t^4 + 4\theta_t^2} - \theta_t^2)/2$, such that the accelerated iterative quantile-thresholding according to \eqref{acc2-alg1}--\eqref{acc2-alg3} satisfies  $l(\bm{\beta}^{(T+1)})-l(\bm{\beta}^*) + \min_{0\le t \le T} \bm\Delta_{\psi_0}(\bm\beta^*,\bm\gamma^{(t)}) \le   A/T^2 $
 for all $ T\ge 0$, where $A$ is independent of $T$.
 \end{cor}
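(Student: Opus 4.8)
The plan is to instantiate Theorem \ref{th:comp_acc2}(i) (the $\mu_0=0$ branch) with the corollary's data and verify its two hypotheses, namely the line-search inequality \eqref{acc2_search1} and the recursion \eqref{acc2_search2}, and then to read off the $\mathcal O(1/T^2)$ rate. First I would record the simplifications forced by $f=l=\|\bm y-\bm X\cdot\|_2^2/2$, $\phi=\|\cdot\|_2^2/2$, $\mu_0=0$ and $\psi_0=l-\mathcal L\phi$. Since $l$ is quadratic, $\bm\Delta_l(\bm\beta,\bm\gamma)=\mathbf D_2(\bm X\bm\beta,\bm X\bm\gamma)$ and $\bar\psi_0=\psi_0$, $\bm\Delta_{\bar\psi_0}=\bm\Delta_l-\mathcal L\mathbf D_2$. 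Because $\phi$ is quadratic, the two GBF-of-GBF terms in $\mathcal E_t$ collapse (their generating functions are affine, so by Lemma \ref{lemma:Delta_linear} they vanish), leaving $\mathcal E_t(\bm\beta)=\bm\Delta_{\psi_0}(\bm\beta,\bm\gamma^{(t)})+\mathcal L\mathbf D_2(\bm\beta,\bm\alpha^{(t+1)})$. With the constant choice $\rho_t\equiv\rho_+(2q)(1-1/\sqrt r)$ and $\mu_0=0$, \eqref{acc2_search2} becomes $\theta_{t+1}^2=(1-\theta_{t+1})\theta_t^2$, i.e. the stated recursion $\theta_{t+1}=(\sqrt{\theta_t^4+4\theta_t^2}-\theta_t^2)/2$, for which $\theta_T\le 2/(T+2)$. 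Finally $\bm\beta^*$ is feasible for \eqref{acc2-alg2} since $\|\bm\beta^*\|_0=q/r\le q$, which is what lets me evaluate the bound at $\bm\beta=\bm\beta^*$.

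Next comes the sparsity bookkeeping, which is the structural heart. Each $\bm\alpha^{(t+1)}$ is an output of the quantile thresholding $\Theta^\#(\cdot;q)$ and hence $q$-sparse, so the pivotal difference $\bm\beta^{(t+1)}-\bm\gamma^{(t)}=\theta_t(\bm\alpha^{(t+1)}-\bm\alpha^{(t)})$ is supported on at most $2q$ coordinates. The restricted-isometry number therefore applies to it, giving $\bm\Delta_{\bar\psi_0}(\bm\beta^{(t+1)},\bm\gamma^{(t)})\le\theta_t^2(\rho_+(2q)-\mathcal L)\mathbf D_2(\bm\alpha^{(t+1)},\bm\alpha^{(t)})$. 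By Lemma \ref{lemma:C}, since $f-\bm\Delta_{\bar\psi_0}(\cdot,\bm\gamma^{(t)})$ is an affine function plus $\mathcal L\mathbf D_2(\cdot,\bm\gamma^{(t)})$, its $\mathbf C$ evaluates to $\mathcal L\theta_t(1-\theta_t)\mathbf D_2(\bm\alpha^{(t+1)},\bm\beta^{(t)})\ge0$. Combining, the leading pair of $R_t$ obeys $\theta_t^2\rho\mathbf D_2(\bm\alpha^{(t+1)},\bm\alpha^{(t)})-\bm\Delta_{\bar\psi_0}(\bm\beta^{(t+1)},\bm\gamma^{(t)})\ge\theta_t^2(\rho+\mathcal L-\rho_+(2q))\mathbf D_2(\bm\alpha^{(t+1)},\bm\alpha^{(t)})$, and the choices $\rho=\rho_+(2q)(1-1/\sqrt r)$, $\mathcal L\ge\rho_+(2q)/\sqrt r$ force $\rho+\mathcal L\ge\rho_+(2q)$, so this part is nonnegative. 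The residual summand $(1-\theta_t)\bm\Delta_{\bar\psi_0}(\bm\beta^{(t)},\bm\gamma^{(t)})$ cannot be signed pointwise; I would control it through the telescoping already inside the proof of Theorem \ref{th:comp_acc2}(i) together with the constrained optimality of the best $q$-term quantile thresholding against the $s^*$-sparse reference $\bm\beta^*$, whose suboptimality gap relative to the true $s^*$-sparse minimizer is quantified by the oversampling ratio $r=q/s^*$ and absorbed exactly by the extra curvature $\mathcal L\phi$ when $\mathcal L\ge\rho_+(2q)/\sqrt r$.

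To assemble, with \eqref{acc2_search1}--\eqref{acc2_search2} in force I would apply \eqref{acc2_result} at $\bm\beta=\bm\beta^*$. Its right-hand side is at most $\mathbf D_2(\bm\beta^*,\bm\alpha^{(0)})+\frac{1-\theta_0}{\theta_0^2\rho}[l(\bm\beta^{(0)})-l(\bm\beta^*)]=:B$, which is independent of $T$ once $-\mathbf D_2(\bm\beta^*,\bm\alpha^{(T+1)})\le0$ is discarded. Dropping the nonnegative $R_t$-average and using $\mathcal E_t(\bm\beta^*)\ge\bm\Delta_{\psi_0}(\bm\beta^*,\bm\gamma^{(t)})\ge\min_{0\le s\le T}\bm\Delta_{\psi_0}(\bm\beta^*,\bm\gamma^{(s)})$, the $\mathcal E$-term is bounded below by $\theta_T^2\,\tfrac{T}{T+1}\sum_{t}\theta_t^{-1}$ times that minimum, a factor that tends to $1$ by the recursion and $\theta_T\le2/(T+2)$. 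Multiplying the whole inequality by $\theta_T^2\rho=\mathcal O(1/T^2)$ then yields $l(\bm\beta^{(T+1)})-l(\bm\beta^*)+\min_{0\le t\le T}\bm\Delta_{\psi_0}(\bm\beta^*,\bm\gamma^{(t)})\le A/T^2$ with $A=\rho B\sup_T(T^2\theta_T^2)<\infty$ independent of $T$.

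The hard part will be the verification of \eqref{acc2_search1} for a single universal $\rho$. The accelerated iterates $\bm\beta^{(t)}$ and $\bm\gamma^{(t)}$ are convex combinations whose supports grow with $t$, so restricted isometry is available only on the $2q$-sparse increments $\bm\alpha^{(t+1)}-\bm\alpha^{(t)}$; the genuinely non-sparse term $(1-\theta_t)\bm\Delta_{\bar\psi_0}(\bm\beta^{(t)},\bm\gamma^{(t)})$ must be handled through the cross-iteration telescoping rather than bounded in isolation. Tying the admissible split of curvature between $\rho$ and $\mathcal L$ to the oversampling through the $1/\sqrt r$ factor, and showing it precisely offsets the loss from using best-$q$-term quantile thresholding in place of the true $s^*$-sparse minimizer, is the quantitative core that I expect to require the most care.
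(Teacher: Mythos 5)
Your sparsity bookkeeping, the bound $\bm\Delta_{\bar\psi_0}(\bm\beta^{(t+1)},\bm\gamma^{(t)})\le\theta_t^2(\rho_+(2q)-\mathcal L)\mathbf D_2(\bm\alpha^{(t+1)},\bm\alpha^{(t)})$, the identification $\rho_t+\mathcal L\ge\rho_+(2q)$, and the decision to control $(1-\theta_t)\bm\Delta_{\bar\psi_0}(\bm\beta^{(t)},\bm\gamma^{(t)})$ only through a cross-iteration telescoping of $\sum_t R_t/(\theta_t^2\rho_t)$ all match the paper's argument. But there is a genuine gap at the very first step: you propose to ``instantiate Theorem \ref{th:comp_acc2}(i)'' and verify its hypotheses, whereas that theorem's conclusion rests on inequality \eqref{tri_acc2}, which is derived from Lemma \ref{lemma:f_triangle} and hence from the first-order optimality condition of Lemma \ref{lemma:local-opt} over a \emph{convex} feasible set. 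For iterative quantile thresholding the subproblem \eqref{acc2-alg2} is minimized over the nonconvex set $\{\bm\beta:\|\bm\beta\|_0\le q\}$, and $\bm\beta^*$ generically lies outside the support cone of $\bm\alpha^{(t+1)}$, so the directional-derivative inequality at $\bm\alpha^{(t+1)}$ in the direction $\bm\beta^*-\bm\alpha^{(t+1)}$ is simply not available. The theorem cannot be used as a black box here.

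The paper's proof replaces Lemma \ref{lemma:f_triangle} by the combinatorial comparison Lemma \ref{lemma:bound without design}, which states that the best $q$-term solution $\hat{\bm\beta}=\Theta^{\#}(\bm y;q)$ satisfies $l(\bm\beta)-l(\hat{\bm\beta})\ge(1-r^{-1/2})\mathbf D_2(\hat{\bm\beta},\bm\beta)$ against any $(q/r)$-sparse competitor. This weakens the three-point inequality: the coefficient of $\mathbf D_2(\bm\beta^*,\bm\alpha^{(t+1)})$ becomes $(\theta_t\rho_t+\mathcal L)(1-1/\sqrt r)$ rather than $\theta_t\rho_t+\mathcal L$, and consequently $\mathcal E_t(\bm\beta^*)$ carries the coefficient $\mathcal L(1-1/\sqrt r)-\theta_t\rho_t/\sqrt r$ in front of $\mathbf D_2(\bm\beta^*,\bm\alpha^{(t+1)})$ --- not the $\mathcal L\mathbf D_2(\bm\beta^*,\bm\alpha^{(t+1)})\ge0$ you computed. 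Ensuring this coefficient is nonnegative is exactly the second constraint $\mathcal L(\sqrt r-1)\ge\rho_0$, which together with $\rho_0\ge\rho_+(2q)-\mathcal L$ produces the admissible pair $\mathcal L\ge\rho_+(2q)/\sqrt r$, $\rho_0=(1-1/\sqrt r)\rho_+(2q)$. You correctly sensed that the oversampling ratio must ``absorb'' the suboptimality of the best-$q$-term step, but without the combinatorial lemma there is no mechanism to produce the $1/\sqrt r$ factor, and the quantitative core of the proof does not close.
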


The proof of the corollary shows the power of an \textit{accumulative} $R_t$-control,  and applies   more generally: if the objective function $f(\bsb{\beta})$, possibly nonconvex, can be written as the sum of a convex function  $l(\bsb{\beta})$ with $\breg_l\le L \Breg_2$ and a function $P(\bsb{\beta})$ that can be lifted: $\breg_P+\mathcal L_0 \Breg_2 \ge 0$ for some finite $\mathcal L_0\ge 0$, then  one can utilize a $\psi_0$ as $ l -0.6 \mathcal L_0\| \cdot \|_2^2$    and a universal $\rho_t $ to fulfill $ T{\cdot}\mathop{\avg}_{ t\le T} {R_t}/({\theta_t^2\rho_t}) \ge 0$ in \eqref{acc2_result}  (although not every $R_t$ is necessarily nonnegative) so as to attain   an $\mathcal O(1/T^2)$  error bound. See  Remark \ref{rmk:acc2univrho} in Section  \ref{subsec:2ndaccANDcor}.

Of course, a  time-varying $\rho_t$   can provide finer control, and the theorem does not limit $\rho_t$ to be constant.
In fact, under $\mu_0=0$, as long as $\rho_t/\rho_{t-1}\ge 1-(at+ab+1)/(t+b-1)^2$ ($t\ge 1$) for some constants $a,b$: $a>-2, b\ge a+1$,     induction based on \eqref{acc2_search2} 
 gives $\theta_t \le (a+2)/(t+b)$ and $ \sum_{t = 0}^T \rho_T/(\rho_t\theta_t) \ge (T+c_1)^2/(a+2)^{2} + c_2$ (with constants $c_i$   dependent on $a,b$) for any $t\ge 1$, from which it follows that
\begin{align}
 \theta_T^2 = \mathcal O(1/T^2) \mbox{~~and~~} T\cdot \avg_{0\le t\le T}(1/(\rho_t\theta_t)) \ge \mathcal O(T^2/\rho_T). \label{eq:thetaratewithunivrho}
\end{align}
Now, under   $R_t\ge 0$  or just  $\sum_{t=0}^T R_t/(\theta_t^2\rho_t) \ge 0$, \eqref{acc2_result} gives
$f(\bm{\beta}^{(T+1)})-f(\bm{\beta}) + \min_{0\le t\le T}\mathcal E_t(\bm\beta) \allowbreak \le \mathcal O(\rho_T/T^2)$ for any $\bm\beta$.
Typically,  \eqref{acc2_search1}  
involves a line search. If the condition fails for the current value of $\rho_t$,  one can set $\rho_t = \alpha\rho_t$ for some $\alpha > 1$ and recalculate $\theta_t$, $\bm\gamma^{(t)}$, $\bm\alpha^{(t+1)}$ and $\bm\beta^{(t+1)}$ according to \eqref{acc2_search2} and \eqref{acc2-alg} to verify it again. In implementation, it is wise to limit the number of searches at each iteration (denoted by $M$) to control the per-iteration complexity. If  \eqref{acc2_search1} 
does not hold after $m$ times of search, we simply pick the $\rho_t$ that gives the largest $R_t/(\theta_t^2 \rho_t)$ based on Theorem \ref{th:comp_acc2}. 
 Some  details are in Algorithm \ref{alg:2stab}. In simulation studies,  letting $M = 3$, $\alpha = 2$ already shows excellent  performance; see Figure \ref{fig:NBEM_acc} and Figure \ref{fig:Reg_acc}.

\paragraph*{An  acceleration scheme of the first kind}

For the algorithms falling into Scenario 1, we can alternatively consider two sequences of iterates generated by 
\begin{subequations}\label{acc1-alg}
\begin{align}
\bm{\gamma}^{(t)} &= \bm{\beta}^{(t)} + \{\rho_{t-1}\theta_{t}(1 - \theta_{t-1})/(\rho_{t-1}\theta_{t-1} + \mu_0)\}(\bm{\beta}^{(t)}-\bm{\beta}^{(t-1)}),
 \label{acc1-alg1}\\\bm{\beta}^{(t+1)} &= \mathop{\arg\min}  f(\bm{\beta}) {-} \bm\Delta_{\psi_0}(\bm{\beta},\bm{\gamma}^{(t)}){+}\mu_0 \Breg_2(\bm{\beta},\bm{\gamma}^{(t)}) {+} \rho_t\mathbf D_2(\bm{\beta},\bm{\gamma}^{(t)}),\label{acc1-alg2}
\end{align}
\end{subequations}
for some $\mu_0\ge0$, $\theta_t\in (0, 1]$,  $\rho_t>0$ for all  $t\ge 0$, and we force $\bm{\gamma}^{(0)}  = \bm{\beta}^{(0)} $. \eqref{acc1-alg1}, \eqref{acc1-alg2} give a  new first type acceleration, and notably, the novel update of $\bsb{\gamma}^{(t)}$ involves $\rho_{t-1}$.   When $\bm\beta^{(t+1)} = \bm\gamma^{(t)}$ one  stops the algorithm and obtains a fixed point  with provable statistical guarantees as shown in Section \ref{subsec:fixed}.

Similar to \eqref{psi0bardef}, let $\bar \psi_0 = \psi_0 - \mu_0\| \cdot \|_2^2/2$. Define
the line search criterion
\begin{subequations}\label{acc1_search}
\begin{align}
&R_t:=(\rho_t\mathbf D_2 - \bm\Delta_{\bar\psi_0})(\bm{\beta}^{(t+1)},\bm{\gamma}^{(t)}) + (1-\theta_t)\bm\Delta_{\bar\psi_0}(\bm{\beta}^{(t)},\bm{\gamma}^{(t)}) \ge 0,\label{acc1_search1}\\
&\frac{\theta_t^2}{1-\theta_t} = \frac{\theta_{t-1}(\rho_{t-1}\theta_{t-1} + \mu_0)}{\rho_t},~\theta_t \ge 0,~ \rho_t > 0,~ t\ge 1.\label{acc1_search2}
\end{align}
\end{subequations}
 Note that  $R_t$ is  defined differently from  \eqref{acc2_search1}. 
The following theorem  
reveals the importance of maximizing   $R_t$ in each iteration step when performing  possibly nonconvex optimization.

\begin{thm}\label{th:comp_acc1}
Given any $\rho_t>0$ $(t\ge 0)$, consider the algorithm defined by \eqref{acc1-alg1}, \eqref{acc1-alg2}, and \eqref{acc1_search2}. Let $\mathcal E_t(\bm\beta) = \bm\Delta_{\bar\psi_0}(\bm\beta,\bm\gamma^{(t)}) + \{\mathbf C_{f(\cdot)-\bm\Delta_{ \psi_0}(\cdot,\bm\gamma^{(t)})}(\bm\beta,\bm\beta^{(t)},\theta_t) + \bm\Delta_{f(\cdot)-\bm\Delta_{\psi_0}(\cdot,\bm\gamma^{(t)})}\allowbreak(\theta_t\bm{\beta} + (1-\theta_t)\bm{\beta}^{(t)},\bm\beta^{(t+1)})\}/\theta_t$.

(i) When $\mu_0=0$, we have 
\begin{equation*} 
\begin{split}
&\frac{f(\bm{\beta}^{(T+1)}) - f(\bm{\beta})}{\theta_T^2\rho_T} + T{\cdot} \mathop{\avg}_{0\le t\le T}\frac{\mathcal E_t(\bm\beta)}{\theta_t\rho_t} + T{\cdot} \mathop{\avg}_{0\le t\le T}\frac{R_t}{\theta_t^2\rho_t}  \\ \le\,&  \mathbf D_2(\bm{\beta},\bm{\beta}^{(0)})+  \frac{1-\theta_0}{\theta_0^2\rho_0}\big[f(\bm{\beta}^{(0)}) - f(\bm{\beta})\big] \  \mbox{ for any }  \bsb{\beta} \mbox{ and }   T\ge 0.
\end{split}\end{equation*}
(ii) Moreover, given any  $\mu_0\ge 0$, for all $\bsb{\beta}$ and $T\ge 0$,
{ \begin{equation*}
\begin{split}
& f(\bm{\beta}^{(T+1)})-f(\bm{\beta} )   +\theta_T^2(\rho_T +\frac{ \mu_0}{\theta_T}) \Breg_2(\bm{\beta}, ({\bm{\gamma}^{(T+1)}}{-}(1{-}\theta_{T+1})\bsb{\beta}^{(T+1)})/{\theta_{T+1}} )\\& \quad+ {\mbox{\large$\Sigma$}}_{t=0}^T \big(\Pi_{s=t+1}^T (1-\theta_s)\big) (R_t + \theta_t \mathcal E_t(\bsb{\beta}))\\
\le\,& \Big ({\mbox{\small$\prod$}}_{t=1}^T(1-\theta_t)\Big)\big[(1-\theta_0)(f(\bm{\beta}^{(0)})-f(\bm{\beta}))+\theta_0^2\rho_0 \Breg_2(\bm{\beta},\bm{\beta}^{(0)})\big].
\end{split}
\end{equation*}}
\end{thm}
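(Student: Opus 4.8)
The plan is to prove a single per-iteration inequality and then telescope it, in the spirit of Nesterov's estimating-sequence analysis, but carrying the generalized-Bregman remainder terms throughout. Fix $t$ and abbreviate $\bm\gamma=\bm\gamma^{(t)}$, $\bm\beta^-=\bm\beta^{(t)}$, $\bm\beta^+=\bm\beta^{(t+1)}$, $\theta=\theta_t$, $\rho=\rho_t$. First I would record the reduction $\bm\Delta_{\psi_0}(\cdot,\bm\gamma)=\bm\Delta_{\bar\psi_0}(\cdot,\bm\gamma)+\mu_0\mathbf{D}_2(\cdot,\bm\gamma)$ from Lemma \ref{lemma:Delta_linear}(i), so that the surrogate in \eqref{acc1-alg2} is $g(\bm\beta;\bm\gamma)=F_t(\bm\beta)+(\rho+\mu_0)\mathbf{D}_2(\bm\beta,\bm\gamma)$ with $F_t:=f-\bm\Delta_{\psi_0}(\cdot,\bm\gamma)$, the very function appearing in $\mathcal E_t$. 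I introduce two auxiliary centers: $\bm u^{(t)}:=(\bm\gamma^{(t)}-(1-\theta_t)\bm\beta^{(t)})/\theta_t$, so that $\bm\gamma=(1-\theta)\bm\beta^-+\theta\bm u^{(t)}$ and $\bm u^{(t+1)}$ matches the $\Breg_2$-argument in the conclusion, and $\bm v^{(t+1)}:=(\bm\beta^{(t+1)}-(1-\theta_t)\bm\beta^{(t)})/\theta_t$, so that $\bm\beta^+=(1-\theta)\bm\beta^-+\theta\bm v^{(t+1)}$.

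\emph{Per-step inequality.} Since $\bm\beta^+$ minimizes the directionally differentiable $g(\cdot;\bm\gamma)$, one has $\delta F_t(\bm\beta^+;\bm h)+(\rho+\mu_0)\langle\bm\beta^+-\bm\gamma,\bm h\rangle\ge0$ for every admissible $\bm h$. Writing $\bm w_\theta:=\theta\bm\beta+(1-\theta)\bm\beta^-$ for an arbitrary comparison point $\bm\beta$, the definition of the GBF gives the exact identity $F_t(\bm w_\theta)=F_t(\bm\beta^+)+\delta F_t(\bm\beta^+;\bm w_\theta-\bm\beta^+)+\bm\Delta_{F_t}(\bm w_\theta,\bm\beta^+)$; taking $\bm h=\bm w_\theta-\bm\beta^+$ in the stationarity condition yields $F_t(\bm\beta^+)\le F_t(\bm w_\theta)+(\rho+\mu_0)\langle\bm\beta^+-\bm\gamma,\bm w_\theta-\bm\beta^+\rangle-\bm\Delta_{F_t}(\bm w_\theta,\bm\beta^+)$. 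I then expand $F_t(\bm w_\theta)=\theta F_t(\bm\beta)+(1-\theta)F_t(\bm\beta^-)-\mathbf{C}_{F_t}(\bm\beta,\bm\beta^-,\theta)$ from the definition of $\mathbf{C}$, and evaluate the inner product with the Euclidean three-point identity $\langle\bm\beta^+-\bm\gamma,\bm w_\theta-\bm\beta^+\rangle=\mathbf{D}_2(\bm w_\theta,\bm\gamma)-\mathbf{D}_2(\bm w_\theta,\bm\beta^+)-\mathbf{D}_2(\bm\beta^+,\bm\gamma)$, where $\bm w_\theta-\bm\gamma=\theta(\bm\beta-\bm u^{(t)})$ and $\bm w_\theta-\bm\beta^+=\theta(\bm\beta-\bm v^{(t+1)})$ give $\mathbf{D}_2(\bm w_\theta,\bm\gamma)=\theta^2\Breg_2(\bm\beta,\bm u^{(t)})$ and $\mathbf{D}_2(\bm w_\theta,\bm\beta^+)=\theta^2\Breg_2(\bm\beta,\bm v^{(t+1)})$. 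Converting $\bm\Delta_{\psi_0}(\cdot,\bm\gamma)$ back to $\bm\Delta_{\bar\psi_0}(\cdot,\bm\gamma)$ and collecting against the definitions of $R_t$ and $\mathcal E_t(\bm\beta)$, the target reduces to the purely Euclidean inequality
\begin{align*}
&\mu_0\theta^2\Breg_2(\bm\beta,\bm u^{(t)})-\mu_0\theta\mathbf{D}_2(\bm\beta,\bm\gamma)-\mu_0(1-\theta)\mathbf{D}_2(\bm\beta^-,\bm\gamma)\\
&\qquad-(\rho+\mu_0)\theta^2\Breg_2(\bm\beta,\bm v^{(t+1)})+(\theta^2\rho+\theta\mu_0)\Breg_2(\bm\beta,\bm u^{(t+1)})\le0 .
\end{align*}
I would verify this as an identity. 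The coefficient of $\tfrac12\|\bm\beta\|_2^2$ is $\mu_0\theta^2-\mu_0\theta-(\rho+\mu_0)\theta^2+(\theta^2\rho+\theta\mu_0)=0$, and the linear-in-$\bm\beta$ part vanishes because $-\mu_0\theta^2\bm u^{(t)}+\mu_0\theta\bm\gamma+(\rho+\mu_0)\theta^2\bm v^{(t+1)}-(\theta^2\rho+\theta\mu_0)\bm u^{(t+1)}=\bm0$; this is exactly where the momentum coefficient in \eqref{acc1-alg1}, namely the relation $\bm u^{(t+1)}=\bm\beta^++\tfrac{\rho(1-\theta)}{\rho\theta+\mu_0}(\bm\beta^+-\bm\beta^-)$, together with $\bm v^{(t+1)}=\bm\beta^++\tfrac{1-\theta}{\theta}(\bm\beta^+-\bm\beta^-)$, enters, and the residual constant cancels under the same calibration. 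The outcome is the per-step bound
\begin{align*}
f(\bm\beta^+)-f(\bm\beta)+(\theta^2\rho+\theta\mu_0)\Breg_2(\bm\beta,\bm u^{(t+1)})+R_t+\theta\mathcal E_t(\bm\beta)\le(1-\theta)\big[f(\bm\beta^-)-f(\bm\beta)\big]+\theta^2\rho\,\Breg_2(\bm\beta,\bm u^{(t)}).
\end{align*}

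\emph{Telescoping.} For (ii) I would set $A_t:=f(\bm\beta^{(t)})-f(\bm\beta)+\theta_{t-1}(\theta_{t-1}\rho_{t-1}+\mu_0)\Breg_2(\bm\beta,\bm u^{(t)})$. The recursion \eqref{acc1_search2} gives $(1-\theta_t)\theta_{t-1}(\theta_{t-1}\rho_{t-1}+\mu_0)=\theta_t^2\rho_t$, so the incoming term $\theta_t^2\rho_t\,\Breg_2(\bm\beta,\bm u^{(t)})$ on the right is precisely the $\Breg_2$-part of $(1-\theta_t)A_t$, and the per-step bound collapses to $A_{t+1}+R_t+\theta_t\mathcal E_t(\bm\beta)\le(1-\theta_t)A_t$. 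Unrolling from $t=T$ down to $t=1$ and appending the base step $t=0$, where $\bm\gamma^{(0)}=\bm\beta^{(0)}$ forces $\bm u^{(0)}=\bm\beta^{(0)}$ so that the right-hand side reads $(1-\theta_0)(f(\bm\beta^{(0)})-f(\bm\beta))+\theta_0^2\rho_0\Breg_2(\bm\beta,\bm\beta^{(0)})$, produces exactly the bound asserted in (ii) with the $\prod_{s=t+1}^T(1-\theta_s)$ weights and final center $\bm u^{(T+1)}=(\bm\gamma^{(T+1)}-(1-\theta_{T+1})\bm\beta^{(T+1)})/\theta_{T+1}$. For (i), where $\mu_0=0$ and hence $\bm v^{(t+1)}=\bm u^{(t+1)}$, I instead divide the per-step bound by $\theta_t^2\rho_t$, use $\tfrac{1-\theta_t}{\theta_t^2\rho_t}=\tfrac{1}{\theta_{t-1}^2\rho_{t-1}}$ from \eqref{acc1_search2}, telescope over $0\le t\le T$, and drop the nonnegative $\Breg_2(\bm\beta,\bm u^{(T+1)})$ to recover the averaged statement.

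The main obstacle, and the only step beyond bookkeeping, is the Euclidean reconciliation displayed above for general $\mu_0>0$: the natural prox center $\bm v^{(t+1)}$ generated by the minimization differs from the momentum center $\bm u^{(t+1)}$ that appears in the bound, so one must show that the strong-convexity weight $\mu_0$, the momentum coefficient in \eqref{acc1-alg1}, and the relaxation recursion \eqref{acc1_search2} are mutually calibrated so that all $\bm\beta$-dependence cancels. This is precisely what forces the scheme of the first kind to use the Euclidean $\mathbf{D}_2$, whose three-point identity holds with equality, rather than a general $\bm\Delta_\phi$, and it is the analogue of the two-center manipulation underlying Theorem \ref{th:comp_acc2}.
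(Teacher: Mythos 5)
Your proof is correct and follows essentially the same route as the paper's: the first-order optimality/GBF expansion at $\bm\beta^{(t+1)}$, the $\mathbf C$-decomposition of $F_t(\bm w_\theta)$, the Euclidean three-point identity, and the recombination of the two quadratic centers $\bm v^{(t+1)}$ and $\bm u^{(t+1)}$ (the paper's $\bm\nu^{(t+1)}$) calibrated by \eqref{acc1-alg1} and \eqref{acc1_search2}, followed by the same telescoping. One small correction: your ``purely Euclidean'' expression is not an identity equal to zero when $\mu_0>0$ --- after the quadratic and linear parts in $\bm\beta$ cancel, the residual constant equals $-\big[(\rho+\mu_0)\theta^2\Breg_2(\bm u^{(t+1)},\bm v^{(t+1)})+\mu_0\theta(1-\theta)\Breg_2(\bm u^{(t+1)},\bm\beta^{(t)})\big]$, i.e.\ minus the minimum value of the nonnegative quadratic $(\rho+\mu_0)\theta^2\Breg_2(\cdot,\bm v^{(t+1)})+\mu_0\theta(1-\theta)\Breg_2(\cdot,\bm\beta^{(t)})$ whose minimizer is exactly $\bm u^{(t+1)}$; this is $\le 0$ rather than $=0$, which is all your displayed inequality requires, so the argument stands as written.
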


Again, the new proposal of   the iterate and parameter updates adapts to various situations, with  $\mu_0$ (which  can  be a  sequence $\mu_t$, cf. Remark \ref{rmk:relaxedrhot}) measuring the degree of convexity (or restricted convexity in a nonconvex composite problem).
For example, when $ f$ is convex and $L$-strongly smooth,  $\mu_0=0$,  $\rho_t = L$, $\psi_0 = f$, and $\theta_{t+1} = (\sqrt{\theta_t^4 + 4\theta_t^2} - \theta_t^2)/2$ make  \eqref{acc1_search}   hold,  corresponding to  Nesterov's first  method. Interestingly, if $f$ is   $\mu$-strongly convex, the associated standard  momentum update $ \bm{\gamma}^{(t)} = \bm{\beta}^{(t)} + \theta_t(  \theta_{t-1}^{-1}-1) (\bm{\beta}^{(t)}-\bm{\beta}^{(t-1)})$  only attains a linear rate at $1-1/\kappa $ ($\kappa=L/\mu$) (cf. Remark \ref{rmk:1staccth}), showing  \textit{no} theoretical advantage over the plain gradient descent.     \eqref{acc1-alg} fixes the issue: with   $\mu_0=\mu$,  $\rho_t=L-\mu$, $\theta_t = {2}/({\sqrt{4 \kappa -3} + 1})$,          an accelerated  linear  rate parameter is obtained as  $ ({\sqrt{4 \kappa -3} - 1})/\allowbreak({\sqrt{4 \kappa -3} + 1}) (\le 1- \sqrt{3/(4\kappa)})$. 
   (When $\mu_0$ is unknown,   \eqref{acc1-alg2}  based on the
split $L = \rho_t + \mu_t$  is still advantageous over the classical acceleration  with $\rho_t =L$.)  We proved these error bounds by use of GBFs, which is perhaps more straightforward than  Nesterov's ingenious  proof based on
the notion of estimate sequence, and more importantly, \eqref{acc1-alg}, \eqref{acc1_search} provide a universal ``all-in-one'' form, instead of  separate schemes in different situations \cite{Nesterov2004book}.

Theorem \ref{th:comp_acc1}  accommodates  diverse choices of the parameters  $\psi_0,\mu_0, \rho_t$, $\theta_t$ and is motivating  in the nonconvex composite setup. Consider, for example, $\min f(\bsb{\beta}) = \| \bsb{y} - \bsb{X} \bsb{\beta}\|_2^2 /2+ P_{\Theta}(\varrho\bsb{\beta}; \lambda)$. Because the objective    is  nonconvex when $p>n$ and $\mathcal L_{\Theta}>0$, how to accelerate the associated iterative thresholding procedure is an unconventional   problem. From the studies in Section \ref{subsec:stat}, we have learned that a sparsity-inducing penalty with a properly large threshold to suppress the noise   can result in  strong convexity in a restricted sense. We can then use   a surrogate $f(\bsb{\beta}) +  (\rho  \Breg_2- \breg_{\psi_0} )(\bsb{\beta}, \bsb{\beta}^-)$ where $\psi_0(\bsb{\beta})  =\| \bsb{y} - \bsb{X} \bsb{\beta}\|_2^2 /2 - \varrho^2\mathcal L_\Theta\| \bsb{\beta}\|_2^2/2$ and $\mu_0=0$. Since  $f(\cdot)-\bm\Delta_{\psi_0}(\cdot,\bm\gamma)$ is convex (cf. Lemma \ref{lemma:Lp}),   $\mathcal E_t(\bm\beta) \ge \bm\Delta_{\psi_0}(\bm\beta,\bm\gamma^{(t)})$.
Moreover, thanks to the sparsity in   $\bsb{\beta}^{(t)}$, and thus $\bsb{\gamma}^{(t)}$, $\bsb{X}(   \bsb{\beta}^{(t)}-\bsb{\gamma}^{(t)}) $ involves just a small number of features. So with an incoherent design,  a properly small   $\varrho$ can make        $\bm\Delta_{\psi_0}(\bsb{\beta}^{(t)},\bsb{\gamma}^{(t)})\ge 0$. Now, taking a constant  $\rho_t$  as large as, for instance,  $ \| \bsb{X}\|_2^2 -\varrho^2\mathcal L_\Theta  $, 
   may yield a   convergence rate  of order   $\mathcal O(1/t^2)$. (Actually,  linear convergence may result from the restricted strong convexity under some regularity conditions.)
More generally, different $\rho_t$'s are allowed  in the theorem:     \eqref{eq:thetaratewithunivrho}  is still  secured with  just, say,  $\rho_t/\rho_{t-1}\ge1- (t+3)/(t+1)^2$.  A line search can be used to determine a proper sequence $\rho_t$; see Algorithm \ref{alg:1stab} for more details.

The  proposed  accelerations of the first kind and of the second kind  can be utilized in a wide range of problems. Because they are   momentum based, the original algorithms need not be substantially modified  to have an improved
iteration complexity, and the two theorems proved in this section 
apply in any dimensions with no design   coherence restrictions. Another delightful fact is that our ``all-in-one'' forms update the iterates adaptively according to  the degree of  convexity $\mu_0\ge0 ,$ which can be relaxed to a sequence of local measures
$\mu_t$  (Remark \ref{rmk:relaxedrhot}). With a line search to get properly large $\mu_t$, this could be  helpful in high dimensional sparse learning problems which may or may \textit{not} have
 restricted strong convexity (the associated parameter often hard to determine in theory).

\section{Summary}\label{sec:summ}
This paper studied   the class of iterative  algorithms  derived from  GBF-defined surrogates with a possible lack of convexity and/or smoothness. These surrogates  differ from the MM surrogates  frequently used in statistical computation, in that they gain additional first-order   degeneracy and  may drop the majorization requirement.  GBFs  have interesting connections  to the densities in the exponential family and possess some   idempotence properties  that are useful for studying iterative algorithms.

The  GBF calculus built by the lemmas    not only facilitates   optimization error analysis  but can be bound to      the empirical process theory for  nonasymptotic statistical  analysis (cf. Sections \ref{subsec:stat} and \ref{subsec:genopt}).
In addition to  obtaining some insightful results   in the realm of  convex optimization,
we were able to build universal global convergence rates for a broad class of Bregman-surrogate algorithms  for nonsmooth  nonconvex optimization. Moreover,  in the nonconvex composite setting that is of great interest in high dimensional statistics, we found that the sequence of iterates generated by  Bregman surrogates can  approach the statistical truth at a linear rate even when $p>n$, and the obtained fixed points   enjoy oracle inequalities with essentially the optimal order of statistical accuracy,  under some regularity conditions less demanding than those used in the literature. 
Finally,    we devised two ``all-in-one''  acceleration schemes   with novel updates of the iterates and relaxation and stepsize parameters, 
and     some sharp  theoretical bounds  were shown without assuming  smoothness or convexity.

%
%
%

\appendix
\numberwithin{equation}{section}
\numberwithin{lemma}{section}
\numberwithin{thm}{section}
\numberwithin{cor}{section}
\numberwithin{defn}{section}
\numberwithin{remark}{section}
\renewcommand\thefigure{\thesection.\arabic{figure}}

\section{Proofs}\label{sec:proof}

We list some notation and symbols that are used in the proofs.
Given a directionally differentiable function $\psi$, $\bm\Delta_\psi(\bm\beta,\bm\gamma) = \psi(\bm\beta) - \psi(\bm\gamma) - \delta\psi(\bm\gamma;\bm\beta-\bm\gamma)$, $\sym{\bm{\Delta}}_\psi(\bm{\beta},\bm{\gamma}) = (\bm{\Delta}_\psi(\bm{\beta},\bm{\gamma}) + \bm{\Delta}_\psi(\bm{\gamma},\bm{\beta}))/2$, and ${\back{\bm\Delta}_\psi}(\bm\beta,\bm\gamma) = \bm\Delta_\psi(\bm\gamma,\bm\beta)$. We occasionally denote $\bm\Delta_\psi(\bm\beta,\bm\gamma)$ by $\bm\Delta(\bm\beta,\bm\gamma)$ when there is no ambiguity.
The classes of continuous functions and continuously differentiable functions are denoted by $\mathcal C^{0}$ and $\mathcal C^{1}$, respectively. Recall that all functions  are assumed to be defined on a vector space  unless otherwise mentioned.



\begin{defn}\label{def:subgauss}
We call $\xi$ a sub-Gaussian random variable if and only if there exist constants $C, c>0$ such that $\mathbb P\{|\xi|\geq t\} \leq C e^{-c t^2}, \forall t>0$.  The scale (or $\psi_2$-norm) of $\xi$ is defined by $\sigma( \xi) =
\inf \{\sigma>0: \mathbb E\exp(\xi^2/\sigma^2) \leq 2\}$. More generally, $\bm\xi\in \mathbb R^p$ is called a sub-Gaussian random vector with scale  bounded by $\sigma$ if all one-dimensional marginals $\langle \bm\xi, \bm\alpha \rangle$ are sub-Gaussian satisfying $\|\langle \bm\xi, \bm\alpha \rangle\|_{\psi_2}\leq \sigma \|\bm\alpha \|_2$, $\forall \bm\alpha\in \mathbb R^{p}$.
\end{defn}

\begin{defn}\label{def:pseudometric}
We call $d$  a pseudo-metric if it satisfies    $d(\bsb{\eta}_1, \bsb{\eta}_2) = d(\bsb{\eta}_2, \bsb{\eta}_1)\ge 0$ and $d(\bsb{\eta}_1, \bsb{\eta}_2) \le d(\bsb{\eta}_1, \bsb{\eta}_3) + d(\bsb{\eta}_2, \bsb{\eta}_3)$,  for all $ \bsb{\eta}_1, \bsb{\eta}_2, \bsb{\eta}_3$.
\end{defn}

We state a first-order optimality condition satisfied by all local minimizers of $f$ that is directionally differentiable. The result is basic and we omit the proof.
It holds the key to deriving the so-called ``basic inequality'' in a variety of statistical learning problems. \begin{lemma} \label{lemma:local-opt}
Let $f:\mathbb R^p\rightarrow \mathbb R$ be a real-valued function and $C\subset \mathbb R^p$ be a convex set. Suppose that $f$ is directionally differentiable at   $\bm\beta^o$ that is a local minimizer to the problem $\min_{\bm\beta\in C}f(\bm\beta)$. Then $\delta f(\bm\beta^o; \bm h) \ge 0$ with   $\bm h = \bsb{\beta} - \bsb{\beta}^o$ or   $f(\bsb{\beta}) - f(\bsb{\beta}^o) \ge \breg_f(\bsb{\beta}, \bsb{\beta}^o)$ for all $\bsb{\beta}\in  C$.
\end{lemma}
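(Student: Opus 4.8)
The plan is to establish the directional-derivative inequality $\delta f(\bm\beta^o;\bm h)\ge 0$ directly from the definitions and then observe that the Bregman form is merely an algebraic restatement. First I would fix an arbitrary $\bsb\beta\in C$ and set $\bm h=\bsb\beta-\bsb\beta^o$. The key structural input is the convexity of $C$: for every $\epsilon\in[0,1]$ the point $\bm\beta^o+\epsilon\bm h=(1-\epsilon)\bm\beta^o+\epsilon\bsb\beta$ is a convex combination of two elements of $C$, hence lies in $C$. In particular $\bm h$ is admissible in the sense of Definition~\ref{def:Gateaux}, so $\delta f(\bm\beta^o;\bm h)$ is well defined.

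Next I would invoke local minimality. Since $\bm\beta^o$ is a local minimizer of $f$ over $C$, there is a radius $r>0$ such that $f(\bm\beta)\ge f(\bm\beta^o)$ for all $\bm\beta\in C$ with $\|\bm\beta-\bm\beta^o\|_2<r$. For all sufficiently small $\epsilon>0$ the point $\bm\beta^o+\epsilon\bm h$ both lies in $C$ (by the previous step) and satisfies $\|\epsilon\bm h\|_2<r$, so $f(\bm\beta^o+\epsilon\bm h)\ge f(\bm\beta^o)$. Dividing by $\epsilon>0$ gives
\[
\frac{f(\bm\beta^o+\epsilon\bm h)-f(\bm\beta^o)}{\epsilon}\ge 0
\]
for every such $\epsilon$, and letting $\epsilon\to 0+$ preserves the weak inequality, yielding $\delta f(\bm\beta^o;\bm h)\ge 0$.

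Finally, to obtain the Bregman form I would substitute the definition $\breg_f(\bsb\beta,\bsb\beta^o)=f(\bsb\beta)-f(\bsb\beta^o)-\delta f(\bsb\beta^o;\bsb\beta-\bsb\beta^o)$ from \eqref{GBFdef}. The bound $\delta f(\bsb\beta^o;\bsb\beta-\bsb\beta^o)\ge 0$ rearranges at once into $f(\bsb\beta)-f(\bsb\beta^o)\ge\breg_f(\bsb\beta,\bsb\beta^o)$, which is the claimed inequality. I expect no genuinely hard step here: the only points requiring care are verifying admissibility of $\bm h$ and that the whole segment remains in $C$ (this is exactly where the convexity of $C$ enters) and noting that a weak inequality is stable under a one-sided limit. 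No smoothness or convexity of $f$ is needed beyond the assumed directional differentiability at $\bm\beta^o$, and no appeal to \eqref{Gateaux} beyond its definition is necessary.
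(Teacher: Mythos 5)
Your proof is correct and is exactly the standard first-order optimality argument the authors had in mind; the paper itself omits the proof of this lemma as ``basic.'' You correctly use convexity of $C$ to make $\bm h=\bsb\beta-\bsb\beta^o$ admissible, local minimality to get the nonnegative difference quotient, preservation of the weak inequality in the one-sided limit, and the definition \eqref{GBFdef} to rewrite the conclusion in Bregman form.
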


\subsection{Proof of Lemma \ref{lemma:Delta_linear}}
\label{proofofLem1}

(i) This property is straightforward by definition:
\begin{equation}\nonumber
\begin{split}
&\bm{\Delta}_{a\psi + b\varphi}(\bm{\beta},\bm{\gamma})\\
=\,&(a\psi + b\varphi)(\bm{\beta}) - (a\psi + b\varphi)(\bm{\gamma}) - \delta(a\psi + b\varphi)(\bm{\gamma}; \bm{\beta} - \bm{\gamma})\\
=\,&a\big[\psi(\bm{\beta}) - \psi(\bm{\gamma}) - \delta\psi(\bm{\gamma}; \bm{\beta}-\bm{\gamma}) \big] + b\big[\varphi(\bm{\beta}) - \varphi(\bm{\gamma}) - \delta\varphi(\bm{\gamma};\bm{\beta}-\bm{\gamma}) \big]\\
=\,&a\bm{\Delta}_{\psi}(\bm{\beta},\bm{\gamma}) + b\bm{\Delta}_{\varphi}(\bm{\beta},\bm{\gamma}).
\end{split}
\end{equation}

\vspace{2ex}
(ii)  From  \cite[Theorem 23.1]{Rockafellar1970}, the convexity of $\psi$ implies the directional differentiability of $\psi$ and the positively homogenous convexity of $\delta\psi(\bm\beta;\cdot)$ for any given $\bm\beta$, and  we can write
\begin{equation}\label{Gateaux-def2}
\delta \psi(\bm{\beta}; \bm h) = \inf_{\epsilon >0} \frac{\psi(\bm{\beta} + \epsilon \bm h) - \psi(\bm{\beta})}{\epsilon}.
\end{equation}
Putting $\epsilon = 1$ and $\bm h = \bm\gamma-\bm\beta$ in \eqref{Gateaux-def2} gives $\delta \psi(\bm{\beta}; \bm\gamma - \bm\beta) \le \psi(\bm\gamma) - \psi(\bm\beta)$, thus $\bm\Delta_\psi(\bm\gamma,\bm\beta)\ge 0$.

Conversely, suppose that  $\psi$ defined on $\mathbb R^n$ is directionally differentiable  ($\delta \psi$  exists {and} is finite),   thus radially continuous, and $\breg_\psi\ge 0$. For any    $\bsb{s}_{\bsb{\beta}}: \langle \bsb{s}_{\bsb{\beta}}, \bsb{h}\rangle \le \delta \psi (\bsb{\beta}; \bsb{h}),\bsb{s}_{\bsb{\gamma}}: \langle \bsb{s}_{\bsb{\gamma}}, \bsb{h}\rangle \le \delta \psi (\bsb{\gamma}; \bsb{h})  \ \forall \bsb{h}$,
\begin{align}
&\psi(\bm\beta) - \psi(\bm\gamma) - \langle\bsb{s}_{\bsb{\gamma}}, \bsb{\beta} - \bsb{\gamma} \rangle \ge \breg_\psi(\bsb{\beta}, \bsb{\gamma})\ge 0, \label{cvx-1}\\
&\psi(\bm\gamma) - \psi(\bm\beta) -\langle\bsb{s}_{\bsb{\beta}}, \bsb{\gamma} - \bsb{\beta} \rangle\ge \breg_\psi(\bsb{\gamma}, \bsb{\beta})\ge 0.\label{cvx-2}
\end{align}
Adding them together gives $\langle\bsb{s}_{\bsb{\beta}}-\bsb{s}_{\bsb{\gamma}}, \bsb{\beta} - \bsb{\gamma} \rangle\ge 0$. This indicates the monotone property of   the Clarke-Rockafellar subdifferential of $\psi$, thereby its convexity  according to  \cite{Correa1994}.

(iii)  To show the first result, notice that $\breg_{\psi \circ \varphi} (\bsb{\beta}, \bsb{\gamma}) - \breg_{\psi  } (\varphi(\bsb{\beta}), \varphi(\bsb{\gamma})) = \lim_{\epsilon \rightarrow 0+} \{ \psi (\varphi(\bsb{\gamma})\allowbreak+ \epsilon (\varphi(\bsb{\beta}) -\varphi( \bsb{\gamma}))) -\psi (\varphi(\bsb{\gamma}+ \epsilon (\bsb{\beta} - \bsb{\gamma}))) \}/\epsilon = \langle \nabla \psi (\varphi(\bsb{\gamma})), \varphi(\bsb{\beta}) -\varphi( \bsb{\gamma}) \rangle - \delta (\psi \circ \varphi)(\bsb{\gamma}; \bsb{\beta} - \bsb{\gamma})   $. From $\psi\in \mathcal C^{1}$ and $ \varphi\in \mathcal C^{0}$,
\begin{align*}\delta (\psi \circ \varphi)(\bsb{\gamma}; \bsb{\beta} - \bsb{\gamma}) = &\lim_{\epsilon\rightarrow 0+} \{\psi ( \varphi(\bsb{\gamma}) + (\varphi(\bsb{\gamma} + \epsilon ( \bsb{\beta} - \bsb{\gamma}))- \varphi(\bsb{\gamma}))) - \psi(\varphi(\bsb{\gamma}))\}/{\epsilon} \\ = &\lim_{\epsilon\rightarrow 0+}\langle \nabla \psi (\varphi(\bsb{\gamma})), \varphi(\bsb{\gamma} + \epsilon ( \bsb{\beta} - \bsb{\gamma}))- \varphi(\bsb{\gamma})/\epsilon \rangle  \\
= & \langle \nabla \psi (\varphi(\bsb{\gamma})), \delta \varphi(\bsb{\gamma}; \bsb{\beta} - \bsb{\gamma}) \rangle.
\end{align*}
Using the definition of $\breg_{\varphi}(\bsb{\beta}, \bsb{\gamma})$ (the componentwise extension), we obtain the conclusion.

Next, we  prove the second result. Let  $\varphi: \mathbb R^p \rightarrow \mathbb R^n$ be the linear function $\varphi(\bsb{\beta} )= \bsb{X} \bsb{\beta} + \bsb{\alpha}$ with its Jacobian matrix $ \rD  \varphi (\bsb{\beta})    := [\rD_j \varphi_i(x)] =\bsb{X}\in \mathbb R^{n\times p}$. By definition, $\breg_{\psi \circ \varphi} (\bsb{\beta}, \bsb{\gamma}) = \psi(\varphi (\bsb{\beta})) - \psi(\varphi (\bsb{\gamma})) - \delta (\psi\circ \varphi)(\bm\gamma; \bm\beta-\bm\gamma)$ and
\begin{align*}
\delta (\psi\circ \varphi)(\bm\gamma; \bm\beta-\bm\gamma)    = & \lim_{\epsilon\rightarrow 0+} \{\psi (\varphi(\bsb{\gamma}+ \epsilon (\bsb{\beta} - \bsb{\gamma}))) - \psi (\varphi(\bsb{\gamma}))\}/\epsilon  \\
= & \lim_{\epsilon\rightarrow 0+} \{\psi (\varphi(\bsb{\gamma})+ \epsilon \rD\varphi(\bsb{\gamma}) (\bsb{\beta} - \bsb{\gamma}))) - \psi (\varphi(\bsb{\gamma}))\}/\epsilon \\
 = &  \, \delta  \psi  (\varphi(\bm\gamma); \rD\varphi(\bsb{\gamma}) (\bsb{\beta} - \bsb{\gamma})) = \delta  \psi  (\varphi(\bm\gamma); \varphi(\bm\beta)-\varphi(\bm\gamma)),
 \end{align*} from which it follows that
$ \breg_{\psi \circ \varphi} (\bsb{\beta}, \bsb{\gamma})   = \breg_{\psi  } (\varphi(\bsb{\beta}), \varphi(\bsb{\gamma}))$.

(iv) From Theorem 11 in \cite{Dini2006}, for any continuous function $f$ with finite Dini derivative $D^+f(x):=\mathop{\lim\sup}_{\epsilon\rightarrow 0+}(f(x+\epsilon)-f(x))/\epsilon$, if $D^+f(x)$ is integrable over $[a,b]$, $f(b) - f(a) = \int_a^b D^+f(x)\text{d}x$.
By definition, $\psi$ is continuous when restricted to the line segment  $[\bsb{\beta}, \bsb{\gamma}]$ (radial continuity). It follows that
\begin{equation}\nonumber
\begin{split}
\psi(\bm\beta) - \psi(\bm\gamma) &= \psi\big(\bm\gamma + t(\bm\beta-\bm\gamma)\big)\Big|_{t=0}^1\\
&= \int_0^1 \lim_{\epsilon\rightarrow 0+}\frac{1}{\epsilon}\Big[\psi\big(\bm\gamma + (t+\epsilon)(\bm\beta-\bm\gamma)\big)-\psi\big(\bm\gamma + t(\bm\beta-\bm\gamma)\big)\Big]\text{d}t\\
&= \int_0^1 \lim_{\epsilon\rightarrow 0+}\frac{1}{\epsilon}\Big[\psi\big(\bm\gamma + t(\bm\beta-\bm\gamma) +\epsilon(\bm\beta-\bm\gamma)\big)-\psi\big(\bm\gamma + t(\bm\beta-\bm\gamma)\big)\Big]\text{d}t\\
&= \int_0^1 \delta\psi\big(\bm\gamma + t(\bm\beta-\bm\gamma);\,\bm\beta-\bm\gamma \big)\text{d}t.
\end{split}
\end{equation}
Hence, $\bm\Delta_\psi$ can be formulated by
\begin{equation}\nonumber
\bm\Delta_\psi(\bm\beta,\bm\gamma) = \int_0^1 \Big[\delta\psi\big(\bm\gamma + t(\bm\beta-\bm\gamma);\,\bm\beta-\bm\gamma \big) - \delta \psi(\bm\gamma; \bm\beta-\bm\gamma)\Big] \rd t.
\end{equation}

\subsection{Proof of Lemma \ref{lemma:Delta_Delta}} \label{subsec:lemma2}
(i)
First,    if $\delta\psi(\bm\alpha;\cdot-\bm\alpha)$ is directionally differentiable, then
\begin{equation} \label{Delta_diff}
\bm\Delta_\psi(\bm\beta,\bm\gamma) - \bm\Delta_{\bm\Delta_\psi(\cdot,\bm\alpha)}(\bm\beta,\bm\gamma) = \bm\Delta_{\delta\psi(\bm\alpha;\cdot-\bm\alpha)}(\bm\beta,\bm\gamma)
\end{equation}
for any $\bm\alpha,\bm\beta,\bm\gamma$.
In fact,  $\bm\Delta_\psi(\bm\beta,\bm\gamma) - \bm\Delta_{\bm\Delta_\psi(\cdot,\bm\alpha)}(\bm\beta,\bm\gamma) = \bm\Delta_{\psi(\cdot) - \bm\Delta_\psi(\cdot,\bm\alpha)}(\bm\beta,\bm\gamma) = \bm\Delta_{\psi(\bm\alpha)+\delta\psi(\bm\alpha;\cdot-\bm\alpha)}\allowbreak(\bm\beta,\bm\gamma)= \bm\Delta_{\delta\psi(\bm\alpha;\cdot-\bm\alpha)}(\bm\beta,\bm\gamma)$.

Accordingly, when $\psi$ is convex, which means  $\delta\psi(\bm\alpha;\cdot-\bm\alpha)$ is convex as well (cf. Section \ref{proofofLem1}),    $\bm\Delta_{\delta\psi(\bm\alpha;\cdot-\bm\alpha)}(\bm\beta,\bm\gamma) \ge 0$ by Lemma \ref{lemma:Delta_linear}. The result under concavity can be similarly proved.

(ii)
Let
\begin{align}
q(\cdot;\bm\alpha) = \delta\psi(\bm\alpha;\cdot-\bm\alpha).
\end{align} We want to show for $\bsb{\alpha} = \theta \bsb{\beta} + (1-\theta) \bsb{\gamma}$ with $\theta\le 0 $ or $\theta\ge 1$, $\bm\Delta_{q(\cdot;\bm\alpha)}(\bm\beta,\bm\gamma)
$
is well-defined and equals 0. This is  intuitive due to the linearity of  $q$ when restricted to $[\bsb{\beta}, \bsb{\gamma}]$, assuming $\bsb{\beta}-\bsb{\alpha}$ and $\bsb{\gamma}-\bsb{\alpha}$ are positively collinear.

To verify it,  by definition,
\begin{equation} \nonumber
\begin{split}
\delta q(\cdot;\bm\alpha)(\bm \gamma;\bm\beta-\bm\gamma) = &\lim_{\epsilon\rightarrow 0+}[q(\bm\gamma+\epsilon(\bm\beta-\bm\gamma);\bm\alpha) - q(\bm\gamma;\bm\alpha)]/\epsilon\\
= &\lim_{\epsilon\rightarrow 0+}[\delta\psi(\bm\alpha;\bm\gamma+\epsilon(\bm\beta-\bm\gamma)-\bm\alpha) - \delta\psi(\bm\alpha;\bm\gamma-\bm\alpha)]/\epsilon\\
= &\lim_{\epsilon\rightarrow 0+}[\delta\psi(\bm\alpha; (\theta - \epsilon)( \bm\gamma-\bm\beta)) - \delta\psi(\bm\alpha; \theta (\bm\gamma-\bm\beta))]/\epsilon\\
= &\lim_{\epsilon\rightarrow 0+}[\delta\psi(\bm\alpha; ( \epsilon-\theta)(\bm\beta-\bm\gamma)) - \delta\psi(\bm\alpha; (-\theta) (\bm\beta-\bm\gamma))]/\epsilon,\end{split}
\end{equation}
and so with $\theta>0$,
\begin{equation} \nonumber
\begin{split}
\delta q(\cdot;\bm\alpha)(\bm\gamma;\bm\beta-\bm\gamma) =  &\lim_{\epsilon\rightarrow 0+}[(\theta-\epsilon)\delta\psi(\bm\alpha;\bm\gamma-\bm\beta) -\theta \delta\psi(\bm\alpha;\bm\gamma-\bm\beta)]/\epsilon\\
= & -\delta\psi(\bm\alpha;\bm\gamma-\bm\beta) ,
\end{split}
\end{equation}
and  with $\theta\le 0$,
\begin{equation} \nonumber
\begin{split}
\delta q(\cdot;\bm\alpha)(\bm\gamma;\bm\beta-\bm\gamma) =  &\lim_{\epsilon\rightarrow 0+}[( \epsilon-\theta)\delta\psi(\bm\alpha; ( \bm\beta-\bm\gamma)) - (-\theta)\delta\psi(\bm\alpha;  (\bm\beta-\bm\gamma ))]/\epsilon\\
= &\,\delta\psi(\bm\alpha;\bm\beta-\bm\gamma) .
\end{split}
\end{equation}
The above derivation also guarantees the existence of $\bm\Delta_{\bm\Delta_\psi(\cdot,\bm\alpha)}(\bm\beta,\bm\gamma)$.
Now, as $\theta\ge 1$, $\langle \bsb{\beta} - \bsb{\alpha}, \bsb{\gamma} - \bsb{\beta}\rangle\ge 0$ and so $q(\bsb{\beta};\bm\alpha) - q(\bsb{\gamma};\bm\alpha) -\delta q(\cdot;\bm\alpha)(\bm\gamma;\bm\beta-\bm\gamma) = \delta \psi(\bm\alpha ;\bm\beta-\bm\alpha)  - \delta \psi(\bm\alpha ;\bm\gamma-\bm\alpha) + \delta \psi(\bm\alpha ;\bm\gamma-\bm\beta) = 0$. As  $\theta\le 0$,  $\langle \bsb{\beta} - \bsb{\alpha}, \bsb{\gamma} - \bsb{\beta}\rangle\le 0$ and $q(\bsb{\beta};\bm\alpha) - q(\bsb{\gamma};\bm\alpha) -\delta q(\cdot;\bm\alpha)(\bm\gamma;\bm\beta-\bm\gamma) = \delta \psi(\bm\alpha ;\bm\beta-\bm\alpha)  - \delta \psi(\bm\alpha ;\bm\gamma-\bm\alpha) - \delta \psi(\bm\alpha ;\bm\beta - \bm\gamma) = 0$.

(iii) By definition, we have
\begin{align*}
&\ \delta q(\cdot;\bm\alpha)(\bm z;\bm\beta-\bm\gamma)\\
=& \lim_{\epsilon_2\rightarrow 0+}\frac{1}{\epsilon_2} \{q ( \bm z + \epsilon_2(\bm\beta-\bm\gamma);\bm\alpha ) - q ( \bm z ;\bm\alpha)\}\\
=&  \lim_{\epsilon_2\rightarrow 0+}\frac{1}{\epsilon_2} \{\delta \psi (\bm\alpha; \bm z + \epsilon_2(\bm\beta-\bm\gamma) -\bm\alpha ) -\delta \psi (\bm\alpha; \bm z  -\bm\alpha ) \}.
\end{align*}

Under the restricted linearity condition $\delta\psi (\bsb{\alpha}; \bsb{h})=\langle g(\bsb{\alpha}), \bsb{h}\rangle,  \forall \bsb{h}\in [\bsb{\beta}-\bsb{\alpha}, \bsb{\gamma}-\bsb{\alpha}]$, for  $\bsb{z} = \bsb{\gamma}+ t(\bsb{\beta} - \bsb{\gamma})$ with $t\in [0,1)$, \begin{align*}
 \delta q(\cdot;\bm\alpha)(\bm z;\bm\beta-\bm\gamma)
= & \lim_{\epsilon_2\rightarrow 0+}\frac{1}{\epsilon_2}  \langle g (\bm\alpha), \bm z + \epsilon_2(\bm\beta-\bm\gamma) -\bm\alpha -
\bm z  +\bm\alpha  \rangle \\
=&\langle g (\bm\alpha), \bm\beta-\bm\gamma  \rangle.
\end{align*}
  Under the restricted continuity condition $\lim_{\epsilon\rightarrow 0+} \delta \psi(\bsb{\alpha}+\epsilon \bsb{h}; \bsb{\beta} - \bsb{\gamma}) =\delta \psi(\bsb{\alpha}; \bsb{\beta} - \bsb{\gamma}), \forall  \bsb{h}\in[\bsb{\beta}-\bsb{\alpha}, \bsb{\gamma}-\bsb{\alpha}] $, for  $\bsb{z} = \bsb{\gamma}+ t(\bsb{\beta} - \bsb{\gamma})$ with $t\in [0,1)$,

\begin{equation}\nonumber
\begin{split}
&\ \delta q(\cdot;\bm\alpha)(\bm z;\bm\beta-\bm\gamma)\\
=& \lim_{\epsilon_2\rightarrow 0+}\frac{1}{\epsilon_2}\bigg\{\lim_{\epsilon_1\rightarrow 0+}\frac{1}{\epsilon_1}\Big[\psi\big(\bm\alpha+\epsilon_1[\bm z + \epsilon_2(\bm\beta-\bm\gamma)-\bm\alpha]\big) - \psi(\bm\alpha)\Big]\\
&~~~~~~~~~~~~~~~ - \lim_{\epsilon_1\rightarrow 0+}\frac{1}{\epsilon_1}\Big[\psi\big(\bm\alpha+\epsilon_1(\bm z-\bm\alpha)\big) - \psi(\bm\alpha)\Big] \bigg\}\\
=& \lim_{\epsilon_2\rightarrow 0+} \lim_{\epsilon_1\rightarrow 0+} \frac{1}{\epsilon_1\epsilon_2}\Big[\psi\big((1-\epsilon_1)\bm\alpha + \epsilon_1\bm z-\epsilon_1\epsilon_2 \bm\gamma + \epsilon_1\epsilon_2\bm\beta\big) - \psi\big((1-\epsilon_1)\bm\alpha + \epsilon_1\bm z\big)\Big]\\
=& \lim_{\epsilon_2\rightarrow 0+} \lim_{\epsilon_1\rightarrow 0+} \frac{1}{\epsilon_1\epsilon_2} \int_0^1 \delta \psi\big((1-\epsilon_1)\bm\alpha + \epsilon_1\bm z+\epsilon_1\epsilon_2s(\bm\beta-\bm\gamma);\epsilon_1\epsilon_2(\bm\beta-\bm\gamma) \big)\rd s\\
=& \lim_{\epsilon_2\rightarrow 0+} \lim_{\epsilon_1\rightarrow 0+}   \int_0^1 \delta \psi\big((1-\epsilon_1)\bm\alpha + \epsilon_1\bm z+\epsilon_1\epsilon_2s(\bm\beta-\bm\gamma);  \bm\beta-\bm\gamma  \big)\rd s\\
=& \lim_{\epsilon_2\rightarrow 0+}    \int_0^1 \lim_{\epsilon_1\rightarrow 0+}\delta \psi\big( \bm\alpha + \epsilon_1(\bm z + \epsilon_2s(\bm\beta-\bm\gamma)-\bm \alpha);  \bm\beta-\bm\gamma  \big)\rd s\\
=&  \ \delta\psi(\bm\alpha;  \bm\beta-\bm\gamma),
\end{split}
\end{equation}
where we used  the positive homogeneity of $\delta \psi(\bsb{\alpha}; \cdot)$ and the dominated convergence theorem. (The integral is well-defined due to the boundedness  and Lebesgue measurability of the integrand.)

The two sets of conditions are not equivalent in multiple dimensions. But in either case, $\delta q(\cdot;\bm\alpha)(\bm z;\bm\beta-\bm\gamma)$ is  a term  independent of $\bm z$.
Hence  by  Lemma  \ref{lemma:Delta_linear} (iv),
$$\bm\Delta_{\bm\Delta_q(\cdot,\bm\alpha)}(\bm\beta,\bm \gamma) = \int_0^1 \Big[\delta q(\cdot;\bm\alpha)\big(\bm\gamma + t(\bm\beta-\bm\gamma);\,\bm\beta-\bm\gamma \big) - \delta q(\cdot;\bm\alpha)(\bm\gamma; \bm\beta-\bm\gamma)\Big] \text{d}t=0.$$

\subsection{Proof of Lemma \ref{lemma:glmBreg}}
\label{proofofexp}
%
 (i)
 Let $\varphi = b^*$.   Then for all subgradient $\bsb{g}\in \partial \varphi(\bsb{z})$,   $(\bsb{g}, \bsb{z})$ makes a conjugate pair and so $ \langle  \bsb{g},  \bsb{z}  \rangle =    b( \bsb{g}) + \varphi( \bsb{z} )$ (see, e.g.,  \cite{Rockafellar1970}). Using the shorthand notation $( \partial \varphi(\bsb{z}),  \bsb{z} ) $, we represent it as  $ \langle  \partial \varphi(   \bsb{z}),  \bsb{z}  \rangle =    b( \partial \varphi (   \bsb{z})) + \varphi( \bsb{z} )$. Therefore,
\begin{align*}
 \sigma^2 l_0(\bsb{\eta}; \bsb{z})  +b^*(\bsb{z})  & = - \langle \bsb{z}, \bsb{\eta} \rangle + b(\bsb{\eta}) +\varphi(\bsb{z})  \\ &  =  - \langle \bsb{z}, \bsb{\eta} \rangle + b(\bsb{\eta})+  \langle\bsb{z}, \partial \varphi(   \bsb{z})     \rangle -    b( \partial \varphi(   \bsb{z})) \\
&=  b(\bsb{\eta}) - b(\partial \varphi (   \bsb{z})) -  \langle \bsb{z}, \bsb{\eta} -  \partial \varphi(   \bsb{z})\rangle \\ &=  b(\bsb{\eta}) - b( \partial \varphi(   \bsb{z})) -  \langle \nabla b( \partial \varphi( \bsb{z})), \bsb{\eta} -  \partial \varphi(   \bsb{z})\rangle \\& = \breg_b (\bsb{\eta} ,  \partial \varphi(   \bsb{z})).
\end{align*}
When $p_{\bsb{\eta}}$ is minimal,  $\mathcal M$ is full-dimensional and  the canonical link $g = (\nabla b )^{-1}$ is well-defined on $\mathcal M^\circ$  (Proposition 3.1 and Proposition 3.2 in \citep{WainJordan08}  can be slightly modified to include the dispersion parameter), and so  $(g(\bsb{z}), \nabla b(g(\bsb{z})))$ or $(g(\bsb{z}),  \bsb{z} )$ makes a conjugate pair.

 (ii) Let $\bsb{\mu}(\bsb{\eta}) = \nabla b(\bsb{\eta})$ or $\bsb{\mu}$ for brevity. It follows that $\bsb{\eta}\in \partial \varphi (\bsb{\mu})$ and so
\begin{align*}
- \langle \bsb{z}, \bsb{\eta}\rangle  + b(\bsb{\eta}) + b^{*}(\bsb{z}) & =- \langle \bsb{z}, \bsb{\eta}\rangle +  \langle \bsb{\mu}, \bsb{\eta}\rangle - \varphi(\bsb{\mu}) + \varphi(\bsb{z})  \\
& = - \langle \bsb{z}- \bsb{\mu}, \bsb{\eta}\rangle   - \varphi(\bsb{\mu}) + \varphi(\bsb{z})\\
& \ge - \delta \varphi (\bsb{\mu};  \bsb{z}- \bsb{\mu})- \varphi(\bsb{\mu}) + \varphi(\bsb{z}) = \breg_{\varphi}(\bsb{z}, \bsb{\mu}),
\end{align*}
where  the  inequality is due to \cite[Theorem 23.2]{Rockafellar1970}. We claim that  the inequality is actually an equality.

Indeed, if there exist    $\bsb{\eta}_1, \bsb{\eta}_2\in \partial \varphi(\bsb{\mu})$ with $\bsb{\eta}_1\ne  \bsb{\eta}_2$, then $\sym{\breg}_b(\bsb{\eta}_2, \bsb{\eta}_1)=\langle \nabla b(\bsb{\eta}_2) - \nabla b(\bsb{\eta}_1),    \bsb{\eta}_2 - \bsb{\eta}_1 \rangle = 0$ and so ${\breg}_b(\bsb{\eta}_2, \bsb{\eta}_1)=0$ since $b$ is convex. Therefore, for any random vector $\bsb{y}$ following  $p_{\bsb{\eta}}$ in the exponential family, where  $\bsb{\eta} = t \bsb{\eta}_1 + (1-t) \bsb{\eta}_2$, $t\in (0,1)$, $$Var( (\bsb{\eta}_2 - \bsb{\eta}_1)^T \bsb{y}) = 0,$$ which can be obtained from Proposition 3.1 of \cite{WainJordan08}.  Because $\exp(  (\langle \cdot, \bsb{\eta}\rangle -  {b}(\bsb{\eta} ))/\sigma^{2})  >0$ for any $\bsb{\eta}\in \Omega$, we have      $\langle\bsb{\eta}_2 - \bsb{\eta}_1, \bsb{z}\rangle = c$  for almost every   $\bsb{z}\in \mathcal Y^{n}$ with respect to  $\nu$ (i.e.,      $p_{\bsb{\eta}}$ is not minimal). It follows that
$$
 \langle \bsb{z}- \bsb{\mu}, \bsb{\eta}_1 - \bsb{\eta}_2\rangle = 0.
$$
Finally, from $\delta \varphi(\bsb{\mu};  \bsb{h})=\sup\{\langle \bsb{g}, \bsb{h}\rangle:\bsb{g}\in \partial \varphi(\bsb{\mu} \} $   \cite[Theorem 23.4]{Rockafellar1970}, the claim is true.

In the case that       $p_{\bsb{\eta}}$ is also  minimal, $\varphi$ can be shown to be strictly convex and differentiable on $\mathcal M^\circ$ \cite[Theorem 26.4]{Rockafellar1970}.

 (iii) Let $\rd P_{\bsb{\eta}}  = p_{\bsb{\eta}}\rd \nu_0$. By definition, $$\mbox{KL}(  p_{\bsb{\eta}_1} , p_{\bsb{\eta}_2}) = \int \log(\rd P_{\bsb{\eta}_1}/\rd P_{\bsb{\eta}_2}) \rd P_{\bsb{\eta}_1} = \int p_{\bsb{\eta}_1}\log( p_{\bsb{\eta}_1}/\rd p_{\bsb{\eta}_2})  \rd \nu_0  $$ and so
\begin{align*}
\mbox{KL}(  p_{\bsb{\eta}_1} , p_{\bsb{\eta}_2}) & = \int \log {\{e^{ ( \langle \bsb{y},  \bsb{\eta}_1\rangle - b(\bsb{\eta}_1))/\sigma^2 - c(\bsb{y}, \sigma^2)}} / {e^{ ( \langle  \bsb{y},  \bsb{\eta}_2\rangle - b(\bsb{\eta}_2))/\sigma^2 - c(\bsb{y}, \sigma^2)}}\}  \rd P_{\bsb{\eta}_1} \\
 & = \frac{1}{\sigma^2}\int \langle \bsb{y},   \bsb{\eta}_1 - \bsb{\eta}_2\rangle -  b(\bsb{\eta}_1) + b(\bsb{\eta}_2) \rd P_{\bsb{\eta}_1} \\
& = \frac{1}{\sigma^2} \{b(\bsb{\eta}_2) - b(\bsb{\eta}_1)+ \int \langle  \bsb{y},   \bsb{\eta}_1 - \bsb{\eta}_2 \rangle   \rd P_{\eta_1}\} \\
& = \frac{1}{\sigma^2}\{ b(\bsb{\eta}_2) - b(\bsb{\eta}_1) + \langle \nabla b (\bsb{\eta}_1) , \bsb{\eta}_1 - \bsb{\eta}_2 \rangle\} \\
& = \breg_b(\bsb{\eta}_2, \bsb{\eta}_1) / \sigma^2,
\end{align*}
where the third equality is due to $\EE_{\bsb{y} \sim p_{\bsb{\eta}_1}} \bsb{y} = \nabla b (\bsb{\eta}_1)$ under $\bsb{\eta}_1\in \Omega^\circ$ (which can be derived from  Proposition 3.1 of \cite{WainJordan08}).
Moreover, from Lemma   \ref{lemma:Delta_linear},   $\sigma^2\breg_{l_0} (\bsb{\eta}_2, \bsb{\eta}_1) = \breg_b (\bsb{\eta}_2, \bsb{\eta}_1)$.


\subsection{Proof of Lemma \ref{lemma:degeneracy}}

In this proof, all directional derivatives are with respect with $\bm\beta$. The result of (i) is trivial from the construction of $g$. For (ii), by definition, we have $\delta g(\bm\beta;\bm\beta^-,\bm h) = \delta f(\bm\beta;\bm h) + \delta \psi(\bm\beta;\bm h)- \delta q(\bm\beta;\bm\beta^-,\bm h)$ with $q(\bm\beta; \bm\beta^-) = \delta\psi(\bm\beta^-;\bm\beta-\bm\beta^-)$. It follows from $q(\bm\beta; \bm\beta^-) = \lim_{\epsilon\rightarrow 0+}[\psi(\bm\beta^-+\epsilon(\bm\beta-\bm\beta^-)) - \psi(\bm\beta^-)]/\epsilon$ that
\[\begin{split}
\delta q(\bm\beta;\bm\beta^-,\bm h) = &\lim_{\epsilon'\rightarrow 0+}[q(\bm\beta+\epsilon'\bm h;\bm\beta^-) - q(\bm\beta;\bm\beta^-)]/\epsilon'\\
= &\lim_{\epsilon'\rightarrow 0+}\big\{(1/\epsilon')\lim_{\epsilon\rightarrow 0+}[\psi(\bm\beta^-+\epsilon(\bm\beta+\epsilon'\bm h-\bm\beta^-))-\psi(\bm\beta^-)]/\epsilon\big\}\\
&- \lim_{\epsilon'\rightarrow 0+}\delta\psi(\bm\beta^-;\bm\beta-\bm\beta^-)/\epsilon'.
\end{split}\]
When $\bm\beta^-=\bm\beta$, $\delta\psi(\bm\beta^-;\bm\beta-\bm\beta^-) = 0$ and so
\[\begin{split}
\delta q(\bm\beta;\bm\beta^-,\bm h)|_{\bm\beta^-=\bm\beta} &=\lim_{\epsilon'\rightarrow 0+}\lim_{\epsilon\rightarrow 0+}[\psi(\bm\beta+\epsilon(\epsilon'\bm h))-\psi(\bm\beta)]/(\epsilon\epsilon')\\
&=\lim_{\epsilon''\rightarrow 0+}[\psi(\bm\beta+\epsilon''\bm h)-\psi(\bm\beta)]/\epsilon''\\
&=\,\delta\psi(\bm\beta;\bm h).
\end{split}\]
The above argument also guarantees the existence of $\delta g(\bm\beta;\bm\beta^-,\bm h)|_{\bm\beta^-=\bm\beta}$.
Therefore, $\delta g(\bm\beta;\bm\beta^-,\bm h)|_{\bm\beta^-=\bm\beta} = \delta f(\bm\beta;\bm h)$ for any $\bm\beta$ and $\bm h$.

\subsection{Proof of Lemma \ref{lemma:C}}
\label{subsec:proofoflemma:C}
All results in  Lemma \ref{lemma:Delta_linear} and Lemma \ref{lemma:Delta_Delta} can be formulated for $\mathbf C$. For example,
$\psi$ is convex if and only if $\mathbf C_\psi \ge 0$,    $\mathbf C_{a\phi+b\varphi} = a\mathbf C_\phi + b\mathbf C_\varphi$, $\breg_\psi\ge \mu \Breg_2$ implies $\mathbf C_\psi\ge \mu \mathbf C_2$ since    $\mathbf C_2(\bm\alpha,\bm\beta,\theta):=\mathbf C_{\|\cdot\|_2^2/2}(\bm\alpha,\bm\beta,\theta)= \theta(1-\theta) \Breg_2(\bm\alpha,\bm\beta)$, and so on. To show (i), we have
\begin{equation} \nonumber
\begin{split}
&\mathbf C_\psi(\bm\alpha,\bm\beta,\theta) + \bm\Delta_\psi(\theta\bm\alpha + (1-\theta)\bm\beta, \bm\alpha) \\
=\,& \theta\psi(\bm\alpha)+(1-\theta)\psi(\bm\beta) - \psi(\theta\bm\alpha+(1-\theta)\bm\beta)\\
&+\psi(\theta\bm\alpha+(1-\theta)\bm\beta)-\psi(\bm\alpha)-\delta\psi(\bm\alpha;\theta\bm\alpha+(1-\theta)\bm\beta-\bm\alpha)\\
=\,& (\theta-1)\psi(\bm\alpha) + (1-\theta)\psi(\bm\beta) - \delta\psi(\bm\alpha;(1-\theta)(\bm\beta-\bm\alpha))\\
=\,& (1-\theta)\psi(\bm\beta) - (1-\theta)\psi(\bm\alpha) - (1-\theta)\delta\psi(\bm\alpha;\bm\beta-\bm\alpha)\\
=\,&(1-\theta)\bm\Delta_\psi(\bm\beta,\bm\alpha).
\end{split}
\end{equation}

Similar to the proof of   Lemma \ref{lemma:Delta_Delta}, let $q(\cdot;\bm\alpha) = \delta\psi(\bm\alpha;\cdot-\bm\alpha)$.   Then   $$
\mathbf C_{\bm{\Delta}_\psi(\cdot,\bm\alpha)}(\bm\beta,\bm\gamma,\theta) -\mathbf C_\psi(\bm\beta,\bm\gamma,\theta)=  \mathbf C_{\psi(\bm\alpha)+q(\cdot;\bm\alpha)}(\bm\beta,\bm\gamma,\theta)=  \mathbf C_{q(\cdot;\bm\alpha)}(\bm\beta,\bm\gamma,\theta), $$
 without requiring the directional differentiability of $q(\cdot;\bm\alpha) $.
We can show analogous results to Lemma \ref{lemma:Delta_Delta}.
For example, for any convex   $\psi$,   from   the   positively homogenous convexity of $q$,      $$\mathbf C_{\bm{\Delta}_\psi(\cdot,\bm\alpha)}\le \mathbf C_\psi$$ holds for any $\bsb{\alpha}$, and  for  $\bsb{\alpha}=(1-\theta) \bsb{\gamma} +  \theta  \bsb{\beta}$ with $\theta\not\in(0,1)$,   $$ \mathbf C_{\bm\Delta_\psi(\cdot,\bm\alpha)}(\bm\beta,\bm\gamma)= \mathbf C_\psi(\bm\beta,\bm\gamma)  $$ follows from the restricted linearity of $q$. In particular, when $\nabla \psi(\bsb{\alpha})$ exists,  $q$ is  linear and so $\mathbf C_{q(\cdot;\bm\alpha)}\equiv 0$ which gives the result in (ii).
\subsection{Proof of Theorem \ref{thm:minimaxglm}}
\label{subsec:minimaxglm}


The theorem can be proved  based on Theorem 6.1 of \cite{Lounici2011} and property (iii) of Lemma \ref{lemma:glmBreg}  in Section \ref{sec:Bregnotation}. 
We  give some details for the   second conclusion;  the proof of the first  follows similar lines and is easier. Consider a signal subclass
\begin{equation}\nonumber
\mathcal B^1 = \{{\bm\beta} : \beta_j\in\{0,\tau R\}, \|\bm\beta\|_0\le  s^*\},
\end{equation}
where $$R = [\sigma (\log(ep/s^*))^{1/2}/{  \overline \kappa  }^{1/2}]\wedge M$$ and $1> \tau>0$ is a small constant to be chosen later.
Clearly, $\mathcal B^1\in\mathcal B(s^* ,M)$. By Stirling's approximation, $\log |\mathcal B^1 | \ge \log {p \choose s^*} \ge s^*\log(p/s^*) \ge cs^*\log(ep/s^*)$
for some universal constant $c$.

Let $\rho({\bm\beta}_1,{\bm\beta}_2) = \|{\bm\beta}_1-{\bm\beta}_2\|_0$, the Hamming distance between ${\bm\beta}_1$ and ${\bm\beta}_2$. By Lemma A.3 in   \cite{Rigollet11}, there exists a subset $\mathcal B^{10}  \subset \mathcal B^1$ such that $\bsb{0}\in \mathcal B^{10} $ and
\begin{equation}\nonumber
\log |\mathcal B^{10}| \ge c_1s^*\log(ep/s^*), \rho({\bm\beta}_1,{\bm\beta}_2) \ge c_2s^*,\forall {\bm\beta}_1,{\bm\beta}_2\in \mathcal B^{10}, {\bm\beta}_1\ne {\bm\beta}_2
\end{equation}
for some universal constants $c_1,c_2>0$. Then \begin{align}\label{minimax_ineq-1}
\|\bsb{X}{\bm\beta}_1-\bsb{X}{\bm\beta}_2\|_2^2 \ge\underline \kappa\|{\bm\beta}_1-{\bm\beta}_2\|_2^2 = \underline \kappa \tau^2 R^2\rho({\bm\beta}_1,{\bm\beta}_2) \ge c_2\underline \kappa \tau^2R^2s^*\end{align} 
for any ${\bm\beta}_1,{\bm\beta}_2\in\mathcal B^{10}$, ${\bm\beta}_1\ne {\bm\beta}_2$.

By Lemma \ref{lemma:glmBreg} (iii), since $\Omega$ is open, for any ${\bm\beta}\in\mathcal B^{10}$, we have
\begin{equation}\nonumber
\mbox{KL}(p_{\bsb{\beta}}, p_{{\bm 0}} ) = \breg_{l_0} (    \bsb{0} , \bsb{X} { \bsb{\beta}}) \le   {\tau^2}\overline\kappa R^2s^*/(2\sigma^2).
\end{equation}
 Therefore,
\begin{equation} \label{minimax_ineq-2}
\frac{1}{|\mathcal B^{10}|-1}\sum_{ \bsb{\beta}\in\mathcal B^{10}\setminus \{\bsb{0}\}}\mbox{KL}( p_{ \bsb{\beta}}, p_{{\bm0} } ) \le {\tau^2}   s^*\log(ep/s^*).
\end{equation}

Combining \eqref{minimax_ineq-1} and \eqref{minimax_ineq-2} and choosing a sufficiently small value for $\tau$, we can apply Theorem 2.7 of \cite{Tsybakov2008} to get the desired lower bound.

\subsection{Proof of Proposition \ref{th:comp_funcval}} \label{proof:comp_funcval}

We first introduce a lemma.


\begin{lemma}\label{lemma:f_triangle}
 For the sequence of iterates $\{\bm\beta^{(t)}\}$
defined by \eqref{BregIter} starting from an arbitrary  point $\bm{\beta}^{(0)}$,  if  $f(\cdot)$ and $g(\cdot; \bsb{\beta}^{(t)})$ are  directionally differentiable,  the following inequality holds for any $\bm{\beta}$ and $t \geq 0$
\begin{align}\label{f_triangle-0}
\begin{split}
&f(\bm{\beta}) + \bm\Delta_\psi(\bm{\beta},\bm{\beta}^{(t)})   \\\ge \ &    f(\bm{\beta}^{(t+1)}) + \bm\Delta_\psi(\bm{\beta}^{(t+1)},\bm{\beta}^{(t)}) + (\bm\Delta_{\breg_\psi(\cdot; \bsb{\beta}^{(t)})}+\bm{\Delta}_f)(\bm{\beta},\bm{\beta}^{(t+1)}).
\end{split}\end{align}
\end{lemma}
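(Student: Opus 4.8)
The plan is to read the claim as a single statement about the surrogate $g(\cdot;\bm\beta^{(t)})$ rather than as a two-sided comparison. Observe that the left-hand side of \eqref{f_triangle-0} is literally $g(\bm\beta;\bm\beta^{(t)})$, while the first two terms on the right-hand side are $f(\bm\beta^{(t+1)})+\bm\Delta_\psi(\bm\beta^{(t+1)},\bm\beta^{(t)})=g(\bm\beta^{(t+1)};\bm\beta^{(t)})$. Hence \eqref{f_triangle-0} is equivalent to
\[
g(\bm\beta;\bm\beta^{(t)}) - g(\bm\beta^{(t+1)};\bm\beta^{(t)}) \ \ge\ \bm\Delta_{g(\cdot;\bm\beta^{(t)})}(\bm\beta,\bm\beta^{(t+1)}),
\]
so the whole lemma reduces to a ``strong'' first-order optimality inequality for the directionally differentiable function $g(\cdot;\bm\beta^{(t)})$ at its minimizer, together with an additivity computation for the generalized Bregman operator.

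First I would invoke Lemma \ref{lemma:local-opt}. By construction in \eqref{BregIter}, $\bm\beta^{(t+1)}$ is a global minimizer of $g(\cdot;\bm\beta^{(t)})$ over the convex set $C=\mathbb R^p$, and $g(\cdot;\bm\beta^{(t)})$ is directionally differentiable by hypothesis; Lemma \ref{lemma:local-opt} then yields exactly the displayed inequality, namely $g(\bm\beta;\bm\beta^{(t)})-g(\bm\beta^{(t+1)};\bm\beta^{(t)})\ge \bm\Delta_{g(\cdot;\bm\beta^{(t)})}(\bm\beta,\bm\beta^{(t+1)})$ for every $\bm\beta$. Second, since $g(\cdot;\bm\beta^{(t)})=f(\cdot)+\bm\Delta_\psi(\cdot,\bm\beta^{(t)})$, I would apply the additivity of the GBF operator in Lemma \ref{lemma:Delta_linear}(i) to split the right-hand side as
\[
\bm\Delta_{g(\cdot;\bm\beta^{(t)})}(\bm\beta,\bm\beta^{(t+1)})
= \bm\Delta_f(\bm\beta,\bm\beta^{(t+1)}) + \bm\Delta_{\bm\Delta_\psi(\cdot,\bm\beta^{(t)})}(\bm\beta,\bm\beta^{(t+1)}),
\]
which is precisely the term $(\bm\Delta_{\bm\Delta_\psi(\cdot;\bm\beta^{(t)})}+\bm\Delta_f)(\bm\beta,\bm\beta^{(t+1)})$ appearing in \eqref{f_triangle-0}. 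Combining the two displays finishes the argument.

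The only delicate bookkeeping point — and what I would flag as the ``obstacle,'' though it is mild — is ensuring that the iterated object $\bm\Delta_{\bm\Delta_\psi(\cdot,\bm\beta^{(t)})}$ is meaningful, i.e.\ that the inner function $\bm\beta\mapsto\bm\Delta_\psi(\bm\beta,\bm\beta^{(t)})$ is itself directionally differentiable so that Lemma \ref{lemma:Delta_linear}(i) may be applied term by term. This is exactly why the hypothesis explicitly assumes directional differentiability of $g(\cdot;\bm\beta^{(t)})$: writing $\bm\Delta_\psi(\cdot,\bm\beta^{(t)})=g(\cdot;\bm\beta^{(t)})-f(\cdot)$ exhibits it as the difference of two directionally differentiable functions, hence directionally differentiable, and the additive decomposition is legitimate. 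With that in hand, the proof is essentially immediate from Lemma \ref{lemma:local-opt} and the linearity in Lemma \ref{lemma:Delta_linear}(i); no integration or idempotence machinery is needed at this stage.
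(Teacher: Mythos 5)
Your proposal is correct and coincides with the paper's own (omitted) argument: the authors state that Lemma \ref{lemma:f_triangle} "can be proved by Lemma \ref{lemma:local-opt} and Lemma \ref{lemma:Delta_linear}," which is exactly your two-step route of applying the first-order optimality inequality to $g(\cdot;\bm\beta^{(t)})$ at its minimizer $\bm\beta^{(t+1)}$ and then splitting $\bm\Delta_{g(\cdot;\bm\beta^{(t)})}$ by linearity of the GBF operator. Your additional remark on why $\bm\Delta_\psi(\cdot,\bm\beta^{(t)})$ is directionally differentiable is a sensible piece of bookkeeping that the paper leaves implicit.
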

It can be proved by Lemma \ref{lemma:local-opt} and Lemma \ref{lemma:Delta_linear} (details omitted).
Rearranging \eqref{f_triangle-0} gives
\begin{equation} \nonumber
\begin{split}
&f(\bm{\beta}^{(t+1)}) - f(\bm{\beta}) + \bm\Delta_\psi(\bm{\beta}^{(t+1)}, \bm{\beta}^{(t)}) + \bm{\Delta}_f(\bm{\beta},\bm{\beta}^{(t+1)})\\
\le\,&\bm\Delta_\psi(\bm{\beta}, \bm{\beta}^{(t)}) - \bm\Delta_{\breg_\psi(\cdot; \bsb{\beta}^{(t)})}(\bm{\beta}, \bm{\beta}^{(t+1)}).
\end{split}
\end{equation}
Under $\bm\Delta_\psi(\bm{\beta}^{(t+1)}, \bm{\beta}^{(t)}) + \bm{\Delta}_f(\bm{\beta},\bm{\beta}^{(t+1)}) \geq 0$, we have
\begin{equation} \label{funcval-1}
f(\bm{\beta}^{(t+1)}) - f(\bm{\beta}) \leq \bm\Delta_\psi(\bm{\beta}, \bm{\beta}^{(t)}) - \bm\Delta_{\breg_\psi(\cdot; \bsb{\beta}^{(t)})}(\bm{\beta}, \bm{\beta}^{(t+1)}).
\end{equation}
By Lemma \ref{lemma:Delta_Delta}, when $\psi$ is differentiable, $ \bm\Delta_{\breg_\psi(\cdot; \bsb{\beta}^{(t)})}$ is well-defined and equals $\breg_\psi$. Adding up the corresponding inequality for $t=0,1,\ldots, T$ leads to
\begin{equation} \nonumber
\sum_{t=0}^T[f(\bm{\beta}^{(t+1)}) - f(\bm{\beta})]  \leq \bm\Delta_\psi(\bm{\beta}, \bm{\beta}^{(0)}) - \bm\Delta_\psi(\bm{\beta}, \bm{\bm{\beta}}^{(T+1)}).
\end{equation}
Therefore,
\begin{equation} \nonumber
\mathop{\avg}_{0\le t\le T}f(\bm{\beta}^{(t+1)}) - f(\bm{\beta}) \leq \frac{1}{T+1}[\bm\Delta_\psi(\bm{\beta}, \bm{\beta}^{(0)}) - \bm\Delta_\psi(\bm{\beta}, \bm{\bm{\beta}}^{(T+1)})].
\end{equation}
 Note that under just the directional differentiability of  $\breg_\psi(\cdot; \bsb{\beta}^{(t)})$, \eqref{th1-condition} can be replaced by $\bm\Delta_\psi(\bm{\beta}^{(t+1)},\bm{\beta}^{(t)}) +( \bm{\Delta}_f+ \bm\Delta_{\breg_\psi(\cdot; \bsb{\beta}^{(t)})}- \bm\Delta_\psi)(\bm{\beta},\bm{\beta}^{(t+1)}) \geq 0,~0\le t\le T$.

In the specific case that both $f$ and $\psi$ are convex, \eqref{th1-condition} is always satisfied by Lemma \ref{lemma:Delta_linear} and letting $\bm\beta = \bm\beta^{(t)}$ in \eqref{funcval-1} gives
\begin{equation} \nonumber
f(\bm{\beta}^{(t+1)}) - f(\bm{\beta}^{(t)}) \le - \bm\Delta_\psi(\bm{\beta}^{(t)}, \bm{\beta}^{(t+1)}) \le 0.
\end{equation}
Hence
$f(\bm\beta^{(T+1)}) - f(\bm\beta) = \min_{0\le t\le T} f(\bm\beta^{(t+1)}) - f(\bm\beta) \le \mathop{\avg}_{0\le t\le T}f(\bm{\beta}^{(t+1)}) - f(\bm{\beta}).$
The proof is complete.

\subsection{Proof of Proposition \ref{th:strongcvx}}
\label{subsec:proofofscvx}

Substituting $\bm\beta^o$ for $\bm\beta$ in Lemma \ref{lemma:f_triangle} gives
\begin{equation}\label{strongcvx-1}
\begin{split}
& f(\bm\beta^{(t+1)}) - f(\bm\beta^o) + \bm\Delta_\psi(\bm\beta^{(t+1)},\bm\beta^{(t)}) + \bm\Delta_{\breg_\psi(\cdot; \bsb{\beta}^{(t)})}(\bm\beta^o, \bm\beta^{(t+1)}) \\
\le\,&\bm\Delta_\psi(\bm\beta^o,\bm\beta^{(t)}) - \bm\Delta_f(\bm\beta^o,\bm\beta^{(t+1)}).
\end{split}
\end{equation}
By Lemma \ref{lemma:local-opt}, we get
\begin{equation}\label{strongcvx-2}
f(\bm\beta^{(t+1)}) - f(\bm\beta^o) \ge \bm\Delta_f(\bm\beta^{(t+1)},\bm\beta^o).
\end{equation}
Combining \eqref{strongcvx-1} and \eqref{strongcvx-2} yields
\begin{align}
(2\sym{\bm\Delta}_f +  \bm\Delta_{\breg_\psi(\cdot; \bsb{\beta}^{(t)})})(\bm\beta^o, \bm\beta^{(t+1)}) +  \bm\Delta_\psi(\bm\beta^{(t+1)},\bm\beta^{(t)}) \le \bm\Delta_\psi(\bm\beta^o,\bm\beta^{(t)}).\label{scvx-basic-iter}
\end{align}
It follows from   the strong idempotence property  that
\begin{equation} \label{strongcvx-3}
(2\sym{\bm\Delta}_f + \bm\Delta_\psi)(\bm\beta^o, \bm\beta^{(t+1)}) \le \bm\Delta_\psi(\bm\beta^o,\bm\beta^{(t)})-\min _{0\le t \le  T} \breg_{\psi} (\bsb{\beta}^{(t+1)},\bsb{\beta}^{(t)} ),
\end{equation}
for any $0\le t\le T$,
and so \eqref{strcvx-res-1} can be obtained under $2\sym{\bm\Delta}_f \ge \varepsilon \bm\Delta_\psi$.

To show the first result, since $\bm\Delta_\phi = \bm\Delta_\psi + \bm\Delta_f$, \eqref{strongcvx-3} becomes
\begin{align*} 
(2\sym{\bm\Delta}_f +  \bm\Delta_{ \phi } -  \bm\Delta_{ f})(\bm\beta^o, \bm\beta^{(t+1)}) \le  (\bm\Delta_\phi - \bm\Delta_f)(\bm\beta^o,\bm\beta^{(t)}) - \min _{0\le t \le  T} \breg_{\psi} (\bsb{\beta}^{(t+1)},\bsb{\beta}^{(t)}).
\end{align*}
Because $\kappa>1$, \eqref{strcvx-cond-2} implies that $$\bm\Delta_f \ge (\kappa+1)\sym{\bm\Delta}_\phi/\kappa - {\back{\bm\Delta}_\phi}.$$
Applying the inequality twice,  we obtain   $((\kappa+1)/\kappa)\sym{\bm\Delta}_\phi(\bm\beta^o, \bm\beta^{(t+1)}) \le (2-(\kappa+1)/\kappa)\sym{\bm\Delta}_\phi(\bm\beta^o, \allowbreak \bm\beta^{(t)})- \min _{0\le t \le  T} \breg_{\psi} (\bsb{\beta}^{(t+1)},\bsb{\beta}^{(t)})$,
or
\begin{equation} \label{strongcvx-5}
\sym{\bm\Delta}_\phi(\bm\beta^o, \bm\beta^{(t+1)}) \le \frac{\kappa-1}{\kappa+1}\sym{\bm\Delta}_\phi(\bm\beta^o, \bm\beta^{(t)})-\frac{\kappa}{\kappa + 1}   \min _{0\le t \le  T} \breg_{\psi} (\bsb{\beta}^{(t+1)},\bsb{\beta}^{(t)}).
\end{equation}
The final conclusion can be obtained by applying \eqref{strongcvx-5} iteratively for $t = 0, 1, \ldots, T$.

\subsection{Proofs of   Theorem \ref{th:comp_nonconvex} and Corollary \ref{cor-mirror}}
The proof of the theorem follows from Section \ref{proof:comp_funcval}. In fact,  setting $\bm{\beta} = \bm{\beta}^{(t)}$ in \eqref{f_triangle-0} gives
\begin{align*}
 (\back\breg_\psi+  \bm\Delta_{\breg_\psi(\cdot; \bsb{\beta}^{(t)})}+\bm{\Delta}_f)(\bm{\beta}^{(t)},\bm{\beta}^{(t+1)}) \le      f(\bm{\beta}^{(t)})-f(\bm{\beta}^{(t+1)}),
\end{align*}
which, by the weak idempotence property (with $\bsb{\alpha} = \bsb{\beta}^{(t)}$), reduces to
\begin{equation}\label{th:comp_diff_bound-2}
(2\sym{\bm\Delta}_\psi + \bm{\Delta}_f)(\bm{\beta}^{(t)},\bm{\beta}^{(t+1)}) \leq f(\bm{\beta}^{(t)}) - f(\bm{\beta}^{(t+1)}).
\end{equation}
Summing up (\ref{th:comp_diff_bound-2}) over $t=0,1,\ldots, T$ gives the conclusion.\\

 Next, we prove a result slightly more general than Corollary \ref{cor-mirror}. Recall the surrogate
$$
g(\bsb{\beta}; \bsb{\beta}^-) = f(\bsb{\beta}) + (\rho \Breg_{\varphi}  - \breg_f)(\bsb{\beta}, \bsb{\beta}^-)
$$
where $\varphi\in \mathcal C^1$ is a strictly convex function, and $f$ is continuous and directionally differentiable but not necessarily convex or differentiable. Denote $  \arg\min g(\bsb{\beta}; \bsb{\beta}^{-})$ by $\mathcal T(\bsb{\beta}^{-})$.
\begin{manualcor}{\ref{cor-mirror}'}\label{cor-mirror-gen}
Suppose that  $\breg_f \le L \sym\Breg_{\varphi}$ for some $L>0$ and  the inverse stepsize parameter $\rho$ satisfies  $\rho > L/2$. Then $\mathop{\avg}_{0\leq t \leq T} (2\rho\bar{\mathbf D}_\varphi - {\back{\bm\Delta}_f}) \allowbreak(\bm{\beta}^{(t)}, \bm{\beta}^{(t+1)}) \leq \frac{f(\bm{\beta}^{(0)})}{(T+1)}  $ and so  $\mathop{\avg}_{0\leq t \leq T}\bar{\mathbf D}_\varphi (\bm{\beta}^{(t)}, \bm{\beta}^{(t+1)}) \leq \frac{f(\bm{\beta}^{(0)})}{(T+1)(2\rho-L)}  $.

Moreover, for any  accumulation point of $\bsb{\beta}^{(t)}$ at which $\mathcal T$ is continuous, it must be  a fixed point of $\mathcal T$. This is particularly true when   $f\in \mathcal C^1$.
\end{manualcor}

 \begin{proof}
Observe from \eqref{th:comp_diff_bound-2} that
\begin{align*}
f(\bm{\beta}^{(t)}) - f(\bm{\beta}^{(t+1)}) & \ge (2\rho \sym{\Breg}_\varphi - 2 \sym\breg_f + \bm{\Delta}_f)(\bm{\beta}^{(t)},\bm{\beta}^{(t+1)}) \\
  & \ge (2\rho \sym{\Breg}_\varphi -  \bm{\Delta}_f)(\bm{\beta}^{(t+1)}, \bm{\beta}^{(t)}) \\
&\ge  (2\rho-L)\sym{\Breg}_\varphi(\bm{\beta}^{(t+1)}, \bm{\beta}^{(t)})\ge 0.
\end{align*}
The error  bounds can be obtained. 

Let $\bsb{\beta}^o$ be the limit point of some subsequence $\bsb{\beta}^{t_l}$ as $l\rightarrow \infty$. Hence $f(\bsb{\beta}^{(t)})$ converges monotonically to   $\lim_{l\rightarrow \infty} f(\bsb{\beta}^{t_l}) =f(\bsb{\beta}^{o})$. It follows that$$
 \lim_{t\rightarrow +\infty} \sym{\Breg}_\varphi(\bm{\beta}^{(t+1)}, \bm{\beta}^{(t)} )= 0.
 $$
 $\mathcal T  $ is a well-defined function because of the strict convexity of the $g$-optimization problem. From   the   continuity assumptions,
 $$
 0 = \lim_{l\rightarrow +\infty} \sym{\Breg}_\varphi(\bm{\beta}^{(t_l+1)}, \bm{\beta}^{(t_l)} )= \sym{\Breg}_\varphi(\mathcal T(\bm{\beta}^o), \bm{\beta}^{o} )
 $$
 and thus $\mathcal T(\bm{\beta}^o)= \bm{\beta}^{o}$, i.e., $\bsb{\beta}^o$ is a fixed point of $\mathcal T$. 
\end{proof}

\subsection{Proof of Proposition \ref{pro:PTheta}} \label{sec:proof-Lp}

First, we show a  result when using the  Bregman surrogate  $g(\bm\beta;\bm\beta^-) = l(\bm\beta) + P(\varrho\bm\beta) + \bm\Delta_\psi(\bm\beta,\bm\beta^-)$ for solving  $\min_{\bm\beta} f(\bsb{\beta})= l(\bm\beta) + P(\varrho\bm\beta)$ where $l$ and $P$ directionally differentiable and can be  nonconvex. Define
\begin{equation} \label{def:Lp}
\mathcal L_P := \inf\{\mathcal L\in\mathbb R: \bm\Delta_P + \mathcal L\mathbf D_2 \ge 0\},
\end{equation}
which provides an index to characterize the degree of nonconvexity of $P$, c.f.\,\cite{Loh2015}.
Assume $\mathcal L_P > -\infty$. Then for $\bm\beta^{(t+1)}\in\arg\min_{\bm\beta}g(\bm\beta;\bm\beta^{(t)})$, the following inequality holds for all $T \ge 1$
\begin{equation*}
\mathop{\avg}_{0\leq t \leq T}(2\sym{\bm\Delta}_\psi + \bm{\Delta}_{l} - \varrho^2\mathcal L_P\mathbf{D}_2)(\bm{\beta}^{(t)}, \bm{\beta}^{(t+1)}) \leq \frac{1}{T+1}\big[f(\bm{\beta}^{(0)}) - f(\bm{\beta}^{(T+1)})\big].
\end{equation*}
The result can be proved from Theorem \ref{th:comp_nonconvex}, noticing the fact that
$\bm\Delta_f(\bm\beta,\bm\beta^-) =  \bm\Delta_l(\bm\beta,\bm\beta^-) + \bm\Delta_P(\varrho\bm\beta,\varrho\bm\beta^-)  \ge   \bm\Delta_l(\bm\beta,\bm\beta^-) - \mathcal L_P\mathbf D_2(\varrho\bm\beta,\varrho\bm\beta^-)  =   \bm\Delta_l(\bm\beta,\bm\beta^-) - \varrho^2\mathcal L_P\mathbf D_2(\bm\beta,\bm\beta^-)
$  for any $\bm\beta,\bm\beta^-$. The details are omitted.

It suffices to proving the following lemma to complete the proof of Proposition \ref{pro:PTheta}.

\begin{lemma} \label{lemma:Lp}
Given any thresholding function $\Theta$ satisfying Definition \ref{def:Theta}, let $P_\Theta$ be the $\Theta$-induced penalty  in \eqref{pendef}. Then $\mathcal L_\Theta$ as defined in \eqref{def:LTheta} equals $\mathcal L_{P_\Theta}$ that is given in \eqref{def:Lp}.
\end{lemma}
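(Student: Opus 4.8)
The plan is to convert both $\mathcal L_{P_\Theta}$ and $\mathcal L_\Theta$ into statements about the convexity of a single one‑dimensional function and then match them. First, by the linearity in Lemma~\ref{lemma:Delta_linear}(i), $\bm\Delta_{P_\Theta}+\mathcal L\mathbf D_2=\bm\Delta_{P_\Theta+\mathcal L(\cdot)^2/2}$, and since $P_\Theta$ is directionally differentiable (being the integral of the monotone map $\Theta^{-1}$), Lemma~\ref{lemma:Delta_linear}(ii) shows this GBF is nonnegative if and only if $h_{\mathcal L}(t):=P_\Theta(t;\lambda)+\mathcal L t^2/2$ is convex. Hence, from \eqref{def:Lp}, $\mathcal L_{P_\Theta}=\inf\{\mathcal L:h_{\mathcal L}\text{ is convex on }\mathbb R\}$, and the task reduces to finding the smallest $\mathcal L$ rendering $h_{\mathcal L}$ convex.

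Next I would differentiate $P_\Theta$. As $\Theta^{-1}(\cdot;\lambda)$ is nondecreasing, hence locally bounded and integrable, \eqref{pendef} gives a continuous $P_\Theta$ with $P_\Theta'(t;\lambda)=\Theta^{-1}(t;\lambda)-t$ for a.e.\ $t>0$ (and with one‑sided derivatives everywhere, using the left/right limits of the monotone $\Theta^{-1}$). Thus $h_{\mathcal L}'(t)=\Theta^{-1}(t;\lambda)+(\mathcal L-1)t$ on $(0,\infty)$. Because $P_\Theta$, and therefore $h_{\mathcal L}$, is even, $h_{\mathcal L}$ is convex on $\mathbb R$ if and only if (a) $h_{\mathcal L}'$ is nondecreasing on $(0,\infty)$ and (b) the right derivative at the origin satisfies $h_{\mathcal L}'(0^+)\ge 0$.

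I would then settle (b) and (a) in turn. For (b), property (iv) of Definition~\ref{def:Theta} forces $\Theta(0;\lambda)=0$ and $\Theta(t;\lambda)\ge0$ for $t\ge0$, so $\Theta^{-1}(u;\lambda)=\sup\{t:\Theta(t;\lambda)\le u\}\ge 0$ for every $u>0$; consequently $h_{\mathcal L}'(0^+)=\Theta^{-1}(0^+;\lambda)\ge0$ for all $\mathcal L$, so (b) imposes no constraint. For (a), I would treat $\phi(t):=\Theta^{-1}(t;\lambda)-(1-\mathcal L)t$ as a function of bounded variation; its distributional derivative is $\mathrm d\Theta^{-1}-(1-\mathcal L)\,\mathrm dt$, and writing the Lebesgue decomposition $\mathrm d\Theta^{-1}=(\mathrm d\Theta^{-1}/\mathrm du)\,\mathrm du+\mathrm d\mu_s$ with $\mu_s\ge0$ singular, this signed measure is nonnegative if and only if its absolutely continuous part is, i.e.\ $\mathrm d\Theta^{-1}(u;\lambda)/\mathrm du\ge 1-\mathcal L$ for a.e.\ $u>0$. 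By \eqref{def:LTheta} this is exactly $\mathcal L\ge 1-\mathrm{ess\,inf}\{\mathrm d\Theta^{-1}/\mathrm du:u\ge0\}=\mathcal L_\Theta$. Combining (a) and (b), $h_{\mathcal L}$ is convex precisely when $\mathcal L\ge\mathcal L_\Theta$, whence $\mathcal L_{P_\Theta}=\mathcal L_\Theta$.

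The step I expect to be the main obstacle is the rigorous part of (a): translating ``$h_{\mathcal L}'$ nondecreasing on $(0,\infty)$'' into the a.e.\ derivative inequality when $\Theta^{-1}$ is only monotone and may carry jumps or a singular continuous component. The clean resolution is the measure‑theoretic decomposition above—jumps and the singular part contribute only nonnegative mass and so cannot compensate for a deficient absolutely continuous density on a positive‑measure set, so the essential infimum of $\mathrm d\Theta^{-1}/\mathrm du$ is exactly the quantity governing monotonicity. I would also note that the measure‑zero ambiguities (the value at $u=0$, the countably many non‑differentiability points of $P_\Theta$) affect neither $\mathcal L_\Theta$ nor the convexity test.
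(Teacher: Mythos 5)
Your proof is correct, and it reaches the conclusion by a genuinely different route from the paper's. The paper works directly with the generalized Bregman function: after reducing to the univariate case it writes $(\bm\Delta_{P_\Theta}+\mathcal L\mathbf D_2)(\beta,\gamma)$ as the double integral $\int_\gamma^\beta\int_\gamma^u[s'(v)+\mathcal L]\,\mathrm dv\,\mathrm du$ with $s(u)=\Theta^{-1}(u;\lambda)-u$, and reads off that nonnegativity for all $\beta,\gamma$ is governed by $\operatorname{ess\,inf}s'$. You instead invoke Lemma \ref{lemma:Delta_linear}(i)--(ii) to convert $\bm\Delta_{P_\Theta}+\mathcal L\mathbf D_2\ge 0$ into convexity of the lifted penalty $h_{\mathcal L}=P_\Theta+\mathcal L(\cdot)^2/2$, and then characterize convexity through monotonicity of $h_{\mathcal L}'$, with the Lebesgue decomposition of $\mathrm d\Theta^{-1}$ absorbing any jump or singular-continuous component. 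What your route buys is rigor exactly where the paper is most cavalier: the identity $s(u)-s(\gamma)=\int_\gamma^u s'(v)\,\mathrm dv$ used in the paper presumes $\Theta^{-1}$ is absolutely continuous, whereas a nondecreasing $\Theta^{-1}$ may carry positive singular mass (e.g.\ when $\Theta$ is flat on an interval), in which case only an inequality holds; your observation that the singular part is nonnegative and hence cannot rescue a deficient absolutely continuous density is the clean way to justify that the essential infimum of $\mathrm d\Theta^{-1}/\mathrm du$ is the right quantity in both directions. What the paper's route buys is an explicit closed form for the Bregman error itself, which is reused elsewhere, and it avoids needing the directional differentiability hypothesis in Lemma \ref{lemma:Delta_linear}(ii) (which you correctly note holds here since $P_\Theta$ is a locally Lipschitz integral of a function with one-sided limits). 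Two cosmetic points: you should state explicitly that the multivariate condition in \eqref{def:Lp} separates coordinatewise so that the univariate analysis suffices, and note that the candidate set $\{\mathcal L: h_{\mathcal L}\text{ convex}\}$ is the closed ray $[\mathcal L_\Theta,\infty)$, nonempty because $\mathcal L=1$ gives $h_1(t)=\int_0^{|t|}\Theta^{-1}(u;\lambda)\,\mathrm du$, which is convex as $\Theta^{-1}\ge 0$ is nondecreasing; hence the infimum in \eqref{def:Lp} is attained and equals $\mathcal L_\Theta$.
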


\begin{proof}

Since $\bm\Delta_{P_\Theta}(\bm\beta,\bm\gamma) = \sum_j\bm\Delta_{P_\Theta}(\beta_j,\gamma_j)$, it suffices to show the result in the univariate case.
Recall that $\Theta^{-1}(u;\lambda) := \sup\{t:\Theta(t;\lambda) \le u\}, \forall u>0$.
Since $P_\Theta(\gamma) = P_\Theta(|\gamma|) = \int_0^{|\gamma|}(\Theta^{-1}(u;\lambda)-u)\rd u$, we assume $\gamma \ge 0$ without loss of generality. We define $s(u;\lambda) = \Theta^{-1}(u;\lambda)-u$ for $u \ge 0$, and extend $s(\cdot)$ to $(-\infty, 0)$ by $s(-u) = -s(u), u > 0$. Clearly, $s'(u) = s'(|u|)$ a.e., and so $-\mathcal L_\Theta = \mathrm{ess\,inf}\{s'(u;\lambda):u\ne 0\}$.
By definition,
\begin{equation} \nonumber
\delta  P_\Theta(\gamma; \beta - \gamma) = \begin{cases} s(\gamma)(\beta - \gamma), &\text{ if } \gamma \ge 0, \\ s(0)|\beta|, &\text{ if } \gamma=0.\end{cases}
\end{equation}

When $\beta\ge 0$ and $\gamma \ne 0$, we get
\begin{equation} \nonumber
\begin{split}
&\,(\bm\Delta_{P_\Theta} + \mathcal L \mathbf D_2)(\beta,\gamma) \\
= &\,P_\Theta(\beta) - P_\Theta(\gamma) - \delta P_\Theta(\gamma; \beta - \gamma) + \mathcal L\mathbf D_2(\beta,\gamma)\\
= &\int_{\gamma}^{\beta}s(u)\rd u - s(\gamma)(\beta - \gamma) + \frac{1}{2}\mathcal L(\beta-\gamma)^2\\
= &\int_{\gamma}^{\beta}s(u)\rd u - \int_{\gamma}^{\beta} s(\gamma)\rd u + \mathcal L\int_{\gamma}^{\beta}(u-\gamma)\rd u\\
= &\int_{\gamma}^{\beta}\big[s(u) - s(\gamma) + \mathcal L(u-\gamma) \big]\rd u\\
= &\int_{\gamma}^{\beta}\int_{\gamma}^u  \big[s'(v) +\mathcal L\big] \rd v\rd u.
\end{split}
\end{equation}
When $\beta < 0$ and $\gamma \ne 0$,
\begin{equation} \nonumber
\begin{split}
&\,(\bm\Delta_{P_\Theta} + \mathcal L \mathbf D_2)(\beta,\gamma) \\
= &\,P_\Theta(\beta) - P_\Theta(\gamma) - \delta P_\Theta(\gamma; \beta - \gamma) + \mathcal L\mathbf D_2(\beta,\gamma)\\
= &\int_{\gamma}^{-\beta}s(u)\rd u - s(\gamma)(\beta - \gamma) + \frac{1}{2}\mathcal L(\beta-\gamma)^2\\
= &\int_{-\gamma}^{-\beta}s(u)\rd u - \int_{-\gamma}^{-\beta} s(-\gamma)\rd u + \mathcal L\int_{-\gamma}^{-\beta}(u+\gamma)\rd u\\
= &\int_{-\gamma}^{-\beta}\big[s(u) - s(-\gamma) + \mathcal L(u+\gamma) \big]\rd u\\
= &\int_{-\gamma}^{-\beta}\int_{-\gamma}^u  \big[s'(v) +\mathcal L\big] \rd v\rd u.
\end{split}
\end{equation}
Similarly, when $\gamma = 0$, $(\bm\Delta_{P_\Theta} + \mathcal L \mathbf D_2)(\beta,0) = \int_0^{|\beta|}\int_0^u[s(v)+\mathcal L]\rd v\rd u$.
It is then easy to verify that $\mathcal L_\Theta = \mathcal L_{P_\Theta}$.
\end{proof}

\subsection{Proof of Proposition \ref{pro:LLA_comp}}

Let $f(\bm\beta) = l(\bm\beta) + P(\varrho\bm\beta)$ and recall
\begin{equation} \nonumber
\begin{split}
g^{(t)}_\mathrm{LLA}(\bm\beta;\bm\beta^{(t)}) &= f(\bm\beta) + \bm\Delta_\mathrm{LLA}^{(t)}(\varrho\bm\beta,\varrho\bm\beta^{(t)})\\
&= f(\bm\beta) + \sum_j(\alpha_j^{(t)}\bm\Delta_1 - \bm\Delta_P)(\varrho\beta_j,\varrho\beta_j^{(t)}).
\end{split}
\end{equation}
The proof is similar to that of Theorem \ref{th:comp_nonconvex} and we give some details for completeness.
The important fact  $\beta^{(t+1)} \in \mathop{\arg\min}_{\bm\beta}g^{(t)}_\mathrm{LLA}(\bm\beta;\bm\beta^{(t)})$ as shown in Example \ref{ex:LLA} implies
$
  \bm\Delta_{g^{(t)}_\mathrm{LLA}(\cdot;\bm\beta^{(t)})}(\bm\beta^{(t)},\bm\beta^{(t+1)})
\le  f(\bm\beta^{(t)}) - f(\bm\beta^{(t+1)}) + \bm\Delta_\mathrm{LLA}^{(t)}(\varrho\bm\beta^{(t)},\varrho\bm\beta^{(t)}) - \bm\Delta_\mathrm{LLA}^{(t)}(\varrho\bm\beta^{(t+1)},\varrho\bm\beta^{(t)})
$ or \begin{equation*} 
\begin{split}
&f(\bm\beta^{(t)}) - f(\bm\beta^{(t+1)})\\
\ge\,&\bm\Delta_f(\bm\beta^{(t)},\bm\beta^{(t+1)}) + \bm\Delta_{\bm\Delta_\mathrm{LLA}^{(t)}(\varrho\cdot,\varrho\bm\beta^{(t)})}(\bm\beta^{(t)},\bm\beta^{(t+1)}) +  \bm\Delta_\mathrm{LLA}^{(t)}(\varrho\bm\beta^{(t+1)},\varrho\bm\beta^{(t)})\\
=\,&\bm\Delta_f(\bm\beta^{(t)},\bm\beta^{(t+1)}) + \bm\Delta_{\bm\Delta_\mathrm{LLA}^{(t)}(\cdot,\varrho\bm\beta^{(t)})}(\varrho\bm\beta^{(t)},\varrho\bm\beta^{(t+1)}) +  \bm\Delta_\mathrm{LLA}^{(t)}(\varrho\bm\beta^{(t+1)},\varrho\bm\beta^{(t)})\\
=\,&\bm\Delta_f(\bm\beta^{(t)},\bm\beta^{(t+1)}) +\breg_{\mathrm{LLA}}^{(t)}(\varrho\bm\beta^{(t)},\varrho\bm\beta^{(t+1)})+  \bm\Delta_\mathrm{LLA}^{(t)}(\varrho\bm\beta^{(t+1)},\varrho\bm\beta^{(t)}).\\
   = \, & \bm\Delta_f(\bm\beta^{(t)},\bm\beta^{(t+1)}) + 2\sym{\bm\Delta}_{\mathrm{LLA}}^{(t)}(\varrho\bm\beta^{(t)},\varrho\bm\beta^{(t+1)}).
\end{split}
\end{equation*}
The conclusion follows from summing up this inequality for $t = 0,1,\ldots,T$.

\subsection{Proofs of   Theorem \ref{thm:errrate} and Theorem \ref{thm:ora}} \label{proof:ora}

Let $f(\bm\beta) = l(\bm\beta) + P_{\Theta}(\varrho\bm\beta;\lambda)$ and recall $g(\bm\beta;\bm\beta^-) = f(\bm\beta) + \bm\Delta_\psi(\bm\beta,\bm\beta^-)$. We first introduce a lemma.

\begin{lemma}\label{lemma:Theta_triangle_fixed}
Let $\hat{\bm\beta} \in \mathcal F$. Then for any $\bm\beta\in\mathbb R^p$, we have the following inequality regardless of the specific form of $\psi$
\begin{equation}\label{fix_tri_ineq}
\begin{split}
&(\bm\Delta_l - \varrho^2\mathcal L_{\Theta}\mathbf D_2)(\bm\beta, \hat{\bm\beta}) + \bm\Delta_l(\hat{\bm\beta}, \bm\beta^*) + P_\Theta(\varrho\hat{\bm\beta};\lambda)\\
\le\,&\breg_l(\bm\beta,\bm\beta^*) + \langle\bm\epsilon, \bm X \hat{\bm\beta} - \bm X\bm\beta \rangle + P_\Theta(\varrho\bm\beta;\lambda).
\end{split}
\end{equation}
\end{lemma}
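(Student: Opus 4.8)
The plan is to show that the defining property of a fixed point $\hat{\bm\beta}\in\mathcal F$, fed through the Bregman calculus of Lemmas \ref{lemma:Delta_linear}--\ref{lemma:Delta_Delta}, yields a $\psi$-free ``basic inequality'' $f(\bm\beta)-f(\hat{\bm\beta})\ge(\bm\Delta_l-\varrho^2\mathcal L_\Theta\mathbf D_2)(\bm\beta,\hat{\bm\beta})$, which is then rewritten around $\bm\beta^*$ to produce \eqref{fix_tri_ineq}.

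First I would use that $\hat{\bm\beta}$ globally minimizes $g(\cdot;\hat{\bm\beta})$. Applying Lemma \ref{lemma:local-opt} to the directionally differentiable function $g(\cdot;\hat{\bm\beta})$ gives $g(\bm\beta;\hat{\bm\beta})-g(\hat{\bm\beta};\hat{\bm\beta})\ge\bm\Delta_{g(\cdot;\hat{\bm\beta})}(\bm\beta,\hat{\bm\beta})$. Since $g(\hat{\bm\beta};\hat{\bm\beta})=f(\hat{\bm\beta})$ and $g(\bm\beta;\hat{\bm\beta})=f(\bm\beta)+\bm\Delta_\psi(\bm\beta,\hat{\bm\beta})$, while additivity (Lemma \ref{lemma:Delta_linear}(i)) together with the weak idempotence $\bm\Delta_{\bm\Delta_\psi(\cdot,\hat{\bm\beta})}(\bm\beta,\hat{\bm\beta})=\bm\Delta_\psi(\bm\beta,\hat{\bm\beta})$ (Lemma \ref{lemma:Delta_Delta}(ii), the anchor being the second argument) gives $\bm\Delta_{g(\cdot;\hat{\bm\beta})}(\bm\beta,\hat{\bm\beta})=\bm\Delta_f(\bm\beta,\hat{\bm\beta})+\bm\Delta_\psi(\bm\beta,\hat{\bm\beta})$, the two copies of $\bm\Delta_\psi(\bm\beta,\hat{\bm\beta})$ cancel, leaving $f(\bm\beta)-f(\hat{\bm\beta})\ge\bm\Delta_f(\bm\beta,\hat{\bm\beta})$. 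This cancellation is precisely what makes the conclusion hold regardless of the form of $\psi$.

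Next I would lower-bound $\bm\Delta_f$. Writing $f=l+P_\Theta(\varrho\,\cdot\,;\lambda)$ and splitting by Lemma \ref{lemma:Delta_linear}(i), $\bm\Delta_f(\bm\beta,\hat{\bm\beta})=\bm\Delta_l(\bm\beta,\hat{\bm\beta})+\bm\Delta_{P_\Theta(\varrho\,\cdot\,;\lambda)}(\bm\beta,\hat{\bm\beta})$. Applying the linear-composition identity of Lemma \ref{lemma:Delta_linear}(iii) to the map $\bm\beta\mapsto\varrho\bm\beta$ turns the penalty GBF into $\bm\Delta_{P_\Theta(\cdot;\lambda)}(\varrho\bm\beta,\varrho\hat{\bm\beta})$, and Lemma \ref{lemma:Lp} (which identifies $\mathcal L_{P_\Theta}$ with $\mathcal L_\Theta$) together with the definition of $\mathcal L_{P_\Theta}$ yields the concavity bound $\bm\Delta_{P_\Theta(\cdot;\lambda)}(\varrho\bm\beta,\varrho\hat{\bm\beta})\ge-\mathcal L_\Theta\mathbf D_2(\varrho\bm\beta,\varrho\hat{\bm\beta})=-\varrho^2\mathcal L_\Theta\mathbf D_2(\bm\beta,\hat{\bm\beta})$. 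Combining with the previous step gives $f(\bm\beta)-f(\hat{\bm\beta})\ge(\bm\Delta_l-\varrho^2\mathcal L_\Theta\mathbf D_2)(\bm\beta,\hat{\bm\beta})$.

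Finally I would re-express the left-hand side around $\bm\beta^*$. Because $l_0$ is differentiable at $\bm X\bm\beta^*$, the increment map $\bm h\mapsto\delta l(\bm\beta^*;\bm h)=\langle\nabla l_0(\bm X\bm\beta^*),\bm X\bm h\rangle=-\langle\bm\epsilon,\bm X\bm h\rangle$ is linear, so subtracting the defining identities of $\bm\Delta_l(\hat{\bm\beta},\bm\beta^*)$ and $\bm\Delta_l(\bm\beta,\bm\beta^*)$ produces $l(\hat{\bm\beta})-l(\bm\beta)=\bm\Delta_l(\hat{\bm\beta},\bm\beta^*)-\bm\Delta_l(\bm\beta,\bm\beta^*)-\langle\bm\epsilon,\bm X(\hat{\bm\beta}-\bm\beta)\rangle$. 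Substituting $f(\bm\beta)-f(\hat{\bm\beta})=-[l(\hat{\bm\beta})-l(\bm\beta)]+P_\Theta(\varrho\bm\beta;\lambda)-P_\Theta(\varrho\hat{\bm\beta};\lambda)$ into the basic inequality and rearranging gives exactly \eqref{fix_tri_ineq}. I expect the main subtlety to be the $\psi$-cancellation step, which hinges on weak idempotence holding under mere directional differentiability; the bookkeeping with the scale factor $\varrho$ in the concavity bound and the use of differentiability of $l_0$ at $\bm X\bm\beta^*$ (to obtain the linearity that isolates the noise term $\langle\bm\epsilon,\bm X(\hat{\bm\beta}-\bm\beta)\rangle$) are the other places where care is needed.
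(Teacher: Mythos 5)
Your proposal is correct and follows essentially the same route as the paper's proof: both invoke Lemma \ref{lemma:local-opt} at the minimizer $\hat{\bm\beta}$ of $g(\cdot;\hat{\bm\beta})$, use the weak idempotence of Lemma \ref{lemma:Delta_Delta}(ii) to cancel the $\bm\Delta_\psi(\bm\beta,\hat{\bm\beta})$ terms (which is what makes the bound $\psi$-free), bound the penalty GBF via Lemma \ref{lemma:Lp}, and re-center the loss at $\bm\beta^*$ using the differentiability of $l_0$ at $\bm X\bm\beta^*$ to extract $\langle\bm\epsilon,\bm X\hat{\bm\beta}-\bm X\bm\beta\rangle$. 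The only difference is cosmetic ordering: you cancel $\psi$ before expanding around $\bm\beta^*$, whereas the paper expands both sides first and cancels at the end.
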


\begin{proof}
Denote $\hat g(\bm\beta) := g(\bm\beta;\hat{\bm\beta}) = l(\bm\beta) + P_\Theta(\varrho\bm\beta;\lambda) + \bm\Delta_\psi(\bm\beta,\hat{\bm\beta})$.
Since $\hat{\bm\beta}$ is a minimizer of $\hat g(\cdot)$, Lemma \ref{lemma:local-opt} shows that for any $\bsb{\beta}$, $\bm\Delta_{\hat g}(\bm\beta,\hat{\bm\beta})   \le \hat g(\bm\beta) - \hat g(\hat{\bm\beta})$. On the one hand,
\begin{align*}
&\,\hat g(\bm\beta) - \hat g(\hat{\bm\beta})\\
=&\,l(\bm\beta) - l(\hat{\bm\beta}) + P_\Theta(\varrho\bm\beta;\lambda) - P_\Theta(\varrho\hat{\bm\beta};\lambda) + \bm\Delta_\psi(\bm\beta,\hat{\bm\beta})\\
=&\,l(\bm\beta) - l(\bm\beta^*) - (l(\hat{\bm\beta}) - l(\bm\beta^*)) + P_\Theta(\varrho\bm\beta;\lambda) - P_\Theta(\varrho\hat{\bm\beta};\lambda) + \bm\Delta_\psi(\bm\beta,\hat{\bm\beta})\\
=&\,\bm\Delta_l(\bm\beta,\bm\beta^*) + \langle\nabla l(\bm\beta^*),\bm\beta-\bm\beta^*\rangle - (\bm\Delta_l(\hat{\bm\beta},\bm\beta^*) + \langle\nabla l(\bm\beta^*),\hat{\bm\beta}-\bm\beta^*\rangle)\\
 &\,+ P_\Theta(\varrho\bm\beta;\lambda) - P_\Theta(\varrho\hat{\bm\beta};\lambda) + \bm\Delta_\psi(\bm\beta,\hat{\bm\beta})\\
=&\,\bm\Delta_l(\bm\beta,\bm\beta^*) - \bm\Delta_l(\hat{\bm\beta},\bm\beta^*) + \langle \bm\epsilon, \bm X\hat{\bm\beta} - \bm X\bm\beta\rangle+ P_\Theta(\varrho\bm\beta;\lambda) - P_\Theta(\varrho\hat{\bm\beta};\lambda) + \bm\Delta_\psi(\bm\beta,\hat{\bm\beta}).
\end{align*}
On the other hand, by Lemma \ref{lemma:Delta_linear}, Lemma \ref{lemma:Delta_Delta}, and Lemma \ref{lemma:Lp},
\begin{equation} \nonumber
\begin{split}
\bm\Delta_{\hat g}(\bm\beta,\hat{\bm\beta}) =&\,\bm\Delta_l(\bm\beta,\hat{\bm\beta}) + \bm\Delta_{P_\Theta(\varrho\cdot)}(\bm\beta,\hat{\bm\beta}) +  \bm\Delta_{\bm\Delta_\psi(\cdot,\hat {\bm\beta})} (\bm\beta,\hat{\bm\beta})\\
=&\, \bm\Delta_l(\bm\beta,\hat{\bm\beta}) + \bm\Delta_{P_\Theta(\varrho\cdot)}(\bm\beta,\hat{\bm\beta})  + \bm\Delta_\psi(\bm\beta,\hat{\bm\beta})
\\\ge&\, \bm\Delta_l(\bm\beta,\hat{\bm\beta}) - \varrho  ^{2}\mathcal L_\Theta\mathbf D_2(\bm\beta,\hat{\bm\beta}) + \bm\Delta_\psi(\bm\beta,\hat{\bm\beta}).
\end{split}
\end{equation}
The conclusion follows.
\end{proof}

To handle the stochastic term $\langle\bm\epsilon, \bm X \hat{\bm\beta} - \bm X\bm\beta \rangle$ in \eqref{fix_tri_ineq}, we introduce the following result. %

\begin{lemma} \label{lemma:phostochastic}
Let $\bm{X}\in\mathbb{R}^{n\times p}$,   $\bsb{\epsilon}$ be a sub-Gaussian random vector with mean 0 and scale bounded by $\sigma$, and $\lambda^o=\sigma\sqrt{\log (ep)}$. Suppose that  $\varrho\ge \| \bsb{X}\|_2$. Then  there exist universal constants $A_0, C, c>0$ such that for any $a\ge 2b>0$ and $A_1 \ge A_0$, the following event
$$
\sup_{\bm\beta \in \mathbb R^p} \Big\{2\langle  \bsb{\epsilon},    \bm X\bm \beta \rangle - \frac{1}{a} \|\bm X \bm\beta  \|_2^2 - \frac{1}{b} [  P_{H}(\varrho\bm\beta; \sqrt{ab}{A_1\lambda^o}) ]\Big\} \geq a  \sigma^2 t
$$ 
occurs with probability at most $C\exp(-ct)p^{-cA_1^2}$.
\end{lemma}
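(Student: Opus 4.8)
The plan is to split the noise term $2\langle\bm{\epsilon},\bm X\bm\beta\rangle$ according to the coordinate magnitudes of $\bm\beta$ and to exploit the two-regime structure of $P_H$: writing $\mathcal A=\mathcal A(\bm\beta)=\{j:\varrho|\beta_j|\ge\lambda\}$ with $s=|\mathcal A|$ and $\lambda=\sqrt{ab}A_1\lambda^o$, one has the flat value $P_H(\varrho\beta_j;\lambda)=\lambda^2/2$ on $\mathcal A$ (a sparsity budget) and the $\ell_1$-type lower bound $P_H(\varrho\beta_j;\lambda)\ge\lambda\varrho|\beta_j|/2$ on $\mathcal A^c$. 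Decomposing $\bm\beta=\bm\beta^{\mathcal A}+\bm\beta^{\mathcal A^c}$, I would control the small part by H\"older, $\langle\bm{\epsilon},\bm X\bm\beta^{\mathcal A^c}\rangle\le\|\bm X^\top\bm{\epsilon}\|_\infty\|\bm\beta^{\mathcal A^c}\|_1$, and the large part by a Young/projection step $2\langle\bm{\epsilon},\bm X\bm\beta^{\mathcal A}\rangle\le\frac{1}{2a}\|\bm X\bm\beta^{\mathcal A}\|_2^2+\frac a2\|P_{\mathcal A}\bm{\epsilon}\|_2^2$, where $P_{\mathcal A}$ is the orthogonal projection onto $\mathrm{col}(\bm X_{\mathcal A})$. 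The separability $P_H(\varrho\bm\beta;\lambda)=P_H(\varrho\bm\beta^{\mathcal A};\lambda)+P_H(\varrho\bm\beta^{\mathcal A^c};\lambda)$ is what makes this split clean.

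For the stochastic control, note first that each column obeys $\|\bm x_j\|_2\le\|\bm X\|_2\le\varrho$, so $\langle\bm{\epsilon},\bm x_j\rangle$ is sub-Gaussian with scale $\le\sigma\varrho$; a union bound over the $p$ columns shows $\|\bm X^\top\bm{\epsilon}\|_\infty\le\lambda\varrho/(4b)$ on an event of probability at least $1-Cp^{-cA_1^2}$ (using $a\ge 2b$), on which the small part is absorbed into $\frac1b P_H(\varrho\bm\beta^{\mathcal A^c};\lambda)$. For the large part the engine is concentration of $\|P_{\mathcal A}\bm{\epsilon}\|_2^2$: for fixed $\mathcal A$ of size $s$ it is the squared norm of a sub-Gaussian vector in an $s$-dimensional subspace, so $\|P_{\mathcal A}\bm{\epsilon}\|_2^2\le C\sigma^2 s+\sigma^2 v$ with probability $1-e^{-cv}$. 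A union bound over the $\binom ps\le(ep)^s$ supports of each size followed by peeling over $s=0,1,2,\dots$ turns the flat budget $\frac1b P_H(\varrho\bm\beta^{\mathcal A};\lambda)=\frac{s\lambda^2}{2b}=\frac{saA_1^2\sigma^2\log(ep)}{2}$ into a dominating term once $A_1$ is large (here $A_0$ is chosen so that $A_1^2\log(ep)$ beats the complexity $s\log(ep)$ of the union), and the leftover proportional to $v$ produces the slack $a\sigma^2 t$ with the advertised tails $e^{-ct}p^{-cA_1^2}$.

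The main obstacle is the cross term: the Young step for the large coordinates produces $\|\bm X\bm\beta^{\mathcal A}\|_2^2$ rather than the full $\|\bm X\bm\beta\|_2^2$ appearing in the target, and these differ by $\pm2\langle\bm X\bm\beta^{\mathcal A},\bm X\bm\beta^{\mathcal A^c}\rangle$ of indefinite sign. I would resolve this through the scaling $\varrho\ge\|\bm X\|_2$: for small coordinates $|\beta_j|<\lambda/\varrho$ gives $\|\bm X\bm\beta^{\mathcal A^c}\|_2^2\le\varrho^2\|\bm\beta^{\mathcal A^c}\|_2^2\le\varrho\lambda\|\bm\beta^{\mathcal A^c}\|_1\le2P_H(\varrho\bm\beta^{\mathcal A^c};\lambda)$, whence the triangle inequality $\|\bm X\bm\beta^{\mathcal A}\|_2\le\|\bm X\bm\beta\|_2+\|\bm X\bm\beta^{\mathcal A^c}\|_2$ yields $\|\bm X\bm\beta^{\mathcal A}\|_2^2\le2\|\bm X\bm\beta\|_2^2+4P_H(\varrho\bm\beta^{\mathcal A^c};\lambda)$; the factor-two loss is exactly compensated by the Young constant $\frac1{2a}$, and the residual penalty is soaked up by the $\frac1b$ budget since $a\ge 2b$. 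The second delicate point is to carry the deviation parameter $t$ through \emph{both} the $\ell_\infty$ event and the projection event simultaneously, so that the two high-probability events intersect to give a single $C\exp(-ct)p^{-cA_1^2}$ bound; this, together with the careful bookkeeping that ensures each of the quadratic and penalty budgets is spent only once, is where most of the attention is required, the remaining estimates being routine.
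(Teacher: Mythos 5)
Your architecture --- splitting coordinates at the threshold $\varrho|\beta_j|\ge\lambda$, using the flat value $\lambda^2/2$ of $P_H$ on the large set as a per-coordinate sparsity budget and $P_H(u;\lambda)\ge\lambda|u|/2$ on the small set, H\"older plus a union bound over columns for the small part, Young plus projection concentration with a union over the $\binom{p}{s}$ supports and peeling over $s$ for the large part, and $\varrho\ge\|\bm X\|_2$ to tame the cross term --- is exactly the architecture of the result the paper actually invokes here: the paper gives no self-contained proof, deferring to Lemma 4 of She (2016) plus a scaling argument, and your sketch is essentially a reconstruction of that external proof. Your two-regime bounds on $P_H$ are correct, and the cross-term fix ($\|\bm X\bm\beta^{\mathcal A^c}\|_2^2\le 2P_H(\varrho\bm\beta^{\mathcal A^c};\lambda)$, hence $\|\bm X\bm\beta^{\mathcal A}\|_2^2\le 2\|\bm X\bm\beta\|_2^2+4P_H(\varrho\bm\beta^{\mathcal A^c};\lambda)$) works, modulo the budget double-spend you flag yourself: the allowance $\tfrac1b P_H(\varrho\bm\beta^{\mathcal A^c};\lambda)$ is consumed once by the H\"older step and again by the cross-term correction, so it must be split (e.g.\ take the $\ell_\infty$ threshold to be $\lambda\varrho/(8b)$ and the Young constant $1/(4a)$); this only perturbs absolute constants.

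The one step that fails as written is the claim that intersecting the $\ell_\infty$ event with the projection event yields a tail of the product form $Ce^{-ct}p^{-cA_1^2}$. Your $\ell_\infty$ event has a fixed, $t$-independent threshold, hence a $t$-independent failure probability of order $p^{-cA_1^2}$; the union of the two failure events is then of order $p^{-cA_1^2}+e^{-ct}p^{-cA_1^2}$, which does not decay in $t$ and so cannot match the stated bound for large $t$. This matters downstream: the proof of Theorem~\ref{thm:ora} integrates this tail over $t$ to conclude $\EE R\le Ca\sigma^2$, and a $t$-independent additive term makes that integral blow up. Inflating the $\ell_\infty$ threshold with $t$ does not rescue the argument, because the penalty absorbs $\|\bm\beta^{\mathcal A^c}\|_1$ only at the fixed rate $\lambda\varrho/(4b)$. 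The standard repair (and what the cited lemma does) is to dispense with the $\ell_\infty$ event altogether and evaluate the small-coordinate supremum exactly: for each $j$, $\sup_{|\gamma_j|\le\lambda/\varrho}\{2(\bm X^\top\bm\epsilon)_j\gamma_j-\tfrac{\lambda\varrho}{4b}|\gamma_j|\}=\tfrac{2\lambda}{\varrho}\bigl(|(\bm X^\top\bm\epsilon)_j|-\tfrac{\lambda\varrho}{8b}\bigr)_+$, so the small-coordinate contribution is a sum of thresholded sub-Gaussian variables, each nonzero only on an event of probability $(ep)^{-c(a/b)A_1^2}$ and with exponential conditional tails; a Chernoff bound on this sum carries both the $e^{-ct}$ and the $p^{-cA_1^2}$ factors simultaneously. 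With that replacement (and the halved budgets above), your argument goes through and recovers the lemma.
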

The lemma can be proved by Lemma 4 of   \cite{She2016} based on a scaling argument.

Let
$
R = \sup_{\bm\beta, \hat{\bm\beta}\in \mathbb R^p} \{\langle \bm\epsilon, \bm X \hat{\bm\beta} - \bm X\bm\beta \rangle - \frac{1}{2a} \|\bm X \hat{\bm\beta} - \bm X\bm\beta  \|_2^2 - \frac{1}{2b} [ P_H(\varrho(\hat{\bm\beta} - \bm\beta); \sqrt{ab}{A\lambda^o}) ]\}
$
with $\lambda^o = \sigma\sqrt{\log(ep)}$.
Plugging the bound into \eqref{fix_tri_ineq} gives
\begin{equation} \label{intermediateRes}
\begin{split}
&(\bm\Delta_l - \varrho^2 \mathcal L_{\Theta}\mathbf D_2)(\bm\beta, \hat{\bm\beta}) + \bm\Delta_l(\hat{\bm\beta}, \bm\beta^*) + P_\Theta(\varrho\hat{\bm\beta};\lambda) - P_\Theta(\varrho\bm\beta;\lambda)\\
\le\,&\bm\Delta_l(\bm\beta,\bm\beta^*) +  \frac{1}{2a} \|\bm X \hat{\bm\beta} - \bm X\bm\beta  \|_2^2 + \frac{1}{2b} [ P_H(\varrho(\hat{\bm\beta} - \bm\beta); \sqrt{ab}{A\lambda^o}) ] + R
\end{split}
\end{equation}
with $\EP(2R\ge a\sigma^2 t)\le C\exp(-ct)p^{-cA ^2}$ for any $a\ge 2 b >0$ and $A$  large.

To prove Theorem \ref{thm:errrate}, substitute $\bsb{\beta}^*$ for $\bsb{\beta}$ in \eqref{intermediateRes} and combine it with the regularity condition \eqref{R_0-errrate}, resulting in
\begin{align*}
\big(  \delta - \frac{1}{a} \big)\mathbf D_2(\bm X \hat{\bm\beta},\bm X {\bm{\beta}}^*) + \big(\vartheta - \frac{1}{2b}\big)P_H(\varrho (\hat {\bsb{\beta}} - {\bsb{\beta}}^*; \lambda)  \le      K\lambda^2J(\bm\beta^*) + R
\end{align*}
where $\lambda = \sqrt{ab}A\lambda^o$, $a\ge 2b>0$, and $A \ge A_0$ with $A_0$ given in Lemma \ref{lemma:phostochastic}. Setting $a = 2/(\delta\wedge(2\vartheta)), b = 1/(2\vartheta)$ or   $a = 2/((2\delta)\wedge\vartheta), b = 1/\vartheta$  bounds    $\Breg_2(\bm{X} \hat {\bsb{\beta}},\bm{X}\bsb{\beta}^*)$ or $P_H(\varrho  (\hat {\bsb{\beta}} - \bsb{\beta}^*); \lambda)$. Finally, by Lemma \ref{lemma:phostochastic}, $\mathbb P(R \le 0) \ge 1 - Cp^{-cA^2}$.

For Theorem \ref{thm:ora}, we combine  \eqref{intermediateRes} and   \eqref{R_0-oracond-general} with $\bsb{\gamma}=\hat {\bsb{\beta}}$:
\begin{align*}
\bm\Delta_l(\hat{\bm\beta}, \bm\beta^*) + \big(\delta - \frac{1}{a}   \big)\mathbf D_2(\bm X \bm\beta,\bm X\hat{\bm{\beta}})\le\,&   \frac{\alpha}{L}r\bm{\Delta}_{l}(\bm{\beta}, \hat {\bm{\beta}})
 + \bm\Delta_l(\bm\beta, \bm\beta^*)   + K\lambda^2J(\bm\beta) + R.
\end{align*}
Take the same choice for   $\lambda $ and  set $a = 2/(\delta\wedge(2\vartheta)), b = 1/(2\vartheta)$.

Case (i):       $\alpha r/L=0$. The conclusion follows easily, and does not need any restriction on $\alpha$ or $L$.

Case (ii):  $\alpha r/L>0$. Then $0<r<1$ and $\alpha/L>0$. If $\alpha < 0$, $ ({\alpha}r/{L})     \bm{\Delta}_{l_0}(\bsb{X}\bm{\beta},\bsb{X} \hat {\bm{\beta}}) \le \alpha  r d^2(\bsb{X}\bm{\beta},\bsb{X} \hat {\bm{\beta}}) \le 0 $,   reducing to the first case. Assume  $\alpha>0$. Then
\begin{align*}
  \frac{\alpha}{L}r\bm{\Delta}_{l}(\bm{\beta}, \hat {\bm{\beta}}) & =   \frac{\alpha}{L}r\bm{\Delta}_{l_0}(\bm{\bsb{X}\beta},\bsb{X} \hat {\bm{\beta}}) \le     \alpha  r d^2(\bsb{X}\bm{\beta},\bsb{X} \hat {\bm{\beta}}) \\
   & \le  \alpha  r (d(\bsb{X}\bm{\beta}, \bsb{\bsb{X}\beta}^*) + d(\bsb{X}\hat {\bm{\beta}}, \bsb{\bsb{X}\beta}^*))^2 \\
& \le \alpha r (1 +1/M)d^2(\bsb{X}\bm{\beta}, \bsb{\bsb{X}\beta}^*) + \alpha r (1 +M)d^2(\bsb{X}\hat{\bm{\beta}},\bsb{X} \bsb{\beta}^*) \\
& \le   r (1 +1/M)\bm{\Delta}_{l}(\bm{\beta}, \bsb{\beta}^*) +   r (1 +M)\bm{\Delta}_{l}(\hat{\bm{\beta}}, \bsb{\beta}^*),
\end{align*}
for any $M>0$. Take $M= (1- r)/(1+r)$. Then $$1/ \{1 - r (1 +M)\} =  1+ r (1 +1/M) = (1+r)/(1-r).$$
So we obtain
\begin{align*}
&(\frac{1-r}{1+r} \breg_{l_0} + \frac{\delta}{2}\Breg_2)(\bsb{X}\hat{\bm\beta},\bsb{X}\bm\beta^*) \\ \le \ & \frac{1+r}{ 1-r}\bm{\Delta}_{l_0}(\bsb{X}\bm{\beta}, \bsb{X}\bsb{\beta}^*) +  \frac{KA^2}{((2\vartheta)\wedge\delta )\vartheta}\sigma^2J(\bm\beta)\log(ep) +R.
\end{align*}
Finally, from  $\EP(2R\ge a\sigma^2 t)\le C\exp(-ct)p^{-cA ^2}\le C\exp(-ct)$, we have $\EE R \le Ca \sigma^2$. The oracle inequality is proved.   In fact,  we also get
\begin{align*}
\EE[\Breg_2(\bsb{X}\hat{\bm\beta},\bsb{X}\bm\beta^*)] \le    \frac{2(1+r)}{ (1-r)\delta}\EE [\bm{\Delta}_{l_0}(\bsb{X}\bm{\beta}, \bsb{X}\bsb{\beta}^*) ] \\ +  \frac{2 KA^2}{(\vartheta\wedge\delta )\vartheta \delta}\sigma^2J(\bm\beta)\log(ep) + \frac{C}{( \vartheta\wedge\delta)\delta } \sigma^2
\end{align*}
 under the same condition.


\begin{remark} \label{staterr-exts}
 Recall $J^*=J(\bsb{\beta}^*), \mathcal J^*=\mathcal J(\bsb{\beta}^*)$ and       $P( \bm\beta;\lambda)=\sum_j P( \beta_j; \lambda) $. When $\hat {\bsb{\beta}}$ is a global minimizer, applying the bound of the stochastic term proved in Lemma \ref{lemma:phostochastic} gives the same conclusions \eqref{esterrbnd-1}, \eqref{esterrbnd-2}, under
\begin{align}
\begin{split}
&  \vartheta P_H(\varrho(\bm{\beta} - \bm{\beta}^*);\lambda)  +  P_\Theta(\varrho\bm{\beta}^*;\lambda) \\
\le\,& (2\sym{\bm{\Delta}}_{l_0} - \delta\mathbf D_2)(\bm{X\beta},\bm{X\beta}^*)  + P_\Theta(\varrho\bm{\beta};\lambda) + K\lambda^2J^*,  \forall \bsb{\beta} \end{split}\label{R_0-errrate-cond-global}
\end{align}
for some  $\delta>0$, $\vartheta>0$ and large enough $K\geq 0$. Assuming  $P_{\Theta}$  is subadditive, we can follow the arguments in  Remark \ref{rmk:regconds} to show that    \eqref{R_0-errrate-cond-global} is implied by $(1+\vartheta)  P_\Theta(\varrho(\bm{\beta}-\bsb{\beta}^*)_{\mathcal J^*};\lambda) \le   (2\sym{\bm{\Delta}}_{l_0} - \delta\mathbf D_2)(\bm{X\beta},\bm{X\beta}^*)  + K\lambda^2J^*+(1-\vartheta) P_\Theta(\varrho(\bm{\beta}-\bsb{\beta}^*)_{\mathcal J^{* c}};\lambda)
$. Furthermore, when $l_0$ is $\mu$-strongly convex as in regression, one can take $\delta = \mu$ and  the regularity condition is implied by
\begin{align}
(1+\bar \vartheta)  P_\Theta(\varrho \bm{\gamma}_{\mathcal J^*};\lambda) \le    \bar K\lambda \sqrt {J^*} \| \bsb{X} \bsb{\gamma}\|_2 +  P_\Theta(\varrho\bm{\gamma}_{\mathcal J^{* c}};\lambda), \forall \bsb{\gamma} \label{rem1-4-gen}
\end{align}
for some $\bar\vartheta \,(=\!\frac{2\vartheta}{1-\vartheta})>0$ and $\bar K  \big(\kern-4pt=\! \frac{\sqrt  {2 (2 \mu-\delta)K}}{1 - \vartheta}\big)\ge 0$, or the constrained forms
\begin{align}
[ P_{\Theta} (\varrho \bsb{\gamma}_{\mathcal J^*}; \lambda)]^2 \le \tilde K \lambda^2 J^* \| \bsb{X} \bsb{\gamma}\|_2^2,\ \, \forall \bsb{\gamma}: P_{\Theta}(\varrho \bsb{\gamma}_{\mathcal J^{*c}}; \lambda)\le (1+\bar \vartheta) P_{\Theta}(\varrho \bsb{\gamma}_{\mathcal J^{*}}; \lambda) \label{Compacond-genp}
\\\sum _{j\in \mathcal J^*} P_{\Theta}^2 (\varrho \gamma_j; \lambda) \le \tilde K \lambda^2 \| \bsb{X} \bsb{\gamma}\|_2^2,\ \, \forall \bsb{\gamma}: P_{\Theta}(\varrho \bsb{\gamma}_{\mathcal J^{*c}}; \lambda)\le (1+\bar \vartheta) P_{\Theta}(\varrho \bsb{\gamma}_{\mathcal J^{*}}; \lambda) \label{REcond-genp}
\end{align}
for some $\bar\vartheta>0$ and $\tilde K\ge 0$. The conclusions and conditions can also be formulated in the oracle inequality setup of Theorem \ref{thm:ora}. \eqref{rem1-4-gen}, \eqref{Compacond-genp} and \eqref{REcond-genp}
  extend the comparison condition \eqref{regcondition-comp},  compatibility condition and  RE condition to a more general penalty.

\end{remark}

\subsection{Proof of Theorem \ref{th:stat_bound}}
\label{subsec:proofseqbound}
We prove the result under a more relaxed assumption: $l$, $g$ are merely directionally differentiable, and \eqref{S} is replaced by
{
\begin{align*}
\begin{split}
& ( \varrho^2\mathcal L_{\Theta}\mathbf{D}_2+\varepsilon\bm\Delta_\psi)( \bm{\beta}^*, \bm{\beta}) +(\breg_\psi - \breg_{\breg_\psi(\cdot; \bsb{\alpha})})(\bm{\beta}^*,\bm{\beta}) + \vartheta P_H(\varrho(\bm{\beta} - \bm\beta^*);\lambda) + P_{\Theta}(\varrho\bm\beta^*;\lambda) \\
\le\,& (2\sym{\bm{\Delta}}_{l_0} -\delta\mathbf D_2)(\bm{X\beta},\bm{X\beta}^*) + P_{\Theta}(\varrho\bm{\beta};\lambda) + K\lambda^2J(\bm{\beta}^*), \forall \bsb{\beta}, \bsb{\alpha}
\end{split}
\end{align*}}
for some $\delta > 0$, $\varepsilon > 0$, $\vartheta > 0$ and large $K\geq 0$.

 Recall the objective function $f(\bm\beta) = l(\bm\beta) + P_\Theta(\varrho\bm\beta;\lambda)$ and the surrogate function $g(\bm\beta;\bm\beta^-)=f(\bm\beta) + \bm\Delta_\psi(\bm\beta,\bm{\beta}^-)$. From Lemma \ref{lemma:f_triangle} and   Lemma \ref{lemma:Lp}, we obtain
\begin{align*}
&f(\bm{\beta}) + \bm\Delta_\psi(\bm{\beta},\bm{\beta}^{(t)}) \\
\ge\,& f(\bm{\beta}^{(t+1)}) + \bm\Delta_\psi(\bm{\beta}^{(t+1)},\bm{\beta}^{(t)}) + (  \bm\Delta_{\breg_\psi(\cdot; \bsb{\beta}^{(t)})}+\bm{\Delta}_l + \bm{\Delta}_{P_\Theta(\varrho\cdot)})(\bm{\beta},\bm{\beta}^{(t+1)})\\
\ge\,& f(\bm{\beta}^{(t+1)}) + \bm\Delta_\psi(\bm{\beta}^{(t+1)},\bm{\beta}^{(t)}) + (  \bm\Delta_{\breg_\psi(\cdot; \bsb{\beta}^{(t)})}+\bm{\Delta}_l - \varrho^2\mathcal L_\Theta\mathbf{D}_2)(\bm{\beta},\bm{\beta}^{(t+1)}).
\end{align*}
Substituting $\bm{\beta}^*$ for $\bm{\beta}$   yields
\begin{equation} \nonumber
\begin{split}
&\bm\Delta_\psi(\bm{\beta}^*,\bm{\beta}^{(t)})\\
\ge\,&\bm\Delta_\psi(\bm{\beta}^{(t+1)},\bm{\beta}^{(t)}) + (  \bm\Delta_{\breg_\psi(\cdot; \bsb{\beta}^{(t)})}+\bm{\Delta}_l)(\bm{\beta}^*,\bm{\beta}^{(t+1)}) - \mathcal L_{\Theta}\mathbf D_2(\varrho\bm{\beta}^*,\varrho\bm{\beta}^{(t+1)})\\
&+ l(\bm\beta^{(t+1)}) - l(\bm\beta^*) + P_{\Theta}(\varrho\bm{\beta}^{(t+1)};\lambda) - P_{\Theta}(\varrho\bm{\beta}^*;\lambda)\\
=\,&\bm\Delta_\psi(\bm{\beta}^{(t+1)},\bm{\beta}^{(t)}) + \bm\Delta_{\breg_\psi(\cdot; \bsb{\beta}^{(t)})}(\bm{\beta}^*,\bm{\beta}^{(t+1)}) - \mathcal L_{\Theta}\mathbf D_2(\varrho\bm{\beta}^*,\varrho\bm{\beta}^{(t+1)})\\
& 2\sym{\bm\Delta}_l(\bm\beta^*,\bm\beta^{(t+1)})- \langle\bm{\epsilon}, \bm{X\beta}^{(t+1)} - \bm{X\beta}^*\rangle + P_{\Theta}(\varrho\bm{\beta}^{(t+1)};\lambda) - P_{\Theta}(\varrho\bm{\beta}^*;\lambda).
\end{split}
\end{equation}
From the above regularity condition,
\begin{equation}\nonumber
\begin{split}
&\varepsilon\bm\Delta_\psi(\bm{\beta}^*,\bm{\beta}^{(t+1)}) + \mathcal L_{\Theta}\mathbf{D}_2(\varrho\bm{\beta}^*,\varrho\bm{\beta}^{(t+1)}) + \delta\mathbf D_2(\bm{X\beta}^*,\bm{X\beta}^{(t+1)})\\
& + \vartheta P_H(\varrho(\bm{\beta}^{(t+1)} - \bm{\beta}^*);\lambda) + P_{\Theta}(\varrho\bm{\beta^*};\lambda)+(\bm \Delta _\psi- \bm\Delta_{\breg_\psi(\cdot; \bsb{\beta}^{(t)})})(\bm{\beta}^*,\bm{\beta}^{(t+1)})\\
\le\,& 2\sym{\bm{\Delta}}_{l}(\bm{\beta}^*,\bm{\beta}^{(t+1)}) + P_{\Theta}(\varrho\bm{\beta}^{(t+1)};\lambda) + K\lambda^2J(\bm{\beta}^*),
\end{split}
\end{equation}
and so we obtain
\begin{equation} \label{stat-triangle-3}
\begin{split}
&(\varepsilon+1)\bm\Delta_\psi(\bm{\beta}^*,\bm{\beta}^{(t+1)}) + \bm\Delta_\psi(\bm{\beta}^{(t+1)},\bm{\beta}^{(t)})\\
& + \delta\mathbf D_2(\bm{X\beta}^*,\bm{X\beta}^{(t+1)}) + \vartheta P_H(\varrho(\bm{\beta}^{(t+1)}-\bm{\beta}^*);\lambda)\\
\le\,& \bm\Delta_\psi(\bm{\beta}^*,\bm{\beta}^{(t)}) + \langle \bm{\epsilon},\bm{X\beta}^{(t+1)}-\bm{X\beta}^*\rangle + K\lambda^2J^*.
\end{split}
\end{equation}

According to  Lemma \ref{lemma:phostochastic}, as long as $A\ge A_0/\sqrt{2}$, choosing $\lambda = \sqrt{2ab}A\lambda^o$, $b = 1/(2\vartheta), a = 1/(\delta\wedge\vartheta)$ guarantees that the probability of the following inequality occurring for all $t$,
\begin{align*}  \langle \bm{\epsilon},\bm{X\beta}^{(t+1)}-\bm{X\beta}^*\rangle - \delta\mathbf D_2(\bm{X\beta}^*,\bm{X\beta}^{(t+1)}) - \vartheta P_H(\varrho(\bm{\beta}^{(t+1)}-\bm{\beta}^*);\lambda) \ge 0,\end{align*}
is no greater than $Cp^{-cA^2}$. Together with \eqref{stat-triangle-3}, with probability $1 - Cp^{-cA^2}$
\begin{equation} \label{stat-triangle-4}
\bm\Delta_\psi(\bm{\beta}^*,\bm{\beta}^{(t+1)}) \le \frac{1}{\varepsilon+1} [\bm\Delta_\psi(\bm{\beta}^*,\bm{\beta}^{(t)}) - \bm\Delta(\bm\beta^{(t+1)},\bm\beta^{(t)}) + K\lambda^2J^*]
\end{equation}
for all $t$.
The desired inequality can be shown by iteratively applying \eqref{stat-triangle-4} for $t= 0,1,2, \ldots$.

\subsection{Proofs of Theorem \ref{th:comp_acc2} and Corollary \ref{cor-AccIQ}}
\label{subsec:2ndaccANDcor}

First we prove Theorem \ref{th:comp_acc2}. Note that \eqref{acc2-alg2}, \eqref{acc2_search2} have additional terms involving $\mu_0$.
The  first result  for $\mu_0=0$  can be shown based on a GBF translation of the proof of Proposition 1 in \cite{Tseng2008}.
For convenience, let $h_{t}(\bm{\beta}) \allowbreak= f(\bm{\beta}) - \bm\Delta_{\bar\psi_0}(\bm{\beta},\bm{\gamma}^{(t)})= f(\bm{\beta}) - \bm\Delta_{ \psi_0}(\bm{\beta},\bm{\gamma}^{(t)})+\mu_0\breg_{\phi}(\bm{\beta},\bm{\gamma}^{(t)})$. Applying Lemma \ref{lemma:f_triangle} to \eqref{acc2-alg2} yields     $(\bm{\Delta}_f - \bm\Delta_{\bm\Delta_{\bar\psi_0}(\cdot,\bm\gamma^{(t)})} + \theta_t\rho_t\bm\Delta_{\bm\Delta_\phi(\cdot,\bm\alpha^{(t)})})(\bm{\beta}, \bm{\alpha}^{(t+1)})
\le h_{t}(\bm{\beta}) + \theta_t\rho_t\bm\Delta_\phi(\bm{\beta},\bm{\alpha}^{(t)}) - h_{t}(\bm{\alpha}^{(t+1)}) - \theta_t\rho_t\bm\Delta_\phi(\bm{\alpha}^{(t+1)},\bm{\alpha}^{(t)})$   $\forall \bm\beta$, or
\begin{equation} \label{tri_acc2}
\begin{split}
&h_{t}(\bm{\alpha}^{(t+1)}) -h_{t}(\bm{\beta}) + \theta_t\rho_t\bm\Delta_\phi(\bm{\alpha}^{(t+1)},\bm{\alpha}^{(t)})\\
\le\,& \theta_t\rho_t\bm\Delta_\phi(\bm{\beta},\bm{\alpha}^{(t)}) -  (\theta_t\rho_t\bm\Delta_{\bm\Delta_\phi(\cdot,\bm\alpha^{(t)})} + \bm{\Delta}_{f(\cdot) - \bm\Delta_{\bar\psi_0}(\cdot,\bm\gamma^{(t)})})(\bm{\beta}, \bm{\alpha}^{(t+1)}).
\end{split}
\end{equation}
By   definition, 
$\mathbf C_{h_t}(\bm{\alpha}^{(t+1)},\bm{\beta}^{(t)},\theta_t)= \theta_t h_{t}(\bm{\alpha}^{(t+1)}) + (1-\theta_t)h_{t}(\bm{\beta}^{(t)}) - h_{t}(\bm{\beta}^{(t+1)})
$; 
   adding it to   \eqref{tri_acc2} multiplied by $\theta_t$   gives\begin{equation}\nonumber
\begin{split}
&h_{t}(\bm{\beta}^{(t+1)}) - (1-\theta_t)h_{t}(\bm{\beta}^{(t)}) - \theta_t h_{t}(\bm\beta) \\& + \theta_t^2\rho_t\bm\Delta_\phi(\bm{\alpha}^{(t+1)},\bm{\alpha}^{(t)})  + \mathbf C_{h_t}(\bm{\alpha}^{(t+1)},\bm{\beta}^{(t)},\theta_t) \\&+ \theta_t^2\rho_t\bm\Delta_{\bm\Delta_\phi(\cdot,\bm\alpha^{(t)})-\phi(\cdot)}(\bm\beta,\bm\alpha^{(t+1)})\\
\le\,&\theta_t^2\rho_t\bm\Delta_\phi(\bm{\beta},\bm{\alpha}^{(t)}) - (\theta_t^2\rho_t\bm\Delta_\phi + \theta_t\bm{\Delta}_{f(\cdot)-\bm\Delta_{\bar\psi_0}(\cdot,\bm\gamma^{(t)})})(\bm{\beta},\bm{\alpha}^{(t+1)}),
\end{split}
\end{equation}
and so
\begin{equation}\label{befrec1_acc2}
\begin{split}
&f(\bm{\beta}^{(t+1)})-f(\bm{\beta}) - (1-\theta_t)[f(\bm{\beta}^{(t)})-f(\bm{\beta})]  +\theta_t\bm\Delta_{\bar\psi_0}(\bm{\beta},\bm{\gamma}^{(t)})  \\
&  + \theta_t \{ (\bm\Delta_{f(\cdot)-\bm\Delta_{\bar\psi_0}(\cdot,\bm\gamma^{(t)})}+\theta_t\rho_t\bm\Delta_{\bm\Delta_\phi(\cdot,\bm\alpha^{(t)})-\phi(\cdot)})(\bm\beta,\bm\alpha^{(t+1)})\}+ R_t
\\
\le\,&\theta_t^2\rho_t(\bm\Delta_\phi(\bm{\beta},\bm{\alpha}^{(t)}) - \bm\Delta_\phi(\bm{\beta},\bm{\alpha}^{(t+1)})), \forall t\ge 0
\end{split}
\end{equation}
where  $R_t$ is given by $ \theta_t^2\rho_t\bm\Delta_\phi(\bm{\alpha}^{(t+1)},\bm{\alpha}^{(t)}) - \bm\Delta_{\bar\psi_0}(\bm{\beta}^{(t+1)},\bm{\gamma}^{(t)}) + (1-\theta_t)\bm\Delta_{\bar\psi_0}(\bm{\beta}^{(t)},\bm{\gamma}^{(t)}) + \mathbf C_{f(\cdot)-\bm\Delta_{\bar\psi_0}(\cdot,\bm\gamma^{(t)})}(\bm\alpha^{(t+1)},\bm\beta^{(t)},\theta_t)$.

Under $\mu_0=0$,   \eqref{befrec1_acc2} implies that
\begin{equation}
\begin{split}
&\frac{1}{\theta_t^2\rho_t}\big[f(\bm{\beta}^{(t+1)})-f(\bm{\beta})\big] - \frac{1-\theta_t}{\theta_{t}^2\rho_{t}}\big[f(\bm{\beta}^{(t)})-f(\bm{\beta})\big] + \frac{\mathcal E_t(\bm\beta)}{\theta_t\rho_t} + \frac{R_t}{\theta_t^2\rho_t}\\
\le\,&\bm\Delta_\phi(\bm{\beta},\bm{\alpha}^{(t)}) - \bm\Delta_\phi(\bm{\beta},\bm{\alpha}^{(t+1)}).
\end{split}
\end{equation}
Since in this case  \eqref{acc2_search2} gives $ ({1-\theta_t})/{\theta_{t}^2\rho_{t}}=  {1}/{\theta_{t-1}^2 \rho_{t-1}}$ for any $t\ge 1$, we obtain the first conclusion
\begin{equation} \nonumber
\begin{split}
&\frac{1}{\theta_T^2\rho_T}\big[f(\bm{\beta}^{(T+1)})-f(\bm{\beta})\big] - \frac{1-\theta_0}{\theta_0^2\rho_0}\big[f(\bm{\beta}^{(0)})-f(\bm{\beta})\big] + \sum_{t=0}^T \Big(\frac{\mathcal E_t(\bm\beta)}{\theta_t\rho_t} + \frac{R_t}{\theta_t^2\rho_t}\Big)\\
\le\,&\bm\Delta_\phi(\bm{\beta},\bm{\alpha}^{(0)}) - \bm\Delta_\phi(\bm{\beta},\bm{\alpha}^{(T+1)}).
\end{split}
\end{equation}
%

On the other hand, given  $\mu_0\ge 0$, \eqref{befrec1_acc2} can be written as
\begin{equation}\label{befrec1_acc2-2}
\begin{split}
&f(\bm{\beta}^{(t+1)})-f(\bm{\beta}) - (1-\theta_t)[f(\bm{\beta}^{(t)})-f(\bm{\beta})]    \\
& + R_t +\theta_t\bm\Delta_{\bar\psi_0}(\bm{\beta},\bm{\gamma}^{(t)})+ \theta_t\bm\Delta_{f(\cdot)-\bm\Delta_{\psi_0}(\cdot,\bm\gamma^{(t)})}(\bm\beta,\bm\alpha^{(t+1)}) \\ & + \theta_t  (\mu_0\bm\Delta_{\bm\Delta_\phi(\cdot,\bm\gamma^{(t)})-\phi(\cdot)}+\theta_t \rho_t\bm\Delta_{\bm\Delta_\phi(\cdot,\bm\alpha^{(t)})-\phi(\cdot)})(\bm\beta,\bm\alpha^{(t+1)})
\\
\le\,&\theta_t^2\rho_t\bm\Delta_\phi(\bm{\beta},\bm{\alpha}^{(t)}) -\theta_t^2(\rho_t +\frac{\mu_0}{\theta_t}) \bm\Delta_\phi(\bm{\beta},\bm{\alpha}^{(t+1)}).
\end{split}
\end{equation}
Therefore, we have
\begin{equation}\nonumber
\begin{split}
& f(\bm{\beta}^{(t+1)})-f(\bm{\beta})   +\theta_t^2(\rho_t +\frac{\mu_0}{\theta_t}) \bm\Delta_\phi(\bm{\beta},\bm{\alpha}^{(t+1)})+ \theta_t \mathcal E_t(\bm\beta)  +  R_t \\
\le\,& (1-\theta_t)[f(\bm{\beta}^{(t)})-f(\bm{\beta})]+\theta_t^2\rho_t \bm\Delta_\phi(\bm{\beta},\bm{\alpha}^{(t)}), \forall t\ge 0
\end{split}
\end{equation}
and from  \eqref{acc2_search2},
\begin{equation}\nonumber
\begin{split}
& f(\bm{\beta}^{(t+1)})-f(\bm{\beta})   +\theta_t^2(\rho_t +\frac{\mu_0}{\theta_t}) \bm\Delta_\phi(\bm{\beta},\bm{\alpha}^{(t+1)})+ \theta_t \mathcal E_t(\bm\beta)  +  R_t \\
\le\,& (1-\theta_t)[f(\bm{\beta}^{(t)})-f(\bm{\beta})+\theta_{t-1}^2(\rho_{t-1} +\frac{\mu_0}{\theta_{t-1}})\Delta_\phi(\bm{\beta},\bm{\alpha}^{(t)})], \forall t\ge 1
\end{split}
\end{equation}
The second conclusion can be obtained  by a recursive argument and  $ R_T +\theta_T \mathcal E_T(\bm\beta )  +(1-\theta_T)(R_{T-1} + \theta_{T-1} \mathcal E_{T-1}(\bm\beta)  )+ \cdots (1-\theta_T)\cdots (1-\theta_1)(R_{0}+ \theta_{0} \mathcal E_{0}(\bm\beta )) = \sum_{t=0}^T (\prod_{s=t+1}^T (1-\theta_s))( R_t + \theta_t \mathcal E_t(\bsb{\beta} )$).
\begin{remark}\label{rmk:relaxedrhot}
With the  `$=$' in \eqref{acc2_search2}   replaced by `$\le $',  \eqref{eq:th_acc2-conclusion2} still holds
when $\breg_\phi\ge 0$ (or $\phi$ is convex), and  \eqref{acc2_result}    still holds
  if we set $\bsb{\beta} $ to be a  minimizer of $f$.   But the   equality form of \eqref{acc2_search2}   makes   our conclusions applicable to say the noise-free statistical truth    $\bsb{\beta}= \bsb{\beta}^*$, which may not be a minimizer of the sample-based objective. The same comment applies to Theorem \ref{th:comp_acc1}.

Also, it is trivial to see that the conclusions extend to  a varying sequence of    $\mu_t$. (Concretely, the $\mu_0$ in  \eqref{acc2-alg2}, \eqref{psi0bardef}, and  $\mathcal E_t(\bm\beta) $ becomes $\mu_t$, and the $\mu_0$  in  \eqref{acc2_search2}, \eqref{eq:th_acc2-conclusion2} becomes $\mu_{t-1}, \mu_T$, respectively.) One can add  backtracking for $\mu_t$ in the algorithm to further reduce its iteration complexity.
\end{remark}

Finally, we prove  Corollary \ref{cor-AccIQ-gen}    which implies
Corollary \ref{cor-AccIQ} and applies to any convex  $l_0$ in \eqref{eq:featurescreening} below. The proof is based on an accumulative $R_t$ bound that can be derived in a more general setup; see \eqref{eq:sumRcontrol} in Remark \ref{rmk:acc2univrho}.

Here,  the   optimization problem of interest in ``variable screening'' is
\begin{align}\label{eq:featurescreening}
\min l(\bsb{\beta})=l_0(\bsb{X}\bsb{\beta}) \mbox{ s.t. } \| \bsb{\beta}\|_0\le q,\end{align}  to estimate the target   $\bsb{\beta}^*$ satisfying the  strict inequality   $\| \bsb{\beta}^*\|_0< q$. Take $\mu_0=0$,   $\phi = \|\cdot \|_2^2/2$, $\psi_0(\bsb{\beta})= l(\bsb{\beta})  -    \mathcal L  \phi(\bsb{\beta}) $ for some $\mathcal L \ge 0$.

Given $l_0$, $\bsb{X}$, and $s\le p$, we  extend the notion of
restricted isometry numbers $\rho_+, \rho_-$ \citep{Candes2005}:
\begin{equation}\label{eq:genrip}
\rho_-(s) \Breg_2(\bsb{\beta}, \bsb{\gamma}) \le \breg_{l_0}(\bsb{X} \bsb{\beta}, \bsb{X} \bsb{\gamma})\le  \rho_+(s) \Breg_2(\bsb{\beta}, \bsb{\gamma}),  \forall   \bsb{\beta}, \bsb{\gamma}: \| \bsb{\beta} -\bsb{\gamma}\|_0 \le s.
\end{equation}
(The dependence on $\bsb{X}$ and $l_0$ is dropped for the sake of brevity.)

\begin{manualcor}{\ref{cor-AccIQ}'}\label{cor-AccIQ-gen}
 Let $l_0$ be any convex function and  $q$ be any nonnegative  integer no more than $p$. As long as $r = q/\|\bsb{\beta}^*\|_0>1$, for any  $\mathcal L \ge \rho_+(2q)/\sqrt r$, there must  exist a universal $\rho_t =\rho_0,  \forall t$, e.g.,$$\rho_0 = (1- 1/\sqrt r )\rho_+(2q), $$    and thus $\theta_{t+1} = (\sqrt{\theta_t^4 + 4\theta_t^2} - \theta_t^2)/2$, so that the accelerated iterative quantile-thresholding according to \eqref{acc2-alg1}--\eqref{acc2-alg3} satisfies
\begin{align*}
\frac{l(\bm{\beta}^{(T+1)})-l(\bm{\beta}^*)}{\theta_T^2 } + T\cdot\mathop{\avg}_{0\le t\le T}\frac{\breg_{\psi_0}(\bm\beta^*,\bm\gamma^{(t)})}{\theta_t}   + \frac{\mathcal L(1-\theta_T)  }{\theta_T^3}\Breg_2 (\bm{\beta}^{(T+1)},\bm{\beta}^{(T)}) \\ \le \rho_0 \Breg_2 (\bsb{\beta}^*, \bsb{\alpha}^{(0)}), \mbox{ for all }
T\ge 0,
\end{align*}
or $ l(\bm{\beta}^{(T+1)})-l(\bm{\beta}^*) + \min_{0\le t\le T} \breg_{\psi_0}(\bm\beta^*,\bm\gamma^{(t)}) \le A / T^2 $ with $A$ independent of $T$.
\end{manualcor}

Seen from the error measure,  $r$ should be appropriately large  (but cannot be too large from the perspective of   statistical accuracy.)

To prove the corollary, we first introduce a useful result \citep[Lemma 9]{She2017RRRR}. \begin{lemma} \label{lemma:bound without design}
 Given  $ q\le p$,  $\hat{\bm\beta}=\Theta^{\#}(\bm y; q)$ is a globally optimal solution to  $\min_{\bm\beta\in\mathbb R^p}l(\bm\beta) = \|\bm y - \bm\beta\|_2^2/2 $ s.t. $\|\bm\beta\|_0\le q$. Let $\mathcal J = \mathcal J(\bm\beta)$, $\hat{\mathcal J} = \mathcal J(\hat{\bm\beta})$ and assume      $J(\hat{\bm\beta}) = q$. Then, for any $\bm\beta$ with $J(\bm\beta)\le s = q/r$ and $r\ge 1$,   $$l(\bm\beta)-l(\hat{\bm\beta}) \ge \{1-\mathcal L(\mathcal J, \hat{\mathcal J})\}\Breg_2 (\hat{\bm\beta},  \bm\beta),$$ where $\mathcal L(\mathcal J,\hat{\mathcal J}) = (|\mathcal J \backslash \hat{\mathcal J}| / |\hat{\mathcal J}\backslash \mathcal J|)^{1/2} \le (s/q)^{1/2} = r^{-1/2}$.
\end{lemma}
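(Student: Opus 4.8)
The plan is to exploit the exact second-order expansion available for the quadratic loss $l(\bm\beta) = \|\bm y - \bm\beta\|_2^2/2$ and then control the resulting cross term using the magnitude-ordering property built into quantile thresholding. First I would record the identity
\[
l(\bm\beta) - l(\hat{\bm\beta}) = \Breg_2(\hat{\bm\beta},\bm\beta) - \langle \bm y - \hat{\bm\beta},\, \bm\beta - \hat{\bm\beta}\rangle,
\]
which follows from $\nabla l(\hat{\bm\beta}) = \hat{\bm\beta} - \bm y$ together with the fact that $l$ has the identity Hessian, so $\Breg_2(\hat{\bm\beta},\bm\beta)$ is the exact quadratic remainder. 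The lemma therefore reduces to bounding the cross term by $\mathcal L(\mathcal J,\hat{\mathcal J})\,\Breg_2(\hat{\bm\beta},\bm\beta)$. (The global-optimality preamble is standard: among $q$-sparse vectors the best $\ell_2$ fit retains the coordinates $\hat\beta_i = y_i$ on the top-$q$ support and zeros the rest.)

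Second, I would evaluate the cross term coordinatewise. Since quantile thresholding leaves $\hat\beta_i = y_i$ on the retained support $\hat{\mathcal J}$ and $\hat\beta_i = 0$ off it, every $i\in\hat{\mathcal J}$ contributes zero (as $y_i - \hat\beta_i = 0$), and off $\hat{\mathcal J}$ only the indices where $\bm\beta$ is nonzero survive; that is $\langle \bm y - \hat{\bm\beta},\,\bm\beta - \hat{\bm\beta}\rangle = \sum_{i\in\mathcal J\setminus\hat{\mathcal J}} y_i\beta_i$. A Cauchy--Schwarz split bounds this by $\bigl(\sum_{\mathcal J\setminus\hat{\mathcal J}} y_i^2\bigr)^{1/2}\bigl(\sum_{\mathcal J\setminus\hat{\mathcal J}}\beta_i^2\bigr)^{1/2}$.

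Third --- the crux --- I would invoke the defining feature of $\Theta^\#$: under the no-ties assumption every discarded magnitude $|y_i|$ with $i\in\mathcal J\setminus\hat{\mathcal J}$ (not among the top $q$) is strictly below every retained magnitude $|y_j|$ with $j\in\hat{\mathcal J}\setminus\mathcal J$ (among the top $q$). Writing $a = |\mathcal J\setminus\hat{\mathcal J}|$, $b = |\hat{\mathcal J}\setminus\mathcal J|$, an averaging step (max over discarded $\le$ min over retained $\le$ average over retained) gives $\sum_{\mathcal J\setminus\hat{\mathcal J}} y_i^2 \le (a/b)\,T$ with $T := \sum_{\hat{\mathcal J}\setminus\mathcal J} y_j^2$. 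I would then match the two energies to disjoint coordinate blocks of $\hat{\bm\beta}-\bm\beta$: on $\hat{\mathcal J}\setminus\mathcal J$ one has $(\hat\beta_j-\beta_j)^2 = y_j^2$, and on $\mathcal J\setminus\hat{\mathcal J}$ one has $(\hat\beta_i-\beta_i)^2 = \beta_i^2$, so with $U := \sum_{\mathcal J\setminus\hat{\mathcal J}}\beta_i^2$ the blocks are disjoint and $T + U \le \|\hat{\bm\beta}-\bm\beta\|_2^2$. Combining Cauchy--Schwarz with $\sqrt{T}\sqrt{U}\le (T+U)/2$ yields $\langle \bm y - \hat{\bm\beta},\,\bm\beta - \hat{\bm\beta}\rangle \le \sqrt{a/b}\,\Breg_2(\hat{\bm\beta},\bm\beta) = \mathcal L(\mathcal J,\hat{\mathcal J})\,\Breg_2(\hat{\bm\beta},\bm\beta)$, which when substituted into the identity gives the claimed bound.

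Finally I would verify $\mathcal L = \sqrt{a/b}\le r^{-1/2}$ by counting: with $c = |\mathcal J\cap\hat{\mathcal J}|$ one has $a = |\mathcal J| - c \le s - c$ and $b = q - c$, and since $q\ge s$ the map $c\mapsto (s-c)/(q-c)$ is nonincreasing, hence at most its value $s/q$ at $c=0$; thus $a/b \le s/q = 1/r$. The main obstacle I anticipate is the bookkeeping of the third step: making sure the averaging inequality points the right way and that $T$ and $U$ truly attach to disjoint blocks so the AM--GM step does not double count. I would also flag the degenerate case $a=0$ (equivalently $\mathcal J\subseteq\hat{\mathcal J}$), where the cross term vanishes identically and the conclusion holds trivially under the convention $\mathcal L(\mathcal J,\hat{\mathcal J})=0$.
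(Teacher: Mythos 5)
Your proof is correct and complete: the exact quadratic expansion $l(\bm\beta)-l(\hat{\bm\beta}) = \Breg_2(\hat{\bm\beta},\bm\beta) - \langle \bm y - \hat{\bm\beta}, \bm\beta - \hat{\bm\beta}\rangle$, the reduction of the cross term to $\sum_{i\in\mathcal J\setminus\hat{\mathcal J}} y_i\beta_i$, the order-statistics bound $\sum_{\mathcal J\setminus\hat{\mathcal J}} y_i^2 \le (a/b)\,T$ (valid under the paper's no-ties assumption), the disjoint-block identification $T+U \le \|\hat{\bm\beta}-\bm\beta\|_2^2$ followed by Cauchy--Schwarz and AM--GM, and the monotonicity argument giving $a/b \le (s-c)/(q-c) \le s/q$ (using $|\hat{\mathcal J}|=q$) are all sound. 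Note that the paper itself does not prove this lemma but imports it as Lemma 9 of the cited reference \cite{She2017RRRR}; your argument is essentially the standard proof of that result, and your treatment of the degenerate case $a=0$ also disposes of the only other boundary case $b=0$, since $b=q-c=0$ together with $c \le |\mathcal J| \le s \le q$ forces $\mathcal J = \hat{\mathcal J}$ and hence $a=0$, making the cross term vanish identically.
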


Set  $\bsb{\beta} = \bsb{\beta}^*$ in the previous proof, and apply,  instead of Lemma \ref{lemma:f_triangle}, Lemma \ref{lemma:bound without design} (where $\| \bm{\alpha}^{(t+1)}\|_0=q$ due to the no-tie-occurring assumption) to   \eqref{acc2-alg2}.    \eqref{tri_acc2} is then replaced by
\begin{align*} 
\begin{split}
&l(\bm{\alpha}^{(t+1)}) - \bm\Delta_{\psi_0}(\bm{\alpha}^{(t+1)}, \bm{\gamma}^{(t)}) - l(\bm{\beta}^*) + \bm\Delta_{\psi_0}(\bm{\beta}^*, \bm{\gamma}^{(t)}) + \theta_t\rho_t\Breg_2  (\bm{\alpha}^{(t+1)},\bm{\alpha}^{(t)})\\
\le\,& \theta_t\rho_t\Breg_2  (\bm{\beta}^*,\bm{\alpha}^{(t)}) -  (\theta_t\rho_t   + \mathcal L  )(1-\frac{1}{\sqrt r})\Breg_2(\bm{\beta}^*, \bm{\alpha}^{(t+1)}). \end{split}
\end{align*}
 Accordingly, \eqref{befrec1_acc2} becomes
\begin{equation}\nonumber
\begin{split}
&l(\bm{\beta}^{(t+1)})-l(\bm{\beta}^*) - (1-\theta_t)(l(\bm{\beta}^{(t)})-l(\bm{\beta}^*)) + R_t \\
&+ \theta_t\Big\{\bm\Delta_{\psi_0}(\bm{\beta}^*,\bm{\gamma}^{(t)}) + \big[  \mathcal L   (1-\frac{1}{\sqrt r}) -\frac{\theta_t\rho_t}{\sqrt r}\big] \Breg_2(\bm\beta^*,\bm\alpha^{(t+1)})\Big\} \\
\le\,&\theta_t^2\rho_t(\Breg_2(\bm{\beta}^*,\bm{\alpha}^{(t)}) - \Breg_2  (\bm{\beta}^*,\bm{\alpha}^{(t+1)})),
\end{split}
\end{equation}
where $R_t$ is  the same as before. Based on  Lemma \ref{lemma:Delta_linear}, Lemma \ref{lemma:Delta_Delta}, Lemma \ref{lemma:C} (together with some  results in its proof), \eqref{eq:genrip},  and the following facts
\begin{align*}
&\bm{\beta}^{(t)}-\bm{\gamma}^{(t)} = \theta_t (\bm{\beta}^{(t)}-\bm{\alpha}^{(t)})= \theta_t (1-\theta_{t-1}) (\bm{\beta}^{(t-1)}-\bm{\alpha}^{(t)}), \forall t\ge 1\\
&\bm{\beta}^{(t+1)}  -\bm{\gamma}^{(t)}  =\theta_t(\bm{\alpha}^{(t+1)}
-\bm{\alpha}^{(t)}), \forall t\ge 0
\end{align*}
we obtain for all $t\ge 1$,
\begin{align*}
R_t \ge  \, &  \theta_t^2\rho_t\Breg_2  (\bm{\alpha}^{(t+1)},\bm{\alpha}^{(t)}) - \bm\Delta_{\psi_0}(\bm{\beta}^{(t+1)},\bm{\gamma}^{(t)}) + (1-\theta_t)\bm\Delta_{\psi_0}(\bm{\beta}^{(t)},\bm{\gamma}^{(t)})\\& + \mathcal L \theta_t(1-\theta_t) \Breg_2(\bm\alpha^{(t+1)},\bm\beta^{(t)}) \\ = \, &\theta_t^2 ( \rho_t +\mathcal L)\Breg_2  (\bm{\alpha}^{(t+1)},\bm{\alpha}^{(t)}) -\bm\Delta_{l_0}(\bsb{X}\bm{\beta}^{(t+1)},\bsb{X}\bm{\gamma}^{(t)})+ (1-\theta_t)\bm\Delta_{l}(\bm{\beta}^{(t)},\bm{\gamma}^{(t)}) \\ &  + \mathcal L \{\theta_t(1-\theta_t) \Breg_2(\bm\alpha^{(t+1)},\bm\beta^{(t)})-(1-\theta_t)\Breg_2 (\bm{\beta}^{(t)},\bm{\gamma}^{(t)}) \} \\ = \, &\theta_t^2 ( \rho_t +\mathcal L)\Breg_2  (\bm{\alpha}^{(t+1)},\bm{\alpha}^{(t)}) -\bm\Delta_{l_0}(\bsb{X}\bm{\beta}^{(t+1)},\bsb{X}\bm{\gamma}^{(t)})+ (1-\theta_t)\bm\Delta_{l}(\bm{\beta}^{(t)},\bm{\gamma}^{(t)}) \\ &  + \mathcal L \{\theta_t(1-\theta_t) \Breg_2(\bm\alpha^{(t+1)},\bm\beta^{(t)})-(1-\theta_t)  \theta_t^2 (1-\theta_{t-1})^2 \Breg_2 (\bm{\alpha}^{(t)},\bm{\beta}^{(t-1)}) \}\\ \ge \, &\theta_t^2 ( \rho_t +\mathcal L)\Breg_2  (\bm{\alpha}^{(t+1)},\bm{\alpha}^{(t)}) -\theta_t^2\rho_+(2q) \Breg_2  (\bm{\alpha}^{(t+1)},\bm{\alpha}^{(t)})  \\ &
+ \mathcal L\theta_t(1-\theta_t) \{ \Breg_2(\bm\alpha^{(t+1)},\bm\beta^{(t)})-  \theta_t  (1-\theta_{t-1})^2 \Breg_2 (\bm{\alpha}^{(t)},\bm{\beta}^{(t-1)}) \},
\end{align*}
  and  $R_t \ge  \theta_t^2 \{  \rho_t +\mathcal L-\rho_+(2q)\}\Breg_2  (\bm{\alpha}^{(t+1)},\bm{\alpha}^{(t)})  $ as  $t = 0$.
 It follows that
\begin{align*}
\sum_{t=0}^T \frac{R_t}{\theta_t^2\rho_t} \ge  \,&\sum_{t=0}^T \frac{ \rho_t +\mathcal L-\rho_+(2q)}{\rho_t}\Breg_2  (\bm{\alpha}^{(t+1)},\bm{\alpha}^{(t)})+   \mathcal L\frac{1-\theta_T}{\theta_T\rho_T}\Breg_2 (\bm{\alpha}^{(T+1)},\bm{\beta}^{(T)}) \\& + \mathcal L\sum_{t=0}^{T-1} \Big\{\frac{1-\theta_t}{\theta_t \rho_t}-\frac{(1-\theta_t)^{2}(1-\theta_{t+1})}{\rho_{t+1}} \Big\} \Breg_2(\bm\alpha^{(t+1)},\bm\beta^{(t)}) \\=\,& \sum_{t=0}^T \frac{ \rho_t +\mathcal L-\rho_+(2q)}{\rho_t}\Breg_2  (\bm{\alpha}^{(t+1)},\bm{\alpha}^{(t)})+ \frac{\mathcal L(1-\theta_T)  }{\theta_T^3\rho_T}\Breg_2 (\bm{\beta}^{(T+1)},\bm{\beta}^{(T)}) \\&+ \mathcal L\sum_{t=0}^{T-1} \frac{1-\theta_t}{\theta_t \rho_t}\Big\{1-(1-\theta_t)\theta_{t+1}\frac{\theta_{t+1}}{\theta_{t}} \Big\} \Breg_2(\bm\alpha^{(t+1)},\bm\beta^{(t)}) .
\end{align*}
Therefore, choosing a  universal $\rho_t = \rho_0\ge \rho_+(2q) - \mathcal L$ (which implies $\theta_t\downarrow $) ensures
 $\sum_{t=0}^T  {R_t}/({\theta_t^2\rho_t}) \ge 0$.

Moreover,  $\mathcal E_t(\bm\beta^*) \ge  \bm\Delta_{\psi_0}(\bm{\beta}^*,\bm{\gamma}^{(t)})$ holds
 under $
\big\{  \mathcal L   (1-\frac{1}{\sqrt r}) -\frac{\theta_t\rho_t}{\sqrt r}\big\} \Breg_2(\bm\beta^*,\allowbreak\bm\alpha^{(t+1)})\}\ge 0$ or $L   (1- {1}/{\sqrt r}) \ge  {\rho_t}/\sqrt r$.
It is easy to see that as long as $r>1$, there exist positive  $\mathcal L, \rho_0$ satisfying \begin{align}
\begin{cases}
\mathcal L (\sqrt r - 1)\ge \rho_0 \\
\rho_0 \ge \rho_+(2q) - \mathcal L.
\end{cases}
\end{align}
Furthermore,  for  any $\mathcal L \ge  \rho_+(2q) /\sqrt r$, we can always choose $\rho_0 = (1- 1/\sqrt r )\rho_+(2q)$.
The rest of the proof proceeds as before.

\begin{remark}\label{rmk:acc2univrho}
The idea of controlling the overall $\sum_{ t\le T}\frac{R_t}{\theta_t^2\rho_t} $ can be extended with a proper choice of $\psi_0$  to a general problem   $\min f(\bsb{\beta})$ that may be nonconvex. In fact, if   $f(\bsb{\beta})$ can be  decomposed as $l(\bsb{\beta})+P(\bsb{\beta})$ with $0\le \breg_l\le L \Breg_2$ and $ \breg_P+\mathcal L_0 \Breg_2 \ge 0$ for some finite $\mathcal L_0\ge 0$, then setting $\mu_0=0$,  $\psi_0 = l - \mathcal L \| \cdot \|_2^2/2$ with $\mathcal L \ge \mathcal L_0$ and repeating the previous arguments, we obtain
\begin{equation}\label{eq:sumRcontrol}
\begin{split}
 \sum_{t=0}^T  \frac{R_t}{\theta_t^2\rho_t} &  \ge
  \sum_{t= 0}^T \frac{1}{\rho_t} (\rho_t\bm\Delta_\phi +\mathcal L \Breg_2 - L\Breg_2)(\bm{\alpha}^{(t+1)},\bm{\alpha}^{(t)}) \\ +\sum_{t=0}^{T-1}& \frac{ 1}{\theta_{t}^2\rho_{t}}\bm\Delta_{l}(\bm{\beta}^{(t+1)},\bm{\gamma}^{(t+1)})  + \frac{(\mathcal L-\mathcal L_0)(1-\theta_T)  }{\theta_T^3\rho_T}\Breg_2 (\bm{\beta}^{(T+1)},\bm{\beta}^{(T)}) \\+ \sum_{t=0}^{T-1}& \mathcal L\frac{1-\theta_t}{\theta_t \rho_t}\Big\{\frac{\mathcal L - \mathcal L_0}{\mathcal L}-(1-\theta_t)\theta_{t+1}\frac{\theta_{t+1}}{\theta_{t}} \Big\} \Breg_2(\bm\alpha^{(t+1)},\bm\beta^{(t)}) .
\end{split}\end{equation}
With  $\rho_t = \rho_0$, it can be shown that
$(1-\theta_t)\theta_{t+1}\frac{\theta_{t+1}}{\theta_{t}}$ achieves the maximum value $0.1612$ at $t = 3$ and so $\mathcal L \ge 1.2 \mathcal L_0$ makes the last term nonnegative. (A varying $\rho_t$ may reduce $\mathcal L$ further.)  Therefore, under $\breg_\phi\ge \sigma\Breg_2$,  we can choose any $\rho_0 \ge (L -\mathcal L)/\sigma$ and  $\mathcal L \ge 1.2 \mathcal L_0$  so that for any $\bsb{\beta}$, \begin{align*}
\frac{f(\bm{\beta}^{(T+1)})-f(\bm{\beta} )}{\theta_T^2 } + T\cdot\mathop{\avg}_{0\le t\le T}\frac{\mathcal E_t(\bm\beta)}{\theta_t}   + \frac{(\mathcal L-\mathcal L_0)(1-\theta_T)  }{\theta_T^3}\Breg_2 (\bm{\beta}^{(T+1)},\bm{\beta}^{(T)}) \\ \le \rho_0 \Breg_2 (\bm{\beta} , \bsb{\alpha}^{(0)}), \forall  T\ge 0.
\end{align*}
Hence for some $A$ independent of $T$,
 $ f(\bm{\beta}^{(T+1)})-f(\bm{\beta} ) + \min_{0\le t\le T} \mathcal E_t(\bm\beta) \le A / T^2 $, or    $ f(\bm{\beta}^{(T+1)})-f(\bm{\beta} ) + \min_{0\le t\le T} \{\breg_{\psi_0}(\bm\beta ,\bm\gamma^{(t)}) + \bm\Delta_{f(\cdot)-\bm\Delta_{\psi_0}(\cdot,\bm\gamma^{(t)})} (\bm\beta, \allowbreak\bm\alpha^{(t+1)})\}\le A / T^2 $ when $\phi$ is differentiable.

\end{remark}

\subsection{Proof of Theorem \ref{th:comp_acc1}}

The construction of the new acceleration scheme and the proof are motivated  by Proposition 2 of  \cite{Tseng2008}, with the use of GBF calculus. First, from  Lemma \ref{lemma:f_triangle}, given any $\bm\beta'_t$,
\begin{equation}\label{eq:th_acc1-1}
\begin{split}
&f(\bm{\beta}^{(t+1)}) - f(\bm{\beta}'_t)+ (\rho_t\mathbf D_2-\bm\Delta_{\bar\psi_0})(\bm{\beta}^{(t+1)},\bm{\gamma}^{(t)})+ \bm{\Delta}_f(\bm{\beta}'_t,\bm{\beta}^{(t+1)})  \\
\le &(\rho_t\mathbf D_2-\bm\Delta_{\bar\psi_0})(\bm{\beta}'_t,\bm{\gamma}^{(t)}) - (\rho_t\mathbf D_2- \bm\Delta_{\bm\Delta_{\bar\psi_0}(\cdot,\bm\gamma^{(t)})})(\bm{\beta}'_t,\bm{\beta}^{(t+1)})
\end{split}
\end{equation}
Let   $\bm{\beta}'_t = \theta_t\bm{\beta} + (1-\theta_t)\bm{\beta}^{(t)}$ with $\theta_t$ to be determined. Define $h_t(\cdot) = f(\cdot) - \bm\Delta_{\bar\psi_0}(\cdot,\bm\gamma^{(t)})$. By the definition of $\mathbf C$,
\begin{equation} \nonumber
\begin{split}
- f(\bm\beta'_t) =\,& \theta_t\bm\Delta_{\bar\psi_0}(\bm{\beta},\bm{\gamma}^{(t)}) + (1-\theta_t)\bm\Delta_{\bar\psi_0}(\bm{\beta}^{(t)},\bm{\gamma}^{(t)}) - \bm\Delta_{\bar\psi_0}(\bm{\beta}'_t,\bm{\gamma}^{(t)})\\
&-\theta_t f(\bm\beta) - (1-\theta_t)f(\bm\beta^{(t)}) + \mathbf C_{h_t}(\bm\beta,\bm\beta^{(t)},\theta_t).
\end{split}
\end{equation}
Plugging the last equality into \eqref{eq:th_acc1-1} yields
\begin{align*}
\begin{split}
&f(\bm{\beta}^{(t+1)}) - f(\bm{\beta}) - (1-\theta_t)(f(\bm{\beta}^{(t)}) - f(\bm{\beta}))\\& + (\rho_t\mathbf D_2-\bm\Delta_{\bar\psi_0})(\bm{\beta}^{(t+1)},\bm{\gamma}^{(t)})+ \mathbf C_{h_t}(\bm\beta,\bm\beta^{(t)},\theta_t)  \\
&+ \theta_t\bm\Delta_{\bar\psi_0}(\bm{\beta},\bm{\gamma}^{(t)}) + (1-\theta_t)\bm\Delta_{\bar\psi_0}(\bm{\beta}^{(t)},\bm{\gamma}^{(t)})  \\
\le\,& (\rho_t\mathbf D_2-\bm\Delta_{\bar\psi_0})(\bm\beta'_t,\bm\gamma^{(t)}) - (\rho_t\mathbf D_2 - \bm\Delta_{\bm\Delta_{\bar\psi_0}(\cdot,\bm\gamma^{(t)})})(\bm\beta'_t,\bm\beta^{(t+1)})  \\
& + \bm\Delta_{\bar\psi_0}(\bm\beta'_t,\bm\gamma^{(t)}) - \bm\Delta_f(\bm\beta'_t,\bm\beta^{(t+1)})  \\
=\,&\rho_t[\mathbf D_2(\bm{\beta}'_t,\bm{\gamma}^{(t)}) - \mathbf D_2(\bm{\beta}'_t,\bm{\beta}^{(t+1)})] - \bm\Delta_{h_t}(\bm\beta'_t,\bm\beta^{(t+1)}) %
\end{split}\end{align*}
and based on the definition of $\bar \psi_0$ and $R_t$,
\begin{align*}
\begin{split}
&f(\bm{\beta}^{(t+1)}) - f(\bm{\beta})+R_t  \\& +\rho_t \mathbf D_2(\bm{\beta}'_t,\bm{\beta}^{(t+1)})+ \mathbf C_{ \mu_0\Breg_{2}(\cdot,\bm\gamma^{(t)})}(\bm\beta,\bm\beta^{(t)},\theta_t)+ \mu_0\Breg_{2}(\bm\beta'_t,\bm\beta^{(t+1)}) \\& +\theta_t\bm\Delta_{\bar\psi_0}(\bm{\beta},\bm{\gamma}^{(t)}) + \mathbf C_{f(\cdot) - \bm\Delta_{\psi_0}(\cdot,\bm\gamma^{(t)})}(\bm\beta,\bm\beta^{(t)},\theta_t)+\bm\Delta_{f(\cdot) - \bm\Delta_{\psi_0}(\cdot,\bm\gamma^{(t)})}(\bm\beta'_t,\bm\beta^{(t+1)})   \\
\le \,&(1-\theta_t)(f(\bm{\beta}^{(t)}) - f(\bm{\beta}))+\rho_t\mathbf D_2(\bm{\beta}'_t,\bm{\gamma}^{(t)}).  %
\end{split}\end{align*}
From Section \ref{subsec:proofoflemma:C},  $\mathbf C_{ \mu_0\Breg_{2}(\cdot,\bm\gamma^{(t)})}(\bm\beta,\bm\beta^{(t)},\theta_t)=\mu_0 \mathbf C_{2 }(\bm\beta,\bm\beta^{(t)},\theta_t)=\mu_0 \theta_t(1-\theta_t)\Breg_{2 }(\bm\beta,\bm\beta^{(t)})$ and so
\begin{align}\label{eq:th_acc1-keybef}
\begin{split}
&f(\bm{\beta}^{(t+1)}) - f(\bm{\beta})+R_t  +   \theta_t \mathcal E_t(\bsb{\beta})\\ & +(\rho_t +\mu_0)\mathbf D_2(\bm{\beta}'_t,\bm{\beta}^{(t+1)})+ \mu_0 \theta_t(1-\theta_t)\Breg_{2 }(\bm\beta,\bm\beta^{(t)})    \\
\le \,&(1-\theta_t)(f(\bm{\beta}^{(t)}) - f(\bm{\beta}))+\rho_t\mathbf D_2(\bm{\beta}'_t,\bm{\gamma}^{(t)}).  %
\end{split}\end{align}

We would like to write $(\rho_t +\mu_0)\mathbf D_2( \theta_t\bm{\beta} + (1-\theta_t)\bm{\beta}^{(t)},\bm{\beta}^{(t+1)})+ \mu_0 \theta_t(1-\theta_t)\Breg_{2 }(\bm\beta,\bm\beta^{(t)})$ into the form of  a multiple of  $\mathbf D_2(\bm{\beta},\bm{\nu}^{(t+1)}) $ for some $\bsb{\nu}^{(t+1)}$. This can be done by  solving the gradient equation with respect to $\bsb{\beta}$:
\begin{align}
\bsb{\nu}^{(t+1)}= \frac{(\rho_t+\mu_0)\theta_t \bsb{\beta}^{(t+1)}-\rho_t\theta_t(1- \theta_t) {\bsb{\beta}}^{(t)}}{\rho_t \theta_t^2 + \mu_0 \theta_t}.\label{eq:nu++}
\end{align}
On the other hand,   $\nabla \rho_t\mathbf D_2( \theta_t\bm{\beta} + (1-\theta_t)\bm{\beta}^{(t)},\bm{\gamma}^{(t)})=\bsb{0}$
 gives
\begin{align}
\bsb{\nu}^{(t)}= \frac{\bm{\gamma}^{(t)}}{\theta_t} - \frac{ 1-\theta_t}{\theta_t}\bm{\beta}^{(t)}.\label{eq:nu+}  \end{align}
Combining \eqref{eq:nu++} and \eqref{eq:nu+} results in  \begin{equation}\bm{\gamma}^{(t)} = \bm{\beta}^{(t)} + \frac{\rho_{t-1} \theta_t(1-\theta_{t-1})}{\rho_{t-1}\theta_{t-1} + \mu_0} (\bm{\beta}^{(t)}-\bm{\beta}^{(t-1)}),\end{equation} as in \eqref{acc1-alg1}. Therefore,
 \eqref{eq:th_acc1-keybef} becomes \begin{align}\label{acc1-res}
\begin{split}
&f(\bm{\beta}^{(t+1)}) - f(\bm{\beta}) +(\theta_t^2\rho_t +\mu_0 \theta_t) \mathbf D_2(\bm\beta,\bm\nu^{(t+1)})+R_t  +   \theta_t \mathcal E_t(\bsb{\beta})    \\
\le \,&(1-\theta_t)(f(\bm{\beta}^{(t)}) - f(\bm{\beta}))+\theta_t^2\rho_t\mathbf D_2(\bm\beta,\bm\nu^{(t)}).  %
\end{split}
\end{align}

  Let $\mu_0=0$. It follows from    \eqref{acc1-res} that
\begin{equation}
\begin{split}
&\frac{1}{\theta_t^2\rho_t}\big[f(\bm{\beta}^{(t+1)})-f(\bm{\beta})\big] + \mathbf D_2(\bm{\beta},\bm{\nu}^{(t+1)}) + \frac{\mathcal E_t(\bm\beta)}{\theta_t\rho_t} + \frac{R_t}{\theta_t^2\rho_t}\\
\le\,&\frac{(1-\theta_t)}{\theta_t^2\rho_t}\big[f(\bm{\beta}^{(t)}) - f(\bm{\beta})\big] + \mathbf D_2(\bm{\beta},\bm{\nu}^{(t)}), \forall t\ge 0.
\end{split}\label{eq:th_acc1-7}
\end{equation}
Under \eqref{acc1_search2}, we have
\begin{align}
\begin{split}
&\frac{1}{\theta_t^2\rho_t}\big[f(\bm{\beta}^{(t+1)})-f(\bm{\beta})\big] + \mathbf D_2(\bm{\beta},\bm{\nu}^{(t+1)}) + \frac{\mathcal E_t(\bm\beta)}{\theta_t\rho_t} + \frac{R_t}{\theta_t^2\rho_t}\\
\le\,&\frac{1}{\theta_{t-1}^2\rho_{t-1}}\big[f(\bm{\beta}^{(t)}) - f(\bm{\beta})\big]+ \mathbf D_2(\bm{\beta},\bm{\nu}^{(t)}),~~\forall t\ge 1.
\end{split}\label{eq:th_acc1-8}
\end{align}
Summing \eqref{eq:th_acc1-8} for $t = T,\ldots,1$ and \eqref{eq:th_acc1-7} for $t = 0$ gives
\begin{equation}\nonumber
\begin{split}
&\frac{1}{\theta_T^2\rho_T}[f(\bm{\beta}^{(T+1)}) - f(\bm{\beta})] + \sum_{t=0}^T\Big(\frac{\mathcal E_t(\bm\beta)}{\theta_t\rho_t} + \frac{R_t}{\theta_t^2\rho_t}\Big)\\
\le\,& \frac{1-\theta_0}{\theta_0^2\rho_0}\big[f(\bm{\beta}^{(0)}) - f(\bm{\beta})\big] + \mathbf D_2(\bm{\beta},\bm\nu^{(0)}) - \mathbf D_2(\bm{\beta},\bm\nu^{(T+1)}), 
\end{split}
\end{equation}
and so the first bound  noticing that  $\bm\nu^{(0)} = \bm\gamma^{(0)} = \bm\beta^{(0)}$.

Moreover, given any $\mu_0\ge 0$, from \eqref{acc1_search2},  \eqref{acc1-res} implies for any $t\ge 1$,
\begin{align}\label{acc1-res-genmu0}
\begin{split}
&f(\bm\beta^{(t+1)}) - f(\bm\beta)   +(\theta_t^2\rho_t +\mu_0 \theta_t)\mathbf D_2(\bm\beta,\bm\nu^{(t+1)})  +R_t+\theta_t\mathcal E_t(\bm\beta)\\
\le\,&(1-\theta_t)[f(\bm\beta^{(t)})-f(\bm\beta)+(\theta_{t-1}^2\rho_{t-1} +\mu_0 \theta_{t-1})\mathbf D_2(\bm\beta,\bm\nu^{(t)}) ].
\end{split}
\end{align}
Similar to the proof of Theorem \ref{th:comp_acc2},  a recursive argument using \eqref{acc1-res-genmu0} and \eqref{acc1-res} gives the second bound.
\begin{remark}\label{rmk:1staccth}
Compared with the proof of Theorem \ref{th:comp_acc2}, the proof here needs to perform a finer analysis of $ \mathbf C_{h_t}$  (the proof of  Corollary \ref{cor-AccIQ-gen} uses a similar treatment). Otherwise one would  get $ \bm{\gamma}^{(t)} = \bm{\beta}^{(t)} + \theta_t(  \theta_{t-1}^{-1}-1) (\bm{\beta}^{(t)}-\bm{\beta}^{(t-1)})$ and  $\rho_t {\theta_t^2}/({1-\theta_t}) =  {\theta_{t-1}^2 (\rho_{t-1}+\mu_0)
}$
in place of \eqref{acc1-alg1}, \eqref{acc1_search2}, respectively. Following the same proof, we can show that the resultant algorithm does result in a linear rate when $\mu_0=\mu>0$, but  offers no acceleration  ($\theta_0 = 1/\kappa$) in strongly smooth and  convex optimization.

Finally, Remark \ref{rmk:relaxedrhot} still applies. For example, the second conclusion holds when the `$=$' in \eqref{acc1_search2} is replaced by `$\le$', and it is straightforward to see that  $\mu_0$ can be  similarly replaced by a sequence of varying $\mu_t$ to speed the convergence.
\end{remark}

\subsection{Statistical accuracy of LLA iterates} \label{subsec:LLA-stat}

In this subsection, assume  $f(\bm\beta) = l(\bm\beta) + P(\varrho\bm\beta)$,  $l(\bm\beta) = l_0(\bm X\bm\beta)$, $P(\varrho\bm\beta) = \sum_jP(\varrho\beta_j)$ (by a slight  abuse of notation), $P(0) = 0$,  $P'_+(0)<+\infty$,   $P(t)=P(-t)\ge 0$,  $P(t)$ is differentiable for any $t>0$, and    $P$ is concave on $(0,+\infty)$. Recall $\bm\Delta^{(t)}_{\mathrm{LLA}} = \bm\Delta_{\|\bm\alpha^{(t)}\text{\tiny$\circ$}(\cdot)\|_1 - P(\cdot)}$ which does \emph{not} satisfy the strong idempotence.\\ 

\noindent\textsc{Assumption} $\mathcal A(\varepsilon,\delta,\vartheta,K,\bm\alpha,\bm\beta)$ Given $\bm X, \bm\alpha, \bm\beta$, there exist $\varepsilon > 0, \delta > 0, \vartheta > 0, K \ge 0$ such that the following inequality holds
\begin{align}
&(1+\varepsilon)\bm\Delta_{\|\bm\alpha\text{\tiny$\circ$}(\cdot)\|_1 - P(\cdot)}(\varrho\bm{\beta}^*,\varrho\bm{\beta}) + \delta\mathbf D_2(\bm{X\beta}^*,\bm{X\beta}) + \vartheta P_H(\varrho(\bm{\beta} - \bm{\beta}^*);\lambda) \nonumber \\
\le\,& 2\sym{\bm{\Delta}}_{l}(\bm\beta^*,\bm\beta) + P(\varrho\bm{\beta};\lambda) - P(\varrho\bm{\beta}^*;\lambda) + K\lambda^2J^*. \nonumber 
\end{align}

\begin{pro}\label{thm:stat_LLA}
Assume that for any given $T\ge 1$, $\mathcal A(\varepsilon,\delta,\vartheta,K,\bm\alpha^{(t)},\bm\beta^{(t)})$ ($1\le t\le T$) is satisfied for some $\varepsilon > 0, \delta>0, \vartheta>0, K\geq 0$. Let $\lambda = A\sigma\sqrt{\log(ep)}/\sqrt{(\delta\wedge\vartheta )\vartheta}$.
Then the following inequality holds with probability at least $1-Cp^{-cA^2}$
\begin{equation} \nonumber 
\bm\Delta_\mathrm{LLA}^{(T)}(\varrho\bm{\beta}^*,\varrho\bm{\beta}^{(T)}) \leq \kappa^T\bm\Delta_\mathrm{LLA}^{(0)}(\varrho\bm{\beta}^*,\varrho\bm{\beta}^{(0)}) + \frac{\kappa}{1-\kappa} K\lambda^2J^*,
\end{equation}
where $\kappa = 1/(1+\varepsilon)$ and $C,c$ are universal positive constants.
\end{pro}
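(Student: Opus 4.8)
The plan is to run the same contraction argument as in Theorem~\ref{th:stat_bound}, but with the fixed $\psi$ there replaced by the step-dependent LLA discrepancy and with strong idempotence replaced by an exact second-order identity tailored to the LLA weights. Write $f=l+P(\varrho\,\cdot)$ and, for each $t$, let $\phi^{(t)}(\bm\beta)=(\|\bm\alpha^{(t)}\circ(\cdot)\|_1-P)(\varrho\bm\beta)$ so that the surrogate is $g^{(t)}_{\mathrm{LLA}}(\cdot;\bm\beta^{(t)})=f+\bm\Delta_{\phi^{(t)}}(\cdot,\bm\beta^{(t)})$ with $\bm\Delta_{\phi^{(t)}}(\bm\beta,\bm\beta^{(t)})=\bm\Delta_{\mathrm{LLA}}^{(t)}(\varrho\bm\beta,\varrho\bm\beta^{(t)})$ by the linear-composition rule of Lemma~\ref{lemma:Delta_linear}(iii). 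First I would apply the basic inequality Lemma~\ref{lemma:f_triangle} at step $t$ with $\bm\beta=\bm\beta^*$, giving
\[
M_t:=\bm\Delta_{\mathrm{LLA}}^{(t)}(\varrho\bm\beta^*,\varrho\bm\beta^{(t)})\ge \bm\Delta_{\mathrm{LLA}}^{(t)}(\varrho\bm\beta^{(t+1)},\varrho\bm\beta^{(t)})+\bm\Delta_{\bm\Delta_{\phi^{(t)}}(\cdot,\bm\beta^{(t)})}(\bm\beta^*,\bm\beta^{(t+1)})+\bm\Delta_f(\bm\beta^*,\bm\beta^{(t+1)})+f(\bm\beta^{(t+1)})-f(\bm\beta^*).
\]

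The crux --- the content of Lemma~\ref{lemma:Delta_Delta_P} --- is the calculus of the weighted-$\ell_1$-minus-$P$ function, which I would establish componentwise through \eqref{Delta_1} and \eqref{Delta_P}, splitting on whether $\beta_j^{(t)}=0$ and using $\alpha_j^{(t)}=|P'_+(\beta_j^{(t)})|$. Two facts are needed. (i) The surrogate discrepancy is nonnegative, $\bm\Delta_{\mathrm{LLA}}^{(t)}(\varrho\bm\beta^{(t+1)},\varrho\bm\beta^{(t)})\ge 0$: because the $\ell_1$ weight equals the magnitude of the subgradient of $P$ at the base point, the concavity of $P$ on $\mathbb R_+$ forces each coordinate term to be a gap between $P$ and its tangent/secant, hence $\ge 0$. (ii) An effective strong idempotence holds, $\bm\Delta_{\bm\Delta_{\phi^{(t)}}(\cdot,\bm\beta^{(t)})}(\bm\beta^*,\bm\beta^{(t+1)})=\bm\Delta_{\mathrm{LLA}}^{(t)}(\varrho\bm\beta^*,\varrho\bm\beta^{(t+1)})$, \emph{even though} $\phi^{(t)}$ is nonsmooth so that the generic Lemma~\ref{lemma:Delta_Delta}(iii) does not apply: in the second-order GBF the kink of $\alpha_j^{(t)}|\cdot|$ and the correction produced by linearizing $P$ at a coordinate of $\bm\beta^{(t)}$ cancel precisely, leaving the first-order LLA GBF with base $\varrho\bm\beta^{(t+1)}$. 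I expect this cancellation to be the main obstacle, since it must be verified by hand; once available it makes the correction term $(\bm\Delta_{\phi^{(t)}}-\bm\Delta_{\bm\Delta_{\phi^{(t)}}(\cdot,\bm\beta^{(t)})})$ that appears in the general statistical recursion underlying Theorem~\ref{th:stat_bound} vanish, so that Assumption $\mathcal A$ in its stated (uncorrected) form suffices.

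With (i) and (ii) in hand I would drop the nonnegative term $\bm\Delta_{\mathrm{LLA}}^{(t)}(\varrho\bm\beta^{(t+1)},\varrho\bm\beta^{(t)})$ and expand the loss and penalty parts. Using the effective noise $\bm\epsilon=-\nabla l_0(\bm X\bm\beta^*)$ and $\bm\Delta_f=\bm\Delta_l+\bm\Delta_{P(\varrho\cdot)}$, the quantity $\bm\Delta_f(\bm\beta^*,\bm\beta^{(t+1)})+f(\bm\beta^{(t+1)})-f(\bm\beta^*)$ equals $2\sym{\bm\Delta}_l(\bm\beta^*,\bm\beta^{(t+1)})+\bm\Delta_P(\varrho\bm\beta^*,\varrho\bm\beta^{(t+1)})-\langle\bm\epsilon,\bm X(\bm\beta^{(t+1)}-\bm\beta^*)\rangle+P(\varrho\bm\beta^{(t+1)})-P(\varrho\bm\beta^*)$. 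Combining with (ii), the two penalty pieces telescope into $\bm\Delta_{\mathrm{LLA}}^{(t)}(\varrho\bm\beta^*,\varrho\bm\beta^{(t+1)})+\bm\Delta_P(\varrho\bm\beta^*,\varrho\bm\beta^{(t+1)})=\sum_j\alpha_j^{(t)}\bm\Delta_1(\varrho\beta_j^*,\varrho\beta_j^{(t+1)})\ge 0$, which I would discard as nonnegative slack; thus the mismatch between the step-$t$ and step-$(t+1)$ weights causes no trouble. What remains is $M_t\ge 2\sym{\bm\Delta}_l(\bm\beta^*,\bm\beta^{(t+1)})+P(\varrho\bm\beta^{(t+1)})-P(\varrho\bm\beta^*)-\langle\bm\epsilon,\bm X(\bm\beta^{(t+1)}-\bm\beta^*)\rangle$.

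Finally I would invoke Assumption $\mathcal A(\varepsilon,\delta,\vartheta,K,\bm\alpha^{(t+1)},\bm\beta^{(t+1)})$ to lower bound $2\sym{\bm\Delta}_l+P(\varrho\bm\beta^{(t+1)})-P(\varrho\bm\beta^*)$ by $(1+\varepsilon)M_{t+1}+\delta\mathbf D_2(\bm X\bm\beta^*,\bm X\bm\beta^{(t+1)})+\vartheta P_H(\varrho(\bm\beta^{(t+1)}-\bm\beta^*);\lambda)-K\lambda^2J^*$, and then absorb the stochastic term by Lemma~\ref{lemma:phostochastic}: with $a=1/(\delta\wedge\vartheta)$, $b=1/(2\vartheta)$ and $\lambda=A\sigma\sqrt{\log(ep)}/\sqrt{(\delta\wedge\vartheta)\vartheta}$, the event $\langle\bm\epsilon,\bm X(\bm\beta^{(t+1)}-\bm\beta^*)\rangle\le\delta\mathbf D_2(\bm X\bm\beta^*,\bm X\bm\beta^{(t+1)})+\vartheta P_H(\varrho(\bm\beta^{(t+1)}-\bm\beta^*);\lambda)$ holds simultaneously for all $t$ with probability at least $1-Cp^{-cA^2}$. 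On this event the quadratic and $P_H$ terms cancel and the recursion collapses to $M_{t+1}\le\kappa(M_t+K\lambda^2J^*)$ with $\kappa=1/(1+\varepsilon)$; iterating from $t=0$ to $T-1$ and summing $\kappa+\cdots+\kappa^T\le\kappa/(1-\kappa)$ yields $M_T\le\kappa^T M_0+\frac{\kappa}{1-\kappa}K\lambda^2J^*$, which is the claim.
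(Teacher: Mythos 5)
Your proposal is correct and follows essentially the same route as the paper's proof: the basic inequality from the surrogate minimization, nonnegativity of $\bm\Delta_{\mathrm{LLA}}^{(t)}(\varrho\bm\beta^{(t+1)},\varrho\bm\beta^{(t)})$ via concavity of $P$ on $\mathbb R_+$, the coordinatewise second-order GBF calculus, Assumption $\mathcal A$ applied at $(\bm\alpha^{(t+1)},\bm\beta^{(t+1)})$, the stochastic term absorbed by Lemma \ref{lemma:phostochastic}, and the geometric recursion $M_{t+1}\le\kappa(M_t+K\lambda^2J^*)$. Your ``effective strong idempotence'' identity is exactly the content of the paper's Lemma \ref{lemma:Delta_Delta_P} once one notes $\alpha_j^{(t)}=P'_+(0)$ on the coordinates with $\beta_j^{(t)}=0$, so the cancellation you flag as the main obstacle does hold and is verified there componentwise.
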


\begin{proof}
From the proof of Proposition \ref{pro:LLA_comp}, for any $\bm\beta$,
\begin{equation} \nonumber
\begin{split}
&\bm\Delta_f(\bm\beta,\bm\beta^{(t+1)}) + \bm\Delta_{\bm\Delta_{\mathrm{LLA}}^{(t)}(\cdot,\varrho\bm\beta^{(t)})}(\varrho\bm\beta,\varrho\bm\beta^{(t+1)})\\
\le\,& f(\bm\beta) - f(\bm\beta^{(t+1)}) + \bm\Delta_\mathrm{LLA}^{(t)}(\varrho\bm\beta,\varrho\bm\beta^{(t)}) - \bm\Delta_\mathrm{LLA}^{(t)}(\varrho\bm\beta^{(t+1)},\varrho\bm\beta^{(t)}).
\end{split}
\end{equation}
Using the definition of $\bm\Delta^{(t)}_{\mathrm{LLA}}$, we have
\begin{align}\label{stat-LLA-2}
\begin{split}
& \bm\Delta_{P}(\varrho\bm\beta,\varrho\bm\beta^{(t+1)})-\bm\Delta_{\bm\Delta_{P(\cdot,\varrho\bm\beta^{(t)})}}(\varrho\bm\beta,\varrho\bm\beta^{(t+1)})\\
&+\bm\Delta_l(\bm\beta,\bm\beta^{(t+1)}) +\sum_j\alpha_j^{(t)}\bm\Delta_{\bm\Delta_1(\cdot,\varrho\beta_j^{(t)})}(\varrho\beta_j,\varrho\beta_j^{(t+1)})  \\
\le\,& f(\bm\beta) - f(\bm\beta^{(t+1)}) + \bm\Delta_\mathrm{LLA}^{(t)}(\varrho\bm\beta,\varrho\bm\beta^{(t)}) ,
\end{split}
\end{align}
where we used  $\bm\Delta_{\mathrm{LLA}}^{(t)}(\varrho\bm\beta^{(t+1)},\varrho\bm\beta^{(t)}) \ge 0$ since  $P(\cdot)$ is concave on $(0,+\infty)$.


\begin{lemma}\label{lemma:Delta_Delta_P}
For any $P(\cdot)$ which is differentiable on $(0,+\infty)$ and satisfies $P(t) = P(-t)\ge 0, P(0)=0$ and $P'_+(0)<+\infty$, we have $\bm\Delta_{\bm\Delta_P(\cdot,\alpha)}(\beta,\gamma) = \bm\Delta_P(\beta,\gamma) - P'_+(0)\bm\Delta_1(\beta,\allowbreak\gamma) 1_{\alpha=0}$ for any $\alpha, \beta,\gamma\in\mathbb R$.
In particular, $\bm\Delta_{\bm\Delta_1(\cdot,\alpha)}(\beta,\gamma) = \bm\Delta_1(\beta,\gamma)1_{\alpha\ne 0}$.
\end{lemma}

The result can be shown from the proof of Lemma \ref{lemma:Delta_Delta}. Indeed, from \eqref{Delta_P},
\begin{equation} \nonumber
\bm\Delta_P(\cdot, \alpha) = \begin{cases}P(\cdot) - P(\alpha) - P'(\alpha)(\cdot - \alpha),&\alpha \ne 0\\ P(\cdot) - P'_+(0)|\cdot|,& \alpha = 0.\end{cases}
\end{equation}
When $\alpha \ne 0$, by Lemma \ref{lemma:Delta_linear} and Lemma \ref{lemma:Delta_Delta}, $\bm\Delta_{\bm\Delta_P(\cdot,\alpha)}(\beta,\gamma) = \bm\Delta_P(\beta,\gamma) - \bm\Delta_{P(\alpha) + P'(\alpha)(\cdot - \alpha)}(\beta,\gamma)\allowbreak= \bm\Delta_P(\beta,\gamma)$.
When $\alpha = 0$, $\bm\Delta_{\bm\Delta_P(\cdot,\alpha)}(\beta,\gamma) = \bm\Delta_P(\beta,\gamma) - P'_+(0)\bm\Delta_1(\beta,\gamma)$.
Combining the two cases gives
\begin{equation} \nonumber
\bm\Delta_{\bm\Delta_P(\cdot,\alpha)}(\beta,\gamma) =
\bm\Delta_P(\beta,\gamma)-P'_+(0)\bm\Delta_1(\beta,\gamma)1_{\alpha=0}.
\end{equation}
When $P(\bm\beta) = \|\bm\beta\|_1$, $\bm\Delta_{\bm\Delta_1(\cdot,\alpha)}(\beta,\gamma) = \bm\Delta_1(\beta,\gamma)-\bm\Delta_1(\beta,\gamma)1_{\alpha=0} = \bm\Delta_1(\beta,\gamma)1_{\alpha\ne 0}$.

\vspace{2ex}
From Lemma \ref{lemma:Delta_Delta_P},
\[\bm\Delta_{P}(\varrho\bm\beta,\varrho\bm\beta^{(t+1)})-\bm\Delta_{\bm\Delta_{P(\cdot,\varrho\bm\beta^{(t)})}}(\varrho\bm\beta,\varrho\bm\beta^{(t+1)}) = \sum_{j:\beta_j^{(t)}=0}P'_+(0)\bm\Delta_1(\varrho\beta_j,\varrho\beta_j^{(t+1)})\]
and
\[\sum_j\alpha_j^{(t)}\bm\Delta_{\bm\Delta_1(\cdot,\varrho\beta_j^{(t)})}(\varrho\beta_j,\varrho\beta_j^{(t+1)}) = \sum_{j:\beta_j^{(t)}\ne 0}\alpha_j^{(t)}\bm\Delta_1(\varrho\beta_j,\varrho\beta_j^{(t+1)}).\]
Plugging these into
\eqref{stat-LLA-2} gives $\bm\Delta_l(\bm\beta,\bm\beta^{(t+1)}) + \sum_{j:\beta_j^{(t)}=0}P'_+(0)\bm\Delta_1(\varrho\beta_j,\varrho\beta_j^{(t+1)}) + \sum_{j:\beta_j^{(t)}\ne 0}\allowbreak \alpha_j^{(t)}\bm\Delta_1(\varrho\beta_j,\varrho\beta_j^{(t+1)}) \le f(\bm\beta) - f(\bm\beta^{(t+1)}) + \bm\Delta_\mathrm{LLA}^{(t)}(\varrho\bm\beta,\varrho\bm\beta^{(t)})$.

Together with $\alpha_j^{(t)} = |P'_+(\beta_j^{(t)})| \le P'_+(0)$, we have
\begin{align}
\bm\Delta_l(\bm\beta,\bm\beta^{(t+1)}) + \sum_j\alpha_j^{(t)}\bm\Delta_1(\varrho\beta_j,\varrho\beta_j^{(t+1)}) \le f(\bm\beta) - f(\bm\beta^{(t+1)}) + \bm\Delta^{(t)}_\mathrm{LLA}(\varrho\bm\beta,\varrho\bm\beta^{(t)}).
\end{align}
Letting $\bm{\beta} = \bm{\beta}^*$ and  using  the definition of $\bm\epsilon$,
we obtain\begin{equation} \nonumber 
\begin{split}
&2\sym{\bm\Delta}_l(\bm\beta^*,\bm\beta^{(t+1)}) + \bm\Delta_{\|\bm\alpha^{(t)}\text{\tiny$\circ$}(\cdot)\|_1}(\varrho\bm\beta^*,\varrho\bm\beta^{(t+1)}) + P(\varrho\bm\beta^{(t+1)};\lambda)\\
\le\,& \bm\Delta^{(t)}_\mathrm{LLA}(\varrho\bm\beta^*,\varrho\bm\beta^{(t)}) + \langle\bm\epsilon,\bm X\bm\beta^{(t+1)}-\bm X\bm\beta^*\rangle + P(\varrho\bm\beta^*;\lambda).
\end{split}
\end{equation}
From the regularity condition,
\begin{align*}
&(1+\varepsilon)\bm\Delta_{\mathrm{LLA}}^{(t+1)}(\varrho\bm{\beta}^*,\varrho\bm{\beta}^{(t+1)}) + \delta\mathbf D_2(\bm{X\beta}^*,\bm{X\beta}^{(t+1)})\allowbreak + \vartheta P_H(\varrho(\bm{\beta}^{(t+1)} - \bm{\beta}^*);\lambda)
\\ \le & 2\sym{\bm{\Delta}}_{l}(\bm\beta^*,\bm\beta^{(t+1)}) + \bm\Delta_{\|\bm\alpha^{(t)}\text{\tiny$\circ$}(\cdot)\|_1}(\varrho\bm\beta^*,\varrho\bm\beta^{(t+1)}) + P(\varrho\bm{\beta}^{(t+1)};\lambda) - P(\varrho\bm{\beta}^*;\lambda) + K\lambda^2J^*\end{align*}
for $1\le t\le T$.
The final conclusion can be proved by combining the last two inequalities and then applying a similar probabilistic argument as in Theorem \ref{th:stat_bound}.
\end{proof}

\subsection{A-estimators as F-estimators}
\label{subsec:Aests}
In this part, we show that an important class of   \textbf{A}-estimators that has \textit{alternative} optimality, typically arising from block coordinate descent (BCD) algorithms, can often be converted to F-estimators, and analyzed in a similar way. Let ${ \bsb{\beta}} = [{ \bsb{\beta}}_{[1]}^T,\ldots,  { \bsb{\beta}}_{[K]}^T]^T$ where ${ \bsb{\beta}}_{[k]}$ is  the $k$th block, $1\le k\le K$, and we use ${ \bsb{\beta}}_{[-k]}$ to denote the subvector after removing the $k$th block. Assume $$f = l+ P$$ where $l$ is  differentiable, and $P$ is separable: $P(\bsb{\beta})=\sum P_k(\bsb{\beta}_{[k]})$.  When viewed as a function of $\bsb{\beta}_{[k]}$ only, $f$ is denoted by $f(\bsb{\beta}_{[k]};\bsb{\beta}_{[-k]}) $.  
We say  $\hat{ \bsb{\beta}}$ has alternative optimality or is an A-estimator if \begin{equation}\hat{ {\bsb{\beta}}}_{[k]}  \in \arg\min_{ {\bsb{\beta}}_{[k]} } f({\bsb{\beta}}_{[k]} ; \hat{\bsb{\beta}}_{[-k]} ), 1\le k \le K. \label{Aest:ch1}\end{equation}
\begin{lemma}\label{Lemma:AtoF-gen}
Let $\hat \beta$ be an A-estimator of  $\min f(\bsb\beta)$.   Construct a surrogate function:
\begin{align}
g_{\bsb{\rho}}(\bsb{\beta}; \bsb{\beta}^-) = f(\bsb{\beta}) - \breg_l (\bsb{\beta}, \bsb{\beta}^-) + \sum \rho_k \Breg_2 (\bsb{\beta}_{[k]}, \bsb{\beta}_{[k]}^-)
\end{align}
where $\bsb{\rho}=(\rho_1, \ldots, \rho_K)$ with $\rho_k\ge 0$.

(i) If $P_k$ are directionally differentiable and
\begin{align}\label{eq:AtoFpencond}\breg_{P_k} + \mathcal L_k \Breg_2\ge 0
\end{align} for some $\mathcal L_k\ge 0$, then for any $\rho_k\ge \mathcal L_k$, $\hat {\bsb{\beta}}$ must satisfy
\begin{equation}\hat{ {\bsb{\beta}}}   \in \arg\min_{ {\bsb{\beta}} } g_{\bsb{\rho}}(\bsb{\beta}; \bsb{\beta}^-) |_{\bsb{\beta}^- =  \hat{\bsb{\beta}}} . \label{Aest:ch2}
\end{equation}
(ii) If $l$ as a function of $\bsb{\beta}_k$ satisfies $\breg_{  l (\cdot; \bsb{\beta}_{[-k]})} \le L_k \Breg_2, \forall \bsb{\beta}_{[-k]}$,  $1\le k \le K$, or less restrictively,
\begin{align}\label{eq:AtoFlosscond} \breg_{\hat l_k(\cdot)}(\bsb{\beta}_{[k]}, \hat{ \bsb{\beta}}_{[k]})\le L_k \Breg_2(\bsb{\beta}_{[k]}, \hat{ \bsb{\beta}}_{[k]}), \ \forall \bsb{\beta}_{[k]}\end{align}   where  $\hat l_k(\bsb{\beta}_{[k]})$   denotes $l_k(\bsb{\beta}_{[k]}; \hat {\bsb{\beta}}_{[-k]})$, then for any $\rho_k>L_k$, \eqref{Aest:ch2} still holds.
In addition, if $\hat{ {\bsb{\beta}}}_{[k]}  $ is the unique solution to \eqref{Aest:ch1}, then  $\rho_k\ge L_k$ suffices.
\end{lemma}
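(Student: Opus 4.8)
The plan is to exploit the block separability of the surrogate once the differentiable loss $l$ is linearized, thereby reducing the global claim \eqref{Aest:ch2} to a per-block statement, and then to treat the two parts by the two complementary mechanisms the paper already uses: convexity-plus-stationarity (via Lemma~\ref{lemma:degeneracy} and Lemma~\ref{lemma:local-opt}) in part (i), and direct value majorization in part (ii). First I would rewrite $g_{\bsb{\rho}}(\cdot;\hat{\bsb{\beta}})$. Since $l$ is differentiable, $l(\bsb{\beta})-\breg_l(\bsb{\beta},\hat{\bsb{\beta}})=l(\hat{\bsb{\beta}})+\langle\nabla l(\hat{\bsb{\beta}}),\bsb{\beta}-\hat{\bsb{\beta}}\rangle$, so using $P=\sum_k P_k$ and the block structure of $\langle\nabla l(\hat{\bsb{\beta}}),\cdot\rangle$ and of $\sum_k\rho_k\Breg_2(\bsb{\beta}_{[k]},\hat{\bsb{\beta}}_{[k]})$ gives the exact identity $g_{\bsb{\rho}}(\bsb{\beta};\hat{\bsb{\beta}})=l(\hat{\bsb{\beta}})+\sum_k g_k(\bsb{\beta}_{[k]})$ with $g_k(\bsb{\beta}_{[k]})=\langle\nabla_{[k]}l(\hat{\bsb{\beta}}),\bsb{\beta}_{[k]}-\hat{\bsb{\beta}}_{[k]}\rangle+P_k(\bsb{\beta}_{[k]})+\rho_k\Breg_2(\bsb{\beta}_{[k]},\hat{\bsb{\beta}}_{[k]})$. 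As this is additive across blocks, \eqref{Aest:ch2} holds iff $\hat{\bsb{\beta}}_{[k]}\in\arg\min_{\bsb{\beta}_{[k]}}g_k$ for every $k$; moreover $\nabla_{[k]}l(\hat{\bsb{\beta}})=\nabla\hat l_k(\hat{\bsb{\beta}}_{[k]})$, so $g_k$ is exactly the gradient-descent-type Bregman surrogate for the restricted objective $\hat f_k(\bsb{\beta}_{[k]})=\hat l_k(\bsb{\beta}_{[k]})+P_k(\bsb{\beta}_{[k]})$, for which \eqref{Aest:ch1} says $\hat{\bsb{\beta}}_{[k]}\in\arg\min\hat f_k$.

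For part (i) I would argue by convexity and stationarity. By Lemma~\ref{lemma:Delta_linear}(i)--(ii), the hypothesis $\breg_{P_k}+\mathcal L_k\Breg_2\ge0$ means $P_k+\mathcal L_k\|\cdot\|_2^2/2$ is convex; adding the convex quadratic $\rho_k\Breg_2(\cdot,\hat{\bsb{\beta}}_{[k]})-\mathcal L_k\|\cdot\|_2^2/2$ (convex since $\rho_k\ge\mathcal L_k$) and the affine term shows $g_k$ is convex. The block form of the first-order degeneracy (Lemma~\ref{lemma:degeneracy}(ii)) gives $\delta g_k(\hat{\bsb{\beta}}_{[k]};\bsb{h})=\delta\hat f_k(\hat{\bsb{\beta}}_{[k]};\bsb{h})$ for all admissible $\bsb{h}$, because the quadratic has vanishing directional derivative at its center; by Lemma~\ref{lemma:local-opt} applied to \eqref{Aest:ch1} this is $\ge0$. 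A convex function with a nonnegative directional derivative at a point is globally minimized there, so $\hat{\bsb{\beta}}_{[k]}\in\arg\min g_k$, which is \eqref{Aest:ch2}.

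For part (ii) convexity of $g_k$ is unavailable, so I would compare values directly. A short computation yields the exact identity $g_k(\bsb{\beta}_{[k]})-g_k(\hat{\bsb{\beta}}_{[k]})=\big[\hat f_k(\bsb{\beta}_{[k]})-\hat f_k(\hat{\bsb{\beta}}_{[k]})\big]+\big[\rho_k\Breg_2(\bsb{\beta}_{[k]},\hat{\bsb{\beta}}_{[k]})-\breg_{\hat l_k}(\bsb{\beta}_{[k]},\hat{\bsb{\beta}}_{[k]})\big]$, the second bracket arising from $-\langle\nabla\hat l_k(\hat{\bsb{\beta}}_{[k]}),\bsb{\beta}_{[k]}-\hat{\bsb{\beta}}_{[k]}\rangle-\hat l_k(\bsb{\beta}_{[k]})+\hat l_k(\hat{\bsb{\beta}}_{[k]})=-\breg_{\hat l_k}(\bsb{\beta}_{[k]},\hat{\bsb{\beta}}_{[k]})$. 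The A-optimality \eqref{Aest:ch1} makes the first bracket nonnegative, and the loss condition $\breg_{\hat l_k}\le L_k\Breg_2$ (which the uniform condition $\breg_{l(\cdot;\bsb{\beta}_{[-k]})}\le L_k\Breg_2$ implies) bounds the second bracket below by $(\rho_k-L_k)\Breg_2(\bsb{\beta}_{[k]},\hat{\bsb{\beta}}_{[k]})$. Hence for $\rho_k>L_k$ the second bracket is strictly positive off $\hat{\bsb{\beta}}_{[k]}$, forcing $g_k(\bsb{\beta}_{[k]})>g_k(\hat{\bsb{\beta}}_{[k]})$ there; when only $\rho_k=L_k$ is available the sum is still $\ge0$, and if \eqref{Aest:ch1} has a unique solution the first bracket supplies the needed strict positivity, so in either case $\hat{\bsb{\beta}}_{[k]}\in\arg\min g_k$ and \eqref{Aest:ch2} follows.

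The main obstacle I expect is bookkeeping rather than conceptual: one must carry the constant $l(\hat{\bsb{\beta}})$ and the per-block constants so that the block reduction is an honest equality, and one must note that $g_k$ is itself a generalized Bregman surrogate $\hat f_k+\breg_{\psi_k}$ with $\breg_{\psi_k}=\rho_k\Breg_2(\cdot,\hat{\bsb{\beta}}_{[k]})-\breg_{\hat l_k}(\cdot,\hat{\bsb{\beta}}_{[k]})$, so that the degeneracy and idempotence machinery applies blockwise verbatim. The only genuinely delicate point is the identification $\nabla_{[k]}l(\hat{\bsb{\beta}})=\nabla\hat l_k(\hat{\bsb{\beta}}_{[k]})$ and the vanishing of $\delta\,\Breg_2(\cdot,\hat{\bsb{\beta}}_{[k]})$ at the center, both immediate for differentiable $l$ but worth stating explicitly so the passage from the A-stationarity to the F-stationarity is airtight.
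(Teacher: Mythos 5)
Your proposal is correct and follows essentially the same route as the paper's proof: exploit the block separability of $g_{\bsb{\rho}}(\cdot;\hat{\bsb{\beta}})$ after linearizing $l$, use convexity of each block surrogate plus the first-order degeneracy (Lemmas~\ref{lemma:degeneracy} and \ref{lemma:local-opt}) for part (i), and use the exact decomposition $g_k(\bsb{\beta}_{[k]})-g_k(\hat{\bsb{\beta}}_{[k]})=[\hat f_k(\bsb{\beta}_{[k]})-\hat f_k(\hat{\bsb{\beta}}_{[k]})]+[\rho_k\Breg_2-\breg_{\hat l_k}](\bsb{\beta}_{[k]},\hat{\bsb{\beta}}_{[k]})$ for part (ii), which is precisely the inequality the paper derives for its minimizer $\tilde{\bsb{\beta}}_{[k]}$. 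If anything, your pointwise version of (ii) is marginally cleaner, since it establishes $\hat{\bsb{\beta}}_{[k]}\in\arg\min g_k$ directly rather than arguing about an a priori minimizer $\tilde{\bsb{\beta}}_{[k]}$ of $\hat g_k$.
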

Overall, \eqref{Aest:ch2}  provides a useful  \textit{joint} optimization form that can be used as the  so-called ``basic inequality'' in empirical process theory, and so with the lemma, A-estimators can analyzed   like F-estimators.  Moreover, the quality of the initial point can be incorporated in the analysis; see \cite{SheetalPIQ}. 

\begin{proof}

(i) The condition \eqref{eq:AtoFpencond} means that  $g_{\bsb{\rho}}$ is convex in,  $\bsb{\beta}_{[k]}$. By Lemma \ref{lemma:degeneracy} and Lemma \ref{lemma:Delta_linear}, and the fact that  $g_{\bsb{\rho}}$ is separable in $\bsb{\beta}_{[k]}$, $1\le k\le K$,   we immediately know that $\hat { \bsb{\beta}}$ is necessarily a solution to $\min_{ {\bsb{\beta}} } g_{\bsb{\rho}}(\bsb{\beta}; \hat{\bsb{\beta}})$.

(ii) We use a shorthand notation   $\hat g_{k}({\bsb{\beta}}_{[k]})$ to denote   $g_{\bsb{\rho}}(\bsb{\beta}; \hat{\bsb{\beta}}) $  as a function of ${\bsb{\beta}}_{[k]}$ when  $  {\bsb{\beta}}_{[-k]} =  \hat{\bsb{\beta}}_{[-k]}$. Let $\tilde {\bsb{\beta}}_{[k]} \in \arg\min  \hat g_{k}({\bsb{\beta}}_{[k]})$. It suffices to show $\tilde {\bsb{\beta}}_{[k]} = \hat {\bsb{\beta}}_{[k]}$. Because of the separability of  $g_{\bsb{\rho}}$, $$
f(\tilde {\bsb{\beta}}_{[k]} ; \hat{\bsb{\beta}}_{[-k]}) + (\rho_k - L_k)\Breg_2(\hat {\bsb{\beta}}_{[k]} , \tilde {\bsb{\beta}}_{[k]} ) \le f(\hat {\bsb{\beta}}_{[k]} ; \hat{\bsb{\beta}}_{[-k]})
$$
and so $(\rho_k - L_k)\Breg_2(\hat {\bsb{\beta}}_{[k]} , \tilde {\bsb{\beta}}_{[k]} ) =0$. The conclusion follows.
\end{proof}

Some conclusions like (i) can be extended to functions defined on Riemannian manifolds. It is also worth mentioning that
in the regression setup, which is of primary interest in many statistical applications, we can use    some surrogates with $\rho_k=1$, regardless of the design or penalty, to convert alternative optimality to joint optimality. The following lemma exemplifies the point  in matrix regression, and is condition free.
\begin{lemma} \label{Lemma:AtoF-l2}
Let $l_0(\bsb{A}; \bsb{Y}) = \| \bsb{Y}-\bsb{A}\|_F^2/2$, and $\bsb A$ be defined differently as follows.

(i) Let $\bsb{A}=\sum \bsb{X}_k\bsb{B}_k$ with $\bsb{B} = (\bsb{B}_1, \ldots, \bsb{B}_K) $, where the dependence of $\bsb B$ (and $\bsb{X}_k$) is dropped for simplicity. Consider   the problem
\begin{align}\label{regprob:sum}
\min_{\bsb{B}_1, \ldots, \bsb{B}_K} l_0(\bsb{A}; \bsb{Y}) + \sum P_k(\bsb{B}_k) \mbox{ s.t. } \bsb{A}  =\sum \bsb{X}_k\bsb{B}_k.
\end{align}
Then the set of A-estimators of \eqref{regprob:sum} is exactly the set of F-estimators associated with the following surrogate
\begin{align}
g(\bsb{B},\bsb{B}^-) = l_0(\bsb{A}; \bsb{Y}) -\Breg_{l_0} (\bsb{A},\bsb{A}^-) + \sum P_k(\bsb{B}_k)+ \sum \Breg_2 (\bsb{X}_k \bsb{B}_k, \bsb{X}_k \bsb{B}_k^-).
\end{align}

(ii) Let $\bsb{A}=  \bsb{X} \bsb{B}_1 \cdots \bsb{B}_K$ with $\bsb{B} = (\bsb{B}_1, \ldots, \bsb{B}_K) $, where the dependence of $\bsb B$ (and $\bsb{X}_k$) is dropped for simplicity.  Consider
\begin{align}\label{regprob:prod}
\min_{\bsb{B}_1, \ldots, \bsb{B}_K}   l_0(\bsb{A}; \bsb{Y}) + \sum P_k(\bsb{B}_k) \mbox{ s.t. } \bsb{A}=  \bsb{X} \bsb{B}_1 \cdots \bsb{B}_K.
\end{align}
Redefine  $l_0$ as a function $\bar l_0$ of $\bsb B$ and introduce a discrepancy measure $d^2$ as follows \begin{align*}
&l_0(\bsb{A};   \bsb{Y}) = \bar l_0(\bsb{B};\bsb{X}, \bsb{Y})\\
& d^2(\bsb{B}, \bsb{B}^-) = \frac{1}{2}\sum_{k=1}^K \|  \bsb{X} \bsb{B}_1^-\cdots \bsb{B}_{k-1} ^-(\bsb{B}_k-\bsb{B}_k^-) \bsb{B}_{k+1}^-\cdots \bsb{B}_{K}   ^-\|_F^2.
\end{align*}
Then the set of A-estimators of \eqref{regprob:prod} is exactly the set of F-estimators associated with  the surrogate
\begin{align}
\begin{split}
g(\bsb{B},\bsb{B}^-) & = \bar l_0(\bsb{B})-\breg_{\bar l_0} (\bsb{B},\bsb{B}^-) + \sum P_k(\bsb{B}_k)+ d^2( \bsb{B},  \bsb{B}^-)\\
 & = \frac{1}{2}\| \bsb{A}^- - \bsb{Y}\|_F^2     + \langle \bsb{X} \bsb{B}_1^-\cdots \bsb{B}_K^- - \bsb{Y},  \\ & \quad\sum_{k=1}^K \bsb{X} \bsb{B}_1^-\cdots \bsb{B}_{k-1} ^-(\bsb{B}_k-\bsb{B}_k^-) \bsb{B}_{k+1}^-\cdots \bsb{B}_{K}^-\rangle+ \sum P_k(\bsb{B}_k)\\&  \quad\frac{1}{2} \sum_{k=1}^K\|  \bsb{X} \bsb{B}_1^-\cdots \bsb{B}_{k-1} ^-(\bsb{B}_k-\bsb{B}_k^-) \bsb{B}_{k+1}^-\cdots \bsb{B}_{K}   ^-\|_F^2.
 \end{split}
\end{align}
\end{lemma}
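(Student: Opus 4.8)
The plan is to reduce both parts to a single mechanism. For fixed $\bm B^-$ I will show that the surrogate $g(\,\cdot\,;\bm B^-)$ is \emph{separable} across the blocks $\bm B_1,\dots,\bm B_K$, and that when $\bm B^-$ is set to a candidate point $\hat{\bm B}$ the $k$th block subproblem of $g(\,\cdot\,;\hat{\bm B})$ agrees with the blockwise A-estimation objective $f(\bm B_k;\hat{\bm B}_{-k})$ up to an additive constant that is independent of $\bm B_k$. Granting these two facts, the set identity is immediate: a block-separable function is minimized jointly iff it is minimized in each block, so $\hat{\bm B}\in\arg\min_{\bm B}g(\bm B;\hat{\bm B})$ (the F-condition \eqref{Aest:ch2}) holds iff $\hat{\bm B}_k\in\arg\min_{\bm B_k}g_k(\bm B_k;\hat{\bm B})$ for every $k$, which by the constant-shift identity is exactly the A-condition $\hat{\bm B}_k\in\arg\min_{\bm B_k}f(\bm B_k;\hat{\bm B}_{-k})$ of \eqref{Aest:ch1}. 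Note that this argument never touches $P_k$ beyond its appearance as an identical summand on both sides, so the result is genuinely condition-free in the penalties.

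For part (i) I would first invoke $\Breg_{l_0}=\Breg_2$ for $l_0=\|\cdot\|_F^2/2$, so that $l_0(\bm A)-\Breg_{l_0}(\bm A,\bm A^-)$ equals the \emph{affine} function $l_0(\bm A^-)+\langle\bm A^- -\bm Y,\bm A-\bm A^-\rangle$; linearization has removed all quadratic terms in $\bm A$, in particular the cross-block couplings $\langle\bm X_j(\bm B_j-\bm B_j^-),\bm X_k(\bm B_k-\bm B_k^-)\rangle$ latent in $\tfrac12\|\bm A-\bm A^-\|_F^2$. Since $\bm A=\sum_k\bm X_k\bm B_k$ is linear in the blocks, this affine term splits as $\sum_k\langle\bm A^- -\bm Y,\bm X_k(\bm B_k-\bm B_k^-)\rangle$, and together with the separable penalty and the separable quadratic $\tfrac12\sum_k\|\bm X_k(\bm B_k-\bm B_k^-)\|_F^2$ one reads off $g(\bm B;\bm B^-)=\sum_k g_k(\bm B_k;\bm B^-)+\mathrm{const}$. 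Evaluating at $\bm B^-=\hat{\bm B}$ and using $\hat{\bm A}-\bm X_k\hat{\bm B}_k=\sum_{j\ne k}\bm X_j\hat{\bm B}_j$, the linear-in-$\bm B_k$ contributions collapse to $-\langle\bm R_k,\bm X_k\bm B_k\rangle$ with the partial residual $\bm R_k=\bm Y-\sum_{j\ne k}\bm X_j\hat{\bm B}_j$, while the retained quadratic rebuilds $\tfrac12\|\bm X_k\bm B_k\|_F^2$; hence $g_k(\bm B_k;\hat{\bm B})=\tfrac12\|\bm R_k-\bm X_k\bm B_k\|_F^2+P_k(\bm B_k)+\mathrm{const}=f(\bm B_k;\hat{\bm B}_{-k})+\mathrm{const}$, as required.

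For part (ii) I would run the identical scheme on the multilinear map $\bm B\mapsto\bm A=\bm X\bm B_1\cdots\bm B_K$. Here $\bar l_0(\bm B)-\breg_{\bar l_0}(\bm B,\bm B^-)=\bar l_0(\bm B^-)+\delta\bar l_0(\bm B^-;\bm B-\bm B^-)$ by the definition of the GBF, and the directional derivative is computed by the matrix product rule: differentiating $\bm X(\bm B_1^-+t\bm H_1)\cdots(\bm B_K^-+t\bm H_K)$ at $t=0$ annihilates all higher-order terms and leaves $\sum_k\bm M_k\bm H_k\bm N_k$ with frozen factors $\bm M_k=\bm X\bm B_1^-\cdots\bm B_{k-1}^-$ and $\bm N_k=\bm B_{k+1}^-\cdots\bm B_K^-$, so $\delta\bar l_0(\bm B^-;\bm B-\bm B^-)=\sum_k\langle\bm A^- -\bm Y,\bm M_k(\bm B_k-\bm B_k^-)\bm N_k\rangle$ is again block-separable. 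The designed discrepancy $d^2(\bm B,\bm B^-)=\tfrac12\sum_k\|\bm M_k(\bm B_k-\bm B_k^-)\bm N_k\|_F^2$ is separable by construction and supplies exactly the intra-block curvature discarded by linearization. Setting $\bm B^-=\hat{\bm B}$ and using $\hat{\bm A}=\bm M_k\hat{\bm B}_k\bm N_k$, the linear terms telescope via $\langle\hat{\bm A}-\bm Y-\bm M_k\hat{\bm B}_k\bm N_k,\bm M_k\bm B_k\bm N_k\rangle=-\langle\bm Y,\bm M_k\bm B_k\bm N_k\rangle$, so $g_k(\bm B_k;\hat{\bm B})=\tfrac12\|\bm Y-\bm M_k\bm B_k\bm N_k\|_F^2+P_k(\bm B_k)+\mathrm{const}=f(\bm B_k;\hat{\bm B}_{-k})+\mathrm{const}$, closing the argument.

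The substantive obstacle is not any single inequality but the bookkeeping in part (ii): the non-commutativity of the matrix products forces one to track the frozen left/right factors $\bm M_k,\bm N_k$ carefully, and one must verify that $d^2$ — a sum of one-block-at-a-time Frobenius norms rather than a bona fide Bregman divergence — reproduces precisely the curvature $\tfrac12\|\bm M_k\bm B_k\bm N_k\|_F^2$ of the block loss. The conceptual heart, shared by both parts, is the observation that linearizing the squared-error loss removes exactly the inter-block coupling, leaving only the block-diagonal curvature reinstated by the added discrepancy, which has been tuned to match the A-estimation subproblem term for term.
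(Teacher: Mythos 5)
Your proof is correct and is exactly the argument the paper has in mind: it states that the lemma "can be directly proved by the definition of GBF and matrix differentiation" and omits the details, and your two-step reduction (block-separability of the surrogate, then the constant-shift identity $g_k(\cdot;\hat{\bm B})=f(\cdot;\hat{\bm B}_{[-k]})+\mathrm{const}$ via completing the square against the partial residual $\bm R_k$ in (i) and against $\bm Y$ with the frozen factors $\bm M_k,\bm N_k$ in (ii)) supplies precisely those details. No gaps; the verification that linearizing the squared loss kills the inter-block coupling while $d^2$ restores the block-diagonal curvature is the whole content of the lemma.
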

The lemma can be directly proved by   the definition of GBF and matrix differentiation and its proof is omitted.
For the application of the first result (i), see \cite{She2017RRRR} for example. The second result can be used to study bilinear problems or NMF like matrix decomposition problems. One could show a statistical accuracy result in terms of $d^1$ (which satisfies $d^1\le K d^2$),  $$
d^1(\bsb{B}, \bsb{B}^-) = \frac{1}{2}\big\| \sum_{k=1}^K \bsb{X} \bsb{B}_1^-\cdots \bsb{B}_{k-1} ^-(\bsb{B}_k-\bsb{B}_k^-) \bsb{B}_{k+1}^-\cdots \bsb{B}_{K}^-  \big \|_F^2,
$$  under  a proper regularity condition involving $d^2$; see, for example, \cite{SheetalCRL}.

\subsection{Statistical error analysis of a general optimal solution}
\label{subsec:genopt} This part demonstrates that using the statistical notions and Bregman calculus  developed earlier can perform statistical analysis of a general optimization problem that may not be in the MLE setup:
\begin{align}
\min_{\bsb{\beta}} f(\bsb{\beta}) \mbox{ s.t. } \bsb{\beta} \in \mathcal S \label{genoptdef-global}
\end{align}
where $f$ is directionally differentiable and
$\mathcal S\subset \mathbb R^p$ can be formulated by  linear equality constraints $\bsb{A}\bsb{\beta}=\bsb{\alpha}$, sparsity constraints $\| \bsb{\beta}\|_0\le s$,  nonnegativity constraints $\bsb{\beta}\ge 0$, and so on.

Statistically, we would like to study how a target parameter can be recovered from solving \eqref{genoptdef-global} in the present of data noise. Following  \eqref{noise-def}, let   $\bsb{\beta}^*$ be a statistical truth and   define the associated effective noise by   $\bsb{\epsilon} =-\nabla f(\bsb{\beta}^*)$, assuming $f$ is differentiable at  $\bsb{\beta}^*$.

Although  $\bsb{\beta}^*$ in the above definition can be any point,     a meaningful recovery must be under some conditions satisfied  the associated $\bsb{\epsilon}$. Consider the following three scenarios:

(a)
Statistical estimation often assumes a zero mean noise:\begin{align}\EE \bsb{\epsilon} = \bsb{0},\label{assnoisemean0}\end{align} which essentially means that    the statistical truth  makes the gradient of the \textit{expectation} of $f$ (so as to remove data randomness) vanish---see Section \ref{subsec:stat}. Yet \eqref{assnoisemean0} alone does not always guarantee a unique   $\bsb{\beta}^*$.

(b) Stronger conclusions can be obtained for the $\bsb{\beta}^*$ that satisfies  the no-model-ambiguity  assumption: $ f$ is differentiable at    $   \bsb{\beta}^* \in D=\mbox{dom}(f)$ with the gradient  $   \nabla f(  \bsb{\beta}^*  )=-\bm\epsilon$,  $   \bsb{\beta}^* $ is a finite optimal solution  to the Fenchel conjugate as  $\bsb{\zeta} = - \bsb{\epsilon}$:
\begin{align}
f^*(\bsb{\zeta}) = \sup_{\bsb{\beta}} \langle \bsb{\zeta} ,\bsb{\beta} \rangle -f(\bsb{\beta}),\label{f-conj}
\end{align}
and the extended real-valued convex function $f^*$ is differentiable at $-\bsb{\epsilon}$. This assumption simply means that $(    \bsb{\beta}^* , -\bsb{\epsilon})$ makes a so-called ``conjugate pair''. Note that   $f$  need not be overall strictly convex, especially when $D$ is compact according to Danskin's min-max theorem \citep{Bertsekas1999Book}.

(c) Another popular assumption in  statistical learning is strong convexity in a restricted sense (especially when $\bsb{\beta} = \bsb{A} \bsb{\alpha} $ with $\|\bsb{\alpha}\|_0\le s$):
\begin{align}\label{statmodelass-scvx}
  ({\bm\Delta}_f -  \mu\Breg_2 )(   \bsb{\beta}_1 ,    \bsb{\beta}_2)
 \ge 0,\  \forall \bsb{\beta}_1, \bsb{\beta}_2\in \mathcal S   \end{align}
 for some $\mu>0$.
  The condition  may  hold even when the number of unknowns is much larger than the sample size \cite{Candes2005,Loh2015}.

The following theorem uses  the GBF calculus to argue how the statistical accuracy of the obtained solutions is determined by the (tail decay of) effective noise. Probabilistic arguments can follow to bound the stochastic terms more explicitly.

\begin{thm}\label{theorem:genalgananal}
Let $\hat {\bsb{\beta}}$ be an optimal solution to \eqref{genoptdef-global}.

 (i) Under the zero mean assumption \eqref{assnoisemean0}   and $\bsb{\beta}^*\in \mathcal S$, the  risk of $\hat {\bsb{\beta}}$ in terms of $\breg_f$  satisfies a  Fenchel-Young  form bound
 \begin{align}
\EE \breg_f(  \hat {\bsb{\beta}},     {\bsb{\beta}^*}) \le \EE  [f^{*} (\bsb{\epsilon})
 + f(\bsb{\beta}^*)].\label{bregfbound1}
\end{align}

(ii) Under the  no-model-ambiguity assumption in (b) and $\bsb{\beta}^*\in \mathcal S$, we have
  \begin{align}
  \breg_f(   \hat {\bsb{\beta}},     {\bsb{\beta}^*}) \le \breg_{f^*}(\bsb{\epsilon}, -\bsb{\epsilon}). \label{bregfbound2}
  \end{align}

(iii) An  \textit{oracle inequality}   holds for any $\delta>0$ and any reference $\bsb{\beta}\in \mathcal S$:
\begin{align}
( \breg_f -\delta \Breg_2)(\hat { \bsb{\beta}},    { \bsb{\beta}}^*)\le \breg_f(    { \bsb{\beta}},     { \bsb{\beta}}^*) + \frac{1}{2\delta} [\,\sup_{\bsb{\theta}\in \Gamma(\bsb{\beta})} \langle \bsb{\epsilon}, \bsb{\theta}  \rangle   ]^2,
  \end{align}
  where
 $ \Gamma(\bsb{\beta}) = \{   \bsb{\theta}: \|\bsb{\theta}\|_2 \le 1, \bsb{\theta} =     \bar {\bsb{\beta}} - \bsb{\beta} \mbox{ for some }    \bar {\bsb{\beta}} \in \mathcal S \}$.
In particular, under \eqref{statmodelass-scvx}, \begin{align}
   \Breg_2 (  \hat { \bsb{\beta}},    \bsb{\beta}^*  ) 
\le     \frac{1}{\mu } \breg_f(    { \bsb{\beta}},    \bsb{\beta}^*  )+\frac{1}{2\mu^2 }
 [\,\sup_{\bsb{\theta}\in \Gamma(\bsb{\beta})} \langle \bsb{\epsilon}, \bsb{\theta}  \rangle   ]^2. \end{align}
\end{thm}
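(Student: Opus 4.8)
The unifying device for all three parts is a ``basic inequality'' coming from global optimality together with the differentiability of $f$ at the truth, exactly in the spirit of Lemma \ref{lemma:local-opt}. Since $f$ is differentiable at $\bsb{\beta}^*$ with $\nabla f(\bsb{\beta}^*)=-\bsb{\epsilon}$, I would first record the GBF expansion
$$\breg_f(\hat{\bsb{\beta}},\bsb{\beta}^*)=f(\hat{\bsb{\beta}})-f(\bsb{\beta}^*)+\langle\bsb{\epsilon},\hat{\bsb{\beta}}-\bsb{\beta}^*\rangle.$$
Because $\hat{\bsb{\beta}}$ minimizes $f$ over $\mathcal S$ and $\bsb{\beta}^*\in\mathcal S$, the gap $f(\hat{\bsb{\beta}})-f(\bsb{\beta}^*)$ is nonpositive, so the right-hand side is controlled by the linear stochastic term and everything reduces to estimating $\langle\bsb{\epsilon},\hat{\bsb{\beta}}-\bsb{\beta}^*\rangle$ in three different ways.

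For (i), I would drop the nonpositive gap, apply Fenchel--Young $\langle\bsb{\epsilon},\hat{\bsb{\beta}}\rangle\le f(\hat{\bsb{\beta}})+f^{*}(\bsb{\epsilon})$, and invoke optimality once more ($f(\hat{\bsb{\beta}})\le f(\bsb{\beta}^*)$) to reach the deterministic bound
$$\breg_f(\hat{\bsb{\beta}},\bsb{\beta}^*)\le f(\bsb{\beta}^*)+f^{*}(\bsb{\epsilon})-\langle\bsb{\epsilon},\bsb{\beta}^*\rangle.$$
Taking expectations and using that $\bsb{\beta}^*$ is deterministic, so that $\EE\langle\bsb{\epsilon},\bsb{\beta}^*\rangle=\langle\EE\bsb{\epsilon},\bsb{\beta}^*\rangle=0$ under \eqref{assnoisemean0}, yields \eqref{bregfbound1}. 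Part (ii) uses the \emph{same} deterministic inequality but keeps the term $-\langle\bsb{\epsilon},\bsb{\beta}^*\rangle$: under the conjugate-pair (no-model-ambiguity) assumption, $f^{*}$ is differentiable at $-\bsb{\epsilon}$ with $\nabla f^{*}(-\bsb{\epsilon})=\bsb{\beta}^*$ and $f(\bsb{\beta}^*)+f^{*}(-\bsb{\epsilon})=-\langle\bsb{\epsilon},\bsb{\beta}^*\rangle$. Expanding $\breg_{f^{*}}(\bsb{\epsilon},-\bsb{\epsilon})=f^{*}(\bsb{\epsilon})-f^{*}(-\bsb{\epsilon})-\langle\bsb{\beta}^*,2\bsb{\epsilon}\rangle$ and substituting these identities shows the right-hand side above equals $\breg_{f^{*}}(\bsb{\epsilon},-\bsb{\epsilon})$ exactly, giving \eqref{bregfbound2}. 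Thus (i) and (ii) are two readings of one Fenchel--Young estimate, one averaged and one pointwise.

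For the oracle inequality (iii), I would compare $\hat{\bsb{\beta}}$ to an arbitrary feasible reference $\bsb{\beta}\in\mathcal S$. Subtracting $\breg_f(\bsb{\beta},\bsb{\beta}^*)$ from the basic identity and using $f(\hat{\bsb{\beta}})\le f(\bsb{\beta})$ gives the comparison
$$\breg_f(\hat{\bsb{\beta}},\bsb{\beta}^*)\le\breg_f(\bsb{\beta},\bsb{\beta}^*)+\langle\bsb{\epsilon},\hat{\bsb{\beta}}-\bsb{\beta}\rangle,$$
where the linear term now runs over a feasible direction $\hat{\bsb{\beta}}-\bsb{\beta}$. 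With $W:=\sup_{\bsb{\theta}\in\Gamma(\bsb{\beta})}\langle\bsb{\epsilon},\bsb{\theta}\rangle$, I would bound this term by $\|\hat{\bsb{\beta}}-\bsb{\beta}\|_2\,W$ and split it by Young's inequality into a quadratic penalty $\delta\Breg_2(\hat{\bsb{\beta}},\bsb{\beta}^*)$ and a localized-complexity term $\tfrac{1}{2\delta}W^{2}$, producing the stated oracle bound. The specialization under restricted strong convexity \eqref{statmodelass-scvx} then follows by lower-bounding $\breg_f(\hat{\bsb{\beta}},\bsb{\beta}^*)\ge\mu\Breg_2(\hat{\bsb{\beta}},\bsb{\beta}^*)$ (legitimate since both $\hat{\bsb{\beta}},\bsb{\beta}^*\in\mathcal S$) and dividing through by $\mu$.

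The hard part is the stochastic term in (iii): relating $\langle\bsb{\epsilon},\hat{\bsb{\beta}}-\bsb{\beta}\rangle$ to the supremum over the \emph{unit-normalized} feasible cone $\Gamma(\bsb{\beta})$ when $\mathcal S$ is nonconvex (e.g.\ a sparsity set) and $\|\hat{\bsb{\beta}}-\bsb{\beta}\|_2$ may exceed $1$, so that the normalized direction need not itself lie in $\mathcal S$; this requires a scaling/peeling argument and a careful alignment of the reference point in the quadratic factor $\Breg_2$ with $\bsb{\beta}^*$ when invoking Young's inequality. A secondary point deserving care is that in (i) the objective $f$, its conjugate $f^{*}$, and the effective noise $\bsb{\epsilon}$ are all data-dependent random objects, so the expectation must be taken with the deterministic $\bsb{\beta}^*$ held fixed before $\EE\bsb{\epsilon}=\bsb{0}$ can be used to annihilate the linear term.
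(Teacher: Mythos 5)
Your proposal is correct and follows essentially the same route as the paper: the same basic inequality from global optimality plus the GBF expansion at $\bsb{\beta}^*$, with your direct Fenchel--Young step $\langle\bsb{\epsilon},\hat{\bsb{\beta}}\rangle\le f(\hat{\bsb{\beta}})+f^{*}(\bsb{\epsilon})$ being a streamlined but equivalent version of the paper's computation of $h^{*}(c\bsb{\epsilon})$ for $h(\bsb{\delta})=\breg_f(\bsb{\delta}+\bsb{\beta}^*,\bsb{\beta}^*)$ at $c=2$ (which yields the identical deterministic bound $f^{*}(\bsb{\epsilon})+f(\bsb{\beta}^*)-\langle\bsb{\epsilon},\bsb{\beta}^*\rangle$ and, via $\nabla f^{*}(-\bsb{\epsilon})=\bsb{\beta}^*$, the identity with $\breg_{f^{*}}(\bsb{\epsilon},-\bsb{\epsilon})$ in part (ii)), and the same comparison-plus-Young's-inequality split in part (iii). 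The two difficulties you single out as ``the hard part'' of (iii) --- that the unit-normalized direction $(\hat{\bsb{\beta}}-\bsb{\beta})/\|\hat{\bsb{\beta}}-\bsb{\beta}\|_2$ need not lie in $\Gamma(\bsb{\beta})$ for a nonconvex $\mathcal S$, and that Young's inequality naturally produces $\Breg_2(\hat{\bsb{\beta}},\bsb{\beta})$ rather than $\Breg_2(\hat{\bsb{\beta}},\bsb{\beta}^*)$ --- are real observations, but the paper's own proof does not address them either: it simply bounds the normalized inner product by $\sup_{\bsb{\theta}\in\Gamma(\bsb{\beta})}\langle\bsb{\epsilon},\bsb{\theta}\rangle$ and applies Young's inequality directly, so no peeling argument is required to reproduce the argument as given.
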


The first two bounds reveal the important  role of  the Fenchel conjugate of the loss, and can be made more explicit under proper Orlicz norm conditions of $\bsb{\epsilon}$;  the third conclusion, on the basis of the supremum of  an empirical process \citep{van1996weak}, demonstrates  how modern probabilistic tools can be used to derive finite-sample error bounds of $\hat {\bsb{\beta}}$ in a general noisy setup.
\begin{proof}
First, by definition, $f(\bsb{}  \hat {\bsb{\beta}}) \le f(    {\bsb{\beta}^*})$, from which it follows that $\breg_f(  \hat {\bsb{\beta}},    {\bsb{\beta}^*}) \le \langle \bsb{\epsilon},    \hat {\bsb{\beta}}-      {\bsb{\beta}^*} \rangle$. Define
$$
h(\bsb{\delta}) = \breg_f(\bsb{\delta}+\bsb{\beta}^* , \bsb{\beta}^*).
$$
By assumption, $f$ is a proper function and applying Fenchel-Young's inequality gives
\begin{align*}
 \breg_f(   \hat {\bsb{\beta}},     {\bsb{\beta}^*}) \le \langle \bsb{\epsilon}, \bsb{\delta} \rangle|_{\bsb{\delta} =   \hat {\bsb{\beta}}-     {\bsb{\beta}^*}}\le \frac{1}{c}\breg_f(\bsb{\delta}+     {\bsb{\beta}^*} ,      {\bsb{\beta}^*})|_{\bsb{\delta} =    \hat {\bsb{\beta}}-    {\bsb{\beta}^*}}  + \frac{1}{c}h^*(c \bsb{\epsilon})
\end{align*}
or $(1-1/c)\breg_f(  \hat {\bsb{\beta}},    {\bsb{\beta}^*})\le h^*(c \bsb{\epsilon})/c $
for any $c>0$.

On the other hand,
\begin{align*}
h^*(\bsb{\zeta}) &= \sup_{\bsb{\delta}} \langle \bsb{\zeta}, \bsb{\delta} \rangle  - f(\bsb{\beta}^*+\bsb{\delta})
 + f(\bsb{\beta}^*) + \langle \nabla f(\bsb{\beta}^*), \bsb{\delta}\rangle \\
 &= \sup_{\bsb{\delta}} \langle \bsb{\zeta}+\nabla f(\bsb{\beta}^*), \bsb{\delta} \rangle  - f(\bsb{\beta}^*+\bsb{\delta})
 + f(\bsb{\beta}^*) \\
 &= \sup_{\bsb{\delta}} \langle \bsb{\zeta}+\nabla f(\bsb{\beta}^*),\bsb{\beta}^*+ \bsb{\delta} \rangle  - f(\bsb{\beta}^*+\bsb{\delta})
 + f(\bsb{\beta}^*)-  \langle \bsb{\zeta}+\nabla f(\bsb{\beta}^*),\bsb{\eta}^* \rangle \\
 &= f^{*} (\bsb{\zeta}+\nabla f(\bsb{\beta}^*))
 + f(\bsb{\beta}^*)- \langle \bsb{\zeta}+\nabla f(\bsb{\beta}^*),\bsb{\beta}^* \rangle,
 \end{align*}
 where $ f(\bsb{\beta}^*),  \nabla f(\bsb{\beta}^*)$ are known to be finite.
 Therefore we obtain
 \begin{align}
\breg_f(  \hat {\bsb{\beta}},    {\bsb{\beta}^*})\le \frac{1}{c-1} [f^{*} ((c-1) \bsb{\epsilon})
 + f(\bsb{\beta}^*)- (c-1)\langle \bsb{\epsilon},\bsb{\beta}^* \rangle ], \ \forall c>0.
\end{align} Taking $c=2$ and using $\EE \bsb{\epsilon} = \bsb{0}$ gives the   $\breg_f$ risk bound  \eqref{bregfbound1}.

Next,  we prove the second bound under the no-model-ambiguity assumption.
 Using the optimality of $\bsb{\beta}^*$, we have further
 \begin{align*}
h^*(\bsb{\zeta}) &= f^{*} (\bsb{\zeta}+\nabla f(\bsb{\beta}^*))
 - f^{*}(-\bsb{\epsilon})- \langle \bsb{\zeta},\bsb{\beta}^*\rangle.
\end{align*}
Moreover, from the assumption and definition \eqref{f-conj}, it is easy to show that $ \bsb{\beta}^*\in \partial f^{*}(-\bsb{\epsilon})$, and so   $\nabla f^{*}(-\bsb{\epsilon})=\bsb{\beta}^*$, from which it follows that
\begin{align}
h^*(\bsb{\zeta}) = \breg_{f^{*}} (\bsb{\zeta}-\bsb{\epsilon}, -\bsb{\epsilon}).
\end{align}
Taking $c=2$  gives \eqref{bregfbound2} (even though   $\EE \bsb{\epsilon} $ may not be $\bsb{0}$).

Finally,  for any    $\bsb{\beta}\in \mathcal S$,   $f(  \hat { \bsb{\beta}}) \le f(    { \bsb{\beta}}) $ and so
\begin{align*}
\breg_f(  \hat { \bsb{\beta}},    { \bsb{\beta}}^*) \le \breg_f(    { \bsb{\beta}},     { \bsb{\beta}}^*) + \langle \bsb{\epsilon},   (\hat { \bsb{\beta}} -   { \bsb{\beta}})/\|  \hat { \bsb{\beta}} -   { \bsb{\beta}}\|_2)\rangle \|   \hat { \bsb{\beta}} -   { \bsb{\beta}} \|_2.
\end{align*}
 We obtain a general result
 \begin{align}\label{generalnoisebasicineq}
( \breg_f -\delta \Breg_2)(\hat { \bsb{\beta}},    { \bsb{\beta}}^*)\le \breg_f(    { \bsb{\beta}},     { \bsb{\beta}}^*) + \frac{1}{2\delta} [\,\sup_{\bsb{\theta}\in \Gamma(\bsb{\beta})} \langle \bsb{\epsilon}, \bsb{\theta}  \rangle   ]^2,
  \end{align}
 for any $\delta>0$.

Based on  the regularity condition,
$$
\frac{\delta}{2}\Breg_2( \hat { \bsb{\beta}} ,    { \bsb{\beta}}^*) \le \breg_f(\hat { \bsb{\beta}},    { \bsb{\beta}}^*) -\frac{\delta}{2}\Breg_2( \hat { \bsb{\beta}} ,    { \bsb{\beta}}^*)\le \breg_f(    { \bsb{\beta}},     { \bsb{\beta}}^*) + \frac{1}{\delta} (\sup_{\bsb{\theta}\in \Gamma(\bsb{\beta})} \langle \bsb{\epsilon}, \bsb{\theta}  \rangle   )^2
$$
for any $\delta \le \mu$. Taking $\delta = \mu$ gives the desired result.
\end{proof}

\section{Algorithms for Accelerations}
\label{sec:algs}
For clarity, we give an outline of the algorithms for acceleration. 

\begin{algorithm}[h]
\caption{\small Accelerated Bregman of the second kind   \label{alg:2stab}}
\textbf{Input} { $\bsb{\beta}^{(0)} $:   initial value; $\rho_{\textrm{min}}>0$, $\alpha>0$, $M\in \mathbb N$, $\mu_0\ge 0$ (e.g., $\rho_{\textrm{min}} = 1$, $\alpha = 2$, $M=3$)
}
\begin{algorithmic}[1]
\State  $\theta_0 \in (0, 1]$, $t \leftarrow 0$, $\bm{\alpha}^{(0)}\leftarrow \bm{\beta}^{(0)}$; 
\While{not converged}
\State $\rho_{t}\leftarrow\rho_{\min}/\alpha$, $s\leftarrow 0$
\Repeat
\State $s \leftarrow s+1$
\State $\rho_t \leftarrow \alpha \rho_t$
\State  {if} $t\ge  1$, then $\theta_t\leftarrow (\sqrt{r^2 +4r} -r)/2$ with $r =(\rho_{t-1}\theta_{t-1} + \mu_0)\theta_{t-1}/{\rho_t}$
\State $\bm{\gamma}^{(t)}  \leftarrow  (1-\theta_t)\bm{\beta}^{(t)} + \theta_t\bm{\alpha}^{(t)}$
\State $\bm{\alpha}^{(t+1)} \leftarrow \mathop{\arg\min}_{\bm{\beta}}\{f(\bm{\beta}) - \bm\Delta_{\psi_0}(\bm{\beta},\bm{\gamma}^{(t)}) {+}\mu_0 \breg_{\phi}(\bm{\beta},\bm{\gamma}^{(t)})+ \theta_t\rho_t\bm\Delta_\phi(\bm{\beta},\bm{\alpha}^{(t)})\}$
\State $\bm{\beta}^{(t+1)} \leftarrow (1-\theta_t)\bm{\beta}^{(t)} + \theta_t\bm{\alpha}^{(t+1)}$

\State $R_t \leftarrow  \theta_t^2\rho_t\bm\Delta_\phi(\bm{\alpha}^{(t+1)},\bm{\alpha}^{(t)}) - \bm\Delta_{\bar\psi_0}(\bm{\beta}^{(t+1)},\bm{\gamma}^{(t)})$ \Statex \qquad\qquad\quad $   +(1-\theta_t)\bm\Delta_{\bar \psi_0}(\bm{\beta}^{(t)},\bm{\gamma}^{(t)})+ \mathbf C_{f(\cdot)-\bm\Delta_{\bar \psi_0}(\cdot,\bm\gamma^{(t)})}(\bm\alpha^{(t+1)},\bm\beta^{(t)},\theta_t)$

\Until{$R_t\ge 0$ or $s>M$}
\State if $s > M$, pick   $(\bsb{\alpha}^{(t+1)}, \bsb{\beta}^{(t+1)},\bsb{\gamma}^{(t)}, \rho_t, \theta_t)$   with the largest   $R_t/(\theta_t^2\rho_t)$
\State $t \leftarrow t+1$
\EndWhile
\State \Return $\boldsymbol{\beta}^{(t+1)}$.

 \end{algorithmic}
\end{algorithm}


\begin{algorithm}[H]
\caption{\small Accelerated Bregman of the first kind   \label{alg:1stab}}
\textbf{Input} { $\bsb{\beta}^{(0)} $:   initial value; $\rho_{\textrm{min}}>0$, $\alpha>0$, $M\in \mathbb N$, $\mu_0\ge 0$ ($\rho_{\textrm{min}} = 1$, $\alpha = 2$, $M=3$)
}
\begin{algorithmic}[1]
\State  $\theta_0 \in (0, 1]$, $t \leftarrow 0$; 
\While{not converged}
\State $\rho_{t}\leftarrow\rho_{\min}/\alpha$, $s\leftarrow 0$
\Repeat
\State $s \leftarrow s+1$
\State $\rho_t \leftarrow \alpha \rho_t$
\State  {if} $t\ge  1$, then $\theta_t\leftarrow (\sqrt{r^2 +4r} -r)/2$ with $r =(\rho_{t-1}\theta_{t-1} + \mu_0)\theta_{t-1}/{\rho_t}$
\State $\bm{\gamma}^{(t)} \leftarrow \bm{\beta}^{(t)} + \{\rho_{t-1}\theta_{t}(1 - \theta_{t-1})/(\rho_{t-1}\theta_{t-1} + \mu_0)\}(\bm{\beta}^{(t)}-\bm{\beta}^{(t-1)})$
\Statex $\qquad\qquad\qquad$ if $t\ge 1$ and $\bsb{\beta}^{(t)}$ if $t=0$
\State $\bm{\beta}^{(t+1)} \leftarrow  \mathop{\arg\min}_{\bm{\beta}}\{f(\bm{\beta}) - \bm\Delta_{\psi_0}(\bm{\beta},\bm{\gamma}^{(t)}) + \mu_0\mathbf D_2(\bm{\beta},\bm{\gamma}^{(t)})+ \rho_t\mathbf D_2(\bm{\beta},\bm{\gamma}^{(t)})\}$

\State $R_t \leftarrow (\rho_t\mathbf D_2 - \bm\Delta_{\bar\psi_0})(\bm{\beta}^{(t+1)},\bm{\gamma}^{(t)}) + (1-\theta_t)\bm\Delta_{\bar\psi_0}(\bm{\beta}^{(t)},\bm{\gamma}^{(t)}) $

\Until{$R_t\ge 0$ or $s>M$}
\State if $s > M$, pick   $(\bsb{\beta}^{(t+1)},\bsb{\gamma}^{(t)}, \rho_t, \theta_t)$   with the largest associated  $R_t/(\theta_t^2\rho_t)$
\State $t \leftarrow t+1$
\EndWhile
\State \Return $\boldsymbol{\beta}^{(t+1)}$.

 \end{algorithmic}
\end{algorithm}

\section{Experiments} \label{sec:simu}

This section performs some simulation studies to support the theoretical results.

\subsection{Computational error} \label{subsec:simu_comp}

In this part, we use mirror descent and DC programming to solve two nonconvex problems.

\subsubsection*{$\bullet$ Nonconvex mirror descent for IS divergence minimization}

In infrared astronomical satellite (IRAS) image reconstruction \citep{Cao99} and audio signal processing \citep{Fevotte2009,Lefevre2011}, the Itakura-Saito (IS) divergence (or the negative cross Burg entropy), $\mathrm{IS}(\bm a,\bm b) = \sum_{i}(a_i/b_i - \log(a_i/b_i) - 1)$, is popularly used to measure the discrepancy between the observed data and the reconstructed data.
Given $\bm X\in\mathbb R_+^{n\times p}$, and $\bm y\in\mathbb R_+^n$, the problem can be defined by
$\min f(\bm\beta) := \mathrm{IS}(\bm y, \bm X\bm\beta)~\text{s.t.\,}\bm\beta\in\mathbb R^p_+$, which is nonconvex in $\bm\beta$.
To maintain the nonnegativity constraint in updating $\bm\beta$ automatically, we develop a mirror descent algorithm. Concretely, define
$g(\bm\beta; \bm\beta^-) = f(\bm\beta) + (\rho \mathbf D_\varphi - \bm\Delta_f)(\bm\beta,\bm\beta^-)$,
where $\varphi(\bm\beta) = \sum_j\beta_j\log \beta_j - \beta_j$.
Then, minimizing $g(\bm\beta;\bm\beta^{(t)})$ with respect to $\bm\beta$ gives rise to a \textit{multiplicative} rule
\begin{equation}
\beta_j^{(t+1)} = \beta_j^{(t)}\exp\Big[ -\frac{1}{\rho}\sum_i \frac{(\bm X\bm\beta^{(t)})_i - y_i}{(\bm X\bm\beta^{(t)})_i^2} X_{ij} \Big].
\end{equation}
From Theorem \ref{th:comp_nonconvex}, the $\mathcal O(1/T)$ rate of convergence holds for the optimization error $\mathop{\avg}_{0\leq t \leq T}(2\rho\sym{\mathbf D}_\varphi  -  {\bm\Delta}_f)( \bm{\beta}^{(t+1)},\bm{\beta}^{(t)})$. To verify this, we generated a design matrix of size $1000\times 1000$ with all elements drawn from $U(0,1)$, and set $\bm y = \bm X \bm\beta^* + \bm e$ with $\beta_j^*$ chosen uniformly from the interval $(0,5)$ and $  e_i\sim \mathcal N(  0,\sigma^2)$ with $\sigma^2=10$. We fixed $1/\rho = 0.01$.
Figure \ref{fig:comp_NBEM} shows how the logarithm of optimization error converges to $-\infty$ (since $\log 0 = -\infty$) for 50 different $\bm\beta^{(0)}$ with $\beta^{(0)}_j$ randomly chosen from $U(0,1)$. Observe that all the error curves in the log-log plot are bounded by a line with slope $-1$.

\begin{figure}[h]
    \centering
    \includegraphics[width=0.6\textwidth, height=2in]{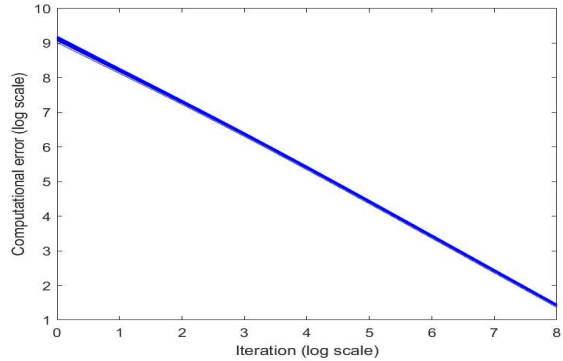}
    \caption{\small Log-log plot of optimization error v.s. number of iterations: mirror descent for IS divergence minimization with 50 random starting points. All error curves are bounded above by the dashed line which has slope $-1$.} \label{fig:comp_NBEM}
\end{figure}

\subsubsection*{$\bullet$ DC programming for capped-$\ell_1$ SVM}

High-dimensional classification with concurrent feature selection can be achieved by minimizing a composite objective function. Given $\bm X = [\bm x_1,\ldots,\bm x_n]^\top \in \mathbb R^{n \times p}$ and $\bm y\in\{-1,1\}^n$, let $l(\bm\beta) = \sum_{i=1}^n (1-y_i\bm x_i^\top\bm\beta)_+$ be the hinge loss \citep{Vapnik1995} that is nondifferentiable,
and $P(\bm\beta;\lambda) = \sum_{j=1}^p \min(\lambda|\beta_j|, \lambda^2/2 )$ be the capped-$\ell_1$ penalty \citep{Zhang2010} which is nonsmooth and nonconvex.
\cite{Ong2013} proposed an effective DC algorithm for solving $\min_{\bm\beta} f(\bm\beta) := l(\bm\beta) + P(\bm\beta;\lambda)$ based on the decomposition $P(\bm\beta;\lambda) = d_1(\bm\beta;\lambda) - d_2(\bm\beta;\lambda)$ with
\begin{equation} \label{simu:d_2}
d_1(\bm\beta;\lambda) = \lambda\|\bm\beta\|_1,~~ d_2(\bm\beta;\lambda) = \sum_{j=1}^p \max(\lambda|\beta_j| - \lambda^2/2, 0).
\end{equation}
As stated in Example \ref{ex:DC}, we can recharacterize DC as a Bregman-surrogate algorithm
{\small \begin{subequations}\label{DC-Bregman}
\begin{align}
\bm\beta^{(t+1)} &\in \mathop{\arg\min} f(\bm\beta) + \bm\Delta_{d_2}(\bm\beta,\bm\beta^{(t)}) \\
& \in \mathop{\arg\min}_{\bm\beta} \sum_{i=1}^n \max(0, 1-y_i\bm x_i^\top\bm\beta) + \lambda \sum_{j=1}^p \big(|\beta_j| - \beta_j 1_{|\beta_j^{(t)}|\ge \lambda/2}\big).
\label{DC-Bregman-b} \end{align}
\end{subequations}}\normalfont
\eqref{DC-Bregman-b} is equivalent to a linear program:
$\min_{\bm\xi,\bm\zeta,\bm\beta} \sum_{i=1}^n \xi_i + \lambda \sum_{j=1}^p \zeta_j - \lambda\sum_{j=1}^p \beta_j 1_{|\beta_j^{(t)}|\ge \lambda/2}$
s.t.\ $\xi_i \ge 1-y_i\bm x_i^\top\bm\beta$ for $i\in [n]$,
$-\zeta_j \le \beta_j \le \zeta_j$ for $j\in [p]$ and
$\xi_i, \zeta_j \ge 0$ for $i\in [n], j\in[p]$, which can be efficiently solved by standard linear programming (LP) solvers. For the convergence of the DC algorithm, a similar result can be shown for the optimization error $\avg_{0\le t\le T}(\bm\Delta_l +\back{\bm\Delta}_{d_2}+ \bm\Delta_{d_1})(\bm\beta^{(t)},\bm\beta^{(t+1)})$, following the lines of the proof of Proposition \ref{pro:LLA_comp}.

We generated $\bm X \in \mathbb R^{400\times 800}$ with each row following $\mathcal N(\bm 0,\bm\Sigma)$ and $\Sigma_{ij} = 0.5^{|i-j|}$, $\bm\beta^* = [15,10,0,\ldots,0]^\top$, and
$\bm y = \sgn(\bm X\bm\beta^* + \bm e)$ where $ e_i \sim \mathcal N(  0,\sigma^2)$ with $\sigma^2=10$. We fixed $\lambda = 1$ and ran the DC algorithm for 50 different starting points with each component randomly drawn from $U(0,1)$. The corresponding optimization error curves are plotted in Figure \ref{fig:comp_svm}, where the $\mathcal O(1/T)$ rate of convergence is impressive.

\begin{figure}[h]
    \centering
    \includegraphics[width=0.6\textwidth, height=2in]{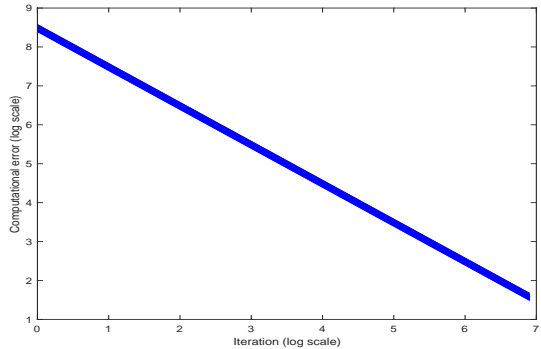}
    \caption{\small Log-log plot of optimization error v.s.\ number of iterations: DC programming for capped-$\ell_1$ SVM (50 different initial points).} \label{fig:comp_svm}
\end{figure}

\subsection{Statistical error} \label{subsec:simu_stat}

In this part, we consider two algorithms for sparse regression: LLA and iterative thresholding. The nonconvex ``hard'' penalty defined by \eqref{def:PH} is applied, which is constructed from the hard-thresholding rule via \eqref{pendef}.
Given $\bm X\in\mathbb R^{n\times p}$ and $\bm y\in\mathbb R^n$, we study the following regularized problem $\min_{\bm\beta} f(\bm\beta) :=  l(\bm\beta) + \sum_{j=1}^p P_H(\varrho\beta_j;\lambda)$,
where $l$ is the loss function and $\varrho = \|\bm X\|_2$.
The loss functions under consideration are the ordinary quadratic loss and a nonconvex loss which is resistant to gross outliers:
\begin{itemize}
  \item[(i)] $\ell_2$ loss: $l(\bm\beta) = \|\bm y-\bm X\bm\beta\|_2^2/2$;

  \item[(ii)] Tukey's biweight loss: $l(\bm\beta) = \sum_{i=1}^n \int_0^{|\bm x_i^\top\bm\beta-y_i|}\psi(t)\text{d}t$ and $\psi(t) = t[ 1-(t/c)^2]^2$ if $|t| \leq c$ and $0$ otherwise,
  where $c = 4.685 \sigma$ with $\sigma$ a robust estimate of the standard deviation of errors \citep{Hampel2005}.
\end{itemize}
In either case, we have a nonconvex optimization problem. In simulations,
the design matrix $\bm X \in \mathbb R^{n\times p}$ has i.i.d.\,rows drawn from $\mathcal N(\bm 0, \bm\Sigma)$ with $\Sigma_{ij} = 0.15^{|i-j|}$, the response is given by $\bm y = \bm X\bm\beta^*+\bm e$ with $e_i \sim N(0,\sigma^2)$ and $\sigma^2 = 10$, and the regularization parameter $\lambda$ is set to $A\sigma\sqrt{\log(ep)}$.
We set $n = 800, p = 1000, \bm\beta^* = [12,8,0,\ldots,0]^\top$ and $A = 2$.

First, we tested the statistical accuracy of LLA (cf. Theorem \ref{thm:errrate} and Proposition \ref{thm:stat_LLA}).   We generated 15 initial points $\bm\beta^{(0)}$ with each element following  $U(-a,a)$, with $a\in\{0.5,1,1.5\}$ and 5 for each. Figure \ref{fig:stat-LLA} shows how the statistical error varies as the cycles progress, with each curve representing an average over 20 implementations of the same setting. Here, the errors are plotted on a log scale for a better view of the convergence rate. Unlike Figure \ref{fig:comp_NBEM} and Figure \ref{fig:comp_svm}, the statistical errors can not reach 0 (or $-\infty$ in the log plot) due to the existence of noise. But they all achieved essentially the same order of statistical precision, which verifies Theorem \ref{thm:errrate}, and the statistical convergence of LLA was really fast.

\begin{figure}[H]
    \subfigure{\includegraphics[width=0.49\textwidth]{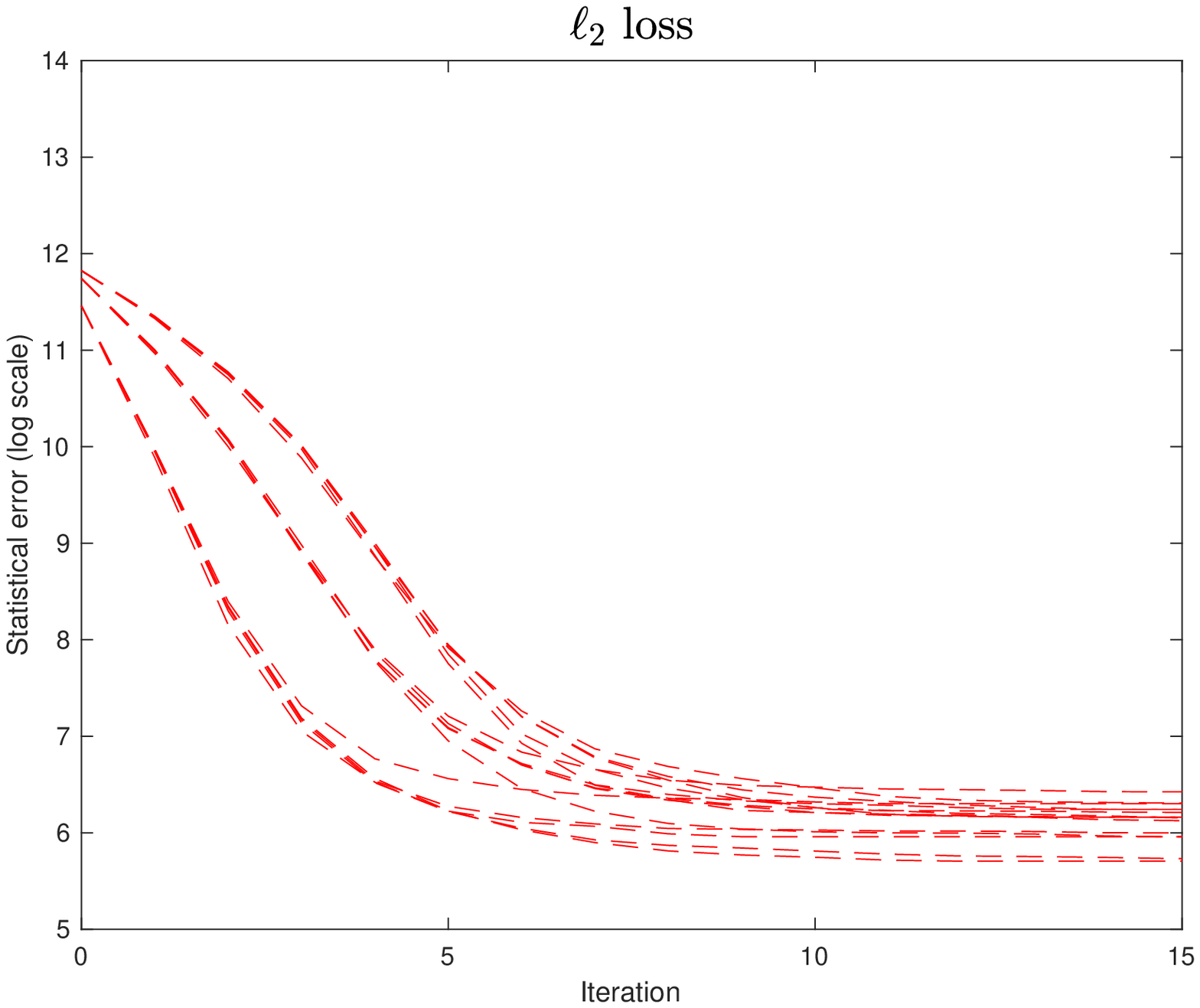}}
    \subfigure{\includegraphics[width=0.49\textwidth]{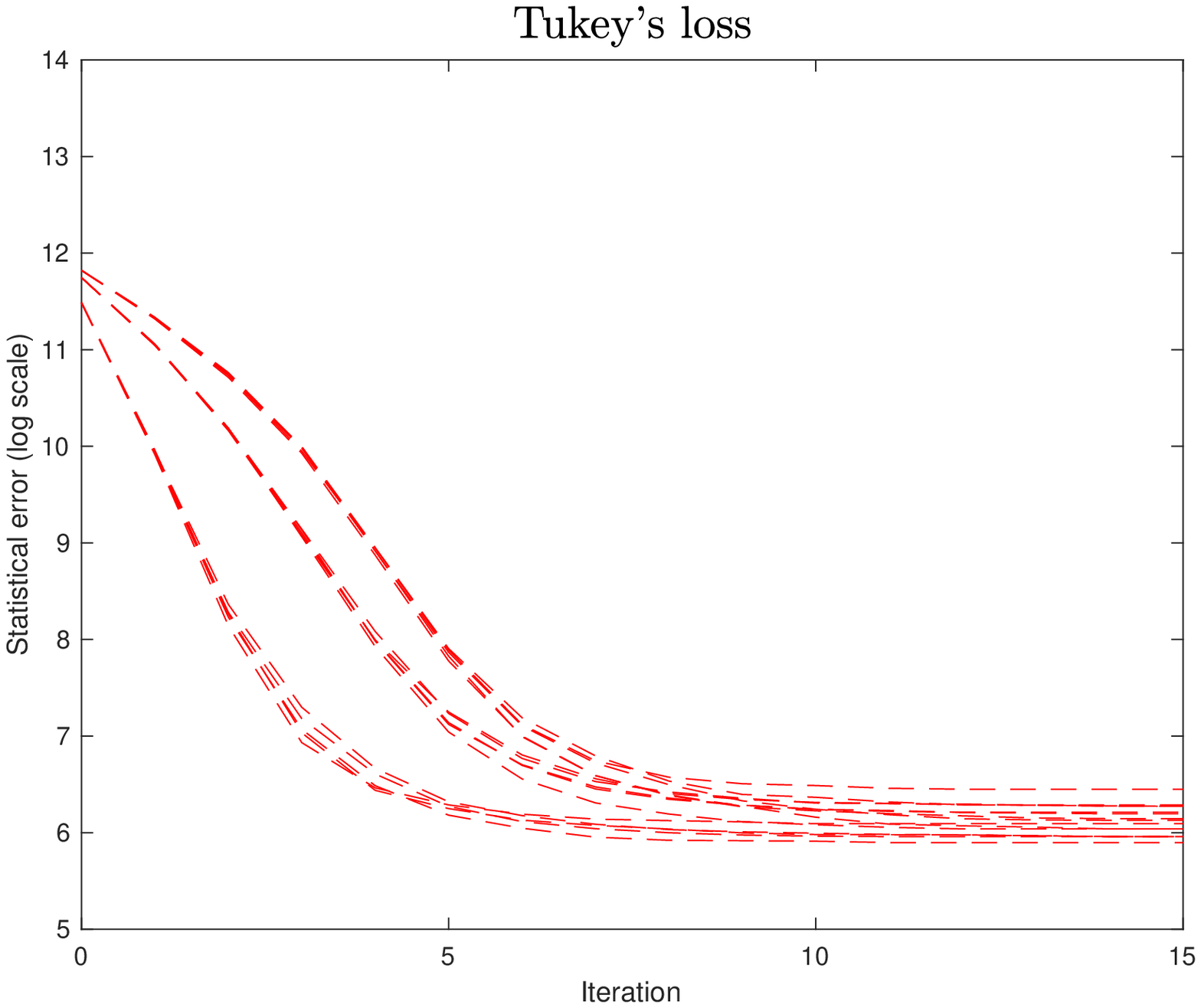}}
    \caption{\small Log plot of the statistical accuracy of LLA iterates in $P_H$-penalized sparse regression (left) and robust regression (right).}\label{fig:stat-LLA}
\end{figure}

On the other hand, the computational burden of LLA turned out to be pretty high, mainly due to the cost of solving a weighted lasso problem at each iteration. We thus turned to iterative thresholding because of its low per-iteration complexity. Figure \ref{fig:stat-Thres} shows some analogous results.
According to Figure \ref{fig:stat-Thres}, all final statistical errors were controlled within the same order of precision. The convergence process seems to conform to the bound in Theorem \ref{th:stat_bound}: when $t$ is small, $\log\bm\Delta_\psi(\bm\beta^*,\bm\beta^{(t)}) \lesssim -\log(1/\kappa)\,t + \log(\bm\Delta_\psi(\bm\beta^*,\bm\beta^{(0)}))$, and when $t$ is large, $\log\bm\Delta_\psi(\bm\beta^*,\bm\beta^{(t)})\lesssim \kappa^t\bm\Delta_\psi(\bm\beta^*,\bm\beta^{(0)}) + \log(\kappa K\lambda^2J^*/(1-\kappa))$, demonstrating an exponential decay.

\begin{figure}[h]
    \subfigure{\includegraphics[width=0.49\textwidth]{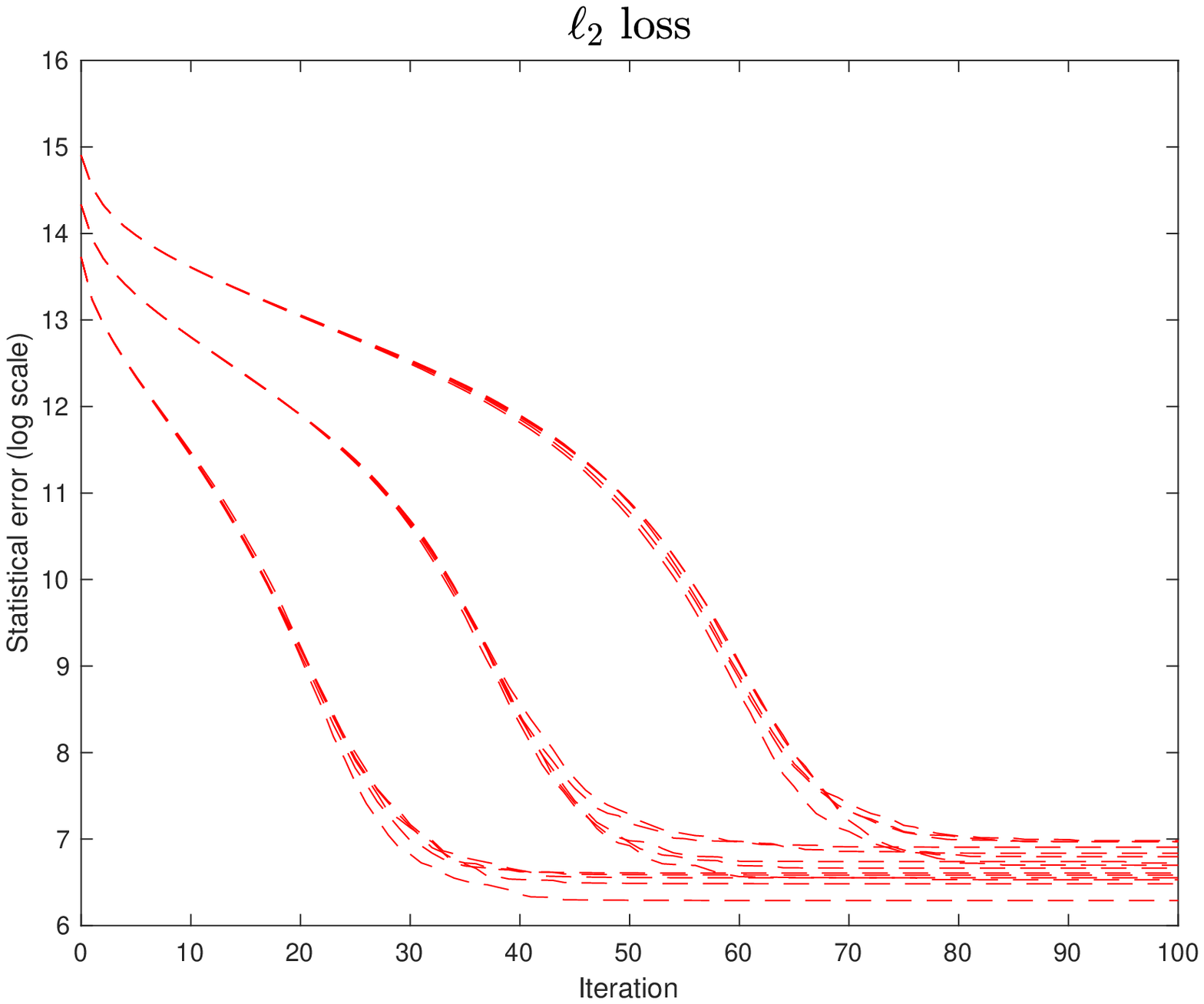}}
    \subfigure{\includegraphics[width=0.49\textwidth]{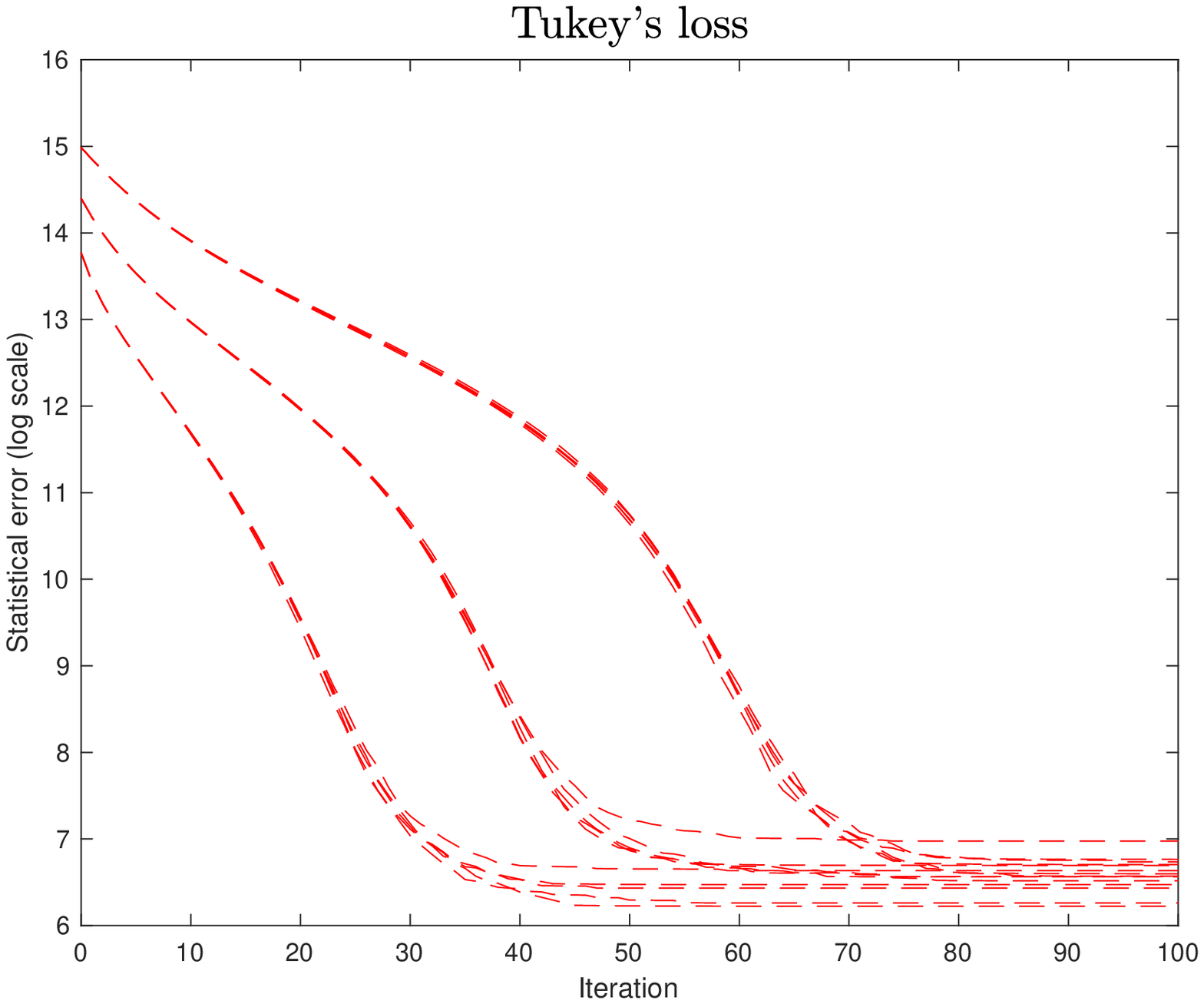}}
    \caption{\small Log plot of the statistical accuracy of iterative thresholding iterates in $P_H$-penalized sparse regression (left) and robust regression (right).}\label{fig:stat-Thres}
\end{figure}

\subsection{Accelerations} \label{subsec:simu_acc}

We test the acceleration schemes in IS divergence minimization and robust sparse regression in this subsection.

Figure \ref{fig:NBEM_acc} shows the power of applying the (second) acceleration in IS divergence minimization problem in Section \ref{subsec:simu_comp}, where we used 50 starting points with $\beta_j^{(0)} \sim U(0,1 ), 1\le j\le p$. With the acceleration, the number of iterations was brought down from 1000 to less than 50 to reach the same value of the objective function, and the overall computational time was saved by nearly 90\%.

\begin{figure}[h]
    \setcounter{subfigure}{0}
    \centering
    \includegraphics[width=0.6\textwidth, height=2in]{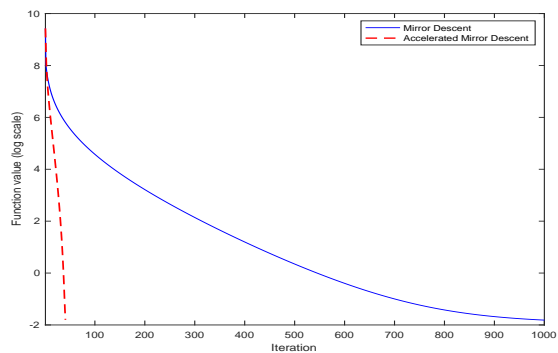}
    \caption{\small Objective function value (shown on $\log$ scale) v.s.\ number of iterations for the plain and accelerated exponentiated gradient descent algorithms in nonconvex Burg entropy optimization. }
\label{fig:NBEM_acc}
\end{figure}

Figure \ref{fig:Reg_acc} shows the convergence of statistical error when applying the (first) acceleration scheme in iterative thresholding for the $P_H$-penalized Tukey's loss minimization problem as mentioned in Section \ref{subsec:simu_stat}. The simulation setting remains the same as before and we sampled 20 initial points with $\beta_j^{(0)}\sim U(-1,1), 1\le j\le p$.
A substantial reduction in the number of iterations was achieved. Of course, the line search causes some overhead in computation. But the accelerated iterative thresholding still reduced the overall running time by more than 30\%, and obtained slightly better statistical accuracy.

\begin{figure}[h]
    \setcounter{subfigure}{0}
    \centering
    \includegraphics[width=0.6\textwidth, height=2in]{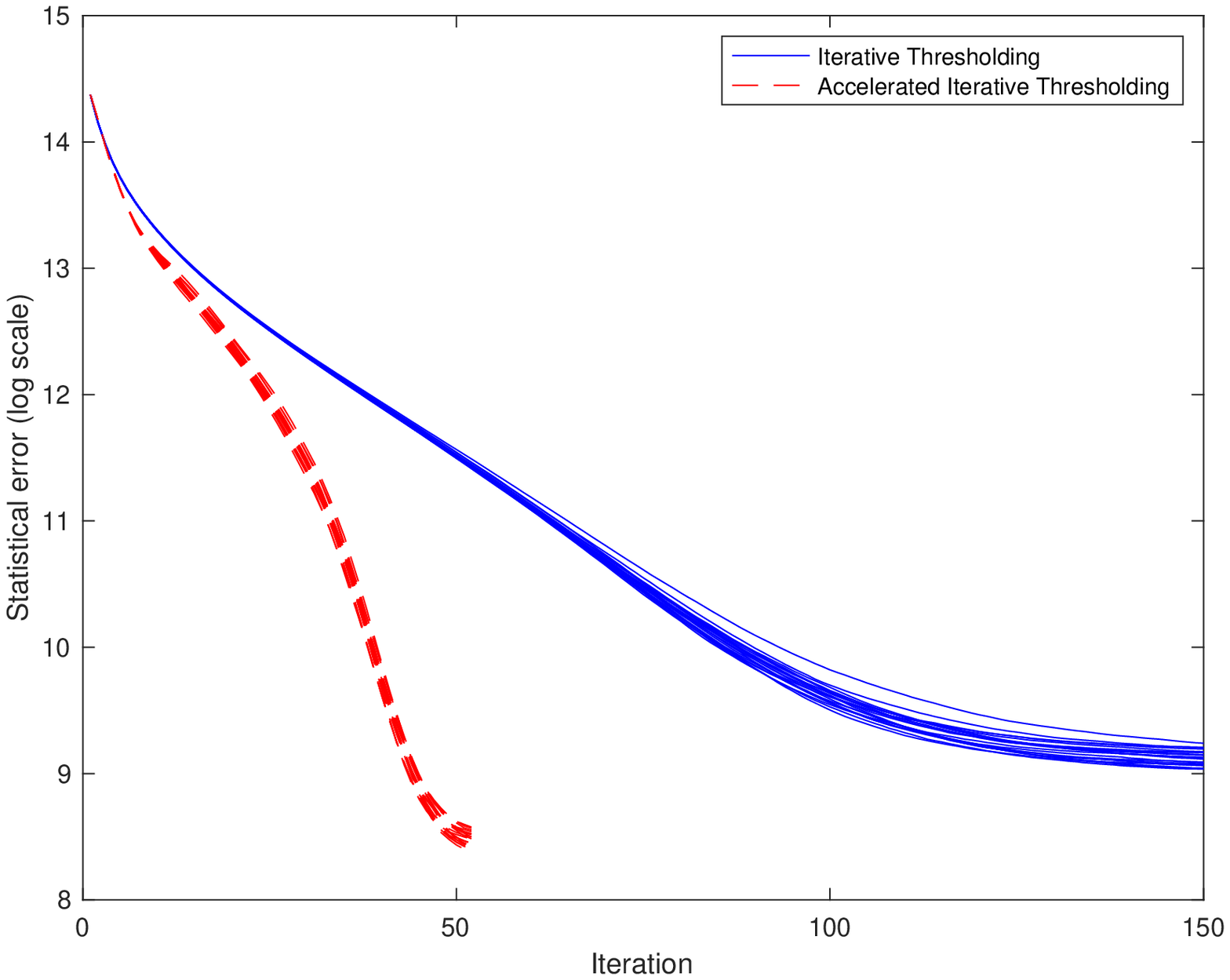}
    \caption{\small Statistical error (shown on $\log$ scale) v.s. iteration number for iterative thresholding and accelerated iterative thresholding in robust sparse regression. } 
\label{fig:Reg_acc}
\end{figure}

\bibliographystyle{imsart-number} 
\bibliography{Bregman}       

\end{document}